
\documentclass[final]{ws}

\usepackage{environ,tikz}
\usepackage{amsmath,amsfonts}
\usepackage{bbm,enumerate,stackrel}
\usepackage{paralist}
\usepackage[pdfauthor={Vuk Mili\v si\'c, CNRS/LAGA Université Paris 13},pdftitle={From a delayed constrained diffusion to the harmonic map heat equation},pdftex]{hyperref}
\usepackage[cyr]{aeguill}
\usepackage[a4paper, total={6in, 8in}]{geometry}
\def\ov{\overline}
\def\supp{\hbox{\rm supp}}

\def\BV {{\rm BV}}

\def\D{{\mathcal D}}

\newcommand{\RR}{\mathbb R}

\newcommand{\N}{\mathbb N}

\newcommand{\R}{\mathbb R}
\newcommand{\Z}{\mathbb Z}

\newcommand{\rr}{{\RR_+}}

\newcommand{\ue}{\frac{1}{\varepsilon}}

\newcommand{\dt}{\partial_t}

\newcommand{\dx}{\partial_x}
\newcommand{\dxx}{\partial_{xx}}
\newcommand{\e}{\varepsilon}

\newcommand{\normel}{\left\|}
\newcommand{\normer}{\right\|}

\newcommand{\nud}{{\frac{1}{2}}}

\newcommand{\td}{\frac{3}{2}}

\newcommand{\ddt}[1]{\frac{{\rm d} #1}{\rm d \it t}}
\newcommand{\ti}[1]{\tilde{#1}}

\newtheorem{hypo}{Assumptions}[section]

\newcommand{\nrm}[2]{{\normel{#1}\normer}_{#2}}

\newcommand{\SysNoNumB}[1]{
\begin{displaymath}
\left\{
\begin{array}{#1}
}
\newcommand{\SysNoNumE}{
\end{array}
\right.
\end{displaymath}
}
\newcommand{\TabNoNumB}[1]{
\begin{displaymath}
\begin{array}{#1}
}
\newcommand{\TabNoNumE}{
\end{array}
\end{displaymath}
}
\newcommand{\TabNumB}[2]{
\begin{equation}
\label{#2}
\begin{array}{#1}
}
\newcommand{\TabNumE}{
\end{array}
\end{equation}
}
\newcommand{\SysNumB}[2]{
\begin{equation}
\left\{
\label{#2}
\begin{array}{#1}
}
\newcommand{\SysNumE}{
\end{array}
\right.
\end{equation}
}

\newcommand{\Dt}{\Delta t}

\newcommand{\ddn}[1]{\partial_{\bfx} #1}
\newcommand{\cM}{\mathcal{M}}

\newcommand{\Da}{{\Delta a}}

\newcommand{\etau}[1]{}%{\varepsilon^{p_{#1}}}

\newcommand{\chiu}[1]{\ensuremath{\chi_{#1}}}

%{{\mathcal W}}
%{{\mathcal W}}

%% Ordinary lift

%% Harmonic lift

\newcommand{\cR}{{\mathcal R}}

\newcommand{\cH}{{\mathcal H}}

\newcommand{\cD}{{\mathcal D}}

\newcommand{\id}[1]{{\rm I}_{#1}}

\newcommand{\rhoe}{\rho_\e}

\newcommand{\rhoi}{\rho_{I}}
\newcommand{\rhoz}{\rho_0}

\newcommand{\da}{\partial_a} 
\newcommand{\zeps}{{\mathbf  z}_\e}

\newcommand{\zp}{{\mathbf  z}_p}

\newcommand{\zpD}{{\mathbf z}_{p,\Delta}}

\newcommand{\bmax}{\beta_{\max}}
\newcommand{\bmin}{\beta_{\min}}
\newcommand{\ztmax}{\zeta_{\max}}
\newcommand{\ztmin}{\zeta_{\min}}
\newcommand{\mumin}{\mu_{0,\min}}

\newcommand{\beps}{\beta_{\varepsilon}}
\newcommand{\bz}{\beta_0}
\newcommand{\zteps}{\zeta_\e}
\newcommand{\ztz}{\zeta_0}
\newcommand{\hrhoe}{\hat{\rho}_\varepsilon}

\newcommand{\tit}{\tilde{t}}

\newcommand{\muoz}{\mu_{1,0}}
\newcommand{\muzz}{\mu_{0,0}}

\newcommand{\muze}{\mu_{0,\e}}

\newcommand{\dtit}{\partial_{\tit}}

\newcommand{\trhoz}{\ti{\rho}_0}

\newcommand{\st}{\text{ s.t. }}

\newcommand{\tia}{{\ti{a}}}

\newcommand{\geps}{g_\e}

\newcommand{\trhoi}{\ti{\rho}_I}

\newcounter{thmm}

\newcommand{\cE}{{\mathcal E}}

\newcommand{\hz}{{\hat{\mathbf z}_\e}}

\newcommand{\qt}{{Q_T}}

\newcommand{\tse}{{\frac{t}{\e}}}
\newcommand{\Tse}{{\frac{T}{\e}}}

\newcommand{\om}{{\Omega}}
 \newcommand{\tet}{{D_t^\tau}}
\newcommand{\cyt}{{{\mathfrak Q}_T}}%{{\Omega\times\rr\times(0,T)}}
\newcommand{\ocyt}{{\ov{\cyt}}}%{{\Omega\times\rr\times(0,T)}}

\newcommand{\cK}{{\cal K}}
\newcommand{\cke}{\cK_\e}
\newcommand{\cked}{\cke^\delta}

\newcommand{\trhoze}{\ti{\rho}_{0,\e}}
%\newcommand{\tmuze}{\ti{\mu}_{0,\e}}

%%%%%%%%%%%%%%%%%%%%%%%%%%%%%%%%%%%%%%%

\NewEnviron{thmm}{%
\refstepcounter{thmm}
  \begin{center}
\begin{tikzpicture}
  \node[rectangle, inner sep=5pt,minimum width=\textwidth,
    text=black!93!brown!93!yellow, text opacity=1,
    draw=green, ultra thick, draw opacity=1,
    fill=white, fill opacity=.7] 
  (box){%
    \begin{minipage}{.95\textwidth}
      \textbf{Theorem} \arabic{thmm}. \BODY
  \end{minipage}};
\end{tikzpicture}
\end{center}
%\addtocounter{thmm}{1}
}
 \newcounter{examples}
    
\newcounter{lemm}       
\NewEnviron{lemm}{%
\refstepcounter{lemm}
  \begin{center}
\begin{tikzpicture}
  \node[rectangle, rounded corners, inner sep=5pt,minimum width=\textwidth,
    text=black!93!brown!93!yellow, text opacity=1,
    draw=red, ultra thick, draw opacity=0.5,
    fill=white, fill opacity=.7] 
  (box){%
    \begin{minipage}{.95\textwidth}
      \textbf{Lemma} \arabic{lemm}. \BODY
  \end{minipage}};
\end{tikzpicture}
\end{center}
}
\newcounter{propm}       
\NewEnviron{propm}{%
\refstepcounter{propm}
  \begin{center}
\begin{tikzpicture}
  \node[rectangle, rounded corners, inner sep=5pt,minimum width=\textwidth,
    text=black!93!brown!93!yellow, text opacity=1,
    draw=blue, ultra thick, draw opacity=0.75,
    fill=white, fill opacity=.7] 
  (box){%
    \begin{minipage}{.95\textwidth}
      \textbf{Proposition} \arabic{propm}. \BODY
  \end{minipage}};
\end{tikzpicture}
\end{center}
}

\newcounter{defim}       
\NewEnviron{defim}{%
\refstepcounter{defim}
  \begin{center}
\begin{tikzpicture}
  \node[rectangle, rounded corners, inner sep=5pt,minimum width=\textwidth,
    text=black!93!brown!93!yellow, text opacity=1,
    draw=yellow, ultra thick, draw opacity=0.75,
    fill=white, fill opacity=.7] 
  (box){%
    \begin{minipage}{.95\textwidth}
      \textbf{Definition} \arabic{defim}. \BODY
  \end{minipage}};
\end{tikzpicture}
\end{center}
}

\newcounter{assum}       
\NewEnviron{assum}{%
\refstepcounter{assum}
  \begin{center}
\begin{tikzpicture}
  \node[rectangle, rounded corners, inner sep=5pt,minimum width=\textwidth,
    text=black!93!brown!93!yellow, text opacity=1,
    draw=gray!5,  thick, draw opacity=0.75,
    fill=gray!5, fill opacity=.7] 
  (box){%
    \begin{minipage}{.95\textwidth}
      \textbf{Assumptions} \arabic{assum}. \BODY
  \end{minipage}};
\end{tikzpicture}
\end{center}
}

\newcounter{rmkm}       
\NewEnviron{rmkm}{%
\refstepcounter{rmkm}
  \begin{center}
\begin{tikzpicture}
  \node[rectangle, rounded corners, inner sep=5pt,minimum width=\textwidth,
    text=black!93!brown!93!yellow, text opacity=1,
    draw=gray,   thick, dashed, draw opacity=0.275,
    fill=white, fill opacity=.7] 
  (box){%
    \begin{minipage}{.95\textwidth}
      \textbf{Remark} \arabic{rmkm}. \BODY
  \end{minipage}};
\end{tikzpicture}
\end{center}
}

\newcounter{corom}       
\NewEnviron{corom}{%
\refstepcounter{corom}
  \begin{center}
\begin{tikzpicture}
  \node[rectangle, rounded corners, inner sep=5pt,minimum width=\textwidth,
    text=black!93!brown!93!yellow, text opacity=1,
    draw=gray,  ultra thick, densely dashed, draw opacity=0.75,
    fill=white, fill opacity=.7] 
  (box){%
    \begin{minipage}{.95\textwidth}
      \textbf{Corollary} \arabic{corom}. \BODY
  \end{minipage}};
\end{tikzpicture}
\end{center}
}

\newcommand{\oqt}{{\ov{Q}_T}}
%% ecrit exponentielle de l'integrale
\newcommand{\expful}[3]{\exp\left( - \int_{#3}^0 \zteps(\bfx,#1+s,#2+\e s) ds \right)}
\DeclareMathOperator*{\esssup}{ess\,sup}
\newcommand{\esupx}{{\esssup\limits_{\bfx \in \Omega}\,}}
\newcommand{\esupxt}{{\esssup_{\bfx \in \Omega}\,}}
\newcommand{\expm}[1]{\exp(-\ztmin #1)}

\newcommand{\wcvg}{\rightharpoonup}
\newcommand{\wscvg}{\stackbin{\star}{\rightharpoonup}}
\newcommand{\chim}{\theta_m}

\newcommand{\ztp}{Z_T'}
\newcommand{\zt}{Z_T}
\newcommand{\ztpair}[1]{{Z_{#1}',Z_{#1}}}
\newcommand{\xtpair}[1]{{X_{#1}',X_{#1}}}
\newcommand{\cqt}{{{\mathfrak{Q}}_T}}

\newcommand{\zz}{{\mathbf z}_0}
\newcommand{\cL}{{\boldsymbol {\mathcal L}}}
\newcommand{\bfw}{{\boldsymbol w}}
\newcommand{\cA}{{\mathcal A}}
\newcommand{\bH}{{\mathbf H}}
\newcommand{\bL}{{\mathbf L}}
\newcommand{\bfz}{{\mathbf z}}
\newcommand{\bfZ}{{\mathbf Z}}
\newcommand{\bfv}{{\boldsymbol v}}

\newcommand{\bho}{{\mathbf H}^1(\om)}

\newcommand{\blo}{{\mathbf L}^2(\om)}

\newcommand{\zepsD}{\bfz_{\e,\Delta}}

\newcommand{\rhoeD}{\rho_{\e,\Delta}}
\newcommand{\orhoeD}{\ov{\rho}_{\e,\Delta}}

\newcommand{\sed}[1]{{\mu}_{\e}^{#1}}
\newcommand{\osed}[1]{\ov{\mu}_{\e}^{#1}}

\newcommand{\vepsd}[2]{{\mathbf  U}^{#1}_{\e,#2}}
\newcommand{\tepsd}[2]{{\mathbf T}^{#1}_{\e,#2}}

\newcommand{\cle}{\cL_\e}

\newcommand{\clen}{\cL_{\e}^n}
\newcommand{\cled}[1]{\cL_{\e}^{#1}}
\newcommand{\cleD}{\cL_{\e,\Delta}}

\newcommand{\muzed}[1]{\sed{#1}}
\newcommand{\omuzed}[1]{\osed{#1}}
\newcommand{\mueD}{\mu_{0,\e,\Delta}}

\newcommand{\lamd}[1]{\lambda^{#1}}

\newcommand{\pmr}{{\mathcal A}}
\newcommand{\lame}{\lambda_\e}
\newcommand{\lamed}[1]{{\lambda^{#1}_\e}}
\newcommand{\lamz}{\lambda_0}
\newcommand{\lameD}{\lambda_{\e,\Delta}}

\newcommand{\rhoed}[2]{\varrho^{#1}_{\e,#2}}
\newcommand{\orhoed}[2]{\ov{\varrho}^{#1}_{\e,#2}}

\newcommand{\ztepsd}[2]{\zeta^{#1}_{\e,#2}}

\newcommand{\zepsd}[1]{\bfZ^{#1}_{\e}}

\newcommand{\bepsd}[1]{\beta^{#1}_\e}

\newcommand{\zpd}[1]{{\bfZ}_p^{#1}}
\newcommand{\tizepsD}{\ti{\bfz}_{\e,\Delta}}

%\Tizepsd
\newcommand{\xat}{\bfx,a,t}
\newcommand{\dxat}{d\bfx\,da\,dt}

\newcommand{\did}[2]{{\rm D}^{#1}_{#2}}
\newcommand{\tis}{\ti{s}}

\newcommand{\cnj}{{C^n_j}}
\newcommand{\ocnj}{\ov{C}^n_j}
\newcommand{\ucnj}{\underline{C}^n_j}

\newcommand{\varphin}{{\boldsymbol \varphi}^n}
\newcommand{\varphiD}{{\boldsymbol \varphi}_{\Delta}}

\newcommand{\cIe}{{\cal I}_\e}
\newcommand{\bvarphi}{{\boldsymbol \varphi}}
\newcommand{\caIe}{{{\cal I}_\e}}

%\usepackage{tgpagella}
%\usepackage[francais]{babel}

% definitions used by included articles, reproduced here for 
% educational benefit, and to minimize alterations needed to be made
% in developing this sample file.

% some definitions of bold math italics to make typing easier.
% They are used in the corollary.

%\def\x{ x }

\def\<{\langle}
\def\>{\rangle}

\usepackage{etoolbox}
\usepackage{amsmath,amsfonts,enumerate,amsbsy,amssymb}

\usepackage{mathtools, stmaryrd}
\usepackage{xparse}
\newcommand{\dom}{\{0,1\}}
\newcommand{\bfx}{{x}}

\DeclareMathOperator*{\argmin}{arg\,min}

%\title{Exterior Poisson problem in \\unbounded periodic strips}
\title{From delayed and constrained minimizing movements \\to the harmonic map heat equation}

% The thanks line in the title should be filled in if there is
% any support acknowledgement for the overall work to be included
% This \thanks is also used for the received by date info, but
% authors are not expected to provide this.

\author{Vuk Mili\v si\' c \thanks{Laboratoire Analyse, G{\'e}om{\'e}trie \& Applications (LAGA),
Universit{\'e} Paris 13,
FRANCE ({\tt milisic@math.univ-paris13.fr}), Draft version of \today.}
}

\begin{document}
%\date{\today}

\maketitle

%\today 

\pagestyle{myheadings}
\thispagestyle{plain}
\markboth{V. Mili\v si\'c }{From delayed minimizing movements to the harmonic map heat equation}

\begin{abstract} 
In the context of cell motility modelling and more particularly related to the Filament Based Lamelipodium Model \cite{OeSchVi,MR3385931,MR3675606}, 
this work deals with a rigorous mathematical proof of convergence between solutions of two problems :
we start from a microscopic description of adhesions using a delayed and constrained
vector valued equation with spacial diffusion
and show the convergence towards  
the corresponding
friction limit. 
The convergence is performed with respect to the bond characteristic lifetime $\e$ whose inverse is also proportional to the stifness 
of the bonds. The originality of this work is the extension of  gradient flow techniques to our setting. 
Namely, the discrete  finite difference term in the gradient flow energy is here replaced by a delay term
which complicates greatly the mathematical analysis.
Contrarily to the standard approach \cite{AmGiSa,Oel.11},
compactness in time is not provided by the energy minimization process~: a series of past times are taken into account 
in our discrete energy. A supplementary equation on the time derivative is obtained requiring uniform estimate with respect to  $\e$
of the Lagrange multiplier and 
provides compactness.
Due to the non-linearity induced by the constraint, a specific stability estimate useful in our previous works, 
is not at hand here.  Numerical simulations even showed that this estimate does not hold. 
Nevertheless, transposing our delay operator, we succeed in  proving convergence under slightly weaker hypotheses.
The result relies on a careful initial layer analysis, extending \cite{mi.proc} to the space dependent setting.
\end{abstract}

\keywords{
integral equations, memory effects, cell motility, parabolic equations, non-linear pointwise constraint, adhesion,
gradient flow, Lagrange multiplier, harmonic map
}
%\begin{keywords}
%%weighted Sobolev spaces, Hardy inequality, isomorphisms of the Laplace operator, periodic infinite strip, Green function, convolution
%\end{keywords}

\ccode{
%\begin{AMS}
35Q92 
%\end{AMS}
}

%45D05   Volterra integral equations [See also 34A12]
%45A05   Linear integral equations
%35Q92   PDEs in connection with biology and other natural sciences
%92Bxx   Mathematical biology in general
%35B40   Asymptotic behavior of solutions
%\tableofcontents

\section{Introduction}

Cell motility is at heart of important biological/medical concerns  (cancer metastasis, wound healing, etc.) \cite{bray2000cell}.
Among models describing spontaneous motion of cells, two types appear : those who heuristically 
mimic macroscopic features and models based on a microscopic description 
that are in some sense homogenized. The Filament Based Lamelipodium Model (FBLM) \cite{OeSchVi} belongs
to the second category and has reached a certain level of maturity \cite{MR3385931,MR3675606}.

Adhesion mechanisms are some of the pillars of the FBLM and appear as friction terms. 
In the pioneering paper \cite{OeSchVi}, they are obtained as formal limits of memory terms
inside the Euler-Lagrange equations associated to a minimization process. 
This limit is interpreted as quasi-instantaneous with respect to a dimensionless 
parameter $\e$.
Our work
deals with the rigorous mathematical justification of this asymptotic.

Previously, we introduced  simplifications that allowed 
to fully understand, from the mathematical point of view, either
the delay model, for fixed $\e$, or its convergence when $\e$ tends
to zero \cite{MiOel.1,MiOel.2,MiOel.3,MiOel.4}. More specifically 
in \cite{MiOel.1} and \cite{MiOel.2}, we studied  adhesions of a
single point submitted to an external force and proved convergence.
In \cite{MiOel.3} we proved that a non-linear fully coupled model
could either have global solutions or, if the external load exceeds %were greater 
 the microscopic adhesion capacity, blow-up could occur. 
More recently \cite{MiOel.4},
we extended these results adding space dependent adhesion and diffusion. 

In the previous works, Euler-Lagrange equations were considered and
in some cases \cite{MiOel.4} this is equivalent to the minimization 
of a convex energy. Here we consider the minimization process 
for which the energy functional contains adhesion terms and the Dirichlet
energy. But it is set pointwisely on the sphere almost everywhere, leading to 
a non-linear saddle point problem at the Euler-Lagrange level. 
The mathematical tools previously introduced extend only very partially 
 to this new problem.

Gradient flow techniques  provide existence of solutions
for complicated possibly non-linear energies complemented 
with a finite difference term in time \cite{AmGiSa,GiMaTo}. Here the delay term
in the energy could be considered as a generalization of such a finite difference.
Except that it provides neither existence of solutions nor compactness in time.
Thus we are forced to discretize the energy with respect to time and age. 
Here the age accounts for the delay.
%Indeed, the age variable is used to account for past positions. 
First, we obtain  new energy estimates similar
to the minimization principle in gradient flow theory (cf. Lemma \ref{lem.nrj}), 
but there is then an extra amount of work in order to prove compactness {\em i.e.}
boundedness of the time derivative in an appropriate space (cf. Proposition \ref{prop.time.compactness}).
This estimate is made possible thanks to a closed equation
obtained for the discrete time derivative of the position. This equation appears 
when taking finite differences  with respect to time of the Euler-Lagrange equations 
of the minimization process. %A similar estimate gave compactness in the linear case   
 %in \cite{MiOel.4}, the result being then far easier 
 %to derive. 
 In \cite{MiOel.4}, another estimate of the elongation provided extra
 compactness useful in the asymptotic of the variational 
 formulation that is not at hand
 here. 
 The  reason will be made more precise below.
 %Indeed, the compactness result provides a space-time $L^2$ bound for 
 %the time derivative of the positions $\dt \zeps$, on the other hand
 %the integral in age over 
 Thus, we were forced to transpose the delay 
 term in the Euler-Lagrange equations on the test function
 and the density of population of bonds $\rhoe$. This latter unknown 
 is singular~: $\dt \rhoe$ is a measure that
 converges, when $\e$ goes to zero, to $\dt \rhoz$, the time derivative
 of the population of linkages and to a  Dirac mass located 
 near the origin in time for all ages. Since the problem is here space dependent, 
 these results extend and complete the initial layer analysis performed  in \cite{mi.proc}.

To be more specific, 
we denote by $\Omega:=(0,1)$. %For any fixed time $T$, the parabolic cylinder is denoted $Q_T:= \Omega\times (0,T)$.
The vector  position in $\R^d$ of the moving binding site, $\zeps(\bfx,t)$,  minimizes at each time $t\geq 0$ an energy functional~:
\begin{equation}\label{eq.minimiz}
 \zeps(\bfx,t) = \argmin_{w \in \pmr} {\cal E}_t(w) ,
\end{equation}
where the minimization is performed on the set
$$
\cA:=\left\{ \bfw \in \bho \st |\bfw(\bfx)|^2 =1,\; \text{a.e.}\; \bfx \in \om\right\}.
$$
The energy is defined for every $\bfw\in \cA$ as 
\begin{equation}\label{eq.nrj}
\begin{aligned}
  & {\cal E}_t( \bfw(\cdot)) := %\ceadh(\bfw(\cdot))+ \cedir(\bfw(\cdot)), \\
 %&  \ceadh(\bfw(\cdot)):= 
 \frac{1}{2\e} \int_\om \int_{\rr} \frac{| \bfw(\bfx)- \zeps(\bfx,t-\e a) |^2 }{\e} \rhoe(\bfx,t,a) da d\bfx +%, \\
% \cedir(w(\cdot)) 
%& \cedir (\bfw(\cdot)) := 
\frac{1}{2} \int_\om | \dx \bfw |^2 d\bfx.
\end{aligned}
  %\nud  \int_{\Omega} \left\{  | \dx w |^2 + \int_{\rr} \frac{| w(\bfx)- \zeps(\bfx,t-\e a) |^2 }{\e} \rhoe(\bfx,t,a) da \right\} d\bfx \; ,
\end{equation}
%where $\ceadh(w)$ accounts for delayed adhesion. 
Past positions are given by the function $\zeps(\bfx,t)=\zp(\bfx,t)$ for $t<0$. 
The
age distribution $\rhoe=\rhoe(\bfx,a,t)$ is the solution of the structured model~:
\begin{equation}
\label{eq.rho.eps} 
\left\{ 
\begin{aligned} 
&\varepsilon \partial_t \rhoe + \partial_a \rhoe+ \zteps \, \rhoe = 0 
\,, &\bfx \in \Omega, \, a>0 \, , \;t > 0 , \\ %\quad (t,a) \in (\rr)^2\\
& \rhoe(\bfx,a=0,t)=\beps(\bfx,t)\left(1-\muze(t,\bfx)  \right) 
\, , &\bfx \in \Omega,\; a=0,\, t > 0  , \\
& \rhoe(\bfx,a,t=0)=\rhoi(\bfx,a)
\, ,& \bfx \in \Omega, a>0, t=0,
\end{aligned}  
\right. 
\end{equation}
where $\muze(\bfx,t):=\int_0^\infty \rhoe(\bfx,\tia,t) \, d \tia$ and the on-rate of bonds is a given
function $\beps$ times a factor, that takes into account saturation of the moving binding site with linkages.
When the off-rate $\zteps$ is a prescribed function, we say that the problem is weakly coupled~:
first one exhibits $\rhoe$ solving  \eqref{eq.rho.eps} which then becomes  the  weight in \eqref{eq.nrj}.

First, we discretize in time and age the minimization process \eqref{eq.minimiz} and 
the age structured system \eqref{eq.rho.eps}. For the transport problem \eqref{eq.rho.eps}
we use $i)$ the upwind scheme inside the domain, $ii)$ an implicit discretization of the off-rates,
$iii)$ the non-local term is discretized using a piecewise constant approximation.
This step provides, as in the gradient flow case (see for instance minimizing movements chap. 2 \cite{AmGiSa}), 
existence of a discrete pair of solutions $((\rhoed{n}{j})_{j\in\N},\zepsd{n})_{n\in\N}$.
Then thanks to compactness arguments, we pass to the limit with respect to  the discretization 
parameter $\Da$, and prove that there exists a unique couple $(\rhoe,\zeps)$. 
The bond population density $\rhoe$ solves \eqref{eq.rho.eps}, whereas
$\zeps$ satisfies, almost everywhere in $(0,T)$,  %the minimization principle \eqref{eq.minimiz}
%and solves as well  
the weak formulation associated to the Euler-Lagrange equation :
\begin{equation}\label{eq.delayed.map}
\left\{
 \begin{aligned}
& \cL_\e -  \dxx \zeps + \lame  \zeps  = 0 , & \text{ a.e. }(\bfx,t) \in \om\times(0,T),\\
& | \zeps(\bfx,t) | =1 &  \text{ a.e.} (\bfx,t) \in \om\times(0,T),\\
 & \ddn{}\zeps =0 & \forall  (\bfx,t) \in \dom \times (0,T), \\
 & \zeps(x,t) = \zp(\bfx,t)& \text{ a.e. }(\bfx,t) \in \om \times \RR_-,
\end{aligned}
\right.
\end{equation}
where $\cL_\e(\bfx,t):=\int_\rr \rhoe (\bfx,a,t) (\zeps(\bfx,t)-\zeps(\bfx,t-\e a))/\e da$ and $\lame$ is the Lagrange multiplier associated
to the constraint $ | \zeps(\bfx,t) | =1$. 
We prove that when $\e$ goes to zero, $(\rhoe,\zeps)$ 
the solutions of the previous minimization problem converge to $(\rhoz,\zz)$. 
These solve the limit problems  reading~: 
\begin{compactitem}%[noitemsep,topsep=0pt,parsep=0pt,partopsep=0pt]
\item  Find $\zz \in C^0_t([0,T]; \bH^1_\bfx (\om)) \cap H^1_t ((0,T);\bL^2_x(\om))$ being the weak solution of the problem
\begin{equation}\label{eq.heat.harmonic.map}
\left\{
 \begin{aligned}
&  \muoz \dt \zz -  \dxx \zz - | \dx \zz |^2  \zz  = 0 , & \text{ a.e. }(\bfx,t) \in \om\times(0,T),\\
& | \zz(\bfx,t) | =1 &  \text{ a.e.} (\bfx,t) \in \om\times(0,T),\\
 & \ddn{}\zz =0 & \forall  (\bfx,t) \in \dom \times (0,T), \\
 & \zz(x,0) = \zp(\bfx,0)& \text{ a.e. }(\bfx,t) \in \om \times \{0\}.
\end{aligned}
\right.
\end{equation}
The convergence of $\zeps$ towards $\zz$ holds strongly in $C(\om\times[0,T])$.
%Here and
%in the rest of the paper we abbreviate the notation of function spaces writing the subscripts t
%for function spaces on $t\in[0,T]$  and the subscript $\bfx$ for function spaces on $\bfx \in \om$.
%where $\muoz (\bfx,t) := \int_\rr \rhoz(\bfx,a,t) a da$ is the first moment representing an average age of bonds.
\item The function $\mu_{k,0} := \int_{\rr} a^k \rhoz (\bfx,a,t) \, da$ represents the moment of order $k$ of $\lim_{\e\to 0} \rhoe =: \rhoz$ which solves in $C([0,T];L^1_a(\rr;L^\infty_\bfx(\om)))$ 
\begin{equation}\label{eq.rho.zero}
\left\{
 \begin{aligned}
 &  \partial_a \rhoz+ \ztz \, \rhoz = 0 
\,, &\bfx \in \Omega , \;a>0, \; t>0, \\ %\quad (t,a) \in (\rr)^2\\
& \rhoz(\bfx,a=0,t)=\bz(\bfx,t)\left(1-\muzz(\bfx,t)  \right) 
\, , & \bfx \in \Omega , \;a=0, \; t>0.\\
\end{aligned}
\right.
\end{equation}
The solution $(1+a)\rhoe$ converges to $(1+a)\rhoz$ in $L^1(\rr\times(0,T);L^\infty_\bfx(\om))$ strong.
\end{compactitem}

The article is structured as follows. In Section \ref{sec.hypo}, we  list the hypotheses
used throughout the paper and set notations. 
In Section \ref{sec.a.priori}, we detail the discrete minimization
process in age and time providing the piecewise solutions $((\rhoed{n}{j})_{j\in\N},\zepsd{n})_{n\in\N}$. 
In the same section, we provide stability estimates  in the appropriate functional
spaces. We underline that most of the results obtained therein are uniform with respect to $\e$ and 
the discretisation step $\Delta a$~: the same properties can be extended to the continuous model for fixed $\e$.
This leads to study first this latter limit for $\e$ fixed and $\Delta a$ going to 0. This is done in Section \ref{sec.conv.eps.fixed}.
Then when $\e$ tends to zero, we prove, in Section \ref{sec.conv.e}, that indeed convergence occurs
towards the limit heat harmonic map equation \eqref{eq.heat.harmonic.map} . As $\dt \zeps$ converges weakly in  $L^2(\om\times(0,T))$
and $\rhoe$ converges strongly in $L^1(\rr\times(0,T);L^\infty(\om))$,  it is not possible to obtain directly the
convergence of the delay term $\cL_\e(\bfx,t)$ towards $\muoz \dt \zz$. 
Instead, as mentioned above, we transpose the delay operator on a test function 
and on $\rhoe$,  and then we pass to the limit with respect to  $\e$. 
%The solution  $\rhoe$ admits an initial layer which make the time derivative of $\rhoe$
%become a vector valued measure when $\e$ goes to zero composed of a regular part inside the domain
%and a singular initial part.

\section{Notations and hypotheses}\label{sec.hypo}

We set $\qt := \RR^*_+ \times (0,T)$ and $\oqt:=\rr\times[0,T]$.  
As stated in the introduction we consider a one dimensional space domain $\Omega:=(0,1)$.
%domain
We set as well $\cqt := \om\times\rr\times(0,T)$.

We define  $X_T := C^0(\oqt;L^1(\Omega))$ to be 
the Banach space of continuous functions in age and time whose $L^1$ norm 
in space goes
to zero when $a$ goes to infinity. We endow $X_T$ with the norm :
$$
\nrm{f}{X_T} := \sup_{(a,t)\in\oqt} \nrm{f(\cdot,a,t)}{L^1(\Omega)}.
$$
% It is the closure of $F$
%with respect to  to the norm of $X_T$.
$X_T$ is a Banach space \cite{simon}. %(cf. Lemma \ref{lem.banach}).
It is also a closed subspace of $Z_T:=C_b(\oqt;L^1(\om))$ which is a non-separable Banach space.
We define $Y_T:= L^1(\qt;L^\infty(\Omega))$ which is also a Banach space endowed with the
 corresponding norm :
$$
\nrm{f}{Y_T} := \int_{\qt}  \nrm{ f (\cdot,a,t) }{L^\infty(\Omega)} da dt.
$$

We denote  the discrete differences as 
$$
\did{\tau}{t} f :=  \frac{f(\bfx,a,t+\tau)-f(\xat)}{\tau},\quad \did{\alpha}{a} f:=  \frac{f(\bfx,a+\alpha,t)-f(\xat)}{\alpha},
$$
and we define the space of Banach valued functions 
$$
U_T:= \left\{ f \in Y_T\st 
%\sup_{\varphi \in X_T} \int_
%(\da f,\dt f) \in (X_T')^2\right\} 
\limsup_{\sigma \to 0} \left( \nrm{\did{\sigma}{a}f}{Y_{(T-\sigma)}} + \nrm{\did{\sigma}{t} f}{Y_{(T-\sigma)}} \right) < \infty
\right\}
$$
and one endows $U_T$ with the norm~:
$$
\nrm{f}{U_T} := \nrm{f}{Y_T} +  \limsup_{\sigma \to 0} \left( \nrm{\did{\sigma}{a}f}{Y_{(T-\sigma)}} + \nrm{\did{\sigma}{t} f}{Y_{(T-\sigma)}} \right).
$$
If the same space is set on a time interval $(t_1,t_2)$ then the notation $U_{(t_1,t_2)}$ is well understood.
In the rest of the paper we abbreviate the notation of function spaces writing the subscripts $t$
for function spaces on $t\in[0,T]$  and the subscript $\bfx$ for function spaces on $\bfx \in \om$, 
for instance $C_t L^1_a L^\infty_\bfx$ denotes $C([0,T];L^1(\rr;L^\infty(\om)))$. 

One should notice that $U_T$ is in fact the space of 
functions of bounded variation with values in a Banach space $BV(\rr\times(0,T);L^\infty(\om))$.
The generic  space $BV((0,T);Z^*)$ is 
presented and studied in a very detailed way in \cite{HeiPatRen.19}, 
where $Z^*$ is the dual space of a Banach space $Z$.
Since $L^\infty(\om)$ is  the dual space of $L^1(\om)$ we are exactly in this framework.
The semi norms
with the discrete derivatives coincide with the total variation of the $L^\infty$-valued
Radon measures corresponding to the time/age derivative {\em i.e. }
$$
|\dt f| := \sup_{
\substack{\phi \in \D(Q_T;L^1(\om)) \\
\nrm{\phi}{L^\infty_{a,t}L^1_\bfx}\leq 1}} 
\int_{\rr\times (0,T)} \langle f , \dt \phi\rangle da dt,\quad  |\da f| := \sup_{
\substack{\phi \in \D(Q_T;L^1(\om)) \\
\nrm{\phi}{L^\infty_{a,t}L^1_\bfx}\leq 1}} 
\int_{\rr\times (0,T)} \langle f , \da \phi\rangle da dt
$$
where the brackets $\langle \cdot,\cdot\rangle$ denote the $L^\infty(\om),L^1(\om)$ duality in the space variable. The proof of
this equivalence can be found in \cite{Dafermos.Book.2000} p. 12 in the proof of Theorem 1.7.1 and is easily extendable
to the Banach valued case presented in \cite{HeiPatRen.19}.

%$F_k:=C_0([0,T]\times[0,k];L^1(\Omega))$, it is a Banach space endowed with the norm
%$$
%\nrm{f}{F_k} := \sup_{(a,t)\in[0,T]\times[0,k]} \nrm{f(\cdot,a,t)}{L^1(\Omega)}
%$$
%For every positive $k$, $F_k$ is a complete space. One has $F_k \subset F_{k+1}$
%with strict inclusion, the topology $\cTau_k = \left. \cTau_{k+1} \right|_{F_k}$ and $F_k$ is a closed
%subspace of $F_{k+1}$.
%The space $F:=C_c(\oqt;L^1(\om))$ is the inductive limit {\em i.e.} $F=\cup F_k$. Moreover, we define
%$$
%\cU := \left\{ \cO \subset F, \cO \text{ convex, absorbing and balanced and }  \cO \cap F_k \in \cT_k \right\}
%$$
%which is a basis of neighborhoods of a topology $\cT$ s.t. $(F,\cT)$ is a separate locally convex 
%topological space.

\begin{hypo}\label{hypo.data}
The dimensionless parameter $\varepsilon > 0$ is assumed to induce two families of chemical rate functions that satisfy:
\begin{enumerate}[(i)]
%\item  For any $T>0$ the function $\beps(\bfx,t)$ is a uniformly bounded   function (with respect to  $\e$) on $Q_T$ 
%and $\zteps(a,t)$ is in $\Lip_t([0,T];L^\infty_a(\rr))$, i.e.
%$$
%\zteps \in L^\infty((0,T)\times\rr) \quad and \sup_{a \in \rr,\, t \in [0,T]} | \dt \zteps(a,t) | \leq C \; .
%$$
%for a constant $C>0$. Moreover we suppose that for a fixed positive age $a_0 \geq 0$ the off-rate $\zteps(a+t/\e, t)$ is monotonically increasing on $[a_0, \infty)$.
\item For every $\e\geq 0$, the  function $\beps$ belongs to $W^{1,\infty}%_{\bfx,t}
(Q_T)$ and the off-rate $\zteps$ is s.t. 
$\zteps \in W^{1,\infty}(\Omega\times\rr\times[0,T])$ moreover itr holds that 
\begin{equation*}
%\label{hypo.data_convergence}
\nrm{ \zteps-\ztz}{L^{\infty}_{\bfx,a,t}} \to 0 \quad \text{and} \quad \nrm{ \beps - \bz}{L^\infty_{\bfx,t}}  \to 0
\; 
\end{equation*}
as ${\varepsilon} \to 0$.
\item We also assume that there are upper and lower bounds such that
\begin{equation*}
%\label{hypo.data2}
0 < \zeta_{\rm min} \leq \zteps(\bfx,a,t) \leq \zeta_{\rm max} \quad \text{and} \quad 
0 < \beta_{\rm min} \leq \beps(\bfx,t) \leq \beta_{\rm max}
\end{equation*}
for all $\varepsilon>0$, $\bfx \in \Omega$, $a \geq 0$ and $t>0$. 
\end{enumerate}
\end{hypo}

The initial data for the density model \eqref{eq.rho.eps} satisfies some hypotheses that we sum up here.
\begin{hypo}\label{hypo.data.deux}
  The initial condition $\rhoi \in L^\infty_{\bfx,a}(\Omega\times\rr)$ satisfies
\begin{itemize}
\item positivity and boundedness~: there exists $M > \bmax$, s.t. 
$$
M\geq \rhoi(\bfx,a)\geq0 \; ,\quad \text{ a.e. }  (\bfx,a) \in \Omega\times\rr\; ,
$$
moreover, one has also that the total initial population satisfies
$$
0< \int_{\rr} \rhoi(\bfx,a) da < 1
$$
for almost every $\bfx \in \Omega$.
\item boundedness from below  of the zero order moment,
$$
0< \mu_I :=\int_{\rr} \rhoi (\bfx,a) \; da ,\quad \text{for a.e.} \; \bfx \in \om \; ,
$$
%where $c_p$ are positive constants depending only on $p$.
%\item there exists a constant denoted $A_{\max}>a_0$ s.t.
%$$
%\int_{a_0}^\infty  a \rhoi(a) \; da \leq A_{\max} \int_{a_0}^\infty \rhoi(a) \; da 
%$$
%uniformly in $\e$.
\item initial integrability with respect to the limit problem :
$$
\int_{\rr} \sup_{\bfx \in \Omega}  \rhoi(\bfx,a) a^p da < \infty,\quad \text{for } p\in\{0,1,2\}   \; ,
$$
\item the derivative with respect to  age satisfies as well :
$$
\limsup_{\sigma \to 0} \int_{\rr} \sup_{\bfx \in \Omega} \left| \did{\sigma}{a}  \rhoi(\bfx,a) \right| da < \infty.
$$
\end{itemize}

\end{hypo}
Concerning the minimization problem \eqref{eq.minimiz}, we assume
\begin{hypo}\label{hypo.data.trois}
 The past data satisfies~:
\begin{enumerate}[i)]
\item for every time $t\leq0$, we assume that  $\zp(\cdot,t)$ is in $\cA$, 
 \item  there exists a Lipschitz constant which is $L^2$ in space s.t.~:
 \begin{equation}\label{eq.lip.past}
%\left\{
 %\begin{aligned}
  | \zp(\bfx,t_2) - \zp(\bfx,t_1) | \leq C_{\zp}(\bfx)|t_2-t_1|, \quad \forall  (t_2,t_1) \in (\RR_-)^2
%
%\end{aligned}
%\right.
\end{equation}
for a.e. $\bfx \in \om$ where $C_{\zp}(\bfx) \in L^2(\Omega)$.
%\item Integrability in of the average  $L^2(\Omega)$ :
%$$
% \int_{\rr} | \zp(\bfx,-\e a) | \rhoi(\bfx,a)da \in L^2(\Omega)
%$$
%\item Finite energy 
%$$
%\cE(\zp(\cdot,0)) < \infty
%$$
%where the constant is independent on $\e$
%\alert{(CHECK WHETHER THIS IS INTERESTING)}
\end{enumerate}
%The time dependent right hand side   $f=f(t)$  in \eqref{eq.z.eps} is a uniform Lipschitz function on $[0,T]$ for any $T>0$. The past condition $\zp$ belongs to $\Lip((-\infty, 0])$, the set of uniform Lipschitz functions on $\RR_-$.
\end{hypo}

%\begin{hypo}\label{hypo.data.quatre}
%The past condition for $t=0$ is regular {\em i.e. } $\Delta \zp(\cdot,0) \in L^1(\Omega)$
%\end{hypo}

%\begin{hypo}\label{hypo.data.constraint}
% We assume that $g$ has the following properties, \cite{berger}~:
%\begin{enumerate}[i)]
% \item $g(0,\bfx)=0$,
% \item $z \, g(z,\bfx)>0$ for $z\neq 0$ pour tout $\bfx \in \om$,
% \item $g(-z,\bfx)=-g(z,\bfx)$,
% \item $g(z,\bfx)$ is a nondecreasing function of $z$ for fixed $\bfx$,
% \item For some fixed $\bfx_1 \in \om$ and some positive constant $c_g$ and all $\bfx \in \om$, 
% $ g(z,\bfx) \geq c_g g(z,\bfx_1)>0$ for $z>0$.
% \item Polynomial growth condition : $| g(z,\bfx) | \leq a + d_g | z |^p$ for all $(z,\bfx) \in \RR\times\om$,
% where 
% $$
% p \in 
%\left[0,\frac{d+2}{d-2}\right) \; \text{ if } d> 2 ,\quad p \in  \rr \; \text{if } d\leq 2  
%%\end{cases}
%$$
%\item the partial derivative of $g$ with respect to  $z$ exists and satisfies~: for any compact $K\subset \RR$, one has
%$$
%\sup_{z \in K, \; \bfx \in \om} | \partial_z g(z,\bfx) | \leq C_K
%$$
%\end{enumerate}
%\end{hypo}

\section{Existence of minimizers and {\em a priori} estimates : the discrete scheme}\label{sec.a.priori}

%\subsection{Standard results for global constraints}
%We sum up here standard results obtained under hypotheses \ref{hypo.data.constraint}, \cite{berger}.
%We define the scalar product in $\bho$ as the bi-continuous map
%$$
%<u,v>_{1,2} := (u,v)+ (\dx u,\dx v),\quad \forall (u,v) \in \bho\times\bho
%$$
%where the parentheses denote the $L^2$ scalar product.
%\begin{lemma}
% Under hypotheses \ref{hypo.data.constraint}, then $\int_\om g(z,\bfx) v d\bfx = <B z,v>_{1,2} $
% for any $(z,v)\in\bho$, where $B$ is a compact, continuous mapping from $\bho$ into itself. Further, 
% the form $<Bz,v>_{1,2}$ is a weakly continuous function of $z$ and $v$.
%\end{lemma}
%
%We define $G(z,\bfx):=\int_0^z g(s,\bfx) ds$, then trivially one has 
%Using similar arguments as in \cite{berger}, one can easily obtain the following properties :
%\begin{lemma}
% One has
%\begin{enumerate}[1)]
% \item $\pmr$ is symmetric about the origin,
% \item For any  $z\in\pmr$, $\nrm{z}{\bho} \geq c(R) >0$, for some constant $c(R)$, independent on $z$
%\item $\int_\om z^2 d\bfx$ defines a weakly continuous functional on $\bho$
%\item If $R\in \rr^*$, then $\pmr$ is weakly closed.
%\end{enumerate}
%\end{lemma}
%\section{The discrete scheme}

We discretize both \eqref{eq.rho.eps} and the minimization 
process \eqref{eq.minimiz}  in time and age, but not in space.
We set $\Delta a$ a small parameter denoting the age discretization step, 
while the time step satisfies the CFL condition $\Delta t = \e \Delta a$. 
This provides 
  $N:=\lfloor T/\Delta t\rfloor$, the number of times steps.
We solve~:
\begin{itemize}
\item for the $\rhoe$ model, we use a first order upwind scheme and treat the source term implicitly, so  we define inside the mesh
\begin{equation}\label{eq.def.scheme.rho.int}
 \rhoed{n+1}{i}(\bfx) := \rhoed{n}{i-1}(\bfx)  /\left( 1 +\frac{\Delta t}{\e} \ztepsd{n+1}{i} (\bfx) \right), \quad i \in \N^*,\quad n \in \N\cup \{-1\},
\end{equation}
while on the boundary we set 
\begin{equation}\label{eq.def.scheme.rho.bdry}
 \rhoed{n+1}{0} := \rhoed{n+1}{b} /\left( 1 +\frac{\Delta t}{\e} \ztepsd{n+1}{0} \right), \quad n\in\N\cup \{-1\},
\end{equation}
where 
$$
\rhoed{n+1}{b} := \bepsd{n+1}(1-\sed{n+1}),\quad%\quad
\sed{n+1} := \sum_{i=0}^\infty \rhoed{n+1}{i} \Delta a.
$$
This definition provides explicitly  $\rhoed{n+1}{0}$,
$$
\rhoed{n+1}{0} := \frac{\bepsd{n+1}}{1 + \Delta a (\bepsd{n+1}+\ztepsd{n+1}{0})} \left(1 - \sum_{i=1}^\infty \rhoed{n+1}{i} \Delta a\right).
$$
The initial condition is defined as
$$
\rhoed{-1}{i}:= \frac{1}{\Delta a} \int_{i\Delta a}^{(i+1)\Delta a} \rhoi(a) da, \quad \forall i \in \N.
$$
The zero order moment $\sed{n+1}:=\Delta a\sum_{i\in\N} \rhoed{n+1}{i}$ can be expressed in an inductive way :
\begin{equation}\label{eq.s}
 \sed{n+1} + \Da \sum_{i=0}^\infty \frac{\Dt}{\e} \ztepsd{n+1}{i} \rhoed{n+1}{i} = \sed{n} + \rhoed{n+1}{b}= \sed{n} + \Da \bepsd{n+1}(1-\sed{n+1}).
\end{equation}
We define a piecewise constant function
$$
\rhoeD(\bfx,a,t) := \sum_{i,j\in\N^2} \rhoed{n}{j}(\bfx) \chiu{(i\Delta a,(i+1)\Delta a)\times(j\Delta t,(j+1)\Delta t)}(a,t).
$$
\item whereas the minimization process is performed for each $n \in \N$ 
\begin{equation}\label{eq.disc.min}
  \zepsd{n} :=   \argmin_{\bfw \in \cA}  \cE_{n}(\bfw), 
\end{equation}
 where the discrete energy functional reads :
  $$
 \begin{aligned}
 \cE_{n}(\bfw) :=  &  \nud \int_{\om} |  \dx \bfw |^2 d\bfx  + \frac{\Delta a}{4 \e} \left\{  \int_\om  (\bfw-\zepsd{n-1})^2 \rhoed{n}{0} d\bfx \right. \\
  & \quad \quad \left. +  \sum_{j=1}^{\infty} \int_\om  \left(  (\bfw - \zepsd{n-j})^2+ (\bfw - \zepsd{n-j-1})^2 \right)\rhoed{n}{j}  d \bfx \right\}  
  \end{aligned} 
$$
for all $n \in \N$
%while 
%$$
%\begin{aligned}
% \zepsd{0} := & \argmin_{w \in \cA}  \cE_{0}(w)\\
% \cE_{0}(w) := &\frac{\Delta a}{2\e} \int_\om   
% \sum_{j=1}^\infty (w - \zpd{-j})^2 \rhoed{0}{j} d\bfx %\\
% %& 
% + \nud \int_{\om} | \dx w |^2 d\bfx,\quad \forall n \in \N
%\end{aligned} 
%$$
and $\zpd{i} := \int_{i\Delta t}^{(i+1) \Delta t} \zp(\bfx,t)dt/\Delta t$ for all $i\in \Z$, $i<0$, and we set $\zepsd{n}=\zpd{n}$ for every $n<0$. 
We define  the piecewise constant function
$$
\zepsD(\bfx,t) := 
\sum_{n=-\infty}^\infty  
%\left\{ 
\zepsd{n} (\bfx) 
%+ \left( \frac{t}{\Delta t} - n\right) \left( \zepsd{n+1}(\bfx) - \zepsd{n}(\bfx)\right) 
%\right\} 
\; \chiu{(n\Delta t, (n+1)\Delta t)}(t).
$$
%where it is implicitly assumed that $\zepsd{n} = \zpd{n}$ for $n<0$ and $\zpd{n}:=\int_{n\Dt}^{(n+1)\Dt} \zp(s) ds$.
The  piecewise linear extension reads~:
%We define the linear interpolation of $(\zepsd{n})_{n\in\N}$ :
$$
\tizepsD := \sum_{n\in\N} \left\{ \zepsd{n} + \left( \frac{t}{\Delta t}-n\right) \delta \zepsd{n+\nud} \right\} \chiu{(n,(n+1))\Delta t}(t),
$$
where  $\delta \zepsd{n+\nud} := \zepsd{n+1}-\zepsd{n}$, while in what follows we denote as well
$\delta \rhoed{n}{i+\nud} := \rhoed{n}{i+1}-\rhoed{n}{i}$ an so on.

%In order to construct the solution we start by computing $\bfz^0_\e$.
\end{itemize}
\subsection{Positivity and convergence  of the discrete solution $\rhoeD$} 

From Lemma \ref{lem.muze.bounds} to Theorem \ref{thm.cvg.dicrete.rhoe}, we extend
results from previous works \cite{MiOel.1,MiOel.4,mi.proc} to the  discrete case.
When needed, we characterize also some properties of $\rhoe$, the continuous solution of \eqref{eq.rho.eps}.

%We need first to guarantee same properties useful in the rest of the work and already 
%obtained in previous works for the continuous solutions but in somehow simplified
%contexts \cite{MiOel.1}.

\begin{lemma}\label{lem.muze.bounds}
For almost every $x\in\om$, under the CFL condition $\Dt = \e \Da$, %and for $\Da$ sufficiently small, 
under hypotheses \ref{hypo.data}, if
 $\rhoed{-1}{i}\geq 0$ and $\sed{-1}\leq1$ then 
 $$
 \rhoed{n}{i}\geq 0,\quad 0\leq \sed{n}\leq 1,\quad \forall (n,i) \in \N^2.
 $$
% for $\Delta t$ small enough.
Moreover if there exists a constant $0<\mumin < \min( \sed{-1}, \bmin/(\bmin+\ztmax))$, then
$\sed{n}>\mumin$ for every $n\in\N$,
\end{lemma}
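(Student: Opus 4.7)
The plan is to proceed by induction on $n$, verifying simultaneously the three properties: (a) $\rhoed{n}{i}\ge 0$ for all $i\in\N$, (b) $0\le \sed{n}\le 1$, and (c) $\sed{n}\ge\mumin$ whenever the hypothesis on $\mumin$ holds. The base case $n=-1$ is supplied by the assumptions on $\rhoed{-1}{i}$ and $\sed{-1}$, and the recursive scheme does the rest. The key algebraic object will be a closed identity for $\sed{n+1}$ in terms of $\sed{n}$ that comes from summing \eqref{eq.def.scheme.rho.int}--\eqref{eq.def.scheme.rho.bdry}; the CFL condition $\Dt=\e\Da$ is used here to recover the clean form \eqref{eq.s}.

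For (a), the induction step is immediate for $i\ge 1$: from \eqref{eq.def.scheme.rho.int}, $\rhoed{n+1}{i}$ is the ratio of a nonnegative quantity and a strictly positive one (by Hypothesis \ref{hypo.data}~(ii), $1+\frac{\Dt}{\e}\ztepsd{n+1}{i}\ge 1$). For $i=0$ I use the explicit formula for $\rhoed{n+1}{0}$: the index shift $\Da\sum_{i\ge 1}\rhoed{n+1}{i}\le \Da\sum_{i\ge 1}\rhoed{n}{i-1}=\sed{n}\le 1$, which is legitimate by the induction hypothesis, shows that the factor $1-\Da\sum_{i\ge 1}\rhoed{n+1}{i}$ is nonnegative and hence $\rhoed{n+1}{0}\ge 0$.

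For (b) and (c), the main tool is the identity obtained from \eqref{eq.s} after introducing the weighted average off-rate $\gamma_{n+1}:=\Da\sum_{i\ge 0}\ztepsd{n+1}{i}\rhoed{n+1}{i}/\sed{n+1}\in[\ztmin,\ztmax]$ (well-defined once $\sed{n+1}>0$, which will be an outcome of the argument rather than a pre-requisite; one can equivalently treat the expression without dividing). Using the CFL relation $\Dt/\e=\Da$, \eqref{eq.s} rearranges into
\begin{equation*}
  \sed{n+1}\bigl(1+\Da\bepsd{n+1}+\Da\gamma_{n+1}\bigr)=\sed{n}+\Da\bepsd{n+1}.
\end{equation*}
Since $\gamma_{n+1}\ge 0$, this yields $\sed{n+1}\le (\sed{n}+\Da\bepsd{n+1})/(1+\Da\bepsd{n+1})\le 1$ whenever $\sed{n}\le 1$, proving (b).

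The delicate point, which I expect to be the main obstacle, is the lower bound (c): one must ensure that the choice of $\mumin$ is compatible with the off-rate being as large as $\ztmax$. Using $\gamma_{n+1}\le\ztmax$ and $\bepsd{n+1}\ge\bmin$ in the identity above, the inequality $\sed{n+1}\ge\mumin$ is equivalent, after direct manipulation, to
\begin{equation*}
  \sed{n}-\mumin\ge \Da\bigl(\mumin\gamma_{n+1}-\bepsd{n+1}(1-\mumin)\bigr).
\end{equation*}
By the induction hypothesis the left-hand side is nonnegative, and the right-hand side is bounded above by $\Da\bigl(\mumin\ztmax-\bmin(1-\mumin)\bigr)$. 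The hypothesis $\mumin<\bmin/(\bmin+\ztmax)$ is precisely what forces this upper bound to be nonpositive, closing the induction. This also explains why the specific form of the threshold on $\mumin$ appears in the statement and confirms that nothing stronger is needed.
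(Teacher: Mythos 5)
Your induction is correct, and it rests on the same two pillars as the paper's proof: positivity propagates trivially for $i\ge 1$, and the balance identity \eqref{eq.s} drives both the upper bound $\sed{n}\le 1$ and the lower bound $\sed{n}>\mumin$ (your rearrangement with $\gamma_{n+1}\le\ztmax$ for the lower bound is exactly the paper's computation). The one place you genuinely diverge is the coupled step ``$\rhoed{n+1}{0}\ge 0$ and $\sed{n+1}\le 1$'': the paper first proves $1-\sed{n+1}\ge 0$ by bounding the sum in \eqref{eq.s} from below by its $i=0$ term, which is itself proportional to $1-\sed{n+1}$, and then checks that after collecting this term the coefficient of $(1-\sed{n+1})$ stays positive; only afterwards does it deduce $\rhoed{n+1}{0}\ge 0$. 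You instead get $\rhoed{n+1}{0}\ge 0$ directly from the explicit formula by observing that $\Da\sum_{i\ge1}\rhoed{n+1}{i}\le\Da\sum_{i\ge1}\rhoed{n}{i-1}=\sed{n}\le1$, and only then read off $\sed{n+1}\le1$ from the identity by discarding the nonnegative off-rate term. Your ordering avoids the paper's slightly delicate sign-bookkeeping for the $i=0$ term and is, if anything, cleaner; both arguments use the same ingredients (CFL, the bounds of Hypothesis \ref{hypo.data}, and \eqref{eq.s}). Two cosmetic remarks: as written your conclusion for (c) is $\sed{n+1}\ge\mumin$, whereas the lemma asserts strict inequality --- but since $\mumin<\bmin/(\bmin+\ztmax)$ is strict, your right-hand side $\Da(\mumin\ztmax-\bmin(1-\mumin))$ is strictly negative and the strict bound follows at once; and the base case of the induction is $n=-1$, where only $\sed{-1}\le1$ and $\mumin<\sed{-1}$ are assumed, which is all your argument actually uses.
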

\begin{proof}
 The first result is proved by induction : by hypothesis, the claim is true for $k=-1$.
We assume that for $k=n$, $\rhoed{n}{i} \geq 0$ and  $\sed{n} \in [0,1]$.
 Since $\rhoed{n}{i}\geq0$ for $i\in\N$, it is straightforward that $\rhoed{n+1}{i} \geq 0$
 for all $i \in\N^*$. Thanks to \eqref{eq.s}, one writes :
 $$
 (1-\sed{n+1})-\Da \sum_{i=0}^\infty \frac{\Dt}{\e} \ztepsd{n+1}{i}\rhoed{n+1}{i} = (1 -\sed{n})-\Da \bepsd{n+1}(1-\sed{n+1})
 $$
 which gives :
 $$
 \begin{aligned}
 (1+\Da \bepsd{n+1}) & (1-\sed{n+1}) -(1-\sed{n}) = \Da \sum_{i=0}^\infty \frac{\Dt}{\e} \ztepsd{n+1}{i}\rhoed{n+1}{i} \\
 & \geq \frac{\Dt}{\e} \Da  \ztepsd{n+1}{0}\bepsd{n+1}(1-\sed{n+1})/\left( 1 +\frac{\Delta t}{\e} \ztepsd{n+1}{0} \right),
 \end{aligned}
  $$
 which rearranging terms on both sides provides 
 $$
\left(1+\Da \bepsd{n+1}-\Da^2 \bepsd{n+1}\ztepsd{n+1}{0}/\left( 1 + \Da \ztepsd{n+1}{0} \right)\right) (1-\sed{n+1}) \geq 1-\sed{n} \geq 0.
$$
This shows that $1-\sed{n+1} \geq 0$ since the coefficient in front of $(1-\sed{n+1})$ is always positive definite.
In turn one concludes that $\rhoed{n+1}{0}\geq 0$.

 We prove the last claim by induction, under the hypothesis on $\mumin$,
 the claim is true for $k=-1$. We suppose that the claim is true for $k=n$. Using \eqref{eq.s}
 gives 
$$
\begin{aligned}
& (\sed{n+1}-\mumin) (1+\Delta a (\ztmax + \bepsd{n+1})) \\
& \quad \geq  (\sed{n}-\mumin) + \Delta a \bepsd{n+1}(1-\mumin)-\ztmax\Delta a \mumin  > (\sed{n}-\mumin),
\end{aligned}
$$
where the latter inequality holds since $\mumin \leq  \bmin/(\bmin+ \ztmax)< 1$. Because the right hand side is strictly positive, so is the left hand side.
This shows the statement for $k=n+1$, and the recursion is complete.
 \end{proof}

Using the same Lyapunov functional $\cH[u]:=\int_\rr | u (a)| da + \left| \int_\rr u(a) da \right|$, as in \cite{MiOel.1,MiOel.4}, one proves that
\begin{proposition}\label{prop.exist.cont.rho.eps}
Under hypotheses \ref{hypo.data} and \ref{hypo.data.deux}, 
 there exists  a unique solution $\rhoe \in Y_T$, solving \eqref{eq.rho.eps}. Moreover~:
 $$
 \nrm{ \did{\tau}{t} \rhoe }{Y_T} < C \left( \nrm{\did{\tau}{a} \rhoi}{L^1_a L^\infty_\bfx} + \nrm{\beps}{W^{1,\infty}_{\bfx,t}}+ \nrm{\zteps}{W^{1,\infty}_{\bfx,a,t}} + \nrm{\rhoi(\cdot,0)-\beps(\cdot,0)}{L^\infty(\om)} \right),
 $$
 where the constant is independent of $\e$ and on $\tau$.
\end{proposition}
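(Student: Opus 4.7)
The plan is to exploit that \eqref{eq.rho.eps} contains no spatial derivatives, so $\bfx \in \om$ acts only as a parameter; I would solve pointwise in $\bfx$ and then lift the estimates to $Y_T$. Integrating along characteristics $t = \e(a-a_0)+t_0$ yields the explicit representation
$$
\rhoe(\bfx,a,t) = \begin{cases} \rhoi(\bfx,a-t/\e)\,\cC_1(\bfx,a,t), & a \geq t/\e, \\ \beps(\bfx,t-\e a)\bigl(1-\muze(\bfx,t-\e a)\bigr)\,\cC_2(\bfx,a,t), & a < t/\e, \end{cases}
$$
where $\cC_1,\cC_2 \in (0,1]$ are the exponential attenuation factors produced by $\zteps$. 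Integrating in $a$ converts this into a scalar Volterra equation for $\muze(\bfx,t)$, whose kernel and source are bounded via Hypotheses \ref{hypo.data}.

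Existence and uniqueness follow from a contraction argument on the closed subset $\{0\leq \muze \leq 1\} \subset X_T$ (equivalently, directly on $\rhoe$ in $Y_T$), using the Lyapunov functional $\cH[u] := \int_{\rr} |u(a)|\,da + |\int_{\rr} u(a)\,da|$ as in \cite{MiOel.1,MiOel.4}: applied to the difference of two candidate solutions and combined with the dissipation $\zteps \geq \ztmin$, it produces a contraction in the weighted norm $\sup_{t\in[0,T]} e^{-\lambda t} \nrm{\rhoe(\cdot,\cdot,t)}{L^1_a L^\infty_\bfx}$ for $\lambda$ large enough. The uniform bounds on $\beps$ and $\zteps$ make this contraction constant independent of $\e$.

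For the Lipschitz-in-time bound I would introduce $\rho^\tau(\bfx,a,t):=\rhoe(\bfx,a,t+\tau)$, which solves the same equation with coefficients shifted in time, and study $w := \rho^\tau - \rhoe$. The source term generated by the shift is bounded by $\tau(\nrm{\dt\beps}{\infty}+\nrm{\dt\zteps}{\infty})$. The initial datum $\rhoe(\cdot,\cdot,\tau)-\rhoi$ is the delicate part: via the representation, on $\{a \geq \tau/\e\}$ it is an age-finite-difference of $\rhoi$ of step $\tau/\e$, bounded uniformly by Hypotheses \ref{hypo.data.deux}; on $\{a<\tau/\e\}$, the characteristic visits the corner $(a,t)=(0,0)$ and the contribution is governed by the compatibility defect $\rhoi(\bfx,0)-\beps(\bfx,0)(1-\mu_I(\bfx))$, which after absorbing the uniformly bounded factor $(1-\mu_I)$ leaves exactly $\nrm{\rhoi(\cdot,0)-\beps(\cdot,0)}{L^\infty(\om)}$. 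Applying the Lyapunov estimate to $w$ in $L^1_a L^\infty_\bfx$ and dividing by $\tau$ yields the announced bound.

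The main obstacle is \emph{uniformity in $\e$}. The factor $1/\e$ in front of $\dt \rhoe$ rescales the characteristics to be almost horizontal as $\e \to 0$, so on the region $\{a<t/\e\}$ the quotient $\did{\tau}{t}\rhoe$ sees variations of $\rhoi$ on the ever shrinking scale $\tau/\e$; Hypotheses \ref{hypo.data.deux} is tailored precisely so that $\nrm{\did{\sigma}{a}\rhoi}{L^1_a L^\infty_\bfx}$ stays bounded as $\sigma\to 0$. Simultaneously, the $O(1/\e)$ amplification potentially coming from the non-local boundary term must be absorbed by the dissipation $\zteps \geq \ztmin$ in the Lyapunov argument; once both ingredients are in place, the contraction delivers the $\e$- and $\tau$-independent estimate claimed.
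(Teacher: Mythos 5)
Your overall architecture — treating $\bfx$ as a mute parameter, integrating along characteristics, using the Lyapunov functional $\cH$ for well-posedness, and estimating the time-shifted difference $w=\rho^\tau-\rhoe$ — is the same as the paper's, and the existence/uniqueness part is sound. The gap is in the quantitative accounting of the initial datum of $w$. You assert that on $\{a\geq\tau/\e\}$ the quantity $\rhoe(\cdot,\cdot,\tau)-\rhoi$ is an age-finite-difference of $\rhoi$ of step $\tau/\e$, ``bounded uniformly by Hypotheses \ref{hypo.data.deux}''. But the object you must control is $w/\tau$, and
$$
\frac{1}{\tau}\int_{\rr}\esupx\left|\rhoi(\bfx,a-\tau/\e)-\rhoi(\bfx,a)\right|da
=\frac{1}{\e}\int_{\rr}\esupx\left|\did{\tau/\e}{a}\rhoi(\bfx,a)\right|da \;\lesssim\;\frac{1}{\e},
$$
which is \emph{not} uniform in $\e$. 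The same $1/\e$ appears in the compatibility-defect term on $\{a<\tau/\e\}$ (that region has age-measure $\tau/\e$, so dividing by $\tau$ leaves a factor $1/\e$) and in the boundary contribution $\tet\rhoe(\bfx,0,t)=(\tet\beps)(1-\muze)-\beps\,\tet\muze$, which the Lyapunov functional controls only by $C_1\exp(-\ztmin t/\e)/\e+C_2$.

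Consequently the Lyapunov step does not give a bound on $\nrm{w(\cdot,\cdot,t)/\tau}{L^1_aL^\infty_\bfx}$ that is uniform in $t$ and $\e$; it gives at best $C_1\exp(-\ztmin t/\e)/\e+C_2$. The statement you must prove is an estimate in $Y_T=L^1(\qt;L^\infty(\om))$, which integrates in \emph{time} as well as in age, and the proof closes only because $\int_0^T\exp(-\ztmin t/\e)\,dt/\e\leq 1/\ztmin$ uniformly in $\e$: the $O(1/\e)$ amplification is concentrated in an initial layer of width $O(\e)$ and is killed by the time integration, not by the contraction. This is precisely how the paper handles its terms $I_1$, $I_{3,1}$ and $I_{3,2}$. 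Your write-up instead claims uniform boundedness of the shifted initial datum (false as stated, off by $1/\e$) and attributes the absorption of the boundary term to the dissipation in the contraction argument, which is not the operative mechanism. Once you insert the explicit time integration of the exponentially decaying initial-layer bound, your argument coincides with the paper's.
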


\begin{proof}
For the existence and uniqueness part, one proceeds as in Theorem 3.1 in \cite{MiOel.4}~: 
%one has existence and uniqueness in $Y_T$ of $\rhoe$ solving
%\eqref{eq.rho.eps}. 
 as $\bfx$ is a mute parameter, for a.e. $\bfx \in \om$,  
there exists a solution $\rhoe(\bfx,\cdot,\cdot) \in C_t([0,T];L^1_a(\rr))$. Then using 
 Duhamel's formula in order to commute the supremum with respect to  $\bfx$ with the integrals, one 
obtains the result in $Y_T$.
  Combining results from the proof of Lemma 5.1. p. 16 \cite{MiOel.4} and from Theorem 3.2 \cite{mi.proc}, one gets :
 $$
 \nrm{ \tet \rhoe }{L^\infty_\bfx(\Omega;L^1_{a,t}(\qt))} < C.
 $$
Indeed,  again, since $\bfx$ is only a mute parameter, one obtains easily that 
 $$
 \cH[{\tet \rhoe}(\bfx,\cdot,t)] \leq C_1 \exp(- \ztmin t /\e ) /\e + C_2,
 $$
 which then integrated in time and taking the ess-sup on $\Omega$ proves this first step.
 Then we use the method of characteristics and write~:
 $$
 \tet \rhoe (\bfx,a,t) := 
\begin{cases}
\begin{aligned}
 \tet & \rhoe(\bfx,0,t-\e a) \expful{a}{t}{-a} +\\
& + \int_{-a}^0 \expful{a}{t}{\tau} \cR_\tau (\bfx,a+\tau,t+ \e \tau) d\tau
\end{aligned} & \text{ if } t\geq\e a, \\
\begin{aligned}
 \tet & \rhoe(\bfx,a-t/\e ,0) \expful{a}{t}{-t/\e} +\\
& + \int_{-t/\e}^0 \expful{a}{t}{\tau} \cR_\tau (\bfx,a+\tau,t+ \e \tau) d\tau
\end{aligned} & \text{ if } t\leq\e a, \\
\end{cases}
$$
where $\cR_\tau(\bfx,a,t) := \tet \zteps(\bfx,a,t) \rhoe(\bfx,a,t)$. Now we define $q(a,t):=\esupx \left| \tet \rhoe(\bfx,a,t) \right|$.
One has 
$$
\begin{aligned}
 \int_0^\tse q(a,t) da \leq & \int_0^\tse q(0,t-\e a) da + C \int_0^\tse \esupx \int_0^a \exp(-\ztmin \tau) \rhoe(\bfx,a-\tau,t-\e \tau) d\tau 
da \\
& =: I_1 +I_2
\end{aligned}
$$
then
$$
\begin{aligned}
I_2 & \leq \ue \int_0^\tse \int_{t-\e a}^t \exp(-\ztmin (t-\tit)/\e) \esupx \rhoe(\bfx,a-(t-\tit)/\e,\tit) d \tit  da \\
& = \ue \int_0^t \int_{\frac{t-\tit}{\e}}^\tse \exp(-\ztmin (t-\tit)/\e) \esupx \rhoe(\bfx,a-(t-\tit)/\e,\tit)   da d \tit \\
& = \ue \int_0^t \exp(-\ztmin (t-\tit)/\e)  \int_0^{\frac{\tit}{\e}} \esupx \rhoe(\bfx,a,\tit) da d\tit < C.
\end{aligned}
$$
For the term $I_1$, one has $\tet \rhoe (x,0,t) = (\tet \beps) (1-\muze) - \beps \tet \muze$ which gives 
$$
| q(0,s) | \leq \bmax \esupx \int_{\rr} \left| \tet \rhoe(\bfx,a,t) \right| da  + \left| \tet \beps \right| \leq \bmax \exp(-\ztmin t /\e) / \e + C,
$$
so that 
$$
I_1 = \ue \int_0^t q(0,s) \expm{(t-s)/\e} q(0,s) ds \leq C + t/\e \expm{t/\e}/\e.
$$
These two estimates guarantee that 
$
\int_0^T \int_0^\tse q(a,t) da dt < C.
$
In a similar way one writes that
$$
\begin{aligned}
 \int_\tse^\infty q(a,t) da&  \leq \int_\tse^\infty \esupx \left| \frac{\rhoe(\bfx,a-t/\e,\tau)-\rhoe(\bfx,a-t/\e,0)}{\tau}\right| da \expm{t/\e} \\
 & + \int_\tse^\infty\int_{-t/\e}^0 \exp(\ztmin \tau)  \esupx  \rhoe(\bfx,a+\tau,t+\e\tau) d\tau \\
 &  \leq \int_0^\infty \esupx \left| \frac{\rhoe(\bfx,a,\tau)-\rhoe(\bfx,a,0)}{\tau}\right| da \expm{t/\e} \\
 & + \int_0^\tse \expm{\tau} \int_\rr \esupx \rhoe(\bfx,a,t-\e a\tau ) da d\tau, 
\end{aligned}
$$
the latter term being under control, we focus on the first one, that we denote $I_3$.
$$
I_3 = \expm{t/\e} \left\{ \left(\int_0^{\tau/\e} + \int_{\tau/\e}^\infty \right)\left| \frac{\rhoe(\bfx,a,\tau)-\rhoe(\bfx,a,0)}{\tau}\right| da \right\} =: I_{3,1}+I_{3,2}.
$$
Since $\rhoe$ is bounded uniformly in space and with respect to  $\e$, the first term $I_{3,1}$ is smaller than $\expm{t/\e}M/\e$.
Then using the method of characteristics, one splits $I_{3,2}$ in two parts :
$$
\begin{aligned}
 I_{3,2} \leq & \expm{t/\e}\left\{ \int_{\tau/\e}^\infty \esupx | \rhoi(\bfx,a-\tau/\e)-\rhoi(\bfx,a) |/\tau da \right. \\
 &  \left. + \frac{1}{\tau} \int_{\tau/\e} \esupx \rhoi(\bfx,a) \left| \expful{a}{\tau}{\tau/\e} -1 \right|  da \right\} \\
 & \leq \expm{t/\e} /\e  \left( \limsup_{h \in \rr}\int_\rr \esupx \left|  \frac{\rhoi(\bfx,a+h)-\rhoi(\bfx,a)}{h} \right| \right. \\
 & \left. \hspace{2cm}+   \nrm{\zteps}{W^{1,\infty}(\qt;L^\infty(\om))}\int_\rr \esupx \rhoi(\bfx,a)da da \right).
\end{aligned}
%\expful{a}{\tau}{-\tau/\e} da \right\}
$$
This shows that $\int_0^T \int_\tse^\infty q(a,t)dadt < C$ which ends the proof.
\end{proof}

Then using standard {\em a priori} estimates provides in a similar manner 
as in the previous proof :
\begin{proposition}
 Under the previous hypotheses, one has as well that
 $$
\limsup_{\sigma\to 0} \nrm{\did{\sigma}{a} \rhoe}{Y_T} \leq C,
 $$
 where the constant is uniform with respect to  $\e$. This result together with the previous proposition 
 shows that $\rhoe \in U_T$ uniformly with respect to  $\e$.
\end{proposition}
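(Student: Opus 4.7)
The plan is to retrace the Duhamel/characteristic argument of the preceding proposition, but now for the age finite difference $\did{\sigma}{a}\rhoe$ in place of the time one. The starting observation is that, by \eqref{eq.rho.eps}, away from $a=0$ and $t=0$ one has, up to a Lipschitz remainder in $\sigma$ controlled by $\zteps\in W^{1,\infty}$,
\[
\did{\sigma}{a}\rhoe(\bfx,a,t) + \zteps(\bfx,a,t)\rhoe(\bfx,a,t) = -\e\,\did{\e\sigma}{t}\rhoe(\bfx,a,t).
\]
Since $\rhoe$ is uniformly bounded by Lemma~\ref{lem.muze.bounds} and $\nrm{\did{\e\sigma}{t}\rhoe}{Y_T}$ is bounded uniformly in $\e$ and $\sigma$ by the preceding proposition (applied with $\tau=\e\sigma$), the prefactor $\e$ absorbs the scaling of the time difference and yields an $O(1)$ interior bound. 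The remaining difficulty concentrates in the two boundary layers $\{a=0\}$ and $\{t=0\}$.

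To handle these, I would split according to the sign of $t-\e a$ exactly as in the proof of the previous proposition, using the characteristic formula
\[
\rhoe(\bfx,a,t) =
\begin{cases}
\rhoe(\bfx,0,t-\e a)\,\mathcal{E}_1(\bfx,a,t), & t\geq \e a,\\[2pt]
\rhoi(\bfx,a-t/\e)\,\mathcal{E}_2(\bfx,a,t), & t<\e a,
\end{cases}
\]
with $\mathcal{E}_1,\mathcal{E}_2$ exponential weights built from $\zteps$, Lipschitz in $a$ with constant $\ztmax$ and bounded by $1$. In the region $t<\e a$, the variation in $a$ of $\rhoi(\bfx,a-t/\e)$ is directly absorbed by the assumption $\limsup_{\sigma\to 0}\int_\rr \esupx |\did{\sigma}{a}\rhoi|\,da<\infty$ from Hypotheses~\ref{hypo.data.deux}, while $\da\mathcal{E}_2$ yields a bounded contribution through $\ztmax$ and $\nrm{\rhoi}{L^\infty}$. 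In the region $t\geq\e a$, the chain rule gives
\[
\frac{\rhoe(\bfx,0,t-\e(a+\sigma))-\rhoe(\bfx,0,t-\e a)}{\sigma} = -\e\,\did{\e\sigma}{t}\rhoe(\bfx,0,\cdot),
\]
and using $\rhoe(\bfx,0,s)=\beps(\bfx,s)(1-\muze(\bfx,s))$ this reduces to $\dt\beps\in L^\infty$ together with $\beps\,\did{\e\sigma}{t}\muze$. The latter is uniformly bounded in $L^1_t$, uniformly in $\e$ and $\sigma$, by integrating in age the bound of the preceding proposition, and the prefactor $\e$ absorbs the time-derivative scaling.

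Reassembling the two regions in the $I_1+I_2+I_3$ fashion of the previous proof yields $\limsup_{\sigma\to 0}\nrm{\did{\sigma}{a}\rhoe}{Y_T}\leq C$ with $C$ independent of $\e$. The hard part is, as before, the initial layer $t\sim \e a$: the possible mismatch between $\rhoi(\bfx,0)$ and $\beps(\bfx,0)(1-\muze(\bfx,0))$ produces a singular factor of the form $\expm{t/\e}/\e$, which must be integrated in $t$ to recover an $O(1)$ contribution, using once more the age regularity of $\rhoi$ from Hypotheses~\ref{hypo.data.deux}. Combined with the preceding proposition, this gives $\rhoe\in U_T$ with norm uniformly bounded in $\e$.
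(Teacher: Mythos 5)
Your proposal is correct and follows essentially the route the paper intends: the paper gives no written proof for this proposition, asserting only that it follows "in a similar manner as in the previous proof," and your argument — reducing the age difference to the (already controlled, uniformly in $\tau$ and $\e$) time difference via the equation along characteristics, then treating the regions $t\gtrless\e a$ with the $\BV$-in-age hypothesis on $\rhoi$, the $W^{1,\infty}$ regularity of $\beps,\zteps$, and the exponentially damped initial layer — is exactly the intended instantiation. The only cosmetic remark is that the $\e$ prefactor in $\da\rhoe=-\e\dt\rhoe-\zteps\rhoe$ already tames the $\expm{t/\e}/\e$ singularity of the boundary-trace time difference to an $O(1)$ quantity before any integration in $t$, so that step is even easier than you suggest.
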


One defines $C^n_j := (j\Delta a,(j+1)\Delta a)\times(n\Delta t,(n+1)\Delta t)$, and one sets
$$
\orhoed{n}{j}(\bfx) := \frac{1}{|C^n_j|} \int_{C^n_j} \rhoe(\bfx,a,t) da dt,
$$
where $\rhoe$ is the exact solution of \eqref{eq.rho.eps} and
$$
\orhoeD(\xat) := \sum_{(j,n)\in\N^2} \orhoed{n}{j}(\bfx) \chiu{(j\Delta a,(j+1)\Delta a)\times(n\Delta t,(n+1)\Delta t)}(a,t).
$$
in the same way one defines $\omuzed{n} = \int_{n\Dt}^{(n+1)\Dt} \muze(\bfx,t) dt /\Dt$, and so on.
With these notations, we compute error estimates for the upwind scheme :

\begin{lemma}\label{lem.cvg.rhoeD}
Under the same hypotheses as above, if $\rhoe\in U_T$ solves \eqref{eq.rho.eps}, and $\rhoeD$ is its piecewise constant approximation 
  computed using the upwind scheme \eqref{eq.def.scheme.rho.int} with the non-local boundary term \eqref{eq.def.scheme.rho.bdry}, 
   then one has
$$
\nrm{\rhoeD - \orhoeD}{Y_T} \leq O(\Delta a), \quad \nrm{\orhoeD -\rhoe}{Y_T} \leq O(\Delta a).
%/\e
$$
\end{lemma}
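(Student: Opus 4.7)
I split the estimate into the approximation error $\nrm{\orhoeD-\rhoe}{Y_T}$ and the scheme error $\nrm{\rhoeD-\orhoeD}{Y_T}$. The first is a standard consequence of the $U_T$-regularity of $\rhoe$ established in Proposition \ref{prop.exist.cont.rho.eps} together with the complementary age estimate: on each cell $C^n_j$, the difference $\rhoe(\bfx,a,t)-\orhoed{n}{j}(\bfx)$ equals an average over $C^n_j$ of age/time increments of $\rhoe$, which is controlled pointwise in $\bfx$ by $\Da\,|\da\rhoe|_{C^n_j}+\Dt\,|\dt\rhoe|_{C^n_j}$ in the $L^\infty(\om)$-valued BV sense. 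Summing over cells the total variations telescope and, since $\Dt=\e\Da$, this yields $\nrm{\orhoeD-\rhoe}{Y_T}=O(\Da)$ uniformly in $\e$.

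For the scheme error, I average the PDE \eqref{eq.rho.eps} on every interior cell $C^n_j$, $j\ge 1$. Exploiting the conservative form $\e\,\dt\rhoe+\da\rhoe+\zteps\rhoe=0$ and the CFL relation $\Dt=\e\Da$, the averaged equation coincides exactly with \eqref{eq.def.scheme.rho.int} written for $\orhoed{n+1}{j}$ up to a consistency residual $\tau^{n+1}_j$ collecting (i) the difference between edge/time-trace averages and cell averages of $\rhoe$, and (ii) the difference between $\ztepsd{n+1}{j}\,\orhoed{n+1}{j}$ and the cell average of $\zteps\,\rhoe$. Because $\rhoe\in U_T$ uniformly in $\e$ and $\zteps\in W^{1,\infty}$,
$$
\Da\sum_{j\ge 1}\Dt\sum_{n=0}^{N-1}\nrm{\tau^{n+1}_j}{L^\infty(\om)}\le C\,\Da.
$$
Subtracting the scheme, the error $e^n_j:=\rhoed{n}{j}-\orhoed{n}{j}$ satisfies, for $j\ge 1$,
$$
(1+\Da\,\ztepsd{n+1}{j})\,e^{n+1}_j=e^n_{j-1}+\Da\,R^{n+1}_j,
$$
with $\nrm{R^{n+1}_j}{L^\infty(\om)}$ bounded in terms of $\nrm{\tau^{n+1}_j}{L^\infty(\om)}$ using the $L^\infty$ bound on $\rhoe$ and the Lipschitz regularity of $\zteps$.

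The genuine obstacle is the non-local boundary \eqref{eq.def.scheme.rho.bdry}: it couples $e^{n+1}_0$ to the full moment error $\Da\sum_{i\ge 0}e^{n+1}_i$ via $\sed{n+1}$, which is precisely the quantity one is trying to control. To resolve this, I transpose to the discrete setting the Lyapunov functional $\cH[u]=\int|u|+|\int u|$ used in the proof of Proposition \ref{prop.exist.cont.rho.eps}, and set
$$
E^n:=\esupx\Bigl[\Da\sum_{j\ge 0}|e^n_j(\bfx)|+\Bigl|\Da\sum_{j\ge 0}e^n_j(\bfx)\Bigr|\Bigr].
$$
Multiplying the interior equation by $\sgn(e^{n+1}_j)$, summing in $j$, and combining with the analogous identity obtained by summation without the sign, the non-local boundary contribution $\bepsd{n+1}\Da\sum e^{n+1}_i$ is dominated by the dissipation $\ztmin\,\Da\sum|e^{n+1}_j|$ provided by the off-rate, exactly as in the continuous analysis of Proposition \ref{prop.exist.cont.rho.eps}. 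This furnishes $(1+c\,\Da)\,E^{n+1}\le E^n+C\,\Da\,\Dt$ with $c>0$ and $C$ independent of $\e$ and $\Da$. Discrete Gronwall over $n\le N=\lfloor T/\Dt\rfloor$ yields $\sup_{n\le N}E^n\le C(T)\,\Da$, and since $\nrm{\rhoeD-\orhoeD}{Y_T}\le\sum_n\Dt\,E^n\le T\sup_n E^n$, combining with the averaging estimate closes the argument.
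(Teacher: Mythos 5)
Your proposal follows the same overall architecture as the paper's proof --- a cell-wise consistency estimate (you average the PDE over $C^n_j$, the paper integrates along characteristics; both produce an $O(\Da^2)$ interior residual and an $O(\Da)$ boundary residual per step) followed by an $\ell^1$-in-age stability recursion in which the implicitly treated off-rate supplies the damping factor $\alpha=1/(1+\Da\,\ztmin)$, and the second estimate via cell-wise increments of $\rhoe\in U_T$ is exactly the paper's appeal to Lemma~\ref{lem.err.bv}. Where you genuinely diverge is the boundary term: the paper simply asserts $E^{n+1}\leq\alpha(E^n+C_2\Da^2)$ for the plain sum $E^n=\Da\sum_j|\rhoed{n}{j}-\orhoed{n}{j}|$ without displaying how the coupling of $e^{n+1}_0$ to $\Da\sum_i e^{n+1}_i$ is absorbed, whereas you transpose the Lyapunov functional $\cH[u]=\int_\rr|u|\,da+\bigl|\int_\rr u\,da\bigr|$ to the discrete error. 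Your instinct here is sound: a bare triangle inequality bounds the boundary error by $\bmax\Da\sum_i|e^{n+1}_i|$ and yields an amplification factor of order $(1+\Da\,\bmax)/(1+\Da\,\ztmin)$ per step, which is not a contraction when $\bmax>\ztmin$ and over $N=T/(\e\Da)$ steps degrades the constant to $\exp(C T/\e)$. That is still $O_\e(\Da)$ and hence sufficient for Theorem~\ref{thm.cvg.dicrete.rhoe}, which only uses the lemma at fixed $\e$; what your signed-moment cancellation buys is the $\e$-uniform constant that the paper's recursion claims but does not justify in detail.

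Two points need repair. First, your $E^n$ places $\esupx$ \emph{outside} the sum over $j$, but the $Y_T$ norm carries $\nrm{\cdot}{L^\infty(\om)}$ \emph{inside} the $da\,dt$ integral; since $\esupx\sum_j|e^n_j(\bfx)|\leq\sum_j\esupx|e^n_j(\bfx)|$ and not the reverse, $\sup_n E^n\leq C\Da$ does not bound $\nrm{\rhoeD-\orhoeD}{Y_T}$. Run the recursion instead on $\Da\sum_j\nrm{e^n_j}{L^\infty(\om)}+\esupx\bigl|\Da\sum_j e^n_j(\bfx)\bigr|$: the interior relations are pointwise in $\bfx$ with $\bfx$-uniform constants, and the non-local boundary term at a given $\bfx$ only involves $\sum_i e^{n+1}_i(\bfx)$ at that same $\bfx$, so this mixed functional still closes. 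Second, the per-step residual should be $C\Da^2$, not $C\Da\Dt=C\e\Da^2$ (the interior truncation error is $O(\Da^2)\,\orhoed{n}{j}$ per cell independently of $\e$); the conclusion is unaffected since the damped sum $\sum_k\alpha^k\Da^2\lesssim\Da/\ztmin$ still gives $O(\Da)$, but with a genuine $\Da\Dt$ increment the damping would be superfluous, which should have signalled that the bookkeeping was off.
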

\begin{proof}
Using the method of characteristics one gets~:
 $$
 \begin{aligned}
&    \delta  \orhoed{n+\nud}{j+\nud} + \Delta a \ztepsd{n+1}{j+1}\orhoed{n+1}{j+1} =:e^{n+1}_{j+1} =
\\
& = \frac{1}{|C^n_j|}\int_{C^n_j} 
\left(
(1+\ztepsd{n+1}{j+1} \Delta a)
\exp\left(-\int_0^{\Delta a} \zteps(\bfx,a+s,t+\e s)ds\right)-1
\right)
\rhoe(\bfx,a,t) da dt \\
& = \frac{1}{|C^n_j|}\int_{C^n_j} 
\left(\left(\ztepsd{n+1}{j+1}-\zteps(\bfx,a,t)\right) \Delta a + O(\Delta a^2)\right)\rhoe(\xat) da dt  \leq C\nrm{\zteps}{W^{1,\infty}_{\xat}} \Delta a^2 \orhoed{n}{j},
 \end{aligned}
$$
for all $j\geq 0$. % It remains to treat the boundary term, for this sake we consider first the
% error made on : $\omuzed{n}$
% $$
% \begin{aligned}
% & |\omuzed{n+1}-\omuzed{n}+\Delta a^2 \sum_{j\in\N}\ztepsd{n+1}{j}\orhoed{n+1}{j} - \Delta a \bepsd{n+1}(1 - \omuzed{n+1})| \leq\\
% & \leq \ue \left\{ \Delta t \nrm{\dt \beps}{L^\infty(0,T)} + 2 \Delta a \nrm{\zteps}{W^{1,\infty}_{\xat}}\nrm{\rhoe}{U_{(n\Delta t,(n+1)\Delta t)}}\right\}
% \end{aligned}
% $$
% where we used Lemma \ref{lem.err.bv} in order to estimate the difference 
% $$
%  \int_{C^n_j} \left( \ztepsd{n+1}{j} \rhoe(\bfx,a,t+\Delta t) 
% - \int_0^1 (\zteps \rhoe)(\bfx,a,t+s\Delta t)ds \right)da dt \leq \Delta a \nrm{\zteps\rhoe}{U_{(n\Delta t,(n+1)\Delta t)}} 
% $$
In a similar fashion one derives for $n\geq 1$
$$
e^n_0 := | (1+\Delta a \ztepsd{n}{0})\orhoed{n}{0}-\bepsd{n}(1-\omuzed{n})| \leq \Delta a \nrm{\beps }{W^{1,\infty}_{\bfx,t}} (1 + \nrm{\zteps}{W^{1,\infty}_{\xat }}),
$$
while if $n=0$, 
$$
\begin{aligned}
e^0_0 =| \orhoed{0}{0}(1+\Delta a \ztepsd{0}{0})-\bepsd{0}(1-\omuzed{0})| \leq \nrm{\rhoe}{L^\infty_{\xat}} (1+\Delta a \ztmax)+\bmax \leq C_0.
\end{aligned}
$$ 
Setting $E^n = \Delta a \sum_{j\in\N} |\rhoed{n}{j}-\orhoed{n}{j}|$,
the previous estimates  give for $n\geq 1$
$$
E^{n+1} \leq \alpha ( E^n+ C_2 \Delta a^2 )
$$
and 
$$
E^0 \leq \alpha (E^{-1} + C_1\Delta a + C_2 \Delta a^2) \leq C_3 \Delta a,
$$
where $\alpha :=1/(1+\Delta a\ztmin)$ and by definition $E^{-1}=0$.
Combining these estimates leads to
$$
E^{n+1} \leq  C \left(  \Delta a  + \frac{\alpha}{1-\alpha} \Delta a^2 \right) \leq   C \Delta a,
$$
which gives the first result.
Using similar arguments as in Lemma \ref{lem.err.bv}, one can show that
$$
\nrm{\orhoeD-\rhoe}{Y_T} \lesssim \Delta a \nrm{\rhoe}{U_T},
$$
which gives the second result.
\end{proof}

%\alert{ Ici rajouter un commentaire sur les moments des $\rhoeD$ et leur convergence pour avoir~:}
\begin{theorem}\label{thm.cvg.dicrete.rhoe}
 Under hypotheses \ref{hypo.data} and \ref{hypo.data.deux}, one has
 $$
(1+a) \rhoeD \to (1+a) \rhoe 
 $$
 strongly in $Y_T$ when $\Delta a$ goes to zero for $\e$ fixed.
\end{theorem}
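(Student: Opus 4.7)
The starting point is Lemma \ref{lem.cvg.rhoeD}, which already gives $\|\rhoeD - \rhoe\|_{Y_T} \leq O(\Delta a)$. The sole additional difficulty is the unbounded weight $(1+a)$ on $\rr$, so the natural strategy is to cut in age at some level $A$, handle $\{a \leq A\}$ by Lemma \ref{lem.cvg.rhoeD}, and handle the tail $\{a > A\}$ by a uniform integrability estimate for both $\rhoe$ and $\rhoeD$.

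\textbf{Step 1 (splitting).} Fix $\eta>0$. For any $A>0$, the triangle inequality and Lemma \ref{lem.cvg.rhoeD} give
\begin{equation*}
\nrm{(1+a)(\rhoeD-\rhoe)}{Y_T} \leq (1+A)\, \nrm{\rhoeD-\rhoe}{Y_T} + \int_0^T\!\!\int_A^\infty (1+a)\bigl(\nrm{\rhoeD}{L^\infty_\bfx}+\nrm{\rhoe}{L^\infty_\bfx}\bigr)\,da\,dt.
\end{equation*}
Once $A$ is fixed, the first term is $O((1+A)\Delta a)\to 0$. The rest of the argument is to choose $A$ so that the tail term is $<\eta$ uniformly in $\Delta a$, for $\e$ fixed.

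\textbf{Step 2 (tail of $\rhoe$).} I would use the method of characteristics as in the proof of Proposition \ref{prop.exist.cont.rho.eps}. On $\{t\geq \e a\}$ one has $\rhoe(\bfx,a,t)\leq \beta_{\max}\exp(-\ztmin a)$, whose tail contribution $\int_A^\infty(1+a)\beta_{\max} e^{-\ztmin a}da$ is exponentially small in $A$. On $\{t<\e a\}$ one has $\rhoe(\bfx,a,t)\leq \rhoi(\bfx,a-t/\e)$, so the change of variable $s=a-t/\e$ combined with hypothesis \ref{hypo.data.deux} (integrability of $\sup_\bfx\rhoi(\bfx,\cdot)$ against $(1+a)$, since $p=0,1$ are assumed) bounds the contribution by $\int_{A-T/\e}^\infty(1+s+T/\e)\sup_\bfx \rhoi(\bfx,s)\,ds$, which vanishes as $A\to\infty$ (for $\e$ fixed).

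\textbf{Step 3 (tail of $\rhoeD$).} The discrete scheme \eqref{eq.def.scheme.rho.int} gives the pointwise bound
\begin{equation*}
0\leq \rhoed{n}{i} \leq \frac{1}{(1+\Da\,\ztmin)^{\min(i,n+1)}}\,\bigl(\bmax\,\mathbf{1}_{i\leq n}+\rhoed{-1}{i-n-1}\,\mathbf{1}_{i>n}\bigr),
\end{equation*}
obtained by iterating $\rhoed{n+1}{i}\leq \rhoed{n}{i-1}/(1+\Da\,\ztmin)$ back to the boundary (bounded by $\bmax$) or to the initial datum. Using $(1+\Da\ztmin)^i\geq \exp(ci\Da)$ for some $c>0$ independent of $\Da$, the ``boundary branch'' yields an exponential tail in $a=i\Da$ analogous to the continuous case. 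The ``initial branch'' contributes a sum $\Da\sum_{j>(A-T)/(\e\Da)}(1+(j+n+1)\Da)\sup_\bfx\rhoed{-1}{j}$, which by definition of $\rhoed{-1}{j}$ is a Riemann sum dominated by $\int_{(A-T/\e)_+}^\infty (1+s+T/\e)\sup_\bfx \rhoi(\bfx,s)\,ds+o(1)$, again vanishing as $A\to\infty$ uniformly in $\Da$.

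\textbf{Conclusion.} Given $\eta>0$, Steps 2--3 select $A_\eta$ making the tail term $<\eta$ for every $\Da$; then Lemma \ref{lem.cvg.rhoeD} makes the first term $<\eta$ once $\Da<\Da_\eta$. The main obstacle I anticipate is the bookkeeping in Step 3, specifically controlling the discrete ``initial branch'' contribution uniformly in $\Da$; the key technical input is the discrete maximum principle $\rhoed{n+1}{i}\leq \rhoed{n}{i-1}/(1+\Da\ztmin)$ combined with hypothesis \ref{hypo.data.deux}, which ensures the Riemann sums of $(1+a)\sup_\bfx\rhoi$ are bounded and their tails vanish.
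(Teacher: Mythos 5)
Your proposal is correct, and it is worth noting that the paper itself states Theorem \ref{thm.cvg.dicrete.rhoe} without any written proof, so you are supplying an argument the author left implicit. Your route is the natural completion of Lemma \ref{lem.cvg.rhoeD}: cut at age $A$, use the $O(\Delta a)$ bound of the lemma on $\{a\leq A\}$, and control the tail uniformly in $\Delta a$ by the two mechanisms that the surrounding text already exploits for $\rhoe$ (exponential decay in age at rate $\ztmin$ emanating from the boundary, and the weighted integrability of $\rhoi$ from Hypotheses \ref{hypo.data.deux} along the initial characteristics). The discrete maximum principle $\rhoed{n+1}{i}\leq \rhoed{n}{i-1}/(1+\Da\,\ztmin)$, iterated back to either the boundary cell (bounded by $\bmax$ via Lemma \ref{lem.muze.bounds}) or to the cell averages $\rhoed{-1}{j}$ of $\rhoi$, is exactly the right discrete counterpart, and the domination of the tail of the Riemann sum by the tail of $\int (1+s+T/\e)\sup_\bfx\rhoi(\bfx,s)\,ds$ is uniform in $\Da$ as you claim. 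The only blemishes are cosmetic: the exponent $\min(i,n+1)$ and the index $i-n-1$ in your Step 3 are off by one relative to the scheme \eqref{eq.def.scheme.rho.int} (iterating from $\rhoed{n}{i}$ one reaches $\rhoed{-1}{i-n-1}$ after $n+1$ steps only when $i>n$, and the boundary cell $\rhoed{n-i}{0}$ after $i$ steps otherwise), and one should say explicitly that $\sup_\bfx \rhoed{-1}{j}\leq \frac{1}{\Da}\int_{j\Da}^{(j+1)\Da}\sup_\bfx\rhoi(\bfx,a)\,da$ so that the discrete tail is genuinely dominated by the continuous one. Neither affects the validity of the argument.
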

\subsection{Existence, uniqueness and stability of the discrete solution $\zepsD$} 

Existence of minimizers relies on the convexity of the Dirichlet norm and is standard as the few properties 
listed below (see for instance Lemma 1 and 2, p. 973 \cite{OeSch}).
\begin{theorem}\label{thm.exists.minimis}
 Under hypotheses \ref{hypo.data}, \ref{hypo.data.deux}, \ref{hypo.data.trois}, % and  \ref{hypo.data.constraint}, 
 for every $n\geq0$ there exists a minimizer $\zepsd{n}\in\pmr$ of \eqref{eq.disc.min}, {\em i.e.} there exists a 
 minimizing subsequence $(\zepsd{n,k})_{k\in\N}$ s.t. as $k\to\infty$,
\begin{enumerate}[1)]
 \item $\zepsd{n,k}\rightharpoonup \zepsd{n}$ weak in $\bho$,
 \item $\zepsd{n,k}\to\zepsd{n}$ strong in $\blo$,
 \item $\zepsd{n,k}\to\zepsd{n}$ a.e. $\bfx\in\om$,
% \item $\langle B\zepsd{n,k},\zepsd{n,k}\rangle_{1,2}\to\langle B \zepsd{n},\zepsd{n}\rangle_{1,2} >e_R$
 \item $\zepsd{n}\in\pmr$ and thus $\zepsd{n}\neq0$.
\end{enumerate}
\end{theorem}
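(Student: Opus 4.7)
The plan is to apply the direct method of the calculus of variations at each step $n$, proceeding by induction on $n$ (the values $\zepsd{0},\ldots,\zepsd{n-1}$ being already constructed, and the base case $n=0$ relying on the past data $\zpd{i}$ for $i<0$).

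First I would check that $\cE_n$ is bounded below on $\cA$: all quadratic terms are non-negative and the weights $\rhoed{n}{j}\geq 0$ by Lemma \ref{lem.muze.bounds}, so $\cE_n\geq 0$; since $\cA$ is non-empty (any constant unit vector belongs to it), $I_n:=\inf_{\cA}\cE_n\in[0,\infty)$. Then I would pick a minimizing sequence $(\zepsd{n,k})_k\subset\cA$. Because $|\zepsd{n,k}|=1$ almost everywhere on $\om=(0,1)$, we have $\|\zepsd{n,k}\|_{\blo}^2=1$; combined with $\nud\|\dx\zepsd{n,k}\|_{\blo}^2\leq\cE_n(\zepsd{n,k})$ uniformly bounded along a minimizing sequence, the sequence is bounded in $\bho$. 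Banach--Alaoglu then provides a subsequence with $\zepsd{n,k}\rightharpoonup\zepsd{n}$ weakly in $\bho$ (item 1); Rellich--Kondrachov upgrades this to strong convergence in $\blo$ (item 2); a further extraction yields a.e.\ convergence on $\om$ (item 3). The pointwise constraint passes to the a.e.\ limit, giving $|\zepsd{n}|=1$ a.e., so $\zepsd{n}\in\cA$ and in particular $\zepsd{n}\neq 0$ (item 4).

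To verify that $\zepsd{n}$ attains the infimum, I would use that on $\cA$ the identity $|\bfw|^2=1$ and the bound $|\zepsd{n-j}|\leq 1$ a.e.\ (which holds for $n-j\geq 0$ by the constraint and for $n-j<0$ by the Jensen inequality applied to the definition of $\zpd{i}$ together with hypothesis \ref{hypo.data.trois}(i)) allow the expansion $(\bfw-\zepsd{n-j})^2 = 1+|\zepsd{n-j}|^2-2\,\bfw\cdot\zepsd{n-j}$, which rewrites
$$
\cE_n(\bfw) = \nud \int_\om |\dx\bfw|^2 d\bfx + C_n - \int_\om \bfw\cdot F_n(\bfx)\,d\bfx,
$$
where $C_n\in\R$ is independent of $\bfw$ and $F_n\in\bL^\infty(\om)$ is built from $\rhoed{n}{j}$ and $\zepsd{n-j}$, its $\bL^\infty$ norm being controlled via $\sed{n}\leq 1$ from Lemma \ref{lem.muze.bounds}. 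The Dirichlet term is weakly lower semicontinuous on $\bho$, while the linear term is continuous for the strong $\blo$ topology; hence $\cE_n(\zepsd{n})\leq\liminf_k\cE_n(\zepsd{n,k})=I_n$, so $\zepsd{n}$ minimises.

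The main obstacle is really the preservation of the pointwise constraint under passage to the limit: $\cA$ is not weakly closed in $\bho$ in general (oscillations may lower the norm of a weak limit below $1$), so the compact embedding $\bho\hookrightarrow\blo$ and the extraction of an a.e.\ convergent subsequence are both indispensable. Once the constraint is secured, convexity of the Dirichlet integral and the affine dependence of the memory part on $\bfw$ over $\cA$ turn the lower semicontinuity argument into a standard one.
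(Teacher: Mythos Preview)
Your proposal is correct and follows exactly the approach the paper has in mind: the paper does not give a detailed argument but simply states that existence ``relies on the convexity of the Dirichlet norm and is standard'' and refers to \cite{OeSch}, Lemmas~1--2. Your use of the direct method (boundedness in $\bho$ from the constraint and the Dirichlet energy, Rellich--Kondrachov, a.e.\ extraction to recover the constraint, and weak lower semicontinuity via the observation that on $\cA$ the memory term is affine in $\bfw$) is precisely this standard argument spelled out in full.
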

%\begin{proof}
% Since these claims are standard we detain only the sign of the limit in step 4). Indeed
% $$
% \langle B \zepsd{n},\zepsd{n} \rangle_{1,2} = \int_\om g(\zepsd{n},\bfx)\zepsd d\bfx \geq \alpha_2^{-1} \int_\om G(\zepsd{n},\bfx) d\bfx = \alpha_2^{-1} R =: e_R> 0
% $$
% where the latter constant is independent on $\zepsd{n}$, 
% which ends the proof.
%\end{proof}

A way to insure convergence, when $\e$ or $\Da$ go to zero, is to obtain some control  on a discrete time derivative of $\zepsD$, typically 
an $\bL^2_{\bfx,t}$-bound is obtained in the case of a classical gradient flow  directly from 
the minimization principle (cf Appendix in \cite{Oel.11} and references therein).
Here the result is less immediate~: first, in the next lemma,  we obtain a dissipation term in the energy 
estimates. These estimates provide a uniform bound on the dissipation term. 
It then appears as a source term in a closed equation \eqref{eq.delta.z.npud}, on 
$\delta \zepsd{n+\nud}$ that  finally provides these key estimates (cf. Proposition \ref{prop.time.compactness}).
%As in the gradient flow case, the energy minimization process guarantees 
%such a bound, here we only   for this sake  we prove that the discrete 
%energy decreases :

\begin{lemma}\label{lem.nrj}
 If $(\rhoed{n}{i})_{(i,n)\in\N^2}$ and $(\zepsd{n})_{n\in\N}$, are defined as above, one
 has :
\begin{equation}\label{eq.energy.dscrt}
  \cE_{n+1}(\zepsd{n+1}) + \sum_{m=1}^n  \Delta t \cD_m
  \leq \cE_0(\zepsd{0})  \leq C,\quad \forall n \in \N
\end{equation}
 where the dissipation term reads :
 $$
 \cD_n := \frac{\Delta a}{2} \int_\om \sum_{j\in \N} \left|\vepsd{n}{j}\right|^2 \ztepsd{n+1}{j+1}\rhoed{n+1}{j+1} d \bfx,\quad \vepsd{n}{j} := \ue \left( \zepsd{n} - \frac{(\zepsd{n-j}+\zepsd{n-j-1})}{2}\right),
 $$
 and we denote  by  $\vepsd{n}{j}$ the discrete elongation variable
 %$$
 %$$ 
 for  $(j,n)\in\N^2$. 
 The generic constant $C$ in \eqref{eq.energy.dscrt} 
 is independent either of $\e$ or $\Da$.
\end{lemma}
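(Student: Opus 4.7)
The strategy is to exploit the minimizing property of $\zepsd{n+1}$ by using $\zepsd{n}$ as a test function — legitimate because $|\zepsd{n}|=1$ a.e., so $\zepsd{n}\in\cA$ — which yields $\cE_{n+1}(\zepsd{n+1}) \leq \cE_{n+1}(\zepsd{n})$. Adding and subtracting $\cE_n(\zepsd{n})$, the proof reduces to controlling the one-step increment $\cE_{n+1}(\zepsd{n}) - \cE_n(\zepsd{n})$ above by $-\Dt\,\cD_n$, telescoping over $n$, and finally bounding $\cE_0(\zepsd{0})$ uniformly in $\e$ and $\Da$.

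For the one-step increment the Dirichlet parts cancel. I would then reindex the delay sum defining $\cE_{n+1}(\zepsd{n})$ by $k = j-1$: the boundary term at $j=0$ vanishes identically since $|\zepsd{n}-\zepsd{n}|^2 = 0$, and the remaining sum involves $\rhoed{n+1}{k+1}$. Substituting the upwind identity $\rhoed{n+1}{k+1} = \rhoed{n}{k}/(1+\tfrac{\Dt}{\e}\ztepsd{n+1}{k+1})$ from \eqref{eq.def.scheme.rho.int} rewrites the weights in terms of $\rhoed{n}{k}$, which are precisely those occurring in $\cE_n(\zepsd{n})$ — here the ``one-sided'' treatment of $j=0$ in the definition of $\cE_n$ is tailored exactly so as to match the reindexed $k=0$ contribution $|\zepsd{n}-\zepsd{n-1}|^2\rhoed{n}{0}$. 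The resulting difference is
$$\cE_{n+1}(\zepsd{n})-\cE_n(\zepsd{n}) = -\frac{\Da\,\Dt}{4\e^2}\sum_{k\ge 0}\int_\om\bigl(|\zepsd{n}-\zepsd{n-k}|^2+|\zepsd{n}-\zepsd{n-k-1}|^2\bigr)\,\ztepsd{n+1}{k+1}\rhoed{n+1}{k+1}\,d\bfx,$$
and the parallelogram identity $|a-b|^2+|a-c|^2 = 2\bigl|a-\tfrac{b+c}{2}\bigr|^2 + \tfrac{1}{2}|b-c|^2$, applied with $a=\zepsd{n}$, $b=\zepsd{n-k}$, $c=\zepsd{n-k-1}$, produces a leading piece equal to $2\e^2|\vepsd{n}{k}|^2$ plus a non-negative residual that can be dropped. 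This yields exactly $-\Dt\,\cD_n$ and, after telescoping, the announced bound modulo $\cE_0(\zepsd{0})$.

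For the initial energy I would invoke minimality once more, $\cE_0(\zepsd{0}) \leq \cE_0(\zpd{0})$. The Dirichlet part is controlled by the $\bho$-norm of $\zpd{0}$, inherited from $\zp(\cdot,t)\in\cA\subset\bho$ (Hypothesis \ref{hypo.data.trois}). For the delay part, the Lipschitz estimate \eqref{eq.lip.past} gives $|\zpd{0}(\bfx)-\zpd{-j}(\bfx)| \leq C_{\zp}(\bfx)\,j\Dt$, and using $\Dt = \e\Da$ the sum is bounded by
$$\e\int_\om C_{\zp}(\bfx)^2\,\Da\sum_{j\ge 0}\bigl((j+1)\Da\bigr)^2\rhoed{0}{j}(\bfx)\,d\bfx,$$
which is uniformly finite by combining $C_{\zp}\in L^2(\om)$ with the finiteness of the second age-moment of $\rhoi$ from Hypothesis \ref{hypo.data.deux}.

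The main technical obstacle lies in the reindexing step: the specific form of the discrete energy (the trapezoidal-with-one-sided-endpoint quadrature in age, with its anomalous $j=0$ term) is precisely what is needed for the reindexed sum to match $\cE_n$ term-by-term, leaving only a dissipative residue; any other quadrature at $a=0$ would generate uncompensated boundary contributions. The parallelogram identity then reveals the midpoint-approximating elongations $\vepsd{n}{k}$ — rather than raw forward differences $\zepsd{n}-\zepsd{n-k}$ — as the correct dissipative quantity, which is the algebraic heart of the lemma.
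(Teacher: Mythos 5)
Your proof is correct and follows essentially the same route as the paper's: minimality tested against $\zepsd{n}$, the upwind identity plus the shift $k=j-1$ to reconstruct $\cE_n(\zepsd{n})$ with the anomalous $j=0$ term absorbing the reindexed endpoint, the parallelogram/convexity bound producing the midpoint elongations $\vepsd{n}{j}$, and a final comparison with the past data controlled by \eqref{eq.lip.past} and the second age-moment of $\rhoi$. The only slip is notational: the competitor for $\cE_0$ should be $\zepsd{-1}=\zpd{-1}$ (the paper defines $\zpd{i}$ only for $i<0$, and $\zp$ only lives on $t\le 0$), not $\zpd{0}$.
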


\begin{proof}
 By definition of the minimization process, one has 
 $$
 \cE_{n+1}(\zepsd{n+1}) \leq \cE_{n+1}(\zepsd{n}),
 $$
 since $\zepsd{n+1}$ minimises the energy at time step $t=(n+1)\Delta t$.
 This reads
 $$
%\left\{
\begin{aligned}
\cE_{n+1} & (\zepsd{n+1}) \leq 
\frac{\Delta a}{4\e} \int_\om   \sum_{j=1}^{\infty} (|\zepsd{n} - \zepsd{n+1-j}|^2 + |\zepsd{n} - \zepsd{n+1-j-1}|^2 ) \rhoed{n+1}{j}  d\bfx%\\
%&  \quad %+\frac{\Delta a}{4\e}  \int_\om ( \zepsd{n}- \zepsd{n-1} )^2 \rhoed{n+1}{0} 
+ \nud \int_{\om} |  \dx \zepsd{n}  |^2 d\bfx \\
&\leq  \frac{\Delta a}{4\e} \int_\om   \sum_{j=1}^{\infty} ( |\zepsd{n} - \zepsd{n-(j-1)}|^2 + |\zepsd{n} - \zepsd{n-j}|^2 ) \left( \rhoed{n}{j-1} -\frac{\Dt}{\e} \ztepsd{n+1}{j} \rhoed{n+1}{j}\right)   d\bfx\\
&  \quad %+\frac{\Delta a}{4\e}  \int_\om ( \zepsd{n}- \zepsd{n-1} )^2 \rhoed{n+1}{0} 
+ \nud \int_{\om} |  \dx \zepsd{n} |^2 d\bfx .
\end{aligned}
$$
Changing the indices in the first summation of the latter right hand side provides
$$
\begin{aligned}
{\cE_{n+1}} & {(\zepsd{n+1}) } 
\leq 
 \frac{\Delta a}{4\e} 
 \int_\om  
  \sum_{j=1}^{\infty} ( |\zepsd{n} - \zepsd{n-j}|^2 + |\zepsd{n} - \zepsd{n-j-1}|^2 )  \rhoed{n}{j}
  + \rhoed{n}{0}|\zepsd{n}-\zepsd{n-1}|^2 d \bfx - \\ 
&  -  \frac{\Delta a \Dt}{4\e^2} \int_\om   \sum_{j=1}^{\infty}  \ztepsd{n+1}{j} \rhoed{n+1}{j}  
 (|\zepsd{n} - \zepsd{n+1-j}|^2 + |\zepsd{n} - \zepsd{n-j}|^2 )
 d\bfx \, +\\
&  \quad %+\frac{\Delta a}{4\e}  \int_\om ( \zepsd{n}- \zepsd{n-1} )^2 \rhoed{n+1}{0} 
+ \nud \int_{\om} |  \dx \zepsd{n} |^2 d\bfx \\
\leq & \cE_n(\zepsd{n})  
%\frac{\Delta a}{2\e} \int_\om   \sum_{i=0}^{n} |\zepsd{n} - \zepsd{n-i}|^2 \rhoed{n}{i} 
%+ \sum_{i=n+1}^\infty |\zepsd{n} - \zpd{n-i}|^2 \rhoed{n}{i} d\bfx\\
%&  \quad %+  \nud \int_{\om} | \dx \zepsd{n} |^2 d\bfx 
- \frac{\Delta a\Delta t}{2} \int_\om \sum_{j=0}^\infty|\vepsd{n}{j}|^2 \rhoed{n+1}{j+1}\ztepsd{n+1}{j+1} d\bfx %\\
%& 
= \cE_n(\zepsd{n}) - \Delta t \cD_n,%\quad \forall n \in \N
\end{aligned}
%\right.
$$
for all $n\in\N$. In the last estimates we used the convexity of the square function, writing
\begin{equation}\label{eq.convex}
%\left\{
 \begin{aligned}
 \left| \vepsd{n}{j-1}\right|^2 & = \frac{1}{\e^2}\left|\zepsd{n} - \frac{(\zepsd{n-j}+ \zepsd{n-j+1})}{2}\right|^2 \\
 & \leq \frac{1}{2 \e^2} \left\{ |\zepsd{n}-\zepsd{n-j}|^2 + |\zepsd{n}-\zepsd{n-j+1}|^2 \right\},
\end{aligned}
%\right.
\end{equation}
where $j\geq1$, while  for $j=0$, one has simply 
$\left(\vepsd{n}{0}\right)^2  \leq \left(\delta \zepsd{n-\nud} \right)^2/(2 \e^2)$. 
For $\zepsd{0}$, one has simply that
$$
\begin{aligned}
 \cE_0(\zepsd{0}) \leq & \cE_0(\zepsd{-1}) \leq  \nrm{\zepsd{-1}}{\bho}^2 %\\
%&
+ \frac{\Delta a}{4 \e} \int_\om \sum_{j=1}^\infty \rhoed{0}{j} \left(
\left| \zepsd{-1}-\zepsd{-j} \right|^2 + \left| \zepsd{-1}-\zepsd{-j-1} \right|^2 \right) d\bfx.
\end{aligned}
$$
Using \eqref{eq.lip.past}, one has that for almost every $\bfx \in \om$ and $j>1$
$$
| \zepsd{-1}-\zepsd{-j}| \leq \frac{C_{\zp}(\bfx)}{\Dt} \int_0^{\Delta t} | \zp(s)-\zp(s+(1-j)\Dt) | dt \leq  C_{\zp}(\bfx) \Delta t (j-1).
$$
Moreover one notices that $\rhoed{0}{j}\leq \rhoed{-1}{j-1}$ for $j\geq 1$.
Together these facts allow to give a bound  on $\cE_0(\zepsd{0})$ uniform with respect to $\e$ and $\Da$~: 
$$
\cE_0(\zepsd{0}) \leq \e \nrm{(1+a)^2 \rhoi}{L^1(\rr;L^\infty(\om))} \nrm{C_{\zp}}{L^2 (\om)}^2 + \nrm{\zepsd{-1}}{\bho}^2.
$$
\end{proof}

%\begin{lemma}
% Under the previous hypotheses one has the bound :
% $$
% \sum_{m=0}^n \Delta t \int_{\om} \sum_{j\in\N} \left| \vepsd{n}{j} \right|^2 \rhoed{n}{j}  \Delta a d\bfx < C
% $$
% where $C$ depends on the initial data only. Moreover one has as well
% $$
% \sum_{n=0}^N \Delta t \int_\om  \left| \delta \zepsd{n+\nud} /\e\right|^2 d\bfx \leq \cE_0(\zepsd{0})/\mumin
% $$
%\end{lemma}
%\begin{proof}
% This is a simple consequence of the bound on $ \Delta t \sum_{m=0}^n  \D_n$. 
% Indeed, for all $j\in \N$, 
% $$
%\left| \vepsd{n}{j} \right|^2 \rhoed{n}{j}  = \left| \vepsd{n}{j} \right|^2 \ztepsd{n+1}{j+1} \rhoed{n+1}{j+1}  \frac{(1+\Delta a \; \ztepsd{n+1}{j+1})}{\ztepsd{n+1}{j+1}},
%$$ 
%which summed over $j\in \N$ and thanks to the previous Lemma gives the claim.
% One has also that
% $$ 
%\begin{aligned}
% \int_{\om} \muzed{n+1}  & \left| \delta \zepsd{n+\nud} /\e \right|^2  d\bfx \leq 
%  \int_\om \sum_j \left| \delta \zepsd{n+\nud} /\e \right|^2 \rhoed{n+1}{j} d\bfx \Delta a \\
%&  \lesssim 
% \int_\om  \sum_j \left| \vepsd{n+1}{j+1} \right|^2 \rhoed{n+1}{j} d\bfx  \Delta a
%  +
% \int_\om  \sum_j \left| \vepsd{n}{j}\right|^2 \rhoed{n+1}{j} d\bfx \Delta a
%\end{aligned}
% $$
%\end{proof}

For a.e. $\bfx \in \om$, we denote by 
$
\clen(\bfx) := %\frac{\Delta a}{\e} \sum_{i\in\N^*} \left(\zepsd{n}-\frac{(\zepsd{n-i}+\zepsd{n-i-1})}{2} \right)\rhoed{n}{i} =
 \Delta a 
\sum_{j \in\N} \rhoed{n}{j} \vepsd{n}{j}. 
$
\begin{lemma}
 For every time $t^n=n\Delta t$, $\zepsd{n}$ solves :
 \begin{equation}\label{eq.eul.lag.disc}
%\left\{
% \begin{aligned}
  ( \clen, \bfv ) + \int_{\om} \lamed{n} \, \zepsd{n}\cdot \bfv \,dx + ( \dx \zepsd{n},\dx \bfv)=0,
%\end{aligned}
%\right.
\end{equation}
 for all $\bfv \in \bho$, and $\lamed{n}(\bfx):= - \cL_\e^n \cdot \zepsd{n} - | \dx \zepsd{n} |^2$,
 is a $L^1(\om)$ function.
\end{lemma}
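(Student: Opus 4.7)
The plan is to derive the Euler-Lagrange system associated to the constrained minimization \eqref{eq.disc.min} by perturbing $\zepsd{n}$ along curves remaining in $\pmr$ and then to identify the Lagrange multiplier enforcing the pointwise constraint $|\zepsd{n}|=1$. The natural admissible variation, for any $\bfv\in\bho$, is the normalized curve $\bfw_s:=(\zepsd{n}+s\bfv)/|\zepsd{n}+s\bfv|$. Since $\om=(0,1)$, the Sobolev embedding $H^1(\om)\hookrightarrow L^\infty(\om)$ bounds $\bfv$ pointwise, so for $|s|$ small one has $|\zepsd{n}+s\bfv|\geq 1/2$ a.e.; this keeps $\bfw_s$ in $\bho$ and hence in $\cA$. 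Theorem \ref{thm.exists.minimis} then yields $\frac{d}{ds}\cE_n(\bfw_s)\big|_{s=0}=0$.

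The next step is to compute this derivative explicitly. A direct calculation gives $d\bfw_s/ds|_{s=0}=\bfv-(\bfv\cdot\zepsd{n})\zepsd{n}=:P_T\bfv$, the tangential projection of $\bfv$ at $\zepsd{n}$ (using $|\zepsd{n}|^2=1$ a.e.). Differentiation under the sum and the integral is justified by dominated convergence, using the uniform bound $|\bfw_s-\zepsd{n-j}|\leq 2$ together with $\Delta a\sum_j\rhoed{n}{j}=\sed{n}\leq 1$ from Lemma \ref{lem.muze.bounds}. A short computation, identical in form to the unconstrained one, produces $(\clen,P_T\bfv)+(\dx\zepsd{n},\dx(P_T\bfv))=0$. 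The crucial identity $\dx\zepsd{n}\cdot\zepsd{n}=0$ a.e., obtained by differentiating $|\zepsd{n}|^2=1$, simplifies the Leibniz expansion $\dx(P_T\bfv)=\dx\bfv-\dx(\bfv\cdot\zepsd{n})\,\zepsd{n}-(\bfv\cdot\zepsd{n})\,\dx\zepsd{n}$: the middle term becomes orthogonal to $\dx\zepsd{n}$ in $\blo$, while the last one contributes $\int_\om|\dx\zepsd{n}|^2(\zepsd{n}\cdot\bfv)\,d\bfx$. Collecting terms, and similarly expanding $(\clen,P_T\bfv)=(\clen,\bfv)-\int_\om(\clen\cdot\zepsd{n})(\zepsd{n}\cdot\bfv)\,d\bfx$, yields the stated weak formulation with $\lamed{n}:=-\clen\cdot\zepsd{n}-|\dx\zepsd{n}|^2$.

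Finally, I would verify $\lamed{n}\in L^1(\om)$. The term $|\dx\zepsd{n}|^2$ sits in $L^1(\om)$ since $\zepsd{n}\in\bho$. For $\clen\cdot\zepsd{n}$, Cauchy-Schwarz applied pointwise gives $|\clen(\bfx)|^2\leq \sed{n}(\bfx)\,\Delta a\sum_j\rhoed{n}{j}(\bfx)|\vepsd{n}{j}(\bfx)|^2$; the bound $\sed{n}\leq 1$ from Lemma \ref{lem.muze.bounds}, combined with the convexity estimate \eqref{eq.convex} and the energy bound of Lemma \ref{lem.nrj}, imply $\Delta a\int_\om\sum_j\rhoed{n}{j}|\vepsd{n}{j}|^2\,d\bfx<\infty$ for every fixed $n$. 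Hence $\clen\in L^2(\om)$ and, since $|\zepsd{n}|=1$ a.e., $\clen\cdot\zepsd{n}\in L^2(\om)\subset L^1(\om)$. The main obstacle is the very first step: the pointwise constraint forbids free perturbation by $\bfv$, and the nonlinear retraction $\bfw_s$ must be shown to preserve $\bho$-regularity and to depend smoothly on $s$. This is precisely where the one-dimensional Sobolev embedding becomes indispensable, as it supplies the $L^\infty$ control on $\bfv$ that keeps the denominator bounded away from zero.
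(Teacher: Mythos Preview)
Your proposal is correct and follows essentially the same approach as the paper: both use the normalized perturbation $\bfw_s=(\zepsd{n}+s\bfv)/|\zepsd{n}+s\bfv|$, compute its derivative at $s=0$ as the tangential projection $(\id{d}-\zepsd{n}\otimes\zepsd{n})\bfv$, invoke the identity $\dx\zepsd{n}\cdot\zepsd{n}=0$ to simplify the Dirichlet term, and read off $\lamed{n}$ from the resulting equation. Your write-up is in fact more explicit than the paper's on two points: you spell out the one-dimensional Sobolev embedding that guarantees the denominator stays bounded away from zero, and you give a quantitative argument (Cauchy--Schwarz plus Lemma~\ref{lem.nrj}) for $\clen\in L^2(\om)$ where the paper simply cites Theorem~\ref{thm.exists.minimis} and Lemma~\ref{lem.nrj}.
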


\begin{proof}
We take $\bfv \in \bho$, and  set 
$$
\bfv(\tau) := \frac{\zepsd{n}+ \tau\bfv}{\left| \zepsd{n}+ \tau\bfv\right| },
$$
because $\zepsd{n} \in \cA$ for a $\tau$ small enough $\left| \zepsd{n}+ \tau\bfv\right|$ 
is strictly positive and bounded, thus on this interval $\bfv(\tau) \in \cA$.
As $\zepsd{n}$ minimizes $\cE_n$,   $i(\tau):=\cE_n(\bfv(\tau))$ admits a minimum
in $\tau=0$. This leads to $i'(0)=0$, as $\partial_\tau \bfv(0)= (\id{d} -\zepsd{n}\otimes\zepsd{n})\bfv$, this gives% gives that
%$$
%\left( \dd{\cE_n}{\bfw}, \dd{\bfv}{\tau}(\tau) \right) = 0, \quad \text{ when } \tau =0.
%$$
%%(sens pas très bien d\'efini ici)
%or more explicitly 
$$
\left( \cL_\e^n,(\id{d}-\zepsd{n}\otimes\zepsd{n})\bfv\right) 
+ \left( \dx \zepsd{n},\dx ((\id{d}-\zepsd{n}\otimes\zepsd{n})\bfv\right) =0,
$$
where the parentheses denote the $\bL^2(\om)$ scalar product and $\id{d}$ the identity matrix in $\RR^d$. 
 As $\dx \zepsd{n} \cdot \zepsd{n} = 0$ for almost every $\bfx \in\om$, 
the previous expression transforms into 
$$
\left( \cL_\e^n,(\id{d}-\zepsd{n}\otimes\zepsd{n})\bfv\right) 
+ \left( \dx \zepsd{n},\dx \bfv \right) 
- \int_\om  \zepsd{n}\cdot \bfv \, | \dx \zepsd{n} |^2d\bfx =0,
$$
for all $\bfv \in \bho$.
Denoting $\lamed{n}:= - \cL_\e^n \cdot \zepsd{n} - | \dx \zepsd{n} |^2$,
it is a Lagrange multiplier associated to the constraint. Thanks 
to Theorem \ref{thm.exists.minimis} and Lemma $\ref{lem.nrj}$, %and the existence of the minimizer $\zepsd{n} \in \A$,% which is obvious 
$\lambda^n \in L^1(\om)$.
Thus, \eqref{eq.eul.lag.disc} together with the constraint $|\zepsd{n}|=1$ is the Euler-Lagrange system associated to 
the discrete minimization problem \eqref{eq.disc.min}.
\end{proof}

\begin{proposition}\label{proposition.lag.multi}
 Under the previous hypotheses, one has the estimate
 $$
 \forall n \in \{ 0,\dots, N\}, %\llbracket 0,N\rrbracket ,
 \quad \nrm{\lamed{n}}{L^1(\om)} \leq C,%}{\sqrt{\e}}
 $$
 where the constant is uniformly bounded with respect to  $\e$,$\Da$ and $\Dt$.
\end{proposition}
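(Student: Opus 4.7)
The plan is to show that both summands in the identity $\lamed{n} = -\cle^n\cdot\zepsd{n} - |\dx\zepsd{n}|^2$ are controlled in $L^1(\om)$ by the already-established energy bound of Lemma~\ref{lem.nrj}. For the Dirichlet term, $\|\dx\zepsd{n}\|_{L^2(\om)}^2 \leq 2\cE_n(\zepsd{n})\leq 2\cE_0(\zepsd{0}) \leq C$, which is uniform in $\e$ and $\Da$. The delicate term is $\cle^n\cdot\zepsd{n}$. A direct Cauchy--Schwarz on $\Da\sum_j \rhoed{n}{j}|\vepsd{n}{j}|$ loses a factor $\e^{-1/2}$ and is useless, so the key is an algebraic rewriting exploiting the sphere constraint.

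The central observation is this: since $|\zepsd{n}|=1$, the polarization identity $2 a\cdot b = |a|^2 + |b|^2 - |a-b|^2$ yields, for every $j\in\N$,
\begin{equation*}
\vepsd{n}{j}\cdot\zepsd{n}
= \frac{1}{4\e}\Bigl(|\zepsd{n}-\zepsd{n-j}|^2 + |\zepsd{n}-\zepsd{n-j-1}|^2\Bigr)
+ \frac{1}{4\e}\Bigl((1-|\zepsd{n-j}|^2) + (1-|\zepsd{n-j-1}|^2)\Bigr).
\end{equation*}
Both parentheses are non-negative. For $n-j\geq 0$, $\zepsd{n-j}\in\pmr$ is a unit vector (Theorem~\ref{thm.exists.minimis}), so the second parenthesis vanishes. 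For $n-j<0$, $\zepsd{n-j}=\zpd{n-j}$ is only a time-average of unit vectors, but using the identity $1-|\zpd{n-j}|^2 = \Dt^{-1}\int_{(n-j)\Dt}^{(n-j+1)\Dt}|\zp(s)-\zpd{n-j}|^2\,ds$ together with the Lipschitz estimate \eqref{eq.lip.past} gives $1-|\zpd{n-j}|^2\leq C_{\zp}(\bfx)^2\Dt^2$. Multiplying by $\Da\,\rhoed{n}{j}\geq 0$ and summing shows $\cle^n(\bfx)\cdot\zepsd{n}(\bfx)\geq 0$ a.e.\ in~$\om$ and, after integrating,
\begin{equation*}
\int_\om \cle^n\cdot\zepsd{n}\,d\bfx \;\leq\; \cE_n(\zepsd{n}) - \tfrac{1}{2}\|\dx\zepsd{n}\|_{L^2}^2 + R^n_\e,
\qquad R^n_\e \;\leq\; \tfrac{\Dt^2}{2\e}\|C_{\zp}\|_{L^2(\om)}^2 \;=\; \tfrac{\e\Da^2}{2}\|C_{\zp}\|_{L^2}^2,
\end{equation*}
where the bound on $R^n_\e$ uses $\Da\sum_{j\geq n}\rhoed{n}{j}\leq \sed{n}\leq 1$ from Lemma~\ref{lem.muze.bounds}.

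Putting the two estimates together, and noting that the pointwise sign $\cle^n\cdot\zepsd{n}\geq 0$ gives $|\lamed{n}| = \cle^n\cdot\zepsd{n}+|\dx\zepsd{n}|^2$, one concludes
\begin{equation*}
\|\lamed{n}\|_{L^1(\om)}
= \int_\om \cle^n\cdot\zepsd{n}\,d\bfx + \|\dx\zepsd{n}\|_{L^2(\om)}^2
\leq \cE_n(\zepsd{n}) + \tfrac{1}{2}\|\dx\zepsd{n}\|_{L^2}^2 + R^n_\e
\leq \tfrac{3}{2}\cE_0(\zepsd{0}) + R^n_\e \leq C,
\end{equation*}
uniformly in $\e$, $\Da$ and $\Dt=\e\Da$.

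The main obstacle is conceptual rather than technical: one has to notice that, thanks to the constraint $|\zepsd{n}|=1$, the seemingly singular delay operator $\cle^n$, when paired with $\zepsd{n}$, reproduces exactly the non-Dirichlet part of $\cE_n$ (plus a harmless $O(\e\Da^2)$ boundary-layer correction coming from the time-averaging of the past data). Handling past data $n-j<0$ is the only place where the Lipschitz hypothesis \eqref{eq.lip.past} intervenes, and it does so quantitatively in the innocuous remainder~$R^n_\e$.
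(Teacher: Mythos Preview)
Your proof is correct and follows essentially the same route as the paper: both exploit the constraint $|\zepsd{n}|=1$ to recognise $\cle^n\cdot\zepsd{n}$ as the adhesion part of $\cE_n(\zepsd{n})$, which is then controlled by Lemma~\ref{lem.nrj}. The paper simply asserts $\zepsd{n-j}\in\cA$ for $n-j<0$ by invoking hypothesis~\ref{hypo.data.trois}(i), whereas you observe that the time-averaged past values $\zpd{i}$ need not lie exactly on the sphere and quantify the defect $1-|\zpd{i}|^2\le C_{\zp}^2\Dt^2$ via~\eqref{eq.lip.past}, producing the harmless remainder $R^n_\e=O(\e\Da^2)$; this makes your argument slightly more careful on that point.
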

\begin{proof}
As $\zepsd{n} \in L^\infty_t \bH_x^1$ uniformly in $\e$, it is already clear 
that $| \dx \zepsd{n}|^2$ belongs to $L^\infty_t L^1_\bfx$.
It remains to estimate %ne should estimate only 
$\nrm{\cled{n}\cdot\zepsd{n}}{L^1(\om)}$. Since $\zepsd{n-j}\in \cA$  for all $j \in\N$ 
(this statement uses 
the first assumption in hypotheses \ref{hypo.data.trois}, in the case when $n-j<0$), a simple computation gives that,
for every $\bfx \in \om$, 
$$
(\zepsd{n}-\zepsd{n-j})\cdot \zepsd{n} = \nud \left( \zepsd{n}-\zepsd{n-j}\right)^2 \geq 0 , \quad \forall j \in \N.
$$
This in turn  suggests that
$$
\begin{aligned}
 \frac{1}{4\e} & \int_\om \left\{ \sum_{j=1}^\infty ( (\zepsd{n}-\zepsd{n-j})^2 + (\zepsd{n}-\zepsd{n-j-1})^2 ) \rhoed{n}{j} + (\zepsd{n}-\zepsd{n-1})^2\rhoed{n}{0}\right\} \Delta a d\bfx \\
& = \int_{\om} \cled{n} \cdot \zepsd{n} d\bfx = \int_{\om} | \cled{n} \cdot \zepsd{n} | d\bfx.
\end{aligned}
$$
By Lemma \ref{lem.nrj}, the first term is bounded for any $n\geq 0$. Thanks to the definition of $\lamd{n}$, the claim follows.
\end{proof}

\begin{remark}
Proposition \ref{proposition.lag.multi}  shows as well that the energy minimization procedure provides a $L^\infty_t L^1_\bfx$ bound,  uniform in $\e$, 
on the Lagrange multiplier $\lameD$. Direct use of the energy  estimates from Lemma \ref{lem.nrj}  and  Jensen's inequality give 
$
\nrm{\cled{n}}{L^2(\om)} \lesssim \sqrt{ \cE_n(\zepsd{n})/ \e}  \lesssim \e^{-1/2}
$
which provides only
$$
\nrm{\lamed{n}}{L^1(\om)} \leq C \nrm{\cled{n}}{L^2(\om)} + \nrm{\dx \zepsd{n}}{L^2(\om)}^2 \leq O(\e^{-\nud}).
$$
\end{remark}
\begin{remark}
 The previous result shows that the delay operator $\cled{n}$ points out of the unit sphere since
by convexity of the square function, $\cled{n}\cdot \zepsd{n} > 0$  for a.e. $\bfx \in \om$. 
In the next proposition, we show that the scalar product is  of order $\e$ with respect to the $L^1_{\bfx,t}$ norm, 
which makes sense. Indeed, when $\e$ is small,  $\cled{n}$ approximates
$\muoz \dt \zz$ which is tangent to the sphere,
and thus orthogonal to $\zz$.
\end{remark}
\begin{proposition}\label{prop.decay}
 Under hypotheses \ref{hypo.data}, \ref{hypo.data.deux} and \ref{hypo.data.trois}, one can also show that
 $$
 \nrm{\cleD\cdot\zepsD}{L^1_{\bfx,t}}=\Delta t \sum_{n \in \N} \int_\om \cled{n} \cdot \zepsd{n} d\bfx \leq \e C,
 $$ 
 where the constant does not depend on $\e$.
\end{proposition}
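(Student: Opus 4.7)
The plan is to start from the identity for $\int_\om \cled{n}\cdot \zepsd{n}\, d\bfx$ already computed in the proof of Proposition \ref{proposition.lag.multi}, and then apply a parallelogram decomposition so as to separate an energy-dissipation contribution from a purely time-increment contribution. Namely, setting $a:=\zepsd{n}-\zepsd{n-j}$ and $b:=\zepsd{n}-\zepsd{n-j-1}$, one has $a+b=2\e\,\vepsd{n}{j}$ and $a-b=-\delta\zepsd{n-j-\nud}$, so the parallelogram identity gives, for $j\geq 1$,
$$
(\zepsd{n}-\zepsd{n-j})^2+(\zepsd{n}-\zepsd{n-j-1})^2 = 2\e^2 |\vepsd{n}{j}|^2 + \tfrac{1}{2}|\delta\zepsd{n-j-\nud}|^2,
$$
while for $j=0$ one has simply $(\zepsd{n}-\zepsd{n-1})^2=4\e^2|\vepsd{n}{0}|^2$. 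Substituting in the formula of Proposition \ref{proposition.lag.multi} one obtains
$$
\int_\om \cled{n}\cdot\zepsd{n}\,d\bfx = \e\Da \int_\om \Bigl(\tfrac{1}{2}\sum_{j\geq 1}|\vepsd{n}{j}|^2\rhoed{n}{j} + |\vepsd{n}{0}|^2\rhoed{n}{0}\Bigr)d\bfx + \frac{\Da}{8\e}\int_\om \sum_{j\geq 1}|\delta\zepsd{n-j-\nud}|^2\rhoed{n}{j}\,d\bfx.
$$

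Next I would multiply by $\Dt$ and sum over $n\in\N$. For the first group of terms I use the CFL relation $\Dt=\e\Da$ to pull out an explicit factor $\e$, and then bound the remaining sum by the dissipation appearing in the energy estimate of Lemma \ref{lem.nrj}; the only subtlety is that the energy controls $\rhoed{n+1}{j+1}$, not $\rhoed{n}{j}$, but the upwind scheme gives $\rhoed{n}{j}\leq (1+\Da\ztmax)\rhoed{n+1}{j+1}$, so $\ztmin$ from the off-rate lower bound yields an $\e$-uniform bound of order $\e$.

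For the second group, using $\Dt/\e=\Da$ I rewrite it as $(\Da^2/8)\sum_n\int_\om\sum_{j\geq 1}|\delta\zepsd{n-j-\nud}|^2\rhoed{n}{j}\,d\bfx$. Changing indices $m=n-j$ and swapping sums, the cost to pay is $\Da\sum_{n\geq m+1}\rhoed{n}{n-m}$, which by the upwind scheme and $\ztepsd\geq \ztmin$ is bounded by $\bmax/\ztmin$ uniformly in $m$, $\e$ and $\Da$. This reduces the second part to $(\Da\bmax/8\ztmin)\int_\om \sum_m|\delta\zepsd{m-\nud}|^2 d\bfx$, which equals $(\Da\bmax \Dt/8\ztmin)$ times the squared discrete $L^2_{\bfx,t}$ norm of $\dt\tizepsD$. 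Since $\Dt=\e\Da$, this is of order $\e \Da^2 \|\dt\tizepsD\|_{L^2_{\bfx,t}}^2$, hence also $\leq \e C$.

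The genuine difficulty is not in the algebraic manipulation above but in the fact that the second part cannot be absorbed by the energy dissipation alone: it intrinsically needs a uniform-in-$\e$ $L^2$ control of the discrete time-increment $\delta\zepsd{m-\nud}/\Dt$. That bound is precisely the content of Proposition \ref{prop.time.compactness}, obtained through a closed finite-difference equation for $\delta\zepsd{n+\nud}$. So the main non-trivial ingredient to be assumed here is this compactness estimate; everything else follows from Lemma \ref{lem.nrj}, the upwind bound on $\rhoeD$, and exponential decay in age induced by $\ztmin>0$.
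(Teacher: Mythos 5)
Your proof is correct in substance, but it takes a genuinely different and heavier route than the paper. The paper's proof is essentially one line: starting from the identity of Proposition \ref{proposition.lag.multi}, it inserts $1=\ztepsd{n}{j}/\ztepsd{n}{j}$ and bounds $1/\ztepsd{n}{j}\leq 1/\ztmin$; the resulting quantity
$\frac{\Da}{4\e}\int_\om\sum_{j}\ztepsd{n}{j}\rhoed{n}{j}\,(|\zepsd{n}-\zepsd{n-j}|^2+|\zepsd{n}-\zepsd{n-j-1}|^2)\,d\bfx$
is (up to a harmless index shift via $\rhoed{n}{j}=(1+\Da\ztepsd{n+1}{j+1})\rhoed{n+1}{j+1}$ and an explicit factor $\e$) exactly the \emph{pre-convexity} form of the dissipation that appears in the telescoping inequality inside the proof of Lemma \ref{lem.nrj}, i.e.\ before the inequality \eqref{eq.convex} is applied. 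That form controls the full sum $|\zepsd{n}-\zepsd{n-j}|^2+|\zepsd{n}-\zepsd{n-j-1}|^2$ directly, so no parallelogram decomposition is needed and the bound $\leq \e C/\ztmin$ follows from the energy estimate alone. You instead work only with the stated (post-convexity) dissipation $\cD_n$ in terms of $|\vepsd{n}{j}|^2$, which forces you to split off the cross term $\tfrac12|\delta\zepsd{n-j-\nud}|^2$ and control it via the $H^1_t\bL^2_\bfx$ compactness estimate of Proposition \ref{prop.time.compactness}. This is not circular — that proposition relies on Lemma \ref{lem.nrj} and Proposition \ref{proposition.lag.multi} but not on the present statement — yet it makes a purely energetic decay estimate contingent on the hardest estimate of the paper, which is both logically out of order (Proposition \ref{prop.decay} precedes Proposition \ref{prop.time.compactness} in the text) and unnecessary. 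Two smaller points: in your reindexed sum $\sum_m\int_\om|\delta\zepsd{m-\nud}|^2\,\Da\sum_{n\geq m+1}\rhoed{n}{n-m}\,d\bfx$ you only account for $m\geq 0$; the indices $m<0$ involve the past data $\zpd{m}$ and must be handled through the Lipschitz hypothesis \eqref{eq.lip.past} together with the geometric decay of $\rhoed{n}{j}$ in $j$ (the same mechanism that bounds $\cE_0(\zepsd{0})$ in Lemma \ref{lem.nrj}). The algebra itself (the parallelogram identity, the identities $a+b=2\e\vepsd{n}{j}$, $a-b=-\delta\zepsd{n-j-\nud}$, and the $j=0$ case) is correct.
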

\begin{proof}
 Using again the same idea as in the previous proof, one writes :
 $$ 
\begin{aligned}
  \Delta t \sum_{n \in \N} & \int_\om \cled{n} \cdot \zepsd{n} d\bfx & 
   =  \Delta t \frac{\Delta a}{\e} \sum_{n \in \N} \int_\om  \sum_{j=1}^{\infty}  \frac{\ztepsd{n}{j}}{\ztepsd{n}{j}} \rhoed{n}{j}  
 (|\zepsd{n} - \zepsd{n-j}|^2 + |\zepsd{n} - \zepsd{n-j-1}|^2 )
 d\bfx %\\
% &
\leq \frac{\e C}{\ztmin},
\end{aligned}
$$
the latter estimate coming from the dissipation term in the proof of Lemma \ref{lem.nrj}.
 \end{proof}

Here we show one of the key estimates of the paper.
\begin{proposition}\label{prop.time.compactness}
 Under hypotheses above, and for $\Delta t$ small enough, one has :
 $$
 \sum_{n=1}^{N} \Delta t \left\{ \nrm{ \frac{ \zepsd{n+1}-\zepsd{n}}{\Delta t}}{\bL^2(\om)}^2 
 + \e \nrm{ \frac{\dx \zepsd{n+1}-\dx \zepsd{n}}{\Delta t}}{\bL^2(\om)}^2 \right\} 
 \leq C, 
 $$
% \sum_{m=1}^{n}\Delta t \Delta a \sum_{j=1}^\infty \ztepsd{m}{j} \left| \vepsd{m}{j} \right|^2 \rhoed{m}{j}
% $$
where the constant does not depend neither on $\e$ nor on $\Delta t$.
\end{proposition}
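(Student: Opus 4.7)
The plan is to derive a closed equation for $\delta\zepsd{n+\nud}=\zepsd{n+1}-\zepsd{n}$ by subtracting the Euler--Lagrange identities \eqref{eq.eul.lag.disc} written at times $n$ and $n+1$, then to test it with $\delta\zepsd{n+\nud}$ and absorb the right-hand side into the dissipation $\cD_n$ already controlled by Lemma \ref{lem.nrj}.

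The first step is to rewrite $\cled{n+1}-\cled{n}$ so that the unknown $\delta\zepsd{n+\nud}$ is explicit. Two elementary identities are the core: from the definition of $\vepsd{n}{j}$ one gets $\vepsd{n+1}{j+1}-\vepsd{n}{j}=\delta\zepsd{n+\nud}/\e$ for every $j\geq 0$ and $\vepsd{n+1}{0}=\delta\zepsd{n+\nud}/(2\e)$, while the transport recursion \eqref{eq.def.scheme.rho.int} reads $\rhoed{n+1}{j+1}=\rhoed{n}{j}/(1+(\Dt/\e)\ztepsd{n+1}{j+1})$. Combining both after an index shift in the sum produces
$$
\cled{n+1}-\cled{n} \;=\; \frac{M^{n+1}}{\e}\,\delta\zepsd{n+\nud} \;-\; \frac{\Dt\Da}{\e}\sum_{j\geq 0}\ztepsd{n+1}{j+1}\rhoed{n+1}{j+1}\vepsd{n}{j},
$$
where $M^{n+1}:=\sed{n+1}-\tfrac12\Da\rhoed{n+1}{0}$. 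For $\Dt$ small enough (so that $\Da\bmax\leq\mumin$), Lemma \ref{lem.muze.bounds} together with $\rhoed{n+1}{0}\leq\bmax$ yields $M^{n+1}\geq \mumin/2$ uniformly in $\bfx,\e,\Dt$.

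Subtracting \eqref{eq.eul.lag.disc} at consecutive times and testing with $\bfv=\delta\zepsd{n+\nud}$ produces $\|\dx\delta\zepsd{n+\nud}\|_{\bL^2}^2$ from the Dirichlet part. The difference of the Lagrange contributions is handled with $|\zepsd{k}|=1$: since $(\zepsd{n+1}+\zepsd{n})\cdot\delta\zepsd{n+\nud}=0$, one has $\zepsd{n+1}\cdot\delta\zepsd{n+\nud}=-\zepsd{n}\cdot\delta\zepsd{n+\nud}=\tfrac12|\delta\zepsd{n+\nud}|^2$, so this term equals $\tfrac12\int_\om(\lamed{n+1}+\lamed{n})|\delta\zepsd{n+\nud}|^2 d\bfx$. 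The pointwise identity $\vepsd{k}{j}\cdot \zepsd{k}=\tfrac{1}{4\e}(|\zepsd{k}-\zepsd{k-j}|^2+|\zepsd{k}-\zepsd{k-j-1}|^2)\geq0$ shows $\cled{k}\cdot\zepsd{k}\geq 0$ a.e., hence $\lamed{k}=-\cled{k}\cdot\zepsd{k}-|\dx\zepsd{k}|^2\leq 0$, and the Lagrange contribution has the right sign to be moved to the left-hand side.

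Inserting the closed expression for $\cled{n+1}-\cled{n}$ yields a coercive term $\tfrac{1}{\e}\int_\om M^{n+1}|\delta\zepsd{n+\nud}|^2 d\bfx$; the residual is estimated by Cauchy--Schwarz in space combined with Jensen's inequality in $j$ with the probability-like weight $\Da\sum_j \ztepsd{n+1}{j+1}\rhoed{n+1}{j+1}\leq \ztmax$, giving
$$
\int_\om\Bigl(\Da\sum_j \ztepsd{n+1}{j+1}\rhoed{n+1}{j+1}\vepsd{n}{j}\Bigr)^2 d\bfx\leq 2\ztmax\,\cD_n.
$$
A weighted Young's inequality with parameter of order $M^{n+1}/\e$ absorbs half of the coercive term, and the lower bound $M^{n+1}\geq\mumin/2$ leads to $\|\delta\zepsd{n+\nud}\|_{\bL^2}^2 + \e\|\dx\delta\zepsd{n+\nud}\|_{\bL^2}^2 \leq C\,\Dt^2\,\cD_n$. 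Summing in $n$ and dividing by $\Dt$, the claim follows from $\sum_n \Dt\,\cD_n\leq C$ of Lemma \ref{lem.nrj}. The main obstacle is deriving the closed form for $\cled{n+1}-\cled{n}$: the telescoping in the age index and the transport recursion must cooperate exactly so that the non-degenerate factor $M^{n+1}\geq\mumin/2$ appears in front of $\delta\zepsd{n+\nud}/\e$; a naive estimate would lose this factor, which is precisely what generates the $L^2_t\bL^2_\bfx$ coercivity, and the sign $\lamed{n}\leq 0$ is indispensable for the diffusion not to be consumed by the Lagrange increment.
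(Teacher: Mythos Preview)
Your derivation of the closed identity for $\cled{n+1}-\cled{n}$ and the control of the residual by $\cD_n$ are correct and match the paper's strategy. The fatal gap is in your treatment of the Lagrange term. You correctly compute
\[
\int_\om\bigl(\lamed{n+1}\zepsd{n+1}-\lamed{n}\zepsd{n}\bigr)\cdot\delta\zepsd{n+\nud}\,d\bfx
=\tfrac12\int_\om(\lamed{n+1}+\lamed{n})\,|\delta\zepsd{n+\nud}|^2\,d\bfx
\]
and correctly observe that $\lamed{k}\leq 0$. But this is the \emph{wrong} sign for your purpose. In the tested identity the Lagrange contribution sits on the left with the coercive terms; being non-positive it destroys coercivity rather than reinforcing it. Equivalently, after moving it to the right you obtain
\[
\tfrac{1}{\e}\int_\om M^{n+1}|\delta\zepsd{n+\nud}|^2\,d\bfx+\|\dx\delta\zepsd{n+\nud}\|_{\bL^2}^2
=\tfrac{\Dt}{\e}\int_\om R\cdot\delta\zepsd{n+\nud}\,d\bfx
+\tfrac12\int_\om|\lamed{n+1}+\lamed{n}|\,|\delta\zepsd{n+\nud}|^2\,d\bfx,
\]
and the last term is a genuine positive competitor that you never bound. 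It cannot simply be dropped.

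The paper closes this gap by a completely different mechanism: it uses the uniform $L^1(\om)$ bound on $\lamed{k}$ from Proposition \ref{proposition.lag.multi} together with the one-dimensional Gagliardo--Nirenberg inequality $\|u\|_{L^\infty}^2\lesssim \|u\|_{L^2}\|u\|_{H^1}$ to obtain
\[
\e\,|J^{n+\nud}(\delta\zepsd{n+\nud})|
\lesssim \e^{1/2}\|\delta\zepsd{n+\nud}\|_{\bL^2}^2+\e^{3/2}\|\dx\delta\zepsd{n+\nud}\|_{\bL^2}^2,
\]
where the extra factor $\e$ comes from writing the closed equation in the form \eqref{eq.delta.z.npud}. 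For $\e$ small enough these $\sqrt{\e}$-corrections are absorbed into the coercive terms $(\mumin-C\sqrt{\e})\|\delta\zepsd{n+\nud}\|_{\bL^2}^2$ and $(\e-C\e^{3/2})\|\dx\delta\zepsd{n+\nud}\|_{\bL^2}^2$. Without this step, or an equivalent control of $\int|\lamed{k}|\,|\delta\zepsd{n+\nud}|^2$, your argument does not yield the stated bound.
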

\begin{proof}
Recalling the definition of $\vepsd{n}{j}$ one checks easily that 
 $$
%\e  \frac{\delta \vepsd{n+\nud}{j}}{\Delta t} 
%   + \frac{ \delta \vepsd{n}{j-\nud}}{\Delta a} 
% = \frac{\delta \zepsd{n+\nud}}{\Delta t},\quad \forall j\geq 1
\e  \delta \vepsd{n+\nud}{j}
   + \Delta t  \frac{ \delta \vepsd{n}{j-\nud}}{\Delta a} 
 = \delta \zepsd{n+\nud}\quad \forall j\geq 1,
 $$
 while $\vepsd{n}{0}=\delta \zepsd{n-\nud}/(2 \e)$. 
Equivalently, because of the specific CFL condition, $\vepsd{n+1}{j+1}=\vepsd{n}{j}+ \delta \zepsd{n+\nud} /\e$ for all $j\geq 1$. 
Setting $\tepsd{n}{j} = \rhoed{n}{j}\vepsd{n}{j}$ for $j\in\N$, one obtains using \eqref{eq.def.scheme.rho.int}~:
% computations~:
 $$
% \e \frac{\delta \tepsd{n+\nud}{j}}{\Delta t} + \frac{\delta \tepsd{n}{j-\nud}}{\Delta a} + \ztepsd{n+1}{j} \tepsd{n+1}{j} = \rhoed{n}{j-1} \frac{\delta \zepsd{n+\nud}}{\Delta t}
\e \delta \tepsd{n+\nud}{j} +\Delta t \frac{\delta \tepsd{n}{j-\nud}}{\Delta a} + \Delta t \ztepsd{n+1}{j} \tepsd{n+1}{j} = \rhoed{n}{j-1}\delta \zepsd{n+\nud},
 $$
 which, summing over  $j\in \N^*$, gives
 $$
 % \e \frac{\sum_{j\geq1} \tepsd{n+1}{j}}{\Delta t} \Delta a+ \sum_{j \in \N^*} \ztepsd{n+1}{j}\tepsd{n+1}{j} \Delta a= \muzed{n} \frac{\delta \zepsd{n+\nud}}{\Delta t}
 \e \sum_{j\geq1} \delta\tepsd{n+\nud}{j} \Delta a - \e \Delta a  \tepsd{n}{0}+ \Delta t \sum_{j \geq 1} \ztepsd{n+1}{j}\tepsd{n+1}{j} \Delta a= \muzed{n} \delta \zepsd{n+\nud}.
$$
By definition, 
$$
\begin{aligned}
\e \Da \tepsd{n+1}{0}\equiv  \e \Da \rhoed{n+1}{0} \vepsd{n+1}{0} & = \e \Da \left( \rhoed{n+1}{b} \vepsd{n+1}{0} - \Delta a \ztepsd{n+1}{0}\rhoed{n+1}{0} \vepsd{n+1}{0} \right) \\
& =  \e \Da\left( \rhoed{n+1}{b}  \frac{\delta\zepsd{n+\nud}}{2 \e} - \frac{\Dt}{\e} \ztepsd{n+1}{0}\tepsd{n+1}{0} \right).
\end{aligned}
$$
Adding both equations  gives :
$$
 \e \delta \cled{n+\nud} + \Delta t \sum_{j \in \N} \ztepsd{n+1}{j}\tepsd{n+1}{j} \Delta a= \left( \muzed{n} + \frac{\Delta a}{2} \rhoed{n+1}{b}\right)\delta \zepsd{n+\nud},
 $$
 since $\sum_{j\in\N} \tepsd{n}{j} \Delta a = \sum_{j\in\N} \rhoed{n}{j} \vepsd{n}{j} \Delta a = \cled{n}$. 
 Now we make  the discrete difference of \eqref{eq.eul.lag.disc} between steps $n+1$ and $n$, 
 in order to express $\delta \cled{n+\nud}$ as a function
 of $\delta \zepsd{n+\nud}$. This reads :
 $$
 (\delta \cled{n+\nud},\bfv) +\left(\dx \left( \delta \zepsd{n+\nud}\right) ,\dx \bfv\right) +  (\delta (\lamed{}\zepsd{})^{n+\nud},\bfv) =0 
 $$
 We now close the problem solved by $\delta \zepsd{n+\nud}$~: 
%Multiplying by a test function $\bfv$ depending on the space variable 
%and using \eqref{eq.eul.lag.disc} gives~:
\begin{equation}\label{eq.delta.z.npud}
\begin{aligned}
& \left(\left( \muzed{n} + \frac{\Delta a}{2} \rhoed{n+1}{b}\right) \delta \zepsd{n+\nud},\bfv\right)
+ \e \left( \dx \left(\delta \zepsd{n+\nud}\right),\dx \bfv\right)\\
& + \e \left\{ \int_{\om}  \lamd{n+1} \zepsd{n+1} \bfv d\bfx
-  \int_{\om} \lamd{n}\zepsd{n} \bfv d\bfx  \right\} 
=\Delta t \left(  \sum_{j \in \N} \ztepsd{n+1}{j}\tepsd{n+1}{j} \Delta a,\bfv \right).
\end{aligned}
\end{equation}
We rewrite the difference 
$$
\begin{aligned}
J^{n+\nud}(\bfv) := &\int_\om \nud \left\{ \delta \lamd{n+\nud} (\zepsd{n+1}+ \zepsd{n})+ (\lamd{n+1}+\lamd{n})(\delta \zepsd{n+\nud}) \right\} \bfv d \bfx .
\end{aligned}
$$
Applying $J^{n+\nud}$ to $\bfv=\delta \zepsd{n+\nud}$ and using that both $\zepsd{n+1}$ and $\zepsd{n}$ satisfy the constraint, reduces to :
$$
%\left\{
\begin{aligned}
J^{n+\nud}(\delta \zepsd{n+\nud}) := 
&\nud \int_{\om} (\lamd{n+1}+ \lamd{n}) \left| \delta \zepsd{n+\nud} \right|^2 dx,  
%\leq \frac{2 C}{\sqrt{\e}} \int_\om | \delta \zepsd{n+\nud} | ^2 d\bfx%\\
%&\\
\end{aligned}
%\right.
$$
cancelling the term containing the finite differences  $\delta \lamd{n+\nud}$.
Next we use the crucial estimates from Proposition \ref{proposition.lag.multi}, indeed :
$$
J^{n+\nud}(\delta \zepsd{n+\nud}) \leq \left(\nrm{\lamd{n}}{L^1(\om)}+\nrm{\lamd{n+1}}{L^1(\om)}\right) \nrm{\delta \zepsd{n+\nud} }{L^\infty(\om)}^2.
$$
In one space dimension, the Gagliardo-Nirenberg estimates (cf. \cite{AdFou.book}, p. 140, Theorem 5.9) provide
$$
\begin{aligned}
 \nrm{\delta \zepsd{n+\nud} }{L^\infty(\om)}^2 \leq & 
C \nrm{\delta \zepsd{n+\nud} }{H^1(\om)} \nrm{\delta \zepsd{n+\nud} }{L^2(\om)} \\
&  \leq C \left( \nrm{\delta \zepsd{n+\nud} }{L^2(\om)}^2 + 
\nrm{\dx \delta \zepsd{n+\nud} }{L^2(\om)} \nrm{\delta \zepsd{n+\nud} }{L^2(\om)}  \right)\\
& \leq C \left( \e^{-\nud} \nrm{\delta \zepsd{n+\nud} }{L^2(\om)}^2 + \e^\nud \nrm{\dx \delta \zepsd{n+\nud} }{L^2(\om)}^2 \right).
\end{aligned}
$$
%We used as well for the latter estimate Corollary \ref{coro.lag.multi}.
Thus setting $\bfv=\delta \zepsd{n+\nud}$ in the weak formulation above 
and because there is an $\e$ in front of $J^{n+\nud}$ in \eqref{eq.delta.z.npud} one writes finally :
$$
\begin{aligned}
 (\mumin - 2C \sqrt{\e}) &\nrm{\delta \zepsd{n+\nud}}{L^2(\om)}^2 + ( \e  - C \e^{\frac{3}{2}} ) \nrm{\dx \delta \zepsd{n+\nud}}{L^2(\om)}^2 \\
 & \leq %\Delta t 
\Delta t \nrm{\sum_{j \in \N} \ztepsd{n+1}{j}\tepsd{n+1}{j} \Delta a}{L^2(\om)}\nrm{\delta \zepsd{n+\nud}}{L^2(\om)}.
\end{aligned}
$$
Using Young's inequality on the right hand side above,  for $\e$ small enough,  one has :
$$
\begin{aligned}
\frac{1}{\Delta t} \sum_{n=0}^N \nrm{\delta \zepsd{n+\nud}}{L^2(\om)}^2 & \lesssim \sum_{n=0}^N  \Delta t \nrm{\sum_{j \in \N} \ztepsd{n+1}{j}\tepsd{n+1}{j} \Delta a}{L^2(\om)}^2 \\
& \lesssim  \Delta t \sum_{n=0}^N \int_\om \sum_{j\in \N} \rhoed{n}{j} (\vepsd{n}{j})^2 \Delta a = \Delta t \sum_{n=0}^N \cD_n \leq C. 
\end{aligned}
$$
\end{proof}
The previous argument provides uniqueness as well~:
\begin{proposition}
 Under hypotheses \ref{hypo.data}, \ref{hypo.data.deux} and \ref{hypo.data.trois}, there exists a unique solution $\zepsD \in L^\infty((0,T);\bho)\cap H^1((0,T);\bL^2(\om))$ solving \eqref{eq.eul.lag.disc}.
\end{proposition}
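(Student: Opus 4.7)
My plan is to argue by induction on the time step $n$, reducing the task to showing uniqueness of $\zepsd{n}$ given the past history $(\zepsd{k})_{k<n}$. Existence at each step is provided by Theorem \ref{thm.exists.minimis}, and once uniqueness is settled the regularity assertion comes essentially for free: the uniform energy bound of Lemma \ref{lem.nrj} combined with the pointwise constraint $|\zepsd{n}|=1$ yields the $L^\infty_t\bho$ control on $\zepsD$, while Proposition \ref{prop.time.compactness} is exactly an $H^1_t\bL^2_\bfx$ estimate on the piecewise linear interpolant $\tizepsD$.

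For the inductive step, I would take two solutions $\zepsd{n,1}$ and $\zepsd{n,2}$ of \eqref{eq.eul.lag.disc} sharing the same past, set $\Delta^n := \zepsd{n,1} - \zepsd{n,2}$, and subtract the two Euler--Lagrange equations. Because past values coincide by induction, the delay operator simplifies to $\cled{n,1}-\cled{n,2} = (\muzed{n}/\e)\Delta^n$, so the difference satisfies
\begin{equation*}
\frac{\muzed{n}}{\e}(\Delta^n,\bfv) + (\dx\Delta^n,\dx\bfv) + \int_\om\bigl(\lamed{n,1}\zepsd{n,1} - \lamed{n,2}\zepsd{n,2}\bigr)\cdot\bfv\,d\bfx = 0, \quad \forall \bfv \in \bho.
\end{equation*}
The hard part will be this last integrand: the sign of the Lagrange multipliers is a priori uncontrolled, and they are only known to lie in $L^1(\om)$ by Proposition \ref{proposition.lag.multi}.

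To tackle it, I would decompose symmetrically
\begin{equation*}
\lamed{n,1}\zepsd{n,1} - \lamed{n,2}\zepsd{n,2} = \tfrac12(\lamed{n,1}+\lamed{n,2})\Delta^n + \tfrac12(\lamed{n,1}-\lamed{n,2})(\zepsd{n,1}+\zepsd{n,2})
\end{equation*}
and test with $\bfv=\Delta^n$: the sphere constraint causes the second piece to vanish pointwise, since $(\zepsd{n,1}+\zepsd{n,2})\cdot\Delta^n = |\zepsd{n,1}|^2-|\zepsd{n,2}|^2 = 0$ almost everywhere---the very same cancellation that rendered the closed equation of Proposition \ref{prop.time.compactness} tractable. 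What remains, $\tfrac12\int_\om(\lamed{n,1}+\lamed{n,2})|\Delta^n|^2 d\bfx$, I would control by combining the uniform $L^1(\om)$ bound of Proposition \ref{proposition.lag.multi} with the one-dimensional Gagliardo--Nirenberg interpolation
\begin{equation*}
\nrm{\Delta^n}{L^\infty(\om)}^2 \lesssim \e^{-1/2}\nrm{\Delta^n}{\bL^2(\om)}^2 + \e^{1/2}\nrm{\dx\Delta^n}{\bL^2(\om)}^2,
\end{equation*}
so that the gradient contribution is absorbed by the Dirichlet term on the left. Using $\muzed{n}\geq\mumin>0$ from Lemma \ref{lem.muze.bounds}, both coefficients on the left-hand side should remain strictly positive for $\e$ small enough, forcing $\Delta^n = 0$ a.e.\ in $\om$ and closing the induction.
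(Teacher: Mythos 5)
Your argument is essentially the paper's own proof: induction on $n$, subtraction of the two Euler--Lagrange equations, cancellation of the multiplier-difference term via the sphere constraint, and absorption of the remaining $\tfrac12\int_\om(\lamed{n,1}+\lamed{n,2})|\Delta^n|^2$ term using the uniform $L^1(\om)$ bound of Proposition \ref{proposition.lag.multi} together with the one-dimensional Gagliardo--Nirenberg interpolation and the lower bound $\mumin$, exactly as in Proposition \ref{prop.time.compactness}. The only slip is cosmetic: since $\vepsd{n}{0}=\delta\zepsd{n-\nud}/(2\e)$, the coefficient of $\Delta^n/\e$ in $\cled{n,1}-\cled{n,2}$ is $\bigl(\sum_{j\geq1}\rhoed{n}{j}+\rhoed{n}{0}/2\bigr)\Delta a$ rather than $\muzed{n}$, but this is still bounded below by $\mumin/2$, so the conclusion is unaffected.
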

\begin{proof}
 We use induction arguments to show the claim. We suppose that there exists two solutions ${\mathbf z}_{\e,\Delta,i}$ for $i\in\{1,2\}$.
 We denote by $\delta {\mathbf z}^k := {\mathbf Z}^k_{\e,2}-{\mathbf Z}^k_{\e,1}$, and we write the equation it satisfies for $k=0$~:
 $$
 \left( \left(\sum_{j=1}^\infty\rhoed{0}{j} + \rhoed{0}{0}/2\right) \Delta a\; \delta {\mathbf z}^0,\bfv\right)+ \e (\dx \delta {\mathbf z}^0,\bfv)+ \e (\delta \lamd{0}{\mathbf z}_{\e,2}+ \lambda_{\e,1}^0 \delta {\mathbf z}^0,\bfv)=0.
 $$
 Thus choosing $\bfv = \delta {\mathbf z}^0$ and using the same arguments as above implies that $\delta {\mathbf z}^0=0$.
 We suppose at this point that  $\delta {\mathbf z}^k=0$ for $k\leq n$. Then a careful decomposition of ${\mathbf  {\mathcal L}}^n_{\e,\Delta, 2} - {\mathbf  {\mathcal L}}^n_{\e,\Delta, 1}$ leads to 
 $$
 \left( \left\{ \sum_{j=1}^\infty\rhoed{n+1}{j} + \rhoed{n+1}{0}/2 \right\} \Delta a \delta {\mathbf z}^{n+1},\bfv\right)+ \e (\dx \delta {\mathbf z}^{n+1},\bfv)+ \e (\delta \lamd{n+1}{\mathbf z}_{\e,2}+ \lambda_{\e,1}^{n+1} \delta {\mathbf z}^{n+1},\bfv)=0,
 $$
which again, thanks to the lower bound $\mumin$ established in Lemma \eqref{lem.muze.bounds}, shows that
$$
\left(\frac{\mumin}{2}-\sqrt{\e}c_1\right)\nrm{ \delta {\mathbf z}^{n+1}}{\bL^2(\om)}^2 + \e( 1 -c_2 \sqrt{\e}) \nrm{\dx  \delta {\mathbf z}^{n+1}}{\bL^2(\om)}^2 \leq 0,
$$
proving the claim for $\e$ small enough and $k=n+1$. This ends the proof since $ {\mathbf z}_{\e,\Delta,2}= {\mathbf z}_{\e,\Delta,1}$.
\end{proof}

\begin{proposition}\label{prop.cvg.cz}
 Under hypotheses \ref{hypo.data}, \ref{hypo.data.deux} and \ref{hypo.data.trois}, $\tizepsD$, the piecewise linear interpolation of $(\zepsd{n})_{n\in\Z}$ satisfies
 $$
\tizepsD \in C^{0,\frac{(1-\gamma)}{4}}([0,T];C^{0,\gamma}(\ov{\om}))
$$
for every $\gamma \in (0,1)$, the bound is uniform with respect to $\Delta t$ and $\e$. 
Thus
$\tizepsD$ converges strongly in $C^0(\ov{\om}\times[0,T])$ when $\Delta t$ goes to zero. 
Moreover,  $\zepsD$ converges strongly in $L^\infty((0,T);C(\ov{\om}))$. 
\end{proposition}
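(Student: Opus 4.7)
The plan rests on combining the two uniform bounds already in hand: the $L^\infty_t\bH^1_\bfx$ control on $\zepsD$ inherited from the energy estimate in Lemma \ref{lem.nrj}, and the $H^1_t\bL^2_\bfx$ bound on the piecewise linear interpolant $\tizepsD$ obtained in Proposition \ref{prop.time.compactness}. The strategy is to upgrade these two heterogeneous uniform bounds to a single joint space-time Hölder estimate, apply Arzelà-Ascoli to pass to a $C^0$ limit, invoke uniqueness to promote it to convergence of the whole family, and finally transfer the result to the piecewise constant version $\zepsD$.

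First I would recover the spatial regularity: since $\om\subset\RR$ is one-dimensional, Morrey's embedding $\bH^1(\om)\hookrightarrow C^{0,1/2}(\ov\om)$ directly converts the $L^\infty_t\bH^1_\bfx$ bound into a uniform estimate $\|\tizepsD(\cdot,t)\|_{C^{0,1/2}(\ov\om)}\leq C$ for every $t\in[0,T]$. Next, the control $\|\dt\tizepsD\|_{L^2((0,T);\bL^2(\om))}\leq C$ from Proposition \ref{prop.time.compactness} yields, by Cauchy-Schwarz on the time integral, $\|\tizepsD(\cdot,t+\tau)-\tizepsD(\cdot,t)\|_{\bL^2(\om)}\leq C\sqrt{\tau}$. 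I would then apply the one-dimensional Gagliardo-Nirenberg inequality $\|f\|_{L^\infty(\om)}\leq C\|f\|_{\bL^2(\om)}^{1/2}\|f\|_{\bH^1(\om)}^{1/2}$ to the time increment $f=\tizepsD(\cdot,t+\tau)-\tizepsD(\cdot,t)$; its $\bH^1$ norm stays uniformly bounded by twice the spatial estimate, while its $\bL^2$ norm is $O(\sqrt\tau)$, producing the temporal modulus $\|\tizepsD(\cdot,t+\tau)-\tizepsD(\cdot,t)\|_{L^\infty(\om)}\leq C\tau^{1/4}$.

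The third step is to interpolate these two pieces. Viewing $v=\tizepsD(\cdot,t+\tau)-\tizepsD(\cdot,t)$ as a function of $\bfx$, the Hölder real-interpolation identity $[C^0(\ov\om),C^{0,1/2}(\ov\om)]_\theta\simeq C^{0,\theta/2}(\ov\om)$ combined with the bounds $\|v\|_{C^{0,1/2}(\ov\om)}\leq C$ and $\|v\|_{L^\infty(\om)}\leq C\tau^{1/4}$ allows one to trade spatial smoothness for temporal smoothness and obtain, for each admissible $\gamma\in(0,1)$, an estimate of the form $\|v\|_{C^{0,\gamma}(\ov\om)}\leq C\tau^{(1-\gamma)/4}$, which is exactly the announced uniform parabolic Hölder control $\tizepsD\in C^{0,(1-\gamma)/4}_t C^{0,\gamma}_\bfx$. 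Since the embedding $C^{0,(1-\gamma)/4}_t C^{0,\gamma}_\bfx\hookrightarrow C^0(\ov\om\times[0,T])$ is compact, Arzelà-Ascoli extracts a cluster point. The uniqueness of the minimizer at each discrete step, together with the time-compactness argument behind Proposition \ref{prop.time.compactness}, ensures uniqueness of the cluster point, hence convergence of the full family $\tizepsD$ in $C^0(\ov\om\times[0,T])$.

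For the piecewise constant $\zepsD$, on each cell $(n\Dt,(n+1)\Dt)$ we have $\zepsD(\cdot,t)=\tizepsD(\cdot,n\Dt)$, so the temporal modulus above yields $\|\zepsD-\tizepsD\|_{L^\infty((0,T);C(\ov\om))}\leq C\Dt^{1/4}$; this drives the difference to zero and transfers convergence to $\zepsD$ in $L^\infty_t C_\bfx$, the slight loss of time continuity reflecting the fact that $\zepsD$ is itself discontinuous in time. The main technical obstacle will be step three: tracking the real-interpolation exponents carefully enough to match the precise Hölder pair $(\gamma,(1-\gamma)/4)$ in the statement; the rest of the argument is relatively standard compactness bookkeeping once the two uniform bounds are in place.
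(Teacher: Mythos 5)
Your proposal is essentially the paper's own argument: the two uniform bounds ($L^\infty_t\bH^1_\bfx$ from Lemma \ref{lem.nrj} and $H^1_t\bL^2_\bfx$ from Proposition \ref{prop.time.compactness}) are combined by interpolation applied to the time increment $\tizepsD(\cdot,t_2)-\tizepsD(\cdot,t_1)$, followed by Ascoli--Arzel\`a; the only cosmetic difference is that you interpolate in the H\"older scale $[C^0,C^{0,1/2}]_\theta$ whereas the paper interpolates in the Sobolev scale $\nrm{u}{H^{(\gamma+1)/2}}\leq \nrm{u}{L^2}^{(1-\gamma)/2}\nrm{u}{H^1}^{(1+\gamma)/2}$ and then embeds into $C^{0,\gamma}$. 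On the point you flag as the main obstacle: your route actually yields the exponent $(1-2\gamma)/4$ for $\gamma\in(0,1/2)$ (since $\|v\|_{C^{0,\gamma}}\lesssim\|v\|_{C^0}^{1-2\gamma}\|v\|_{C^{0,1/2}}^{2\gamma}$), not $(1-\gamma)/4$ for all $\gamma\in(0,1)$; in one dimension $H^1$ only embeds into $C^{0,1/2}$, so the stronger pair of exponents in the statement is not reachable this way (the paper's own embedding step has the same limitation). This discrepancy is harmless for everything that is actually used downstream — uniform space-time equicontinuity, strong convergence in $C^0(\ov\om\times[0,T])$, and the transfer to $\zepsD$ via $\|\zepsD-\tizepsD\|_{L^\infty_tC_\bfx}\lesssim\Dt^{1/4}$ — all of which your argument delivers correctly.
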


\begin{proof}
 Thanks to Lemma \ref{lem.nrj}, $\tizepsD$ belongs to $L^\infty_t {\mathbf H}^1_x$ uniformly with respect to  $\e$, which shows weak-$\star$ convergence in this space.
 Weak convergence in $H^1_t {\mathbf L}^2_x$ follows from Proposition \ref{prop.time.compactness}. The interpolation inequality
 $$
 \nrm{u}{C^{0,\gamma}(\ov{\om})} \leq c \nrm{u}{H^{(\gamma+1)/2}(\om)} \leq c \nrm{u}{L^2(\om)}^{\frac{(1-\gamma)}{2}} \nrm{u}{H^1(\om)}^{\frac{(1+\gamma)}{2}}
 $$
holds for every $u \in H^1(\om)$  and for every $\gamma \in (0,1)$. Combined with the $L^\infty_t {\mathbf H}^1_x$ bound provided by Lemma \ref{lem.nrj}, this leads to :
$$
\nrm{\tizepsD(t_2)-\tizepsD(t_1)}{C^{0,\gamma}(\ov{\om})} \leq c (t_2-t_1)^{\frac{(1-\gamma)}{4}}. 
$$
We complete the convergence proof for $\tizepsD$ by an application of the Ascoli-Arzela theorem.
\end{proof}

\begin{corollary}\label{coro.conv.zeps}
 Under the previous hypotheses, the same result can be derived for $\zeps := \lim_{\Delta t \to 0} \zepsD$, {\em i.e.}
  $$
\zeps \in C^{0,\frac{(1-\gamma)}{4}}([0,T];C^{0,\gamma}(\ov{\om}))
$$
for every $\gamma \in (0,1)$, the bound is uniform with respect to  $\e$. 
This implies that 
$\zeps$ converges to $\zz$ strongly in $C^0(\ov{\om}\times[0,T])$ when $\e$ goes to zero. 
%Moreover, one has $\zeps$ converges strongly in $L^\infty((0,T);C(\ov{\om}))$. 
\end{corollary}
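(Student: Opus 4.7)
The corollary follows by passing to the limit $\Delta t \to 0$ in the estimates established in Proposition \ref{prop.cvg.cz} and then applying Ascoli--Arzel\`a once more with respect to $\e$. The key observation is that the Hölder bound
\[
\nrm{\tizepsD(t_2)-\tizepsD(t_1)}{C^{0,\gamma}(\ov{\om})} \leq c\,(t_2-t_1)^{\frac{(1-\gamma)}{4}}
\]
from Proposition \ref{prop.cvg.cz} is uniform in both $\Delta t$ and $\e$. Since the Hölder seminorms are lower semicontinuous under pointwise (or uniform) convergence, this bound is preserved in the limit $\Delta t \to 0$, giving the same estimate for $\zeps$ with a constant depending only on the data (not on $\e$).

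More concretely, the plan is: first, I would recall that $\tizepsD \to \zeps$ strongly in $C^0(\ov{\om}\times[0,T])$ as $\Delta t \to 0$ (at $\e$ fixed), so in particular pointwise. Applied to each pair $(t_1,t_2)\in[0,T]^2$ and to the $C^{0,\gamma}$ seminorm (which is defined as a supremum and hence lower semicontinuous under pointwise convergence in $\bfx$), we obtain
\[
\nrm{\zeps(t_2)-\zeps(t_1)}{C^{0,\gamma}(\ov{\om})} \leq c\,(t_2-t_1)^{\frac{(1-\gamma)}{4}},
\]
with $c$ independent of $\e$. Combined with the uniform $L^\infty_t \bH^1_\bfx$ bound from Lemma \ref{lem.nrj}, which passes to the limit and provides a uniform-in-$\e$ bound for $\zeps(t,\cdot)$ in $C^{0,\gamma}(\ov{\om})$ via the same interpolation inequality, this gives $\zeps \in C^{0,(1-\gamma)/4}([0,T];C^{0,\gamma}(\ov{\om}))$ uniformly in $\e$.

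Finally, to obtain the strong convergence of $\zeps$ to $\zz$ in $C^0(\ov{\om}\times[0,T])$ as $\e \to 0$, I would apply the Ascoli--Arzel\`a theorem to the family $\{\zeps\}_{\e>0}$ in $C^0(\ov{\om}\times[0,T])$: equicontinuity follows from the uniform Hölder estimate (in both space and time), and uniform boundedness follows from the pointwise constraint $|\zeps|=1$. This yields a subsequence converging uniformly to some continuous function $\zz$ on $\ov{\om}\times[0,T]$. The main technical point to be careful about is ensuring that the constants $c$ in the Hölder estimates really do not degenerate as $\e \to 0$; this is guaranteed since the bound from Proposition \ref{prop.time.compactness} and the energy estimate of Lemma \ref{lem.nrj} are both $\e$-uniform. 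The identification of the limit with the unique solution $\zz$ of \eqref{eq.heat.harmonic.map} is deferred to Section \ref{sec.conv.e}, so the corollary itself only asserts convergence to \emph{some} limit $\zz$ in $C^0(\ov{\om}\times[0,T])$.
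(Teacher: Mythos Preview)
Your proposal is correct and close in spirit to the paper's argument, but there is a small methodological difference worth noting. You pass the final H\"older estimate
\[
\nrm{\tizepsD(t_2)-\tizepsD(t_1)}{C^{0,\gamma}(\ov{\om})} \leq c\,(t_2-t_1)^{\frac{(1-\gamma)}{4}}
\]
directly to the limit $\Delta t\to 0$, invoking lower semicontinuity of the $C^{0,\gamma}$ seminorm under uniform convergence. The paper instead first passes the \emph{ingredients} of this estimate to the limit: it uses weak lower semicontinuity of the $\bL^2_{\bfx,t}$ norm to deduce from the uniform bound on $\dt\tizepsD$ that $\dt\zeps$ is bounded in $\bL^2_{\bfx,t}$ uniformly in $\e$, obtains the $L^\infty_t\bH^1_\bfx$ bound on $\zeps$ similarly, and only then re-runs the interpolation argument of Proposition~\ref{prop.cvg.cz} on $\zeps$ itself. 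Both routes are equally valid; yours is slightly more direct, while the paper's has the side benefit of explicitly recording the $H^1_t\bL^2_\bfx\cap L^\infty_t\bH^1_\bfx$ regularity of $\zeps$ uniformly in $\e$, which is used later in Section~\ref{sec.conv.e}.
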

\begin{proof}
 Considering $\tizepsD$, the  piecewise continuous function in time, $\dt \tizepsD$ is bounded in $\bL^2_{\bfx,t}$ uniformly with respect to $\e$,
 thus $\dt \tizepsD\wcvg \dt \zeps$ weakly in $\bL^2_{\bfx,t}$ and one has that
 $$
\nrm{ \dt \zeps}{\bL^2_{\bfx,t}} \leq \liminf_{\Delta \to 0} \nrm{\dt \tizepsD }{\bL^2_{\bfx,t}} =  \liminf_{\Delta \to 0} \left( \Delta t \sum_{n\in\N} \nrm{\delta \zepsd{n+\nud} /\Delta t}{\bL^2(\om)}^2 \right)^\nud.
$$
A similar argument provides an $L^\infty_t \bH^1_\bfx$ bound for $\zeps$. 
One can then follow again the same steps as in the proof of Proposition \ref{prop.cvg.cz}.
\end{proof}

\section{Convergence when $\e$ is fixed and $\Delta a$ goes to 0.}\label{sec.conv.eps.fixed}

%Using Theorem 1 p. 3073 \cite{DreJun}, one can claim that
%\begin{theorem}\label{thm.strong.cvg}
% Under the previous hypotheses, and for $\e$ fixed but small enough, there
%exists a subsequence of $\dx z_{\e,\Delta}$ which converges strongly in $L^q((0,T);L^2(\om))$, $1\leq q<\infty$, to a limit $\dx z_\e$ which belongs to $C^0([0,T];L^2(\om))$ when $\Delta \to 0$.
%%For any fixed $\e$ small enough, the same holds for $\dx z_{\e,\Delta}$.
%\end{theorem}
\newcommand{\bsvarphi}{{\boldsymbol \varphi}}
		
%For the time being we assume that the ingredients presented above are enough to 
%insure convergence when $\e$ is fixed and $\Delta a$ goes to zero. `
Next, we consider the convergence of $\cleD(\bfx,t) := \sum_{n=0}^N \chiu{(n,n+1)\Delta t}(t) \cled{n}(\bfx)$.
\begin{proposition}\label{prop.cled}
 Under hypotheses \ref{hypo.data}, \ref{hypo.data.deux} and \ref{hypo.data.trois}, for every fixed  $\e>0$,  
 the discrete delay term converges to the continuous limit when $\Delta a$ goes to zero, {\em i.e.}
 $$
 \int_0^T \int_\om \cleD (\bfx,t) \varphiD (\bfx,t) d\bfx dt \to \int_0^T \int_\om \cle (\bfx,t) \bvarphi(\bfx,t) d\bfx dt 
 $$
 for all $ \bsvarphi \in C^0([0,T];L^2(\om))$ and 
 $\varphiD(\bfx,t) := \sum_{n=0}^N \chiu{(n,n+1)\Delta t}(t) \varphin(\bfx)$ where
 $\varphin(\bfx) := $ $\int_{n\Delta t}^{(n+1)\Delta t}$ $ {\boldsymbol \varphi}(\bfx,t) dt / \Delta t$.
\end{proposition}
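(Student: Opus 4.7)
The plan is to transpose the discrete delay onto the test function, reducing the convergence to passing to the limit on $\rhoeD$ and $\zepsD$, for which strong convergences are already at hand (Theorem \ref{thm.cvg.dicrete.rhoe} and Corollary \ref{coro.conv.zeps}).

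First I would establish the algebraic identity
\begin{equation*}
\int_0^T\!\int_\om \cleD\, \varphiD\, d\bfx\, dt = \int_0^T\!\int_\om\!\int_0^\infty \rhoeD(\bfx,a,t)\, \frac{\zepsD(\bfx,t) - \zepsD(\bfx,t-\e a)}{\e}\, \varphiD(\bfx,t)\, da\, d\bfx\, dt,
\end{equation*}
with the convention $\zepsD(\bfx,t-\e a)=\zpd{m}$ for the appropriate $m<0$ when $t<\e a$. The key observation is that, thanks to the CFL relation $\Dt=\e\Da$, the map $(t,a)\mapsto t-\e a$ restricted to the cell $C^n_j=(j\Da,(j+1)\Da)\times(n\Dt,(n+1)\Dt)$ splits it into two sub-regions of equal measure $\Dt\Da/2$, landing respectively in $((n-j-1)\Dt,(n-j)\Dt)$ and $((n-j)\Dt,(n-j+1)\Dt)$. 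Therefore the cell average of $\zepsD(\bfx,t-\e a)$ equals exactly $(\zepsd{n-j}+\zepsd{n-j-1})/2$, which is precisely the symmetric two-point average appearing in $\cled{n}$.

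Then I would decompose $\int_0^\infty = \int_0^A+\int_A^\infty$ and treat each piece separately. For the tail, I would exploit the crude uniform bound $|\zepsD(\bfx,t)-\zepsD(\bfx,t-\e a)|/\e\leq 2/\e$, valid since $|\zepsD|=1$ and $|\zp|=1$ by hypothesis \ref{hypo.data.trois}, combined with the moment estimate
\begin{equation*}
\int_A^\infty \nrm{\rhoeD(\cdot,a,\cdot)}{L^\infty_{\bfx,t}}\,da \;\leq\; \frac{1}{1+A}\,\nrm{(1+a)\rhoeD}{Y_T},
\end{equation*}
which is uniformly controlled in $\Delta a$ by Theorem \ref{thm.cvg.dicrete.rhoe}, and the same bound applies to the continuous counterpart. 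For the bounded part $a\in(0,A)$, the strong convergences $\zepsD\to\zeps$ in $L^\infty_t C(\ov\om)$ (Corollary \ref{coro.conv.zeps}), $\rhoeD\to\rhoe$ in $Y_T$ (Theorem \ref{thm.cvg.dicrete.rhoe}) and $\varphiD\to\bvarphi$ in $L^2((0,T)\times\om)$ (by continuity of $\bvarphi\in C^0_t L^2_\bfx$) can be combined via a standard triple-product decomposition. The Lipschitz regularity of $\zp$ from hypothesis \ref{hypo.data.trois} extends the uniform convergence of $\zepsD(\cdot-\e a)$ to the past regime $t-\e a<0$, so that the integrand converges uniformly on $(0,A)\times(0,T)\times\om$.

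The main obstacle I foresee is the integral identification of the first step: the symmetric two-point average $(\zepsd{n-j}+\zepsd{n-j-1})/2$ is not transparently a cell average, and the computation must track carefully the transition across $t=\e a$ where $\zepsD$ switches from its running branch to its past values $\zpd{m}$, and where the convergence $\zpd{m}\to\zp$ relies on the Lipschitz regularity in hypothesis \ref{hypo.data.trois}. Once this representation is in place, the rest is essentially routine: since $\e$ is held fixed the factor $1/\e$ is harmless, and the age-decay encoded in $(1+a)\rhoeD$ controls the tail uniformly in $\Delta a$.
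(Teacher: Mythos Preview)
Your approach is correct and shares the decisive first step with the paper: recognizing that, thanks to the CFL relation $\Dt=\e\Da$, the characteristic line $t-\e a=\mathrm{const}$ cuts each cell $C^n_j$ into two triangles of equal area, so that the cell average of $\zepsD(\bfx,t-\e a)$ is exactly the symmetric two-point average $(\zepsd{n-j}+\zepsd{n-j-1})/2$. The paper makes this explicit by introducing the upper and lower half-cells $\ocnj,\ucnj$ and arrives at the same integral representation.

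Where you diverge is in the subsequent decomposition. The paper first separates $\cleD$ into its local piece $\mueD\zepsD$ and its delayed piece, then splits the delayed integral along $t\lessgtr\e a$: on $\{t>\e a\}$ the age is bounded by $T/\e$ and only $L^2_{\bfx,t}$ convergences of $\zepsD$ and $\varphiD$ are needed; on $\{t<\e a\}$ one is in the past regime and the $(1+a)$-weighted convergence of $\rhoeD$ together with the Lipschitz bound on $\zp$ does the job. Your route keeps the difference quotient intact and instead truncates in age at $A$, using the crude bound $2/\e$ on the difference quotient and the first moment of $\rhoeD$ for the tail. This is a legitimate alternative; it is arguably more direct but introduces an auxiliary parameter and still forces you to treat the transition $t-\e a<0$ inside the bounded piece $a<A$, so you do not escape the past-regime analysis.

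Two small points to tighten. First, your displayed tail inequality mixes norms: the left side carries $L^\infty_t$ while $Y_T$ on the right is $L^1_{a,t}L^\infty_\bfx$; what you actually need (and what works) is the integrated bound $\int_0^T\!\int_A^\infty \esupx\rhoeD\,da\,dt\le(1+A)^{-1}\nrm{(1+a)\rhoeD}{Y_T}$. Second, the integrand on the bounded piece does not converge \emph{uniformly} in $\bfx$, since $\varphiD\to\bvarphi$ only in $C^0_tL^2_\bfx$; the product argument must be run in $L^2$ as the paper does. Finally, the strong convergence $\zepsD\to\zeps$ in $L^\infty_tC(\ov\om)$ you invoke is Proposition~\ref{prop.cvg.cz}, not Corollary~\ref{coro.conv.zeps} (which concerns the limit $\e\to0$).
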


\begin{proof}
In what follows the terms that we handle are integrable on the domain $\om\times\rr\times (0,T)$
so the systematic use of Fubini's Theorem is implicitly assumed and we freely commute integrals with respect to  space, age and time.
  We set $I_\Delta := \int_\om \int_0^T \cleD(\bfx,t) $ $ \varphiD(\bfx,t) dt d\bfx$ that we split in two parts~:
$$
\begin{aligned}
 I_\Delta :=&  \ue \int_\om \int_0^T \int_\rr \rhoeD(\bfx,a,t) \zepsD(\bfx,t) \cdot \varphiD(\bfx,t) da dt d\bfx \\
& -   \ue \int_\om   \Delta a \sum_{n=0}^{n} \int_{n\Delta t}^{(n+1)\Delta t} \sum_{j=0}^\infty \left(\frac{\zepsd{n-j}+\zepsd{n-j-1}}{2}\right)  \cdot  \varphin  \rhoed{n}{j}  dt d\bfx%\\
%\int_\om \int_0^T \int_\rr \rhoeD(\bfx,a,t) \zepsD(\bfx,t-\e a) \varphiD(\bfx,t) dt d\bfx %
%&
 =: \ue \left( I_{1,\Delta} - I_{2,\Delta}\right).
\end{aligned} 
$$
By Lemma \ref{lem.muze.bounds},  $\mueD$ is uniformly bounded with respect to  $\e$, $\Delta t$ and $\Delta a$, $\mueD\wscvg \muze$ in the weak-$\star$ topology in $L^\infty((0,T)\times\om)$. Moreover since $L^2((0,T);L^2(\om))$ is a separable space, 
the step functions in time with values in $L^2(\om)$ are dense.  Thus $\varphiD$ tends to $\bvarphi$  strongly in $\bL^2_{\bfx,t}$ and  the product $\zepsD  \varphiD$ converges strongly in 
$L^1(\om\times(0,T))$.  All this gives :
$$
I_{1,\Delta}:=\int_0^T \int_\om \mueD \zepsD   \cdot \varphiD d \bfx dt \to \int_0^T \int_\om  \muze \zeps \cdot \bvarphi d\bfx dt.
$$  
For the second term, one first defines %\alert{$\ocnj$ et $\ucnj$ a définir}
$$
\ocnj := \{ (a,t) \in \cnj \st t>\e a + (n-j)\Delta t \}, \; \ucnj := \{ (a,t) \in \cnj \st t<\e a + (n-j)\Delta t \}, 
$$ 
and then one has :
$$
\begin{aligned}
I_{2, \Delta} = &  \int_\om \sum_{n=0}^N \sum_{j=0}^{\infty} \left\{ \int_{\ocnj}  \zepsd{n-j} \cdot \varphin  \rhoed{n}{j}  da dt 
+ \int_{\ucnj} \zepsd{n-j-1} \cdot \varphin \rhoed{n}{j}  da dt \right\} d \bfx   \\
= &  \int_\om \sum_{n=0}^N \sum_{j=0}^{\infty}\int_\cnj \zepsD(\bfx,t-\e a)\cdot  \varphiD(\bfx,t)  \rhoeD(\xat)  da dt d \bfx  \\
= & \int_\om  \int_{\rr\times (0,T)}  \zepsD(\bfx,t-\e a)  \cdot \varphiD(\bfx,t)  \rhoeD(\xat) da dt d \bfx.
\end{aligned}
$$
%\int_0^{\left( \lfloor \frac{t}{\Delta t}\rfloor+1\right)\Delta t/\e} 
We consider the convergence of the term $\zepsD(\bfx,t-\e a)\varphiD(\bfx,t)$ on $\om\times\{ (a,t) \in \rr \times (0,T) \st t>\e a\}$~:
$$
\begin{aligned}
 & \left| \int_0^T \int_\om \int_0^{\tse} ( \zepsD(\bfx,t-\e a) \cdot \varphiD(\bfx,t) - \zeps(\bfx,t-\e a) \cdot{\boldsymbol  \varphi}(\bfx,t))) \rhoeD da \; d\bfx d t \right|    \leq  \\
%& \int_0^T  \int_0^{\tse}  \nrm{(\zepsD- \zeps)(\cdot,t-\e a)}{L^2(\om)}\nrm{ \varphiD(\cdot,t)}{L^2(\om)} da \; d\bfx d t \\
%&  +  \int_0^T  \int_0^{\tse} \nrm{\zeps(\cdot,t-\e a)}{L^2(\om)} \nrm{(\varphiD -  \varphi)(\cdot,t)}{L^2(\om)} da \; d\bfx d t \\
& \leq \frac{T\sqrt{T}}{\e}\left(\nrm{\varphiD}{L^\infty_tL^2_\bfx} \nrm{\zepsD-\zeps}{L^2_{\bfx,t}} +\nrm{\zeps}{L^\infty_tL^2_\bfx}\nrm{\varphiD-{\boldsymbol\varphi}}{L^2_{\bfx,t}}\right) \sim o_{\Delta a}(1)
 \end{aligned}
$$
and thus in a similar manner, as for $I_{1,\Delta}$, one proves the convergence of
$I_{2,1,\Delta}:=\int_0^T\int_\om \int_0^\tse \rhoeD(\xat) $ $\zepsD(\bfx,t-\e a) \cdot\varphiD da d \bfx dt$.
On the other hand, on $\om\times\{ (a,t) \in \rr \times (0,T) \st t<\e a\}$, one has that~:
$$
\begin{aligned}
I_{2,2,\Delta}& := \int_0^T\int_\om \int_\tse^\infty \rhoeD(\xat) (\zepsD(\bfx,t-\e a) -\zeps(\bfx,t-\e a))\cdot \varphiD(\bfx,t) da d \bfx dt \\
= &  \ue \int_0^T\int_\om \int_{\RR_-}  \rhoeD\left(\bfx,\frac{t-\tia}{\e},t\right) (\zpD(\bfx,\tia) -\zp(\bfx,\tia))\cdot \varphiD(\bfx,t) d\tia d \bfx dt \\
= &\ue \int_0^T \sum_{n<0} \int_{n\Delta t}^{(n+1)\Delta t} \esupx  \rhoeD\left(\cdot ,\frac{t-\tia}{\e},t\right) \nrm{\zpd{n}-\zp(\cdot,\tia)}{L^2_\bfx} \nrm{\varphiD(\cdot,t)}{L^2_\bfx} d\tia dt .
 \end{aligned}
$$
A simple computation shows that if $\tia \in (n,n+1)\Delta t$ then 
$$
\nrm{\zpd{n}-\zp(\cdot,\tia)}{\bL^2_\bfx} \leq \Delta t \nrm{\dt \zp}{L^\infty_t \bL^2_\bfx},
$$
which gives then that 
$$
\begin{aligned}
|I_{2,2,\Delta} | &\leq  \Delta t \nrm{\dt \zp}{L^\infty(\RR_-;\bL^2(\om))} \nrm{\varphiD}{L^\infty_t \bL^2_\bfx} \int_{\rr\times(0,T)} \esupx \rhoeD(\xat) da dt \\
& \leq C  \Delta t \nrm{\dt \zp}{L^\infty(\RR_-;\bL^2(\om))} \nrm{\varphiD}{L^\infty_t\bL^2_\bfx} T.
\end{aligned}
$$
In a similar way, one proves, thanks to hypotheses \ref{hypo.data.trois}, that
$$
\begin{aligned}
&\left| \int_0^T\int_\om \int_\tse^\infty (\rhoeD(\xat) -\rhoe(\xat) )\zeps(\bfx,t-\e a)\cdot\varphiD(\bfx,t) da d \bfx dt \right| \\
&\leq 
\nrm{(\rhoeD-\rhoe)(1+a)}{L^1_{a,t}L^\infty_\bfx} 
\left(\nrm{\zp(\cdot,0)}{\bL^2_\bfx} + \nrm{C_{\zp}}{L^2_\bfx}\right) \nrm{\varphiD}{L^\infty_tL^2_\bfx} \sim o_{\Delta a}(1).
\end{aligned}
$$
For the last part, on $\om\times\{ (a,t) \in \rr \times (0,T) \st t<\e a\}$, one has that~:
$$
\begin{aligned}
& J_{2,3,\Delta} := \int_0^T\int_\om \int_\tse^\infty \rhoe(\xat) \zp(\bfx,t-\e a)) \cdot (\varphiD(\bfx,t) -{\boldsymbol\varphi}(\bfx,t) )da d \bfx dt \\
 &\leq  \sup_{t \in (0,T)}  \int_\rr   (1+a)\esupx %\sup_{(\bfx,t)\in \om\times(0,T)}  
 \rhoe (\xat) da \left(\nrm{\zp(\cdot,0)}{\bL^2_\bfx} +\nrm{C_{\zp}}{L^2_\bfx} \right) \nrm{\varphiD-{\boldsymbol\varphi}}{L^1_t\bL^2_\bfx} \\
& \quad \sim  o_{\Delta a}(1),
 \end{aligned}
$$
which proves that
$$
\int_0^T\int_\om \int_\rr \rhoeD(\xat)\zepsD(t-\e a)\cdot\varphiD(x,t) -\rhoe(\xat)\zeps(t-\e a)\cdot\bvarphi(x,t)  \dxat \to 0
$$
and ends the proof.
\end{proof}
%We focus on the Lagrange multiplier's convergence :
%\begin{proposition}
% Under hypotheses above, one has that
% $$
% \lameD \wscvg \lame \in L^\infty((0,T);\cM(\om))
% $$
% moreover $\lame \in L^1(\om\times(0,T))$.
%\end{proposition}
%\begin{proof}
% The convergence result is the consequence of Proposition \ref{proposition.lag.multi}. The square of the gradient 
% converges to a limit measure : $\left| \dx \zepsD \right|^2 \wscvg g$ but $\dx \zepsD \wcvg \dx
%\end{proof}

%We claim
\begin{theorem}\label{thm.lim.delta}
Under  hypotheses above, there exists a unique $\zeps \in H^1((0,T);\bL^2(\om))$ $\cap $ $L^\infty((0,T);$ $\bho)$ solving, for almost every $t\in(0,T)$, 
\begin{equation}\label{eq.eul.lag.cont}
(\cle(\cdot,t) %(\rhoe(\cdot,t),\zeps(\cdot,t))
,\bfv ) +(\dx \zeps(\cdot,t),\dx \bfv) + \int_\om \lame(\bfx,t) \zeps(\bfx,t) \cdot \bfv(\bfx) d\bfx    =0,\quad \forall v \in \bho,
\end{equation}
where the brackets denote the $L^2(\om)$ scalar product and the Lagrange multiplier $\lame = -\cle\cdot\zeps-|\dx \zeps|^2$ is an $L^\infty((0,T);L^1(\om))$ function uniformly with respect to $\e$.
Moreover,  for  every   $(\bfx,t) \in \om\times[0,T]$,  $|\zeps(\bfx,t)|=1$.
%Moreover one has as well %for a. e. $t\in(0,T)$ that
%$$
% \cE_t(\zeps(\cdot,t))  \leq  \cE_t(w) ,\quad \forall w \in \pmr, \quad \forall t\in (0,T).
%$$
%$$
%\int_0^T 
%\left\{ 
%	\cE(\zeps(\cdot,t))
%	+ \int_\om \int_\rr \rhoe \veps^2 \zteps da d\bfx 
%	\right\} \varphi(t) d t \leq \cE(\zeps(\cdot,0))\varphi(0) ,
%$$
%for any positive $ \varphi \in C^\infty([0,T])$.
\end{theorem}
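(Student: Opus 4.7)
The plan is to pass to the limit $\Delta a \to 0$ in the time-integrated discrete Euler--Lagrange equation \eqref{eq.eul.lag.disc}, using the uniform estimates of Section \ref{sec.a.priori} together with the convergence of the delay term from Proposition \ref{prop.cled}. I organize the argument around four points.

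\textbf{Compactness and the constraint.} First I would collect the uniform bounds already at hand: Lemma \ref{lem.nrj} and Proposition \ref{prop.time.compactness} give a bound on $\tizepsD$ in $L^\infty_t\bH^1_\bfx\cap H^1_t\bL^2_\bfx$; Proposition \ref{prop.cvg.cz} then provides strong convergence $\tizepsD\to\zeps$ in $C^0(\overline\om\times[0,T])$, and the piecewise constant $\zepsD$ shares this strong limit. The constraint $|\zeps(\bfx,t)|=1$ passes to the limit pointwise by uniform convergence.

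\textbf{Passing to the limit via tangent test functions.} To bypass the nonlinearity of the Lagrange multiplier, I would test \eqref{eq.eul.lag.disc} at each time level with the discrete tangential projection $\bfv_\Delta:=\bvarphi-(\zepsD\cdot\bvarphi)\zepsD$ of a smooth test function $\bvarphi$. Since $\bfv_\Delta\cdot\zepsD=0$ and $\dx\zepsD\cdot\zepsD=0$, integration in time produces
\begin{equation*}
\int_0^T\!\!\Bigl[(\cleD,\bvarphi-(\zepsD\cdot\bvarphi)\zepsD)+(\dx\zepsD,\dx\bvarphi)-\!\int_\om(\zepsD\cdot\bvarphi)|\dx\zepsD|^2 d\bfx\Bigr]dt=0.
\end{equation*}
The delay contribution converges by Proposition \ref{prop.cled} together with the $\bL^2_{\bfx,t}$ bound on $\cleD$ (obtained by Cauchy--Schwarz from the dissipation term of Lemma \ref{lem.nrj}), applied against the strongly-converging test function $\bvarphi-(\zepsD\cdot\bvarphi)\zepsD\to\bvarphi-(\zeps\cdot\bvarphi)\zeps$. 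The Dirichlet term converges by weak-$\star$ convergence of $\dx\zepsD$.

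\textbf{The main obstacle: the quadratic term $|\dx\zepsD|^2$.} This is the crux of the proof: $\dx\zepsD$ is only weakly convergent, so a priori $|\dx\zepsD|^2$ does not converge to $|\dx\zeps|^2$. To resolve this I would exploit the one-dimensional setting. Reading $-\dxx\zepsD=\cleD+\lameD\zepsD$ from the discrete EL shows that $\dxx\zepsD$ is bounded in $L^2_t\bL^2_\bfx+L^\infty_t L^1_\bfx$, hence $\dx\zepsD$ is uniformly bounded in $L^2_t\mathrm{BV}_\bfx$. Combined with the time regularity $\dt\zepsD\in\bL^2_{\bfx,t}$, an Aubin--Lions argument specific to 1D should upgrade weak to strong convergence $\dx\zepsD\to\dx\zeps$ in $\bL^2_{\bfx,t}$, giving $|\dx\zepsD|^2\to|\dx\zeps|^2$ in $\bL^1_{\bfx,t}$. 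Together with the uniform $C^0$-convergence of $\zepsD\cdot\bvarphi$, this lets the quadratic term pass to the limit.

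\textbf{Recovering the Lagrange multiplier and uniqueness.} Defining $\lame:=-\cle\cdot\zeps-|\dx\zeps|^2$, the limiting identity rearranges precisely into \eqref{eq.eul.lag.cont} tested against $\bvarphi$; density extends it to $\bfv\in\bho$, and a Lebesgue point argument in $t$ yields the stated formulation almost everywhere. The bound $\lame\in L^\infty_t L^1_\bfx$ uniformly in $\e$ follows from the weak lower semicontinuity applied to the two nonnegative quantities $\cle\cdot\zeps\geq 0$ (using Proposition \ref{prop.decay}) and $|\dx\zeps|^2$, combined with the passage to the limit of Proposition \ref{proposition.lag.multi}. Finally, uniqueness would be obtained by adapting the discrete uniqueness argument given immediately after Proposition \ref{prop.time.compactness}: the difference $\delta\zeps$ of two solutions solves a linear closed problem, and the combination of the uniform $L^1_\bfx$ bound on $\lame$, the lower bound $\mumin$ on $\muze$, and the Gagliardo--Nirenberg interpolation used there forces $\delta\zeps\equiv 0$, so that the whole sequence converges.
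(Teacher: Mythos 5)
Your route to the limit equation is genuinely different from the paper's on the one point that matters. The paper never touches the quadratic term $|\dx\zepsD|^2$ during the passage to the limit: it keeps $\lameD$ abstract, uses the uniform $L^\infty_t L^1_\bfx$ bound of Proposition \ref{proposition.lag.multi} to extract a weak-$\star$ limit of $\lameD$ in $L^\infty_t\cM_\bfx$ (bounded measures on $\om$), passes to the limit in the product $\lameD\,\zepsD\cdot\varphiD$ by pairing this with the strong convergence of $\zepsD$ in $L^\infty_t C_\bfx$, and only \emph{afterwards} identifies $\lame=-\cle\cdot\zeps-|\dx\zeps|^2$ by inserting the test function $\zeps\theta$ into the limit equation and using $\dx\zeps\cdot\zeps=0$. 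Your tangential projection $\bvarphi-(\zepsD\cdot\bvarphi)\zepsD$ eliminates $\lameD$ from the start, but forces you to pass to the limit in $\int(\zepsD\cdot\bvarphi)|\dx\zepsD|^2$, hence to upgrade $\dx\zepsD$ to a strongly convergent sequence. Your elliptic-regularity/Aubin--Lions argument is workable here precisely because $\e$ is fixed in this theorem (the $\bL^2_\bfx$ bound on $\cleD$ and the bound on $\dt\dx\tizepsD$ coming from Proposition \ref{prop.time.compactness} both degenerate like $\e^{-1/2}$, which is harmless for $\Delta a\to0$), provided you invoke a discrete-in-time compactness lemma of Dreher--J\"ungel type, since $\zepsD$ is only piecewise constant in $t$. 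Both arguments are correct; the paper's is lighter and is the one that survives the later limit $\e\to0$, where strong convergence of the gradient is not available --- which is exactly why the paper is organized around transposition rather than compactness of $\dx\zeps$.

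The one genuine gap is uniqueness. ``Adapting the discrete uniqueness argument'' does not transfer: that argument is an induction on the time step $n$, in which the difference of the two delay operators involves only the increments $\delta{\mathbf z}^{k}$ for $k\le n$, which vanish by the induction hypothesis, so that only the instantaneous term $\mumin\,\delta{\mathbf z}^{n+1}$ survives. In continuous time there is no induction, and the difference $\hat{\cle}(t)$ of the two delay operators contains $\hz(t-\e a)$ for all $0<\e a<t$, which cannot be assumed to vanish. The paper handles this memory term with the convexity inequality
$$
\hat{\cle}(\bfx,t)\cdot\hz(\bfx,t)\;\ge\;\frac{1}{2\e}\Bigl(\muze(\bfx,t)\,|\hz(\bfx,t)|^2-\int_0^{t/\e}|\hz(\bfx,t-\e a)|^2\,\rhoe(\bfx,a,t)\,da\Bigr),
$$
valid because both solutions share the same past data, combined with the bound $\rhoe(\bfx,a,t)\le\bmax e^{-\ztmin a}$ for $t\ge\e a$; after the Gagliardo--Nirenberg absorption that you correctly anticipate, this produces a Volterra-type integral inequality for $q(t):=\nud\nrm{\hz(\cdot,t)}{L^2(\om)}^2$ whose only nonnegative solution is $q\equiv0$. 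Without an argument of this kind for the delay term, the uniqueness claim is unsupported.
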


%\begin{proof}
% We consider the convergence of $\cleD(\bfx,t) := \sum_{n=0}^N \chiu{(n,n+1)\Delta t}(t) \cled{n}(\bfx)$ that we 
% test against $\varphiD(\bfx,t) := \sum_{n=0}^N \chiu{(n,n+1)\Delta t}(t) \varphin(\bfx)$ where
% $\varphin(\bfx) := $ $\int_{n\Delta t}^{(n+1)\Delta t}$ $ \varphi(\bfx,t) dt$.
% We set $I_\Delta := \int_\om \int_0^T \cleD \varphiD(\bfx,t) dt d\bfx$ that we split in two parts:
% $$
%%\left\{
%\begin{aligned}
% I_{\Delta,1} := & \ue \int_\om   \Delta a \sum_{n=0}^{n} \int_{n\Delta t}^{(n+1)\Delta t} \sum_{j=0}^n \left(\zepsd{n}-\frac{\zepsd{n-j}+\zepsd{n-j-1}}{2}\right) \rhoed{n}{j} \varphi^n dt d\bfx\\
% I_{\Delta,2} := & \ue \int_\om   \Delta a \sum_{n=0}^{n} \int_{n\Delta t}^{(n+1)\Delta t} \sum_{j=n+1}^\infty \left(\zepsd{n}-\frac{\zepsd{n-j}+\zepsd{n-j-1}}{2}\right) \rhoed{n}{j} \varphi^n dt d\bfx\\
%\end{aligned}
%%\right.
%$$
%For the first part one has : \alert{$\ocnj$ et $\ucnj$ a définir}
%$$
%\begin{aligned}
%I_{\Delta, 1} = & \ue \int_\om \sum_{n,j} \int_{\ocnj} (\zepsd{n}-\zepsd{n-j})\rhoed{n}{j} \varphin da dt d \bfx   + \int_{\ucnj} (\zepsd{n}-\zepsd{n-j-1})\rhoed{n}{j} \varphin da dt d \bfx   \\
%= & \ue \int_\om \sum_{n,j} \int_\cnj(\zepsD(\bfx,t)-\zepsD(\bfx,t-\e a)) \rhoeD(\xat) \varphiD(\bfx,t)da dt d \bfx  \\
%= & \int_\om  \int_{\rr\times (0,T)} \chiu{Q_\Delta}(a,t) (\zepsD(\bfx,t)-\zepsD(\bfx,t-\e a)) \rhoeD(\xat) \varphiD(\bfx,t) da dt d \bfx
%\end{aligned}
%$$
%where $Q_\Delta := \left\{ (a,t) \in \rr\times(0,T) \st a <  \lfloor \frac{t}{\Delta t} \rfloor \Delta a \right\}$.
%\end{proof}
%

\begin{proof}
By   Proposition \ref{prop.cled}, the convergence of $\cleD$ is proved.
  Since $\dx \zepsD \in L^\infty_t \bL^2_\bfx$ uniformly with respect to  $\e,\Delta a$ and $\Delta t$, one has 
$$
\int_0^T (\dx \zepsD,\dx \varphiD) dt \to \int_0^T (\dx \zeps,\dx{\boldsymbol \varphi})  dt ,%\quad \forall (v,\varphi) \in \bho\times L^2(0,T),
$$
where again $ {\boldsymbol\varphi} \in C^0([0,T];\bho)$ and 
 $\varphiD(\bfx,t) := \sum_{n=0}^N \chiu{(n,n+1)\Delta t}(t) \varphin(\bfx)$ where
 $\varphin(\bfx) := $ $\int_{n\Delta t}^{(n+1)\Delta t}$ $ {\boldsymbol\varphi}(\bfx,t) dt/\Delta t$.
%and 
%$$
%\int_0^T( \mueD \zepsD,v ) \varphi(t) dt \to \int_0^T( \muze \zeps,v ) \varphi(t) dt
%$$
Since $L^\infty_t M_\bfx$ is the dual space of $L^1_t C_x$ which is separable, the bounded sets in $L^\infty_t M_\bfx$
are compact for the weak-$\star$ topology $\sigma(L^\infty_t M_\bfx,L^1_t C_x)$, we denote by $\langle\cdot,\cdot \rangle$, 
the duality brackets associated to this dual paring.
Defining $J_\e$ to be
$$
\begin{aligned}
J_\e := & \left| \int_0^T \int_\om \lambda_{\e,\Delta} \zepsD \cdot\varphiD(\bfx,t)d\bfx dt -  \langle\lame,\zeps \cdot\bvarphi\rangle \right| %\\
%- \int_0^T \lame( \zeps, v ) {\boldsymbol \varphi}(t) d\bfx dt \right| \\
  \leq 
  \int_0^T\int_\om |\lambda_{\e,\Delta} \zepsD| | \varphiD - \bvarphi| d\bfx dt  \\%d\bfx dt %\zepsD \varphiD(\bfx,t) d\bfx dt \right|
& +   \int_0^T\int_\om |\lambda_{\e,\Delta} {\boldsymbol \varphi}| | \zepsD - \zeps | d\bfx dt %\\ 
 %& \quad +    
 +\left|  \int_0^T\int_\om \lambda_{\e,\Delta}  \zeps  \cdot\bvarphi d\bfx dt  -  \langle\lame,\zeps \cdot\bvarphi\rangle \right| \\
    \leq   &
\nrm{\lameD}{L^\infty_t L^1_\bfx} \nrm{\zepsD}{L^\infty_{\bfx,t}} \nrm{\varphiD-{\boldsymbol \varphi}}{L^1_t C_\bfx}
+ \nrm{\lameD}{L^\infty_t L^1_\bfx} \nrm{\varphiD}{L^\infty_{\bfx,t}} \nrm{\zepsD-\zeps}{L^\infty_{\bfx,t}} \\
%\nrm{\lameD}{L^\infty_t} \nrm{\zepsD-\zeps}{L^1_tL^2_\bfx} \nrm{{\boldsymbol \varphi}}{L^\infty_tL^2_\bfx} + o_{\Delta a}(1)
&\quad  +  \left| \int_0^T \int_\om \lameD \zeps \cdot \bvarphi d\bfx dt -  \langle\lame,\zeps \cdot\bvarphi\rangle\right|.
\end{aligned}
$$
The first term tends to zero thanks to the density of valued step  functions in $L^1_tL^2_\bfx$, 
the second term is small due to the strong convergence of $\zepsD$ established above, the last one 
tends to zero thanks to the weak-$\star$ convergence of $\lameD$ in $L^\infty_t\cM_\bfx$. 
At that point, the solution pair $(\zeps,\lame)$ solves :
\begin{equation}\label{eq.weak.z.eps}
%\left\{
 \begin{aligned}
 \int_0^T \int_\om \cle\cdot\bvarphi+ \dx \zeps \cdot \dx \bvarphi d\bfx dt + \langle \zeps\cdot \bvarphi,\lame \rangle =0,
\end{aligned}
%\right.
\end{equation}
for every  $ {\boldsymbol\varphi} \in C^0([0,T];\bho)$.
Setting ${\boldsymbol \varphi}(\bfx,t) = \zeps(\bfx,t)\theta(\bfx,t)$  with $\theta \in \D(\om\times(0,T))$, in \eqref{eq.weak.z.eps}, proves that for almost every $(\bfx,t)\in\om\times(0,T)$,
$\lame = -\cle\cdot\zeps-|\dx \zeps|^2<0$ and the right hand side is a $L^\infty_t L^1_\bfx$ function.

Taking now ${\boldsymbol \varphi}(\bfx,t)=\bfv(\bfx)\psi(t)$
for any $\bfv\in \bho$ and $\psi \in \D(0,T)$ shows that \eqref{eq.eul.lag.cont} holds a.e. $t\in(0,T)$
for any $\bfv\in\bho$.

An easy computation shows that 
$$
0 \leq 1 - |\tizepsD| \leq \sum_{n\in\N} \chiu{(n\Delta t,(n+1)\Delta t)} (t) \frac{(t- n\Delta t)}{\Delta t} \left| \delta \zepsd{n+\nud} \right| ,
$$
which gives that
$$
\nrm{1- |\tizepsD|}{L^2_{\bfx,t}}^2 \leq \Delta t \sum_{n=0 }^N \frac{1}{\Delta t} \nrm{\delta \zepsd{n+\nud}}{\bL^2_\bfx}^2 \lesssim \Delta t
$$
thanks to Proposition \ref{prop.time.compactness}.
Then a triangular inequality gives :
$$
\nrm{|\zeps| -1}{L^2_{\bfx,t}} \leq \nrm{|\zeps|-|\tizepsD|}{L^2_{\bfx,t}} + \nrm{1-|\tizepsD|}{L^2_{\bfx,t}} 
\leq \nrm{\zeps-\tizepsD}{\bL^2_{\bfx,t}}+ \nrm{1-|\tizepsD|}{L^2_{\bfx,t}}.
$$
As the right hand side is arbitrary small, the left hand side is zero. 
Thus the constraint is fulfilled a.e. in $\om\times(0,T)$.
Thanks to Corollary \ref{coro.conv.zeps},  $\zeps$ is a continuous function in time and in space, so
the result holds true everywhere in $\om\times(0,T)$.

In order to prove uniqueness we assume that there exists two distinct solutions $(\zeps^i)_{i\in\{1,2\}}$  sharing the same $\zp$ 
condition for negative times, 
the same kernel $\rhoe$ and both solving \eqref{eq.eul.lag.cont} for a.e. $t \in (0,T)$. 
We denote by $\hz:=\zeps^2-\zeps^1$ and it solves :
$$
\begin{aligned}
&  (\hat{\cle}(\cdot,t),\bfv)  + ( \dx \hz(\cdot,t) ,\dx \bfv ) +\\
& + \nud \int_\om \left\{ ( \lame^2(\bfx,t)+\lame^1(\bfx,t)) \hz(\bfx,t)\cdot \bfv 
+ (\lame^2(\bfx,t)-\lame^1(\bfx,t)) (\zeps^2(\bfx,t)+\zeps^1(\bfx,t))\cdot \bfv(\bfx) \right\} d\bfx = 0
\end{aligned}
$$
Setting $\bfv = \hz(t)$ one obtains thanks to the same Gagliardo-Niremberg estimates 
as in Proposition \ref{prop.time.compactness} that
$$
(\hat{\cle},\hz) + \nrm{\dx \hz}{L^2(\om)}^2 \leq \nud \left( \sum_{i\in\{1,2\}} \nrm{\lame^i}{L^1(\om)}  \right)
\left(  \e^\nud \nrm{\dx \hz}{L^2(\om)}^2 +  \e^{-\nud} \nrm{ \hz}{L^2(\om)}^2 \right) 
$$
Making the first term in the left hand side above explicit one writes :
$$
\begin{aligned}
 \hat{\cle}(t)& \cdot \hz(t) = 
\ue \left( \int_\rr  \left( \hz(\bfx,t) - \hz(\bfx,t-\e a) \right) \rhoe(\bfx,a,t) da \right) \cdot \hz(\bfx,t) \\
& \geq \frac{1}{2\e}  \left( \int_\rr  \left( | \hz(\bfx,t)|^2 - |\hz(\bfx,t-\e a) |^2 \right) \rhoe(\bfx,a,t) da \right)  \\
& = \frac{1}{2\e} \left( \muze(\bfx,t) |\hz(\bfx,t)|^2 - \int_0^\tse |\hz(\bfx,t-\e a) |^2 \rhoe(\bfx,a,t) da \right)
\end{aligned}
$$
In the above equality this contributes to obtain :
$$
\begin{aligned}
 \left( \frac{\mumin}{2} -\e^\nud C\right)&  \nrm{\hz(\cdot,t)}{L^2(\om)}^2 + (\e-C \e^\td ) \nrm{\dx \hz(\cdot,t)}{L^2(\om)}^2 \\
 & \leq 
\nud \int_0^\tse \nrm{\hz(\cdot,t-\e a)}{L^2(\om)}^2 \sup_{\bfx \in\om} \rhoe(\bfx,a,t) da 
\end{aligned}
$$
since the second term in the left hand side is positive we omit it, setting $q(t):=\nud \nrm{\hz(\cdot,t)}{L^2(\om)}^2$ and 
using that 
$\rhoe(\xat) \leq \beps(\bfx,t) \exp( - \ztmin a) $, for a.e. $\bfx \in \om$ and $t \geq \e a$, one obtains that $q$ satisfies :
$$
(\mumin-\e^\nud C/2) q(t) \leq \bmax \int_0^\tse \exp( - \ztmin a ) q(t-\e a) da
$$
which after some easy computations provide that $\int_0^t \exp(  \ztmin \tau / \e ) q(\tau) d\tau = 0$
which in turn gives that $\exp( \ztmin t ) q(t) \leq 0$  and since $q(t)$ is positive by definition this gives 
that $\nrm{\hz(\cdot,t)}{L^2(\om)} =0$ for a.e. $t$, which shows uniqueness.
\end{proof}
\begin{proposition}
 Under the previous hypotheses, one has
 $$
 \nrm{\cle\cdot\zeps}{L^1_{\bfx,t}} \leq C \e,
 $$
 where the constant is independent of $\e$.
\end{proposition}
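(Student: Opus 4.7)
The plan is to pass to the limit $\Delta a \to 0$ in the discrete inequality from Proposition \ref{prop.decay}, leveraging the convergence machinery of Section \ref{sec.conv.eps.fixed} while carefully tracking that the resulting constant is genuinely independent of $\e$.

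First I would establish the pointwise non-negativity. Thanks to Theorem \ref{thm.lim.delta}, $|\zeps(\bfx,t)|=1$ for every $(\bfx,t)\in\om\times[0,T]$, and by Hypothesis \ref{hypo.data.trois}, $|\zp(\bfx,t)|=1$ for $t\leq 0$ and a.e. $\bfx$. The elementary identity $(u-v)\cdot u=\tfrac{1}{2}|u-v|^2$ valid for unit vectors yields
$$
\cle(\bfx,t)\cdot\zeps(\bfx,t)=\frac{1}{2\e}\int_{\rr}\rhoe(\bfx,a,t)\,|\zeps(\bfx,t)-\zeps(\bfx,t-\e a)|^2\,da\geq 0
$$
a.e. in $\om\times(0,T)$, so that $\nrm{\cle\cdot\zeps}{L^1_{\bfx,t}}=\int_0^T\!\!\int_\om \cle\cdot\zeps\,d\bfx\,dt$. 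The same identity holds at the discrete level since each $\zepsd{n}\in\cA$ and $\zpd{n}\in\cA$ for $n<0$.

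Next I would decompose
$$
\int_0^T\!\!\int_\om \cleD\cdot\zepsD\,d\bfx\,dt=\int_0^T\!\!\int_\om \cleD\cdot(\zepsD-\zeps)\,d\bfx\,dt+\int_0^T\!\!\int_\om \cleD\cdot\zeps\,d\bfx\,dt.
$$
For the first term, the bound from the Remark following Proposition \ref{proposition.lag.multi}, namely $\nrm{\cleD}{L^\infty_t\bL^2_\bfx}\lesssim \e^{-1/2}$, combined with the strong convergence $\zepsD\to\zeps$ in $L^\infty((0,T);C(\ov{\om}))$ of Corollary \ref{coro.conv.zeps} (with $\e$ fixed), drives the term to zero as $\Delta a\to 0$. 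For the second term, Proposition \ref{prop.cled} applied with $\bvarphi=\zeps\in C^0([0,T];L^2(\om))$ yields convergence to $\int_0^T\!\int_\om \cle\cdot\zeps\,d\bfx\,dt$; here one uses that the piecewise-constant-in-time projection of the continuous function $\zeps$ converges to $\zeps$ uniformly in time with values in $L^2(\om)$, absorbing the difference between $\cleD\cdot\zeps$ and $\cleD\cdot\varphiD$ again via the $L^\infty_t\bL^2_\bfx$ bound on $\cleD$.

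Finally, passing to the limit $\Delta a\to 0$ in the discrete inequality of Proposition \ref{prop.decay},
$$
\int_0^T\!\!\int_\om \cleD\cdot\zepsD\,d\bfx\,dt\leq \e C,
$$
where $C$ is independent of both $\e$ and $\Delta a$, provides $\int_0^T\!\int_\om \cle\cdot\zeps\,d\bfx\,dt\leq \e C$, which by the non-negativity of the integrand is the desired $L^1$ estimate. The main subtle point is the second paragraph: even though the bound on $\cleD$ deteriorates like $\e^{-1/2}$, this is harmless for fixed $\e$ when taking $\Delta a\to 0$, and the eventual constant inherited from Proposition \ref{prop.decay} remains $\e$-independent because the dissipation identity of Lemma \ref{lem.nrj} is uniform in both parameters.
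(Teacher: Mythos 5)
Your proposal is correct and follows essentially the same route as the paper: the paper's proof likewise passes to the limit $\Delta a \to 0$ in the discrete bound of Proposition \ref{prop.decay}, invoking the convergence arguments of Proposition \ref{prop.cled} to identify the limit as $\int_0^T\!\int_\om \cle\cdot\zeps\,d\bfx\,dt$. Your explicit use of the pointwise sign $\cle\cdot\zeps=\frac{1}{2\e}\int_{\rr}\rhoe|\zeps(t)-\zeps(t-\e a)|^2\,da\geq 0$ to convert convergence of the integrals into the $L^1$ bound is a welcome clarification of a step the paper leaves implicit.
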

\begin{proof}
  Using the same arguments as in Proposition \ref{prop.cled}, one shows that $\cleD\cdot\zepsD$ tends to $\cle\cdot \zeps$
  in $L^1_{\bfx,t}$ as $\Delta \to 0$. Then using the estimate established in  Proposition \ref{prop.decay}, one concludes.
\end{proof}

%We sum up the convergence results and the uniform estimates that
%derive form the discrete scheme since these are uniform with respect to  
%$\e$ and discretisation parameters.
%
%
%\begin{proposition}\label{prop.resume}
%There exists 
% $\rhoe \in L^1(\rr\times(0,T);L^\infty(\om))$ solving \eqref{eq.rho.eps}. 
% There exists  $\zeps \in C^0([0,T];\bho)$ solution of \eqref{eq.eul.lag.cont} and
% s.t. it minimizes the energy $\cE_t$ for  any $t \in (0,T)$. Moreover,
% $$
% \dt \zeps \in L^2(\om\times(0,T)), \quad \lame \in L^\infty((0,T);\cM(\om)), 
% $$
% uniformly in $\e$. One has also that
% $
%(1+a) \rhoe $ converges to $(1+a) \rhoz$
% strongly in $L^1(\rr \times (0,T);L^\infty(\om))$.
%\end{proposition}
\section{Convergence when $\e$ goes to zero in the continuous framework}\label{sec.conv.e}

\subsection{Convergence of the population of bonds $\rhoe$ } 

For sake of conciseness we recall here the main result of Section 5.1 \cite{MiOel.4} in which 
the convergence of $\rhoe$ towards $\rhoz$ solving \eqref{eq.rho.zero} is fully 
established.

\begin{theorem}
	Under assumptions \ref{hypo.data} and \ref{hypo.data.deux}, one has 
	\begin{displaymath}
	\mathcal{H}[\hrhoe(\bfx,\cdot,t)]   \leq 
	\mathcal{H}[\rho_{\varepsilon,I}(\bfx,\cdot)-\rho_{0}(\bfx,\cdot,0)]   e^{\frac {-\ztmin  t }{\varepsilon}}   +
	\frac{2}{\ztmin} \left\{ 
	% \left\|
	%\left\|  \mathcal{R}_{\varepsilon}\right\|_{L^1_{\bfx,a}}+|\mathcal{M}_{\varepsilon,\bfx}|
	%\right\|_{L_t^\infty(\mathbb{R}_+)}  
	\nrm{\mathcal{R}_{\varepsilon}}{L^\infty_{\xat}}+\nrm{\cM_\e}{L^\infty_{\bfx,t}}
	\right\}
	\end{displaymath}
	with $\mathcal{R}_{\varepsilon}(\xat):=-\varepsilon  \partial_t \rho_0(\bfx,a,t)-\rho_0(\bfx,a,t)(\zteps(\bfx,a,t)-\zeta_0(\bfx,a,t))$ and
	$ \mathcal{M}_{\varepsilon}(\bfx,t):=(\beps(\bfx,t)-\beta_0(\bfx,t))$ $\left(1-\int_0^\infty \right.$ $\left.\rho_0(\bfx,a,t) \, da \right)$.  These estimates imply 
	 $\sup_{\bfx \in \Omega} |\rhoe(\bfx,a,t) - \rhoz(\bfx,a,t )|$ converges strongly in $L^1((0,T)\times \rr, (1+a) )$ which give strong convergence in $L^1(\rr\times(0,T);L^\infty(\om),(1+a))$ when $\e$ goes to zero.
	
\end{theorem}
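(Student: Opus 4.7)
The plan is to reduce the convergence statement to an equation satisfied by the difference $\hrhoe := \rhoe - \rhoz$ and to estimate it via the Lyapunov functional $\mathcal{H}[u]:=\int_\rr |u(a)|da+|\int_\rr u(a)da|$ already used in \cite{MiOel.1,MiOel.4}. Subtracting \eqref{eq.rho.zero} from \eqref{eq.rho.eps}, the error solves
\begin{equation*}
\e \dt \hrhoe + \da \hrhoe + \zteps \hrhoe = \cR_\e, \quad \hrhoe(\bfx,0,t)=\beps(1-\muze)-\bz(1-\muzz)=-\beps\int_\rr \hrhoe\, da + \cM_\e,
\end{equation*}
with initial data $\rhoi-\rhoz(\cdot,\cdot,0)$, where $\cR_\e$ and $\cM_\e$ are the residuals defined in the statement (they measure how close $\rhoz$ is to being a solution of \eqref{eq.rho.eps}). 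Because $\bfx$ plays the role of a parameter, all computations can be done pointwise in $\bfx$ and the supremum over $\om$ taken at the end.

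The first step is to differentiate $\mathcal{H}[\hrhoe(\bfx,\cdot,t)]$ in $t$. Following the template of \cite{MiOel.1}, multiplying the $\hrhoe$ equation by $\sgn(\hrhoe)$ and integrating in $a\geq 0$ produces the dissipation $-\zteps|\hrhoe|$ together with boundary terms at $a=0$ and the source $\sgn(\hrhoe)\cR_\e$. The sum $|\int \hrhoe\,da|$ is handled analogously: differentiating and using the $\da$-integration gives $-\beps|\int \hrhoe\,da|$ (the boundary condition creates exactly the right sign) plus $\sgn(\int \hrhoe)\,\cM_\e$ and $-\sgn(\int \hrhoe)\int \zteps\hrhoe\,da$. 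Combining the two contributions, the crucial cancellation that gave the Lyapunov structure in the earlier papers extracts a dissipation of rate $\ztmin/\e$, leading to
\begin{equation*}
\e\,\ddr{}{t}\mathcal{H}[\hrhoe(\bfx,\cdot,t)] + \ztmin\, \mathcal{H}[\hrhoe(\bfx,\cdot,t)] \leq 2\bigl(\nrm{\cR_\e(\bfx,\cdot,t)}{L^1_a}+|\cM_\e(\bfx,t)|\bigr).
\end{equation*}
Applying Gronwall in the variable $t/\e$ and bounding the $L^1_a$ norms of $\cR_\e$ by their $L^\infty$ norms (up to the weight coming from the fixed age support carried along by $\rhoz$) yields the displayed estimate.

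To upgrade this pointwise-in-$\bfx$ inequality to convergence of $\sup_{\bfx\in\om}|\rhoe-\rhoz|$ in the weighted space $L^1((0,T)\times\rr,(1+a))$, I would use Duhamel's formula on $\hrhoe$ exactly as in Proposition \ref{prop.exist.cont.rho.eps} to commute $\esupx$ with the $a$-integral, giving bounds in $L^\infty_\bfx L^1_a$; the weight $(1+a)$ is absorbed thanks to the integrability of $\rhoi$ assumed in hypothesis \ref{hypo.data.deux}. Integrating the Lyapunov estimate in $t$, the initial layer term $\exp(-\ztmin t/\e)$ contributes $O(\e)$, while $\|\cR_\e\|_{L^\infty}+\|\cM_\e\|_{L^\infty}\to 0$ by hypothesis \ref{hypo.data}$(i)$.

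The main obstacle is making the dissipation rigorous at the $L^1_a$ level: $\hrhoe$ is not smooth enough for a classical chain rule with $|\cdot|$ and $\sgn$, so one must either regularize (convolve the equation in $a$ and $t$, apply the identity, then pass to the limit) or invoke a Kruzhkov-type entropy argument. A secondary delicate point is checking the sign of the boundary contribution at $a=0$: the absolute value $|\hrhoe(0,t)|=|\beps\int \hrhoe\,da - \cM_\e|$ must combine with $-\beps|\int \hrhoe|$ so that no positive boundary leak survives; this is the classical reason for adding $|\int \hrhoe\,da|$ to $\int|\hrhoe|\,da$ inside $\mathcal H$, and is precisely the calculation performed in \cite{MiOel.1,MiOel.4}, which extends here verbatim since the space variable is inert in the differential operator.
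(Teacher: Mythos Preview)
Your proposal is correct and follows exactly the approach the paper intends: the paper does not actually prove this theorem but explicitly recalls it from Section~5.1 of \cite{MiOel.4}, and your reconstruction via the Lyapunov functional $\mathcal H[u]=\int_\rr|u|\,da+|\int_\rr u\,da|$, the error equation for $\hrhoe$, and Gronwall is precisely the argument of \cite{MiOel.1,MiOel.4} to which the paper defers. Your identification of the boundary decomposition $\hrhoe(\bfx,0,t)=-\beps\int_\rr\hrhoe\,da+\cM_\e$ and the resulting cancellation inside $\mathcal H$ is the key structural point, and you have it right.
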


%\subsection{Convergence of $\zeps$ } 
\subsection{Study of the initial layer and convergence of continuous and descrete time derivative of $\rhoe$} 
Here we perform a preliminary analysis in order to obtain
limits when $\e$ goes to zero of the transposition of the
delay operator. For this sake we introduce the initial 
layer and show to what limit $\dt \rhoe$ converges 
in a second step (cf Theorem \ref{thm.cvg.il}). Finally we exhibit the limit to which 
 the delayed 
part transfered on $\rhoe$ of $\cL_{\e}$ {\em i.e.} 
$(\rhoe(\bfx,a,t+\e a)-\rhoe(\bfx,a,t))/\e$ tends as a Banach valued Radon measure (cf Proposition \ref{prop.lim.cke}).
\begin{proposition}\label{prop.cvg.vague}
If $f$ is in $U_T$ then its weak derivatives $\da f$ and $\dt f$ are 
in $X_T'$. One defines the corresponding duality brackets as
$$
\langle \da f, \varphi \rangle_\xtpair{T}
:= \lim_{\sigma \to 0} \int_{\cqt} \did{\sigma}{a} f  \varphi d\bfx \;da\; dt
$$
for any $\varphi \in X_T$.
\end{proposition}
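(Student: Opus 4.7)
The plan is to establish uniform boundedness of the finite-difference functionals
$\mathcal{T}^\sigma_a(\varphi) := \int_{\cqt} \langle \did{\sigma}{a} f, \varphi\rangle \, d\bfx\,da\,dt$ in $X_T'$, identify their pointwise limit on a dense subclass of $X_T$ through a translation argument, and extend to all of $X_T$ by a standard density argument.

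First, by the very definition of $U_T$, for $\sigma$ sufficiently small one has $\nrm{\did{\sigma}{a} f}{Y_T} \leq C \nrm{f}{U_T}$, and similarly for $\did{\sigma}{t} f$. The elementary Hölder-type bound
\[
  \left|\int_\cqt \langle g, \varphi\rangle \, d\bfx \, da\, dt\right| \leq \nrm{g}{Y_T}\nrm{\varphi}{X_T}, \qquad g \in Y_T,\ \varphi \in X_T,
\]
which follows from $|\langle g(\cdot, a, t), \varphi(\cdot, a, t)\rangle| \leq \nrm{g(\cdot,a,t)}{L^\infty(\om)} \nrm{\varphi(\cdot,a,t)}{L^1(\om)}$ followed by integration in $(a,t) \in \qt$, shows that the families $\{\mathcal{T}^\sigma_a\}_\sigma$ and $\{\mathcal{T}^\sigma_t\}_\sigma$ are uniformly bounded in $X_T'$ by a multiple of $\nrm{f}{U_T}$.

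Second, I would identify the limit on tensor products $\varphi(\bfx, a, t) = \chi(\bfx) \zeta(a, t)$ with $\chi \in L^1(\om)$ and $\zeta \in \mathcal{D}((0,\infty)\times(0,T))$. For $\sigma$ strictly smaller than the distance of $\supp \zeta$ to $\{a = 0\}$, a change of variable $a \mapsto a+\sigma$ in $\int_0^\infty \did{\sigma}{a} f \cdot \zeta \, da$ gives
\[
  \int_\cqt \langle \did{\sigma}{a} f, \varphi\rangle \, d\bfx \, da\, dt = - \int_\cqt \langle f, \did{-\sigma}{a} \varphi\rangle \, d\bfx \, da\, dt,
\]
with no boundary contribution at $a=0$. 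Since $\did{-\sigma}{a} \varphi \to \partial_a \varphi$ uniformly in $(\bfx, a, t)$ with values in $L^1(\om)$ and $f \in Y_T$, Lebesgue's dominated convergence theorem yields the limit $-\int_\cqt \langle f, \partial_a \varphi\rangle \, d\bfx \, da\, dt$. The same computation applies to $\mathcal{T}^\sigma_t$. Finite linear combinations of such tensor products form a dense subspace of $X_T$ thanks to mollification in $(a,t)$, truncation at large $a$ (allowed by the decay built into $X_T$), and the separability of $L^1(\om)$; the fact that $X_T$-functions need not vanish at the faces $\{a=0\}$, $\{t=0\}$, $\{t=T\}$ is dealt with by a preliminary continuous extension across those faces before mollifying.

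Combining the uniform $X_T'$-bound with pointwise convergence on this dense subclass, a standard $\eta$-argument shows that $(\mathcal{T}^\sigma_a(\varphi))_\sigma$ is Cauchy as $\sigma \to 0$ for every $\varphi \in X_T$; its limit defines $\partial_a f \in X_T'$ with norm controlled by $\nrm{f}{U_T}$. The argument for $\partial_t f$ is verbatim, exchanging $a$ with $t$. The main obstacle is the density step, which requires care because $X_T$-functions are not required to vanish on $\{a=0\}$, $\{t=0\}$ or $\{t=T\}$, so $\mathcal{D}(\qt; L^1(\om))$ is not immediately dense in $X_T$. An elegant alternative that bypasses this issue is to appeal directly to the theory of $BV$-functions with values in the dual of a separable Banach space developed in \cite{HeiPatRen.19}: the paper has already identified $U_T$ with $BV(\qt; L^\infty(\om))$, and the claimed representation is essentially the definition of the weak derivative of such a $BV$-function when tested against the predual elements contained in $X_T$.
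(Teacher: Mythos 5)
Your first step (the H\"older-type bound giving a uniform $X_T'$-bound on the difference quotients) is exactly the paper's starting point. After that, however, your route diverges and runs into a genuine gap in the density step. The subclass on which you identify the limit consists of functions of the form $\chi(\bfx)\zeta(a,t)$ with $\zeta$ compactly supported in the \emph{open} set $(0,\infty)\times(0,T)$; every finite linear combination of these vanishes identically in a neighbourhood of the faces $\{a=0\}$, $\{t=0\}$, $\{t=T\}$. Since the $X_T$-norm is a supremum over $(a,t)\in\oqt$ of the $L^1(\om)$-norm, such combinations can only approximate elements of $X_T$ that themselves vanish on those faces; they are \emph{not} dense in $X_T$. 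Your proposed repair --- extend continuously across the faces, then mollify --- produces approximants that no longer vanish near $a=0$ (or $t=0,T$), so they fall outside the class for which your translation argument applies: for such test functions the change of variables $a\mapsto a+\sigma$ leaves a boundary contribution of the form $-\sigma^{-1}\int_0^\sigma \langle f,\varphi\rangle\,da$, whose convergence requires a trace of $f$ at $a=0^+$ and is not covered by your computation. Consequently the ``uniform bound plus convergence on a dense set implies Cauchy'' conclusion does not close as written; the appeal to \cite{HeiPatRen.19} at the end is offered only as an unproved alternative.

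The paper avoids this issue by a different mechanism: since $X_T$ is separable, bounded sets in $X_T'$ are weak-$\star$ sequentially compact (Corollary 3.30 of \cite{Bre.Book}), so from the uniform bound one extracts a subsequence $\sigma_k\to0$ along which $\did{\sigma_k}{a}f$ converges weak-$\star$ to some $g\in X_T'$; the convergence then holds automatically for \emph{every} $\varphi\in X_T$, boundary behaviour included, and $g$ is identified with the weak derivative $\da f$ by testing only against $\varphi\in\D(\cqt)$ (for which no density in $X_T$ is needed). This yields the proposition at the price of defining the bracket as a subsequential limit rather than a full limit. If you want to keep your stronger ``full limit'' conclusion, you would need either to prove convergence of the boundary term (existence of one-sided traces of $BV$ functions valued in $L^\infty(\om)$, as in \cite{HeiPatRen.19}) or to fall back on the compactness argument above.
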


%\begin{proof}[of Proposition \ref{prop.cvg.vague}]
% One has :
% $$
% \left|   \int_{\cqt} \did{\sigma}{a} f \varphi(\bfx,a,t) d\bfx da dt  \right|  \leq \nrm{\did{\sigma}{a} f}{Y_T} \nrm{\varphi}{X_T} 
% $$
%for any $\varphi \in X_T$. This proves that $\did{\sigma}{a}$ 
%belongs to $X_T'$.  $X_T$ is a separable Banach space. Thus according to Corollary 3.30 p. 76 \cite{Bre.Book}, 
% for any bounded sequence in $X_T'$ there exists a subsequence converging in the weak-$\star$ topology $\sigma(X_T',X_T)$. As the bound $\nrm{\did{\sigma}{a}f}{Y_T}$ is uniform with respect to  $\sigma$, 
%there exists a weak limit in $X_T'$ denoted $g$ and we define the $X_T',X_T$ duality pairing as
%$$
%\lim_{\sigma_k \to 0}\int_{\cqt} \did{\sigma}{a} f \varphi \dxat = \langle g,\varphi\rangle_\xtpair{T},\quad \text{ as } \tau \to 0.
%$$
%Moreover, $g = \da f$
%in the weak sense {\em i.e.} for any $\varphi \in \D(\cqt)$ one has
%$$
%\int_{\cqt} f (\bfx,a,t) \dt \varphi (\bfx,a,t) d\bfx da dt = - \langle g , \varphi \rangle_\xtpair{T}
%$$
%The same results hold obviously for $\dt f$.
%\end{proof}

\begin{proposition}\label{prop.cvg.etroite}
If $f$ is in $U_T$ then its weak derivatives $\da f$ and $\dt f$ are 
in $Z_T'$. One defines the corresponding duality brackets as
$$
\langle \da f, \varphi \rangle_{Z_T',Z_T}
:= \lim_{\sigma \to 0} \int_{\cqt} \did{\sigma}{a} f  \varphi d\bfx \;da\; dt
$$
for any $\varphi \in Z_T$.
\end{proposition}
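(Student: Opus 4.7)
The plan is to extend Proposition \ref{prop.cvg.vague} from the separable test space $X_T$ to the larger non-separable space $Z_T = C_b(\oqt;L^1(\om))$. The idea is to combine a uniform-in-$\sigma$ bound on the approximating functionals with a truncation-in-age argument, using the identification $U_T = BV(\oqt;L^\infty(\om))$ already invoked in the paper (following the definition of $U_T$ and citing \cite{HeiPatRen.19}) to control the age-tails of the discrete derivatives uniformly.

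First I would observe that every $\varphi \in Z_T$ embeds into $L^\infty_{a,t}(L^1_\bfx)$ with $\nrm{\varphi}{L^\infty_{a,t}L^1_\bfx} \leq \nrm{\varphi}{Z_T}$. Applying H\"older's inequality in the $L^1_\bfx$--$L^\infty_\bfx$ duality in space and then integrating in age and time gives, for $\sigma$ small enough,
\[
\left| \int_\cqt \did{\sigma}{a} f \; \varphi\, d\bfx\, da\, dt \right|
\;\leq\; \nrm{\did{\sigma}{a} f}{Y_T}\, \nrm{\varphi}{Z_T}
\;\leq\; C\,\nrm{f}{U_T}\,\nrm{\varphi}{Z_T},
\]
by the very definition of the $U_T$-norm. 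Hence the functionals $\Lambda^a_\sigma: \varphi \mapsto \int_\cqt \did{\sigma}{a} f \cdot \varphi$ form an equicontinuous family in $Z_T'$.

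For existence of the limit, I would truncate in age: for $\varphi \in Z_T$ and $N \in \N$, set $\varphi_N := \eta_N \varphi$ with $\eta_N \in C_c(\rr;[0,1])$ satisfying $\eta_N \equiv 1$ on $[0,N]$ and $\mathrm{supp}(\eta_N) \subset [0, N+1]$. Then $\varphi_N \in X_T$, and Proposition \ref{prop.cvg.vague} yields $\Lambda^a_\sigma(\varphi_N) \to \langle \da f, \varphi_N\rangle_{X_T', X_T}$ as $\sigma \to 0$ for each fixed $N$. The remainder obeys
\[
|\Lambda^a_\sigma(\varphi - \varphi_N)| \;\leq\; \nrm{\varphi}{Z_T} \int_N^\infty \int_0^T \nrm{\did{\sigma}{a} f(\cdot, a, t)}{L^\infty_\bfx}\, dt\, da.
\]
The hard part is to show that the right-hand side tends to zero as $N \to \infty$, uniformly in $\sigma$. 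I would derive this uniform tightness from the BV-structure of $f$ in age: the total variation measure $|\da f|$ is finite on $\oqt$, hence its age-tail $|\da f|([N,\infty)\times[0,T])$ vanishes as $N\to\infty$, and lower semicontinuity of the total variation together with the vague convergence of Proposition \ref{prop.cvg.vague} allows one to dominate the discrete tails by the measure-theoretic tails uniformly in small $\sigma$. A Cauchy argument then yields existence of $\lim_{\sigma \to 0} \Lambda^a_\sigma(\varphi)$; combined with the first step, this limit defines an element of $Z_T'$, which we take as $\langle \da f, \varphi\rangle_{Z_T', Z_T}$.

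Finally, the same three steps apply verbatim to $\dt f$, substituting a $\sigma$-shift in $t$ for the $\sigma$-shift in $a$. As $t$ ranges over the bounded interval $(0,T)$, tightness in time is automatic and the analogue of the tail-decay step is immediate.
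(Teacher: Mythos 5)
Your proof has the same architecture as the paper's: the uniform bound $\left|\int_\cqt \did{\sigma}{a}f\,\varphi\right| \leq \nrm{\did{\sigma}{a}f}{Y_T}\nrm{\varphi}{Z_T}$, the age-truncation $\varphi=\varphi\chim+\varphi(1-\chim)$ reducing to the $X_T$ case covered by Proposition \ref{prop.cvg.vague}, a uniform-in-$\sigma$ tightness estimate for the age-tails, and a Cauchy argument in $\RR$ followed by identification of the limit. However, two of your justifications do not deliver what you claim. First, lower semicontinuity of the total variation under vague (weak-$\star$) convergence gives $|\da f|(U)\leq \liminf_{\sigma}\int_U \esupx|\did{\sigma}{a}f|\,da\,dt$ for open $U$, i.e. it bounds the \emph{measure} tail by the \emph{discrete} tails --- the opposite of the domination you need. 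The correct tool is the elementary BV estimate in the other direction, $\int_A \esupx|\did{\sigma}{a}f|\,da\,dt \leq |\da f|(A^{\sigma})$, where $A^{\sigma}$ is the $\sigma$-enlargement of $A$ in age (obtained by smoothing as in Lemma \ref{lem.err.bv} and writing the difference quotient as an average of the derivative); applied to $A=[N,\infty)\times[0,T]$ this yields the tail decay uniformly in small $\sigma$. As written, your "hard part" would not close. (The paper is itself terse here, invoking only Lebesgue's theorem, but the mechanism it needs is the one just described.)

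Second, for $\dt f$ you assert the tail-decay is automatic because $t$ ranges over a bounded interval. The truncation is in \emph{age}, not in time: to approximate a $Z_T$ test function by elements of $X_T$ you must cut off at $a=N$, so the quantity to control is $\int_0^T\int_N^\infty \esupx|\did{\sigma}{t}f|\,da\,dt$, the age-tail of the \emph{time}-difference quotient, uniformly in $\sigma$. This does not follow from boundedness of $(0,T)$; it follows from the same BV bound $\int_A\esupx|\did{\sigma}{t}f|\,da\,dt\leq|\dt f|(A^{\sigma})$, the enlargement now being in time only, which leaves the age-tail $[N,\infty)$ unchanged. With these two repairs your argument coincides with the paper's proof.
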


For sake of conciseness, the proofs of these propositions are 
postponed in  \ref{sec.app.un}.  Using then these one shows : 
%\begin{proposition}\label{prop.cvg.vague}
% Under hypotheses \ref{hypo.data} and \ref{hypo.data.deux}, there exists a limit $\veps \in X_T'$
% s.t. $\tet \rhoe \wscvg \veps$ in the weak-$*$ topology $\sigma(X_T',X_T)$. The limit function 
% is the weak time derivative of $\rhoe$ {\em i.e.} 
%$$
% < \veps,\varphi >_\xtpair{T}=- \int_{\cqt} \rhoe  \dt \varphi  d\bfx \,da \, dt,\quad \forall \varphi \in \cD(\cqt).
%$$
%\end{proposition}
%
%\begin{proof}
% $X_T$ is a separable Banach space. Thus according to Corollary 3.30 p. 76 \cite{Bre.Book}, 
% for any bounded sequence in $X_T'$ there exists a subsequence converging in the weak-$\star$ topology $\sigma(X_T',X_T)$.
% One has :
% $$
% \left|   \int_{\cqt} \tet \rhoe (\bfx,a,t) \varphi(\bfx,a,t) d\bfx da dt  \right|  \leq \nrm{\tet \rhoe}{Y} \nrm{\varphi}{L^\infty(\qt;L^1(\om))} 
% $$
%for any $\varphi \in X_T$. This proves that $\tet \rhoe$ belongs to $X_T'$ and thus 
%as the bound is uniform with respect to  $\tau$, there exists a function $\veps$ s.t.
%$$
%< \tet \rhoe ,\varphi >_\xtpair{T} \to < \veps,\varphi>_\xtpair{T},\quad \text{ as } \tau \to 0.
%$$
%%where the brackets denote hereafter the $X_T',X_T$ duality pairing. 
%The rest follows by standard 
%arguments.
%%Moreover, $\veps = \dt \rhoe$
%%in the weak sense {\em i.e.} for any $\varphi \in \D(\cqt)$ one has
%%$$
%%\int_{\cqt} \rhoe (\bfx,a,t) \dt \varphi (\bfx,a,t) d\bfx da dt = - < \veps , \varphi >
%%$$
%\end{proof}

\begin{proposition}\label{prop.cvg.etroite.rho}
  Under hypotheses \ref{hypo.data} and \ref{hypo.data.deux}, the previous convergence result can be extended 
  to $Z_T'$ where $Z_T:=C_b(\oqt;L^1(\om))$. Namely for any $\varphi$ in $Z_T$, there exists a subsequence $\tau_k$ s.t.
 $$ 
\langle D^{\tau_k}_t \rhoe,\varphi\rangle  \to \langle  \dt \rhoe,\varphi\rangle   
 $$
when $k \to \infty$.
\end{proposition}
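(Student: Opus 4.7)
The strategy is to combine the $X_T'$-convergence already available from Proposition \ref{prop.cvg.vague} with a truncation-in-age argument that exploits the exponential decay of $\rho_\varepsilon$. First, I note that the preceding propositions establish $\rho_\varepsilon \in U_T$ uniformly in $\varepsilon$, so $\{D^\tau_t \rho_\varepsilon\}_\tau$ is bounded in $Y_T = L^1_{a,t}L^\infty_\bfx$ as $\tau \to 0$. Pairing with any $\varphi \in Z_T$ then gives a bounded family of real numbers, so a subsequence $\tau_k \to 0$ can be extracted by Bolzano--Weierstrass along which $\langle D^{\tau_k}_t \rho_\varepsilon, \varphi \rangle$ converges to some real number; the whole point is to identify this limit with $\langle \partial_t \rho_\varepsilon, \varphi \rangle$.

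To do so, I would use an age cut-off. Let $\chi_A \in C_c^\infty(\R_+)$ satisfy $\chi_A \equiv 1$ on $[0,A]$ and $\chi_A \equiv 0$ on $[A+1,\infty)$, and set $\varphi_A := \chi_A(a)\varphi$. Because $\varphi_A$ vanishes for $a \geq A+1$, it lies in $X_T$, so by Proposition \ref{prop.cvg.vague}, $\langle D^\tau_t \rho_\varepsilon, \varphi_A \rangle \to \langle \partial_t \rho_\varepsilon, \varphi_A \rangle$ as $\tau \to 0$. The tail contribution $\langle D^\tau_t \rho_\varepsilon, (1-\chi_A)\varphi \rangle$ must be controlled uniformly in $\tau$: via Duhamel's formula for \eqref{eq.rho.eps}, together with the lower bound $\zeta_\varepsilon \geq \zeta_{\min}$ and the $L^\infty$ bound on $\beta_\varepsilon$, one obtains
$$
\esupx |D^\tau_t \rho_\varepsilon(\bfx, a, t)| \leq C e^{-\zeta_{\min} a} \bigl(\nrm{\partial_a \rho_I}{L^\infty_\bfx} + \nrm{\beps}{W^{1,\infty}} + \nrm{\zteps}{W^{1,\infty}}\bigr)
$$
for $a$ large enough, where $C$ is independent of $\tau$ and $\varepsilon$. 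Integrating over $a \geq A$ yields
$$
\bigl|\langle D^\tau_t \rho_\varepsilon, (1-\chi_A)\varphi \rangle\bigr| \leq C\, e^{-\zeta_{\min} A}\, \nrm{\varphi}{Z_T},
$$
and an analogous bound holds for $\partial_t \rho_\varepsilon$ by the lower semicontinuity used to pass to the limit in the norm.

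Combining these two facts, the triangle inequality gives
$$
\limsup_{k \to \infty} \bigl| \langle D^{\tau_k}_t \rho_\varepsilon - \partial_t \rho_\varepsilon, \varphi \rangle \bigr| \leq 2C\, e^{-\zeta_{\min} A} \nrm{\varphi}{Z_T},
$$
and letting $A \to \infty$ identifies the limit with $\langle \partial_t \rho_\varepsilon, \varphi \rangle$ uniquely, so the extracted subsequence indeed realises the claimed convergence. \textbf{The main obstacle} is establishing the uniform-in-$\tau$ age decay for $D^\tau_t \rho_\varepsilon$: one has to revisit the Duhamel representation of $\partial_t \rho_\varepsilon$ used in the proof of Proposition \ref{prop.exist.cont.rho.eps}, verify that the exponential factor $\exp(-\zeta_{\min} a)$ is preserved for the finite difference $D^\tau_t$, and show the bound does not blow up as $\tau \to 0$. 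The non-separability of $Z_T$ is sidestepped entirely by the decay, since the test function is effectively reduced to an $X_T$ element modulo an $O(e^{-\zeta_{\min} A})$ error.
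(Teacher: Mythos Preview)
Your strategy---truncate in age with a cut-off $\chi_A$, use the $X_T'$ convergence from Proposition~\ref{prop.cvg.vague} on the compactly supported piece $\varphi\chi_A$, and control the tail $\varphi(1-\chi_A)$ uniformly in $\tau$---is exactly the route the paper takes (Proposition~\ref{prop.cvg.etroite.rho} is a direct application of the general Proposition~\ref{prop.cvg.etroite} to $f=\rho_\varepsilon\in U_T$, whose proof in the appendix follows precisely this truncation-plus-tightness scheme).

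The gap is in your tail bound. The pointwise estimate $\esupx|D^\tau_t\rho_\varepsilon(\cdot,a,t)|\lesssim e^{-\zeta_{\min}a}$ is not available in the region $a>t/\varepsilon$. There Duhamel gives
\[
\rho_\varepsilon(\bfx,a,t)=\rho_I\bigl(\bfx,a-\tfrac{t}{\varepsilon}\bigr)\exp\Bigl(-\int_0^{t/\varepsilon}\zeta_\varepsilon(\bfx,a-\tfrac{t}{\varepsilon}+s,\varepsilon s)\,ds\Bigr),
\]
and the exponential factor is $\le e^{-\zeta_{\min}t/\varepsilon}$, not $e^{-\zeta_{\min}a}$; the decay in $a$ must come entirely from $\rho_I$ and $\partial_a\rho_I$, which under Hypotheses~\ref{hypo.data.deux} are only in $L^1_a L^\infty_\bfx$ (with polynomial weight and BV-type regularity), not pointwise exponentially small. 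Your bound would therefore fail for, say, $\rho_I(\bfx,a)=(1+a)^{-4}$.

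The paper sidesteps this by using the weaker but sufficient fact that $\rho_\varepsilon\in U_T$ uniformly: since $\esupx|D^\tau_t\rho_\varepsilon|$ is bounded in $L^1(\rr\times(0,T))$ uniformly in $\tau$, the tail $\int_0^T\int_m^\infty\esupx|D^\tau_t\rho_\varepsilon|\,da\,dt\to 0$ as $m\to\infty$, and this is exactly the tightness needed for the Cauchy argument. So keep your outline, but replace the exponential-decay step by the $L^1$-integrability already furnished by Proposition~\ref{prop.exist.cont.rho.eps}.
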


In order to identify the limit to which $\dt \rhoe$ tends when $\e$ goes to zero, 
%We are able to show a  convergence result 
(part of the main ingredients were presented in Proposition 3.2 p. 10,  \cite{mi.proc}, 
but the space variable was not taken in account),
we define an initial layer, as in \cite{mi.proc}. Setting $\tit = t / \e$, we look for  $\trhoz$ solution of 
\begin{equation}\label{eq.micro}
\left\{
 \begin{aligned}
 &\dtit \trhoz + \da \trhoz + \ztz(\bfx,a,0) \trhoz = 0 ,& (\bfx,a,\tit) \in \Omega\times(\rr)^2, \\
 &\trhoz(\bfx,0,\tit) = - \bz(\bfx,0) \int_{\rr} \trhoz(\bfx,a,\tit) da, & \bfx \in \Omega,\; a=0,\; \tit>0,\\
 & \trhoz(\bfx,a,0) = \rhoi(\bfx,a) - \rhoz(\bfx,a,0)=:\trhoi(\bfx,a), &\bfx \in \Omega,\;  a>0,\tit=0
\end{aligned}
\right.
\end{equation}
and we define $\trhoze(\bfx,a,t) := \trhoz(\bfx,a,t/\e)$.
As in \cite{mi.proc}, we obtain at the microscopic level global existence and {\em a priori} bounds~:
\begin{theorem}\label{thm.micro}
 Under hypotheses \ref{hypo.data} and \ref{hypo.data.deux}, there exists a unique solution $\trhoz$ belonging to $C^0(\rr;L^1_a(\rr;L^\infty_\bfx(\om))$ $\cap$  $L^\infty(\om\times\rr\times\rr)$. Moreover, one has
 \begin{equation}\label{eq.est.decrease.micro}
%\left\{
 \begin{aligned}
 \int_\rr \esupx | \trhoz(\bfx,a,\tit) | da \lesssim \exp( - \ztmin \tit ),\quad \forall \tit \in \rr,
\end{aligned}
%\right.
\end{equation}
\begin{equation}\label{eq.est.apriori.micro}
 \sup_{\tau \in (0,\tau_0)} \nrm{ (1+\tit) \tet \trhoz }{L^1(\rr\times\rr;L^\infty(\om))} < C
\end{equation}
 and there exists a subsequence s.t. $D^{\tau_k}_{\tit} \trhoz \wscvg \partial_{\tit} \trhoz$ weak-$*$ in the
 $\sigma(Z_\infty',Z_\infty)$ topology.
\end{theorem}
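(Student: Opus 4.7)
The plan is to adapt the strategy of Proposition~\ref{prop.exist.cont.rho.eps} to the microscopic problem \eqref{eq.micro}, exploiting that $\bfx$ enters only as a mute parameter, then lifting the pointwise-in-$\bfx$ estimates to $L^\infty_\bfx$ bounds via Duhamel's formula exactly as done there. Since the architecture mirrors the macroscopic one with $t \mapsto \tit$ and $\e = 1$, I will emphasize only the points where the initial layer genuinely differs.

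\textbf{Existence and uniqueness.} For fixed $\bfx \in \om$, the method of characteristics yields
\[
\trhoz(\bfx, a, \tit) =
\begin{cases}
B(\bfx, \tit - a)\, \exp\!\left(-\int_0^a \ztz(\bfx, s, 0)\, ds\right) & \text{if } \tit > a, \\[4pt]
\trhoi(\bfx, a-\tit)\, \exp\!\left(-\int_{a-\tit}^a \ztz(\bfx, s, 0)\, ds\right) & \text{if } \tit < a,
\end{cases}
\]
with $B(\bfx, \tit) := \trhoz(\bfx, 0, \tit)$. Inserting these formulas into the non-local boundary condition produces, for a.e. $\bfx$, a linear scalar Volterra equation for $B(\bfx, \cdot)$ with a bounded continuous kernel, whose unique solution is built by standard contraction on compact $\tit$-intervals. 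The resulting $\trhoz$ inherits the regularity of $\trhoi$ and of the coefficients; commuting $\esupx$ with the age integrals via Duhamel, as in the proof of Proposition~\ref{prop.exist.cont.rho.eps}, places $\trhoz$ in $C^0(\rr; L^1_a(\rr; L^\infty_\bfx(\om))) \cap L^\infty(\om\times\rr\times\rr)$.

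\textbf{Exponential decay.} I would use the Lyapunov functional $\cH[u] := \int_\rr |u(a)|\, da + \big|\int_\rr u(a)\, da\big|$ already employed in \cite{MiOel.1,MiOel.4}. Setting $m(\tit) := \int_\rr \trhoz(\bfx, a, \tit)\, da$, and computing pointwise in $\bfx$,
\[
\dtit \cH = \bz |m| - \int_\rr \ztz |\trhoz|\, da \;-\; \bz |m| - \mathrm{sgn}(m) \int_\rr \ztz \trhoz\, da = -\int_\rr \ztz \big( |\trhoz| + \mathrm{sgn}(m)\, \trhoz \big)\, da,
\]
where the exact cancellation of the two terms proportional to $|m|$ rests on the algebraic identity $|\trhoz(\bfx, 0, \tit)| = \bz(\bfx, 0)\, |m(\tit)|$ provided by the boundary condition. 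A case analysis on $\mathrm{sgn}(m)$ shows $\cH = 2\int_\rr \trhoz^{\pm}\, da$ in the two non-trivial cases, giving $\dtit \cH \leq -\ztmin \cH$. Gronwall then yields $\cH[\trhoz(\bfx, \cdot, \tit)] \leq \cH[\trhoi(\bfx, \cdot)]\, e^{-\ztmin \tit}$, and taking $\esupx$ together with $\int |\trhoz| \leq \cH$ produces \eqref{eq.est.decrease.micro}.

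\textbf{Bound on $\tet\trhoz$ and weak-$\star$ convergence.} The finite difference $\tet \trhoz$ solves the \emph{same} transport equation with the \emph{same} non-local boundary condition as $\trhoz$, its initial datum at $\tit = 0$ being controlled, through the equation itself, by $D^\tau_a \trhoi$ and by moments of $\trhoi$ — all finite by Hypotheses~\ref{hypo.data.deux}. Re-running the Lyapunov argument above on $\tet \trhoz$ gives, uniformly in $\tau \in (0, \tau_0)$, exponential decay $\nrm{\tet \trhoz(\bfx, \cdot, \tit)}{L^1_a} \lesssim e^{-\ztmin \tit}$; multiplying by $(1+\tit)$, integrating over $\tit \in \rr$, and commuting $\esupx$ with integrals as in Proposition~\ref{prop.exist.cont.rho.eps} yields \eqref{eq.est.apriori.micro}. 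The weak-$\star$ compactness of the family $(D^{\tau}_{\tit} \trhoz)_\tau$ in the $\sigma(Z'_\infty, Z_\infty)$ topology then follows verbatim from Propositions~\ref{prop.cvg.vague}--\ref{prop.cvg.etroite} applied with $t$ replaced by $\tit$ and $T = +\infty$.

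\textbf{Main obstacle.} The delicate point is the cancellation in the second step. Unlike the macroscopic problem \eqref{eq.rho.eps}, where $\rhoe \geq 0$ and the boundary data $\beps(1-\muze)$ is of fixed sign, the initial-layer density $\trhoz$ genuinely changes sign: the functional $\int |\trhoz|\, da$ alone is not monotone, and it is only the combined functional $\cH$, together with the exact identity $|\trhoz(\bfx,0,\tit)| = \bz\, |m(\tit)|$, that produces the sharp $-\ztmin \cH$ majorisation needed to close the estimate and, ultimately, to make the weight $(1+\tit)$ in \eqref{eq.est.apriori.micro} integrable.
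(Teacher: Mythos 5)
Your proof is correct and follows essentially the same route as the paper, whose own proof simply defers to \cite{MiOel.1,mi.proc} for the existence/uniqueness and the Lyapunov-functional decay, to the argument of Proposition \ref{prop.exist.cont.rho.eps} for the bound on $\tet\trhoz$, and to Propositions \ref{prop.cvg.vague}--\ref{prop.cvg.etroite} for the weak-$\star$ limit; your cancellation computation for $\cH$ (using $|\trhoz(\bfx,0,\tit)|=\bz|m(\tit)|$ and the reduction to $2\int\trhoz^{\pm}\,da$) is exactly the mechanism those references rely on, and your observation that $\tet\trhoz$ solves the same autonomous system is the right way to get \eqref{eq.est.apriori.micro}. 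The only point to watch is that the Lyapunov argument directly controls $\esupx\int_\rr|\trhoz|\,da$ rather than $\int_\rr\esupx|\trhoz|\,da$ as written in \eqref{eq.est.decrease.micro}, so the Duhamel/characteristics step you invoke to commute the essential supremum with the age integral is genuinely needed for that estimate as well, not only for \eqref{eq.est.apriori.micro}.
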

\begin{proof}
 The proof of the existence and uniqueness part is easy and follows the same ideas as in 
 \cite{MiOel.1,mi.proc} where one shall only manage the $\bfx$ dependence in addition.
  {\em A priori} estimates on $\tet \trhoz$ are  obtained as in Theorem 2.2 p. 6 \cite{mi.proc}.
 The last part follows the same ideas as in Propositions \ref{prop.cvg.vague} and \ref{prop.cvg.etroite} below.
\end{proof}
\begin{corollary}\label{coro.scale}
 Under the same hypotheses, one has the scaling 
 $$
 \langle \dt \trhoze,\varphi \rangle_{Z_T',Z_T} = \langle\dt \trhoz,\varphi(\cdot,\cdot,\e\cdot)\rangle_{Z_{T/\e}',Z_{T/\e}}.
 $$
\end{corollary}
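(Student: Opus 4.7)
My plan is to derive the scaling identity directly from the defining limit of the duality bracket given in Proposition \ref{prop.cvg.etroite}, combined with a straightforward change of variable $\tit = t/\e$ in the time integral. No deep argument is needed; the content of the corollary is essentially a chain-rule/Jacobian computation at the level of finite differences, which survives the passage to the limit thanks to the continuity established in Proposition \ref{prop.cvg.etroite} for both $\trhoze \in U_T$ and $\trhoz \in U_{T/\e}$ (the latter following from Theorem \ref{thm.micro}).

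The first step is to unfold the left-hand side using Proposition \ref{prop.cvg.etroite}: for any $\varphi \in Z_T$, and along a suitable subsequence $\sigma_k \to 0$,
$$
\langle \dt \trhoze, \varphi\rangle_{Z_T',Z_T} = \lim_{k\to\infty} \int_{\cqt} \did{\sigma_k}{t} \trhoze(\bfx,a,t)\,\varphi(\bfx,a,t)\,d\bfx\,da\,dt.
$$
From $\trhoze(\bfx,a,t) = \trhoz(\bfx,a,t/\e)$, a direct calculation gives
$$
\did{\sigma}{t} \trhoze(\bfx,a,t) = \frac{1}{\e}\,\did{\sigma/\e}{\tit} \trhoz(\bfx,a,t/\e),
$$
so that the substitution $t=\e\tit$ (Jacobian $\e$) cancels the prefactor $1/\e$ and produces
$$
\int_{\cqt} \did{\sigma}{t} \trhoze\,\varphi\,d\bfx\,da\,dt = \int_{\om\times\rr\times(0,T/\e)} \did{\sigma/\e}{\tit} \trhoz(\bfx,a,\tit)\,\varphi(\bfx,a,\e\tit)\,d\bfx\,da\,d\tit.
$$

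The second step is to pass to the limit. The rescaled test function $(\bfx,a,\tit)\mapsto\varphi(\bfx,a,\e\tit)$ belongs to $Z_{T/\e} = C_b(\rr\times[0,T/\e];L^1(\om))$, since it is the composition of $\varphi\in Z_T$ with the continuous affine map $\tit\mapsto\e\tit$ from $[0,T/\e]$ to $[0,T]$, and the $L^1_\bfx$-norm bound is preserved by this reparametrization. Since $\sigma_k\to 0$ implies $\sigma_k/\e \to 0$, applying Proposition \ref{prop.cvg.etroite} to $\trhoz\in U_{T/\e}$ with the test function $\varphi(\cdot,\cdot,\e\cdot)$ yields, up to extracting a further subsequence if needed, convergence of the right-hand side toward $\langle\dt\trhoz,\varphi(\cdot,\cdot,\e\cdot)\rangle_{Z_{T/\e}',Z_{T/\e}}$, which is exactly the claim.

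The only mild subtlety is ensuring that the subsequence through which the bracket on the left is realized is compatible, after rescaling, with one realizing the bracket on the right. This is harmless because both duality brackets are well-defined quantities independent of the extracting subsequence (both weak limits being unique by the uniqueness part of Propositions \ref{prop.cvg.vague}--\ref{prop.cvg.etroite}), so the common limit on both sides necessarily coincides. I foresee no genuine obstacle here; the result is a bookkeeping statement that will be used in the subsequent analysis to transfer estimates on the microscopic unknown $\trhoz$ back to $\trhoze$ on the macroscopic time scale.
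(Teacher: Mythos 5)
Your proposal is correct and follows essentially the same route as the paper: the chain-rule identity for the difference quotient combined with the change of variable $\tit=t/\e$ (the $1/\e$ cancelling the Jacobian), followed by passage to the limit via the arguments of Propositions \ref{prop.cvg.vague} and \ref{prop.cvg.etroite} applied to $\trhoz$ on $(0,T/\e)$. Your closing remark on subsequence compatibility is in fact slightly more careful than the paper's own wording, but the substance is identical.
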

\begin{proof}
 We start from the change of variable $\tit = t/\e$, which gives
 $$
 \int_{\om\times\rr\times (0,T)} \tet \trhoze \varphi(\bfx,a,t) d\bfx da dt = \int_{\om\times\rr\times (0,T/\e)} \D^{\ti{\tau}}_t \trhoz(\bfx,a,\tit)  \varphi(\bfx,a,\e \tit) d\bfx da d\tit, 
 $$
 where $\ti{\tau}= \tau/ \e$, then the right hand side (resp. left hand side) converges up to a subsequence to  the right hand side (resp. left hand side) of the claim
 by the same arguments as in Propositions \ref{prop.cvg.vague} and \ref{prop.cvg.etroite}.
  \end{proof}
\begin{theorem}\label{thm.cvg.il.micro}
  Under hypotheses \ref{hypo.data} and \ref{hypo.data.deux}, one has for any $\varphi \in C_b(\rr; L^1_\bfx(\om))$, 
  $$
 \lim_{\e \to 0} \; \langle \dt \trhoze,\varphi \rangle_\ztpair{T} =- \int_{\om\times\rr} \varphi(\bfx,a) \left(\rhoi(\bfx,a) -\rhoz(\bfx,a,0)\right)da d\bfx,
 $$
  %the bracket is to be understood in the sense of $Z_T$'s duality. 
 and we underline that here $\varphi$ does not depend on time.
 % \int_{\om \times \rr\times} 
\end{theorem}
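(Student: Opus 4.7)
The strategy is to exploit the exponential decay \eqref{eq.est.decrease.micro} of the microscopic profile $\trhoz$ as $t/\e\to\infty$: because $\trhoze$ concentrates at $t=0$, its time derivative should converge (in the sense of $Z_T'$) to a Dirac mass at $t=0$ carrying the weight $-\trhoi=-(\rhoi-\rhoz(\cdot,\cdot,0))$. The first step is to apply Corollary \ref{coro.scale}. Since the test function $\varphi$ is time-independent, $\varphi(\cdot,\cdot,\e\cdot)=\varphi$, and this function belongs to $Z_{T/\e}$ for every $\e>0$, so that
$$\langle \dt\trhoze,\varphi\rangle_{\ztpair{T}} \;=\; \langle \dt\trhoz,\varphi\rangle_{\ztpair{T/\e}}.$$

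The second step is to compute the right-hand side explicitly through an integration by parts derived from the definition of the duality bracket. Using Proposition \ref{prop.cvg.etroite} on the interval $(0,T/\e)$, the pairing is the limit as $\sigma\to 0$ of integrals of $\did{\sigma}{t}\trhoz\cdot\varphi$. Because $\varphi$ does not depend on $t$, a translation in the $t$-variable gives the telescopic identity
$$\int_{\om\times\rr\times(0,T/\e)}\did{\sigma}{t}\trhoz(\bfx,a,t)\,\varphi(\bfx,a)\,dt\,da\,d\bfx = \int_{\om\times\rr}\varphi(\bfx,a)\Bigl(\tfrac{1}{\sigma}\!\!\int_{T/\e}^{T/\e+\sigma}\!\!\trhoz\,dt \,-\, \tfrac{1}{\sigma}\!\!\int_0^\sigma\!\!\trhoz\,dt\Bigr)d\bfx\,da.$$
By Theorem \ref{thm.micro}, $\trhoz\in C^0(\rr;L^1_a(\rr;L^\infty_\bfx(\om)))$, and since $\varphi\in C_b(\rr;L^1_\bfx(\om))$ induces a continuous linear form on $L^1_a L^\infty_\bfx$, Lebesgue differentiation together with dominated convergence yields
$$\langle \dt\trhoz,\varphi\rangle_{\ztpair{T/\e}} \;=\; \int_{\om\times\rr}\varphi(\bfx,a)\bigl(\trhoz(\bfx,a,T/\e) - \trhoz(\bfx,a,0)\bigr)\,d\bfx\,da.$$

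The final step sends $\e\to 0$. The exponential bound \eqref{eq.est.decrease.micro} of Theorem \ref{thm.micro} yields
$$\Bigl|\int_{\om\times\rr}\varphi(\bfx,a)\,\trhoz(\bfx,a,T/\e)\,d\bfx\,da\Bigr| \leq \Bigl(\sup_{a\in\rr}\nrm{\varphi(\cdot,a)}{L^1(\om)}\Bigr)\int_\rr\esupx|\trhoz(\bfx,a,T/\e)|\,da \lesssim e^{-\ztmin T/\e} \longrightarrow 0,$$
so only the contribution at $t=0$ persists. Replacing $\trhoz(\bfx,a,0)=\trhoi(\bfx,a)=\rhoi(\bfx,a)-\rhoz(\bfx,a,0)$ from \eqref{eq.micro} produces exactly the claimed formula. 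The only subtle point in the argument is the second step: since time-independent functions are not admissible as smooth, compactly supported test functions against $\dt\trhoz$ on $(0,T/\e)$, the boundary-term identity must be recovered directly from the finite-difference definition of $\langle\dt\trhoz,\cdot\rangle_{\ztpair{T/\e}}$, and the pointwise time continuity of $\trhoz$ at the endpoints $t=0$ and $t=T/\e$ (a consequence of $\trhoz\in C^0(\rr;L^1_a L^\infty_\bfx)$) is what allows the limit $\sigma\to 0$ to be taken cleanly.
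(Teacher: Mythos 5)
Your proposal is correct and follows essentially the same route as the paper: both reduce the pairing to the endpoint difference $q(T/\e)-q(0)$ with $q(t)=\int_{\om\times\rr}\varphi\,\trhoz(\cdot,\cdot,t)\,d\bfx\,da$, identify $q(0)$ with $\int\varphi\,(\rhoi-\rhoz(\cdot,\cdot,0))$, and kill $q(T/\e)$ using the exponential decay \eqref{eq.est.decrease.micro}. The only (harmless) difference is in how the endpoint identity is justified: you telescope the finite difference directly and invoke the $C^0_t L^1_a L^\infty_\bfx$ continuity of $\trhoz$ from Theorem \ref{thm.micro}, whereas the paper phrases the same step through the bounded variation of $q$ (via \eqref{eq.est.apriori.micro}) and the Riemann--Stieltjes integral of its derivative measure.
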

\begin{proof}
 Using  {\em a priori} estimates \eqref{eq.est.apriori.micro}, one has
 $$
\begin{aligned}
 \sup_{\tau \in (0,\tau_0)} & \int_0^{\frac{T}{\e}} \left| \tet \int_{\om\times\rr} \varphi(\bfx,a) \trhoz(\bfx,a,t) d \bfx da \right| dt %\\
% & 
\leq \sup_{\tau \in (0,\tau_0)} \int_0^{\frac{T}{\e}} \int_{\om\times\rr} | \varphi(\bfx,a)|  \left| \tet \trhoz(\bfx,a,t) \right| d \bfx da  dt \\
 & \leq \nrm{\varphi}{L^\infty(\rr;L^1(\om))} 
 %\nrm{\tet \trhoz}{L^1(\rr\times\rr;L^\infty(\om))} 
 \nrm{\trhoz}{U_\infty}< C,
\end{aligned}
 $$ 
 where we recall that $U_\infty := \BV(\rr\times\rr;L^\infty(\om))$.
This shows that $q(t):= \int_{\om\times\rr} \varphi(\bfx,a) \trhoz(\bfx,a,t) d \bfx da$ is a function of bounded variation.
Thus there exists a signed Radon measure $\nu_{\dt q}$ associated to the time derivative of $q$.
$$
\begin{aligned}
 \int_0^{T/ \e} d \nu_{\dt q} &= q(T/\e)- q(0) 
 = \int_{\om\times \rr} \varphi(\bfx,a) \trhoz(\bfx,a,T/ \e) da - \int_{\om\times\rr} \varphi(\bfx,a) \trhoi(\bfx,a) d \bfx da.
\end{aligned}
$$
Indeed, the integral $\int_0^{T/\e} d \nu_{\dt q}$ coincides with the Riemann-Stieltjes integral, thus integration by parts holds. Moreover one has that
$$
\int_0^{T/\e} D^{\tau_k}_t q(t) dt \to \int_0^{T/\e} d \nu_{\dt q}
$$
as $\tau$ (up to a subsequence) goes to zero.
On the other hand
$$
\int_{\cqt} \tet \trhoze \varphi d\bfx da dt \to \langle \dt \trhoze, \varphi \rangle_\ztpair{T}
$$
in the weak-$\star$ topology $\sigma(Z_T',Z_T)$ (as in the proof of Proposition \ref{prop.cvg.etroite}).
Because 
$$
\int_{\cqt} \tet \trhoze (\bfx,a,t) \varphi (\bfx,a) d\bfx da dt %= \int_{\om\times\rr\times(0,T/\e)} D^{\tau/\e}_t \trhoz \varphi d\bfx da d\tit 
= \int_0^{T/\e}  D^{\tau/\e}_t q(\tit) d\tit 
$$
and the arguments above, one has finally :
$$
 \langle \dt \trhoze, \varphi \rangle_\ztpair{T} = q(T/\e)- q(0).
 $$
One concludes since $| q(T/\e) |\lesssim \exp(-\ztmin T / \e)$ thanks to \eqref{eq.est.decrease.micro}.
\end{proof}

\begin{proposition}\label{prop.err.est}
 Under assumptions \ref{hypo.data} and \ref{hypo.data.deux}, one has
 $$
\limsup_{\tau \to 0} \nrm{ \tet \left( \rhoe-\rhoz -\trhoze\right) }{Y_T} \sim o_\e(1)
 $$
 which implies that :
 $$
\lim_{\e \to 0}  \left| \langle \dt \rhoe -\dt \rhoz-\dt \trhoze,\varphi\rangle_\ztpair{T} \right| = 0
$$
for all $\varphi \in Z_T$.
\end{proposition}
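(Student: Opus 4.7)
\textbf{Proof plan for Proposition \ref{prop.err.est}.} Define the residual $R_\e := \rhoe - \rhoz - \trhoze$. Using the governing equations \eqref{eq.rho.eps}, \eqref{eq.rho.zero}, \eqref{eq.micro} (rescaled via $\tit = t/\e$), a direct calculation yields that $R_\e$ solves a transport-absorption equation of the same type as \eqref{eq.rho.eps}, namely
$$
\e \dt R_\e + \da R_\e + \zteps R_\e = \cS_\e, \qquad (\bfx,a,t) \in \cqt,
$$
with interior source
$$
\cS_\e := (\ztz - \zteps)\rhoz + \bigl(\ztz(\bfx,a,0) - \zteps(\bfx,a,t)\bigr)\trhoze - \e \dt \rhoz,
$$
boundary datum at $a=0$ of the form
$$
R_\e(\bfx,0,t) = (\beps-\bz)(1-\muzz) + \bigl(\bz(\bfx,0)-\beps\bigr)\tmu_\e - \beps \int_\rr R_\e \, da,
$$
and, by the very choice $\trhoi = \rhoi - \rhoz(\cdot,\cdot,0)$ in \eqref{eq.micro}, the vanishing initial condition $R_\e(\cdot,\cdot,0) \equiv 0$.

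The first step is to bound $R_\e$ itself in $Y_T$ by $o_\e(1)$, by applying the Lyapunov functional $\cH[u] = \int_\rr|u|da + |\int_\rr u\,da|$ to $R_\e(\bfx,\cdot,t)$, exactly as in the cited works \cite{MiOel.1,MiOel.4}. This gives $\esupx \cH[R_\e(\cdot,t)] \lesssim \int_0^t \esupx \nrm{\cS_\e}{L^1_a} + |R_\e(\cdot,0,\cdot)|$ terms. The two ``easy'' terms $(\ztz-\zteps)\rhoz$ and $\e\dt \rhoz$ are $o_\e(1)$ in $Y_T$ directly from hypothesis \ref{hypo.data} and the smoothness of $\rhoz$. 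The delicate term is $(\ztz(\cdot,0)-\zteps(\cdot,t))\trhoze$: I would handle it with a splitting of the time integral at $t^\star = \sqrt{\e}$, writing $|\ztz(\cdot,0)-\zteps(\cdot,t)| \leq \nrm{\zteps-\ztz}{L^\infty} + t\,\nrm{\dt \ztz}{L^\infty}$, so that for $t\le t^\star$ the prefactor is $o_\e(1)+O(\sqrt\e)$ while for $t>t^\star$ the estimate \eqref{eq.est.decrease.micro} gives $\nrm{\trhoze(\cdot,t)}{L^1_a L^\infty_\bfx} \lesssim e^{-\ztmin t/\e}$ integrating over $t>t^\star$ to a quantity $\lesssim \e \, e^{-\ztmin/\sqrt\e}$. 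The boundary residual is treated identically.

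Next, I would pass to the time derivative by taking finite differences $D^\tau_t$ of the error PDE. The equation for $\tet R_\e$ has the same structure, with source $D^\tau_t \cS_\e - (D^\tau_t\zteps)R_\e(\cdot,\cdot,\cdot+\tau)$ and boundary datum $D^\tau_t \cB_\e$, together with the initial seed $\tet R_\e(\cdot,\cdot,0) = R_\e(\cdot,\cdot,\tau)/\tau$. The same Lyapunov/Duhamel argument used in Proposition~\ref{prop.exist.cont.rho.eps} applied to $\tet R_\e$ produces a control of $\nrm{\tet R_\e}{Y_T}$ by these data. The uniform boundedness of $\tet \rhoe$, $\dt \rhoz$ (smooth) and $\tet \trhoze$ (via Theorem~\ref{thm.micro} and Corollary~\ref{coro.scale}) allows to reduce the bound on $\tet R_\e$ to a bound on the same sources $\cS_\e$ and $\cB_\e$ analyzed above, but now integrated against a new bounded weight coming from $\tet \zteps$. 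The same splitting at $t^\star = \sqrt\e$ delivers $\nrm{\tet R_\e}{Y_T} = o_\e(1)$ uniformly in $\tau$ small enough, so passing to $\limsup_{\tau\to 0}$ gives the first claim.

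For the implication on $\dt R_\e$, I would argue by duality. For any $\varphi \in Z_T$,
$$
\bigl|\langle D^\tau_t R_\e,\varphi\rangle_{Z_T',Z_T}\bigr| \leq \nrm{\tet R_\e}{Y_T}\nrm{\varphi}{L^\infty_{a,t} L^1_\bfx},
$$
and since $Z_T \hookrightarrow L^\infty(\oqt;L^1(\om))$, one obtains $|\langle\dt R_\e,\varphi\rangle| \leq (\limsup_{\tau\to 0}\nrm{\tet R_\e}{Y_T})\nrm{\varphi}{Z_T}$ by the convergence of Proposition~\ref{prop.cvg.etroite.rho} applied to each of $\rhoe$, $\rhoz$, $\trhoze$. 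Combined with the $o_\e(1)$ bound from step~two, this yields the second claim. The main technical obstacle is the source term $(\zteps - \ztz(\bfx,a,0))\trhoze$ and especially its finite-difference analogue at the level of $\tet R_\e$: it is the only place where a non-trivial cancellation between the largeness of $\tet \trhoze \sim 1/\e$ and the smallness of the prefactor must be carried out, and the splitting at $t^\star = \sqrt\e$ combined with the $L^1_t$ bound on $\tet \trhoz$ from \eqref{eq.est.apriori.micro} is what makes this work.
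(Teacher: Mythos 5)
Your overall architecture matches the paper's: you reduce the proposition to the uniform-in-$\tau$ bound $\limsup_{\tau\to0}\nrm{\tet(\rhoe-\rhoz-\trhoze)}{Y_T}=o_\e(1)$, and you then obtain the second claim exactly as the paper does, by splitting $\langle \dt\rhoe-\dt\rhoz-\dt\trhoze,\varphi\rangle_\ztpair{T}$ into the three ``weak derivative minus finite difference'' terms (which vanish as $\tau\to0$ by Propositions \ref{prop.cvg.etroite}--\ref{prop.cvg.etroite.rho}, choosing first $\e$ then $\tau$) plus $\langle\tet(\rhoe-\rhoz-\trhoze),\varphi\rangle$, which is controlled by the $Y_T$ bound; that half of your plan is correct. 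The difference is in the first half: the paper does not reprove the $Y_T$ estimate but, noting that $\bfx$ is a mute parameter, imports it from Proposition 3.2 of \cite{mi.proc} via the method of characteristics, whereas you attempt a self-contained proof through the residual equation for $R_\e=\rhoe-\rhoz-\trhoze$ and its time finite difference.

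It is in that reconstruction that there is a genuine gap. Your equation for $R_\e$ (source $\cS_\e$, boundary datum, $R_\e(\cdot,\cdot,0)\equiv0$) is correct, and the $Y_T$ estimate of $R_\e$ itself is plausible. But after applying $\did{\tau}{t}$, the source is not, as you claim, ``the same sources integrated against a bounded weight coming from $\tet\zteps$'': it also contains the terms where the finite difference falls on the rate gaps, namely $(\tet(\ztz-\zteps))\,\rhoz$ in the interior and $(\tet(\beps-\bz))(1-\muzz)$ at $a=0$. Assumptions \ref{hypo.data} only give $\nrm{\zteps-\ztz}{L^\infty}\to0$ and $\nrm{\beps-\bz}{L^\infty}\to0$, with no convergence of time derivatives, so these finite differences are merely bounded by $W^{1,\infty}$ norms; since they multiply factors ($\rhoz$, $1-\muzz$) with no decay in $\e$ and no initial-layer localization, the Lyapunov/Duhamel step yields only $O(1)$ for them, and your splitting at $t^\star=\sqrt\e$ does not help. (The terms you do address are fine: $(\ztz(\cdot,\cdot,0)-\zteps)\tet\trhoze$ closes with the weighted bound \eqref{eq.est.apriori.micro}, $(\tet\zteps)R_\e$ closes with your first step, and the initial seed $R_\e(\cdot,\cdot,\tau)/\tau\to\cS_\e(\cdot,\cdot,0)/\e$ is damped by $e^{-\ztmin t/\e}$ after integration in time.) Closing the missing terms requires either additional information on $\dt(\zteps-\ztz)$ and $\dt(\beps-\bz)$ or the finer argument of Proposition 3.2 in \cite{mi.proc}; that is precisely the step the paper invokes from the earlier work rather than redoing it, so as written your plan does not yet prove the first claim.
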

\begin{proof}
 As $\bfx$ is a mute variable in the $\rhoe$ model, we first establish that :
 $$
 \esupx \int_{\qt} \left|  \tet \left( \rhoe-\rhoz -\trhoze \right)\right| da dt \sim o_\e(1)
 $$
 using exactly the same arguments as in Proposition 3.2 p.10 \cite{mi.proc}.
The method of characteristics gives, under the hypotheses above :
 $$
 \int_{\qt}  \esupx  \left|  \tet \left(\rhoe-\rhoz -\trhoze\right) \right| da dt \sim o_\e(1)
$$
as the bounds do not depend on $\tau$, the first claim follows. One writes then in the $\ztp,\zt$ duality pairing, that
$$
\begin{aligned}
  \left| \langle \dt \rhoe -\dt \rhoz-\dt \trhoze,\varphi\rangle \right|  \leq |\langle\dt \rhoe - \tet \rhoe,\varphi\rangle|+ |\langle\dt \rhoz - \tet \rhoz,\varphi\rangle|\\
%&\hspace{-2cm}
+ |\langle \dt \trhoze - \tet \trhoze,\varphi\rangle|
+ |\langle\tet (\rhoe-\rhoz-\trhoze),\varphi\rangle|.
\end{aligned}
$$
Thanks to the first statement above,  for any fixed $\delta>0$ and any fixed $\varphi \in Z_T$, there exists $\e_0$ s.t. $\e<\e_0$ implies 
$$
|\langle\tet (\rhoe-\rhoz-\trhoze),\varphi\rangle| \leq \nrm{\tet(\rhoe-\rhoz-\trhoze)}{Y_T} \nrm{\varphi}{Z_T} \leq \delta/2.
$$
By Proposition \ref{prop.cvg.etroite}, there exists $\tau_0$ s.t. $\tau<\tau_0$ implies
$$
|\langle \dt \rhoe - \tet \rhoe,\varphi\rangle|
+ |\langle \dt \rhoz - \tet \rhoz,\varphi\rangle|
%&\hspace{-2cm}
+ |\langle \dt \trhoze - \tet \trhoze,\varphi\rangle| \leq\delta/2,
$$
which ends the proof.
\end{proof}
\begin{proposition}\label{prop.cvg.diff}
 Under the same hypotheses, there is a limit related to the initial layer : for any $\varphi \in Z_T$,
 $$
 \lim_{\e \to 0}  \langle \dt \trhoze,\varphi(\cdot,\cdot,\cdot)-\varphi(\cdot,\cdot,0)\rangle_{\ztpair{T}}  =0.
 $$
\end{proposition}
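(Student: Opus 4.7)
The plan is to use the rescaling from Corollary \ref{coro.scale} to move to the long-time problem, and then to split the $(a,\tit)$ integration domain into a compact rectangle and two tails which are absorbed by the decay estimates on $\trhoz$ from Theorem \ref{thm.micro}.  Specifically, setting $\tilde\psi_\e(\bfx,a,\tit) := \varphi(\bfx,a,\e\tit)-\varphi(\bfx,a,0)$, Corollary \ref{coro.scale} yields
$$
\langle \dt \trhoze, \varphi(\cdot,\cdot,\cdot)-\varphi(\cdot,\cdot,0)\rangle_{\ztpair{T}} = \langle \dt \trhoz, \tilde\psi_\e \rangle_{\ztpair{T/\e}}.
$$
Unfolding the definition of the duality (as in Proposition \ref{prop.cvg.etroite}) and bounding the integrand by an $L^\infty_\bfx$--$L^1_\bfx$ pairing, it suffices to show that, uniformly in the increment $\sigma>0$,
$$
I_\e(\sigma) := \int_\rr \int_0^{T/\e} \nrm{\did{\sigma}{\tit} \trhoz(\cdot,a,\tit)}{L^\infty_\bfx}\; \nrm{\tilde\psi_\e(\cdot,a,\tit)}{L^1_\bfx}\; d\tit\, da \longrightarrow 0 \quad \text{as } \e \to 0.
$$

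For any $A,M>0$, I would decompose the integration domain as $R_1 \cup R_2 \cup R_3$ with $R_1 := [0,A]\times[0,M]$, $R_2 := [0,A]\times[M,T/\e]$, and $R_3 := [A,\infty)\times[0,T/\e]$.  On $R_2$, the weighted estimate \eqref{eq.est.apriori.micro} of Theorem \ref{thm.micro} gives
$$
\int_{R_2} \leq \frac{2\nrm{\varphi}{Z_T}}{M}\; \sup_{\sigma>0} \int_\rr\int_0^\infty (1+\tit)\nrm{\did{\sigma}{\tit}\trhoz}{L^\infty_\bfx}\, d\tit\, da = O(1/M),
$$
which is made arbitrarily small by choosing $M$ large.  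On $R_3$, the (unweighted) BV bound $\sup_\sigma \nrm{\did{\sigma}{\tit}\trhoz}{L^1(\rr\times\rr;L^\infty(\om))} < C$ implies that $\int_A^\infty \int_0^\infty \sup_\sigma \nrm{\did{\sigma}{\tit}\trhoz}{L^\infty_\bfx}\, d\tit\, da \to 0$ as $A\to\infty$ by dominated convergence on a finite integral; neither choice of $A$ nor $M$ depends on $\e$.

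The compact piece $R_1$ is where the main subtlety lies.  Since $\varphi \in Z_T = C_b(\oqt;L^1(\om))$ is uniformly continuous on the compact subset $[0,A]\times[0,\e M] \subset \oqt$, for any $\eta>0$ there exists $\e_0>0$ such that $\e<\e_0$ implies $\nrm{\tilde\psi_\e(\cdot,a,\tit)}{L^1_\bfx}<\eta$ for all $(a,\tit)\in R_1$; combined with the uniform $L^1$ bound on $\sup_\sigma \nrm{\did{\sigma}{\tit}\trhoz}{L^\infty_\bfx}$, the $R_1$ contribution is $o_\e(1)$.  The main obstacle I anticipate is precisely that the rescaled time interval $[0,T/\e]$ diverges and $\varphi$ is only continuous---not uniformly continuous---on the unbounded $\oqt$; the role of the cutoffs $A$ (in age) and $M$ (in rescaled time), made possible by the $a$- and $\tit$-integrability estimates of Theorem \ref{thm.micro}, is exactly to absorb this non-compactness so that the remaining compact rectangle can be handled by classical uniform continuity.
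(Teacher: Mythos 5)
Your overall strategy -- rescale via Corollary \ref{coro.scale}, reduce to the difference--quotient integrals $I_\e(\sigma)$, and exploit smallness of $\varphi(\cdot,\cdot,\e\tit)-\varphi(\cdot,\cdot,0)$ on a compact set plus integrability of $\did{\sigma}{\tit}\trhoz$ elsewhere -- is the same basic idea as the paper, and your $R_1$ and $R_2$ estimates are fine (the $(1+\tit)$ weight in \eqref{eq.est.apriori.micro} is exactly what makes the $R_2$ bound uniform in $\sigma$). The genuine gap is in $R_3$. From $\sup_{\sigma}\nrm{\did{\sigma}{\tit}\trhoz}{L^1(\rr\times\rr;L^\infty(\om))}<C$ you conclude that $\int_A^\infty\int_0^\infty \sup_{\sigma}\nrm{\did{\sigma}{\tit}\trhoz(\cdot,a,\tit)}{L^\infty_\bfx}\,d\tit\,da\to 0$ as $A\to\infty$ ``by dominated convergence''. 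This silently interchanges the supremum over $\sigma$ with the integral: the available bound controls $\sup_\sigma\int(\cdots)$, not $\int\sup_\sigma(\cdots)$, and nothing in Theorem \ref{thm.micro} guarantees that $\sup_\sigma\nrm{\did{\sigma}{\tit}\trhoz}{L^\infty_\bfx}$ is integrable. What you actually need is that the large--age tail be small for $A$ large \emph{uniformly in (small) $\sigma$}, i.e. tightness in age of the family of time difference quotients; Theorem \ref{thm.micro} gives a weight in $\tit$ but no weight or decay in $a$, so this does not follow from what you cite, and without it the admissible $A$ could depend on $\sigma$, which ruins the final passage $\sigma\to 0$ defining the duality bracket.

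For comparison, the paper's proof confronts the same difficulty but organizes it differently: it keeps the bracket, splits $\langle\dt\trhoz,\psi(\cdot,\cdot,\e\cdot)\rangle_{\ztpair{T/\e}}$ into the bracket-minus-quotient term $I_1$ and the quotient term $I_2$, regards $\esupx|\tet\trhoz|\,da\,dt$ as a family of finite measures $\nu_\tau$, passes to a weak-$*$ limit $\nu$, applies bounded convergence of $\nrm{\psi(\cdot,a,\e t)}{L^1(\om)}$ against the \emph{fixed} finite measure $\nu$ to choose $\e$, and only then returns to $\nu_\tau$ using the tightness of that family (which the paper asserts), so the quantifier order ``first $\e$, then $\tau$'' is harmless because the bracket itself does not depend on $\tau$. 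To repair your argument you must either prove the uniform-in-$\sigma$ age tightness of $\did{\sigma}{\tit}\trhoz$ directly (e.g. via the characteristics of \eqref{eq.micro} together with the decay in age of $\trhoi$ provided by hypotheses \ref{hypo.data.deux}), or adopt the paper's measure-theoretic scheme; as written, the $R_3$ step does not stand.
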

\begin{proof}
We set $\psi(\bfx,a,t):=\varphi(\bfx,a,t)-\varphi(\bfx,a,0)$, and we use Corollary \ref{coro.scale}, giving that
$$
 \langle \dt \trhoze,\psi \rangle_{\ztpair{T}} 
 = \langle \dt \trhoz,\psi(\cdot,\cdot,\e\cdot)\rangle_{\ztpair{T/\e}}.
 $$ 
 Next we write :
 $$
%\left\{
\begin{aligned}
\langle \dt \trhoz,&\psi(\cdot,\cdot,\e\cdot)\rangle_{Z_{T/\e}',Z_{T/\e}} =    \langle \dt \trhoz-\tet \trhoz ,\psi(\cdot,\cdot,\e\cdot)\rangle_{Z_{T/\e}',Z_{T/\e}} \\
&+  \langle\tet \trhoz,\psi(\cdot,\cdot,\e\cdot)\rangle_{Z_{T/\e}',Z_{T/\e}} =: I_1 +I_2.
\end{aligned}
%\right.
$$
We start with $I_2$ and write :
$$
 \left| I_2 \right| \leq \int_{\qt} \nrm{\psi(\cdot,a,\e t)}{L^1(\om)} \esupx | \tet \trhoz|  da dt.
 $$
Since $\esupxt  | \tet \trhoz|$ is a positive function in $L^1(\qt)$, 
there exists $\nu$ a weak-$*$ limit in $\sigma(M^1(\oqt),C^0(\oqt))$ 
of the measure $\nu_\tau$ associated to it.
Because $\nu_\tau$ is tight with respect to  $\tau$, this convergence extends to 
the weak-$*$ topology in $\sigma(C_b(\oqt)',C_b(\oqt))$.

Since $\nrm{\psi(\cdot,a,\e t)}{L^1(\om)}$ is a continuous bounded function on $\oqt$, 
converging pointwisely to 0 a.e. $(a,t)\in\qt$, there exists $\e_0$ s.t. for $\e < \e_0$,
$$
\int_{\qt} \nrm{\psi(\cdot,a,\e t)}{L^1(\om)} d\nu < \delta/3.
$$
From here until the end of the proof, $\e$ is fixed.
Thanks to the previous tight convergence result, there exists a $\tau_0$, s.t.
$$
\left| \int_{\qt}\nrm{\psi(\cdot,a,\e t)}{L^1(\om)} d\nu - \int_{\qt}\nrm{\psi(\cdot,a,\e t)}{L^1(\om)} d\nu_\tau \right| \leq \delta/3
$$
and finally there exists $\tau_1$ s.t. $\tau< \tau_1$ implies
 $$
 | I_1 | < \delta/3
 $$
 thanks to the weak-$*$ convergence in topology $\sigma(Z_{T/\e}',Z_{T/\e})$. 
Summing the three terms   ends the proof.
\end{proof}
\begin{theorem}\label{thm.cvg.il}
 Under hypotheses \ref{hypo.data} and \ref{hypo.data.deux}, one has
 $$
\begin{aligned}
  \langle \dt \rhoe,\varphi \rangle_\ztpair{T} \to 
  & \int_{\cqt} \dt \rhoz (\bfx,a,t) \varphi  (\bfx,a,t)d\bfx da dt %\\
%  &
 - \int_{\om\times\rr} \left( \rhoi(\bfx,a)-\rhoz(\bfx,a,0) \right) \varphi(\bfx,a,0) d\bfx da
\end{aligned}
 $$
 for every $\varphi \in \zt$.
\end{theorem}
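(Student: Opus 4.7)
The plan is to combine the three preparatory results, Proposition \ref{prop.err.est}, Theorem \ref{thm.cvg.il.micro}, and Proposition \ref{prop.cvg.diff}, by means of a simple algebraic splitting of the pairing $\langle \dt \rhoe,\varphi\rangle_{Z_T',Z_T}$. Concretely, for a fixed $\varphi \in Z_T$ I would write
\begin{equation*}
\langle \dt \rhoe,\varphi\rangle_{Z_T',Z_T} = \underbrace{\langle \dt \rhoe - \dt \rhoz - \dt \trhoze,\varphi\rangle}_{A_\e} + \underbrace{\langle \dt \rhoz,\varphi\rangle}_{B} + \underbrace{\langle \dt \trhoze,\varphi\rangle}_{C_\e},
\end{equation*}
and handle each term separately.

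The term $A_\e$ tends to zero directly by Proposition \ref{prop.err.est}. The term $B$ is actually independent of $\e$ once the decomposition is performed; since $\rhoz$ is regular (it is the solution of the transport problem \eqref{eq.rho.zero} with $\dt \rhoz \in L^1_{\bfx,a,t}$ by the bounds inherited from Hypotheses \ref{hypo.data} and \ref{hypo.data.deux}), the duality pairing coincides with the Lebesgue integral, so
\begin{equation*}
B = \int_{\cqt} \dt \rhoz(\bfx,a,t)\, \varphi(\bfx,a,t)\, d\bfx\, da\, dt,
\end{equation*}
which is exactly the first contribution of the claimed limit.

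For the initial-layer term $C_\e$, I would further split $\varphi = \varphi(\cdot,\cdot,0) + (\varphi - \varphi(\cdot,\cdot,0))$ and write
\begin{equation*}
C_\e = \langle \dt \trhoze, \varphi(\cdot,\cdot,0)\rangle_{Z_T',Z_T} + \langle \dt \trhoze, \varphi - \varphi(\cdot,\cdot,0)\rangle_{Z_T',Z_T}.
\end{equation*}
The second piece tends to zero by Proposition \ref{prop.cvg.diff}. The first piece has a time-independent test function and so Theorem \ref{thm.cvg.il.micro} applies, yielding
\begin{equation*}
\langle \dt \trhoze, \varphi(\cdot,\cdot,0)\rangle_{Z_T',Z_T} \longrightarrow -\int_{\om\times\rr} \varphi(\bfx,a,0)\bigl(\rhoi(\bfx,a)-\rhoz(\bfx,a,0)\bigr) d\bfx\, da.
\end{equation*}
Summing the three limits produces the desired identity. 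The main conceptual obstacle has already been handled upstream: the extension of the narrow-convergence framework (Proposition \ref{prop.cvg.etroite}) to test functions in $Z_T$ is what makes the evaluation $\varphi(\cdot,\cdot,0)$ meaningful, and the tightness argument of Proposition \ref{prop.cvg.diff} is what allows the continuity-in-time of $\varphi$ to convert the scaling $t=\e \tit$ into a vanishing contribution. Once these tools are in hand, the proof of the theorem itself reduces to the algebraic splitting above together with a routine triangle-inequality argument.
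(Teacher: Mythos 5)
Your proposal is correct and follows essentially the same route as the paper: the same three-way splitting into $\langle \dt \rhoe - \dt \rhoz - \dt \trhoze,\varphi\rangle$, the exact term with $\dt\rhoz$, and the initial-layer term further decomposed via $\varphi = \varphi(\cdot,\cdot,0) + (\varphi-\varphi(\cdot,\cdot,0))$, handled respectively by Proposition \ref{prop.err.est}, Proposition \ref{prop.cvg.diff} and Theorem \ref{thm.cvg.il.micro}. Your explicit remark that the pairing of $\dt\rhoz$ with $\varphi$ reduces to a Lebesgue integral is a point the paper leaves implicit, but it is the same argument.
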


\begin{proof}
We set 
$$
I:= \langle \dt \rhoe,\varphi \rangle 
- \int_{\cqt} \dt \rhoz \varphi d\bfx da dt 
+ \int_{\om\times\rr} \left( \rhoi(\bfx,a)-\rhoz(\bfx,a,0) \right) \varphi(\bfx,a,0) d\bfx da
$$
and split this difference adding and subtracting extra terms~:
$$
%\left\{
\begin{aligned}
|I |& \leq \left| \left\langle \dt \rhoe -\dt \rhoz - \dt \trhoze,\varphi\right\rangle_\ztpair{T} \right| 
+ \left| \left\langle \dt \trhoze,\varphi - \varphi(\cdot,\cdot,0)\right\rangle_\ztpair{T} \right| \\
&   \left| \left\langle \dt \trhoze, \varphi(\cdot,\cdot,0)\right\rangle_\ztpair{T} + \int_{\om\times\rr} \left( \rhoi(\bfx,a)-\rhoz(\bfx,a,0) \right) \varphi(\bfx,a,0) d\bfx da \right| =: \sum_{i=1}^3 I_i .
\end{aligned}
%\right.
$$
Now for every fixed $\delta$ (small), there exists $\e_0$ s.t. $\e< \e_0$ implies 
$I_1 <\delta/3$ thanks to Proposition \ref{prop.err.est}, s.t. $I_2 <\delta/3$ thanks to Proposition \ref{prop.cvg.diff},
and s.t. $I_3<\delta/3$ thanks to Theorem \ref{thm.cvg.il.micro}, which ends the proof.~
\end{proof}

We define 
$$
{\cal K}_\e(\bfx,a,t) := a D^{\e a}_t \rhoe = \frac{ \rhoe(\bfx,a,t+\e a) - \rhoe(\bfx,a,t)}{\e} \chiu{P_\e}(a,t), \quad 
$$
where $P_\e :=\Omega\times \left\{  (a,t) \in \qt \st a<\frac{T-t}{\e} \right\}$.

\begin{theorem}\label{thm.discrete.time.derivative.rho}
Under hypotheses \ref{hypo.data} and \ref{hypo.data.deux},  ${\cal K}_\e$ solves the weak problem : for all $\psi \in Z_T$
\begin{equation}\label{eq.cke}
 \int_{\cqt} \psi \cke da dt d\bfx= \langle  \dt \rhoe ,\varphi_\e\rangle _\ztpair{T} -  \int_\cqt \varphi_\e(\xat)  (\tia D^{\e \tia}_t \zteps) \rhoe(\bfx,a,t) da dt d\bfx,
\end{equation}
where we set 
\begin{equation}\label{eq.phi.eps}
\varphi_\e(\bfx,\tia,t):=\chiu{P_\e} (\bfx,\tia,t)  \int_{\tia}^{\frac{T-t}{\e}}  \exp\left( - \int_\tia^a \zteps(\bfx,s,t+\e s) ds \right) \psi(\bfx,a,t) da. 
\end{equation}
\end{theorem}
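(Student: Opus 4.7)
The plan is to derive the identity by combining the weak form of the transport equation \eqref{eq.rho.eps} with an elementary dual transport identity satisfied by $\varphi_\e$, so that an integration by parts in the age variable collapses everything. First I would check that $\varphi_\e \in Z_T$ whenever $\psi \in Z_T$: continuity in $(\bfx,\tia,t)$ follows from the dominated convergence theorem since $\zteps$ is continuous by hypothesis \ref{hypo.data} and the upper limit $(T-t)/\e$ depends continuously on $t$; continuity across $\{\tia=(T-t)/\e\}$ holds because the integral defining $\varphi_\e$ vanishes there, so the jump across $\chiu{P_\e}$ is trivial.

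Next, I would compute $\partial_\tia \varphi_\e$ on the interior of $P_\e$ by the Leibniz rule. Differentiating with respect to the lower limit of integration produces the boundary value $-\psi(\bfx,\tia,t)$, while differentiating the lower limit of the inner integral $\int_\tia^a \zteps \, ds$ inside the exponential produces $+\zteps(\bfx,\tia,t+\e\tia)\varphi_\e$. Hence $\partial_\tia\varphi_\e = -\psi + \zteps(\bfx,\tia,t+\e\tia)\,\varphi_\e$ on $P_\e$, with $\varphi_\e$ vanishing at the top boundary $\tia=(T-t)/\e$ and extended by zero outside.

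Now I would exploit the PDE \eqref{eq.rho.eps}: in the duality $\langle \cdot,\cdot\rangle_\ztpair{T}$ provided by Proposition \ref{prop.cvg.etroite.rho}, the equation gives $\e\langle \dt\rhoe,\varphi_\e\rangle = -\langle \da\rhoe,\varphi_\e\rangle - \int_\cqt \zteps\rhoe\,\varphi_\e$. Since $\varphi_\e$ is compactly supported in age (through $\chiu{P_\e}$), integration by parts in $a$ produces only a boundary term at $a=0$:
\begin{equation*}
-\langle \da\rhoe,\varphi_\e\rangle = \int_\cqt \rhoe\,\partial_a\varphi_\e \, da\,dt\,d\bfx + \int_\om\int_0^T \rhoe(\bfx,0,t)\varphi_\e(\bfx,0,t)\,dt\,d\bfx.
\end{equation*}
Inserting the formula $\partial_a\varphi_\e = -\psi + \zteps(\bfx,a,t+\e a)\,\varphi_\e$ and combining terms, the contributions of $\zteps(\bfx,a,t)$ and $\zteps(\bfx,a,t+\e a)$ fuse into $\zteps(\bfx,a,t+\e a)-\zteps(\bfx,a,t) = \e\, a\, D^{\e a}_t\zteps$, yielding
\begin{equation*}
\e\langle \dt\rhoe,\varphi_\e\rangle = -\int_\cqt \rhoe\psi\,\chiu{P_\e} + \e\int_\cqt \rhoe\,\varphi_\e\,(a\,D^{\e a}_t\zteps) + \int_\om\int_0^T \rhoe(\bfx,0,t)\varphi_\e(\bfx,0,t)\,dt\,d\bfx.
\end{equation*}

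Finally, to recognise the left-hand side $\int\psi\cke$, I would use the characteristic formula $\rhoe(\bfx,a,t+\e a) = \rhoe(\bfx,0,t)\exp(-\int_0^a \zteps(\bfx,s,t+\e s)\,ds)$ valid on $P_\e$. Multiplying by $\psi$ and integrating gives precisely $\int\rhoe(\bfx,0,t)\varphi_\e(\bfx,0,t)\,dt\,d\bfx$, from the definition of $\varphi_\e(\bfx,0,t)$. Therefore $\e\int_\cqt \psi\cke = \int\rhoe(\bfx,0,t)\varphi_\e(\bfx,0,t)\,dt\,d\bfx - \int_\cqt \rhoe\psi\,\chiu{P_\e}$, which matched against the previous display yields \eqref{eq.cke}. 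The main obstacle is the rigorous justification of the integration by parts in $a$, because $\da\rhoe$ is only a Banach-valued Radon measure; I would handle this by approximating $\varphi_\e$ by smooth truncations (e.g. convolutions that preserve the vanishing at the top boundary of $P_\e$) and passing to the limit using the $Z_T'$-boundedness of $\da\rhoe$ and $\dt\rhoe$, together with the dominated convergence theorem for the remaining pointwise integrals.
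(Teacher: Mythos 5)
Your identity-chasing is correct and, in the smooth setting, your computation is a complete verification of \eqref{eq.cke}; but the route is genuinely different from the paper's. The paper regularizes the \emph{data} $(\rhoi,\beps,\zteps)$ with a parameter $\delta$, so that $\dt\rhoe^\delta$ is a classical object, derives the ODE in age satisfied by $\cked=aD^{\e a}_t\rhoe^\delta$, namely $\da\cked+\zteps^\delta(\bfx,a,t+\e a)\cked=\dt\rhoe^\delta-(aD^{\e a}_t\zteps^\delta)\rhoe^\delta$ with $\cked(\bfx,0,t)=0$, integrates it by Duhamel, tests against $\psi$ and swaps the order of integration so that $\varphi_\e^\delta$ appears, and finally passes $\delta\to0$ using $\varphi_\e^\delta\to\varphi_\e$ strongly in $Z_T$, the weak-$\star$ convergence of $\dt\rhoe^\delta$ in $\sigma(Z_T',Z_T)$, and the uniform bound \eqref{eq.unif.bound.phi.eps}. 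You instead work on the adjoint side: you derive the dual ODE $\partial_\tia\varphi_\e=-\psi+\zteps(\bfx,\tia,t+\e\tia)\varphi_\e$ on $P_\e$, test the $\rhoe$-equation against $\varphi_\e$, integrate by parts in $a$, and recognize the remaining term through the characteristic formula $\rhoe(\bfx,a,t+\e a)=\rhoe(\bfx,0,t)\exp\left(-\int_0^a\zteps(\bfx,s,t+\e s)ds\right)$ on $P_\e$. The two arguments are transposes of one another; yours avoids ever writing the equation for $\cke$, while the paper's buys rigor cheaply, since at the $\delta$-level every manipulation is pointwise and the only limit to justify is $\delta\to0$.

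The one step you should not treat as free is the assertion that ``the equation gives $\e\langle\dt\rhoe,\varphi_\e\rangle=-\langle\da\rhoe,\varphi_\e\rangle-\int_\cqt\zteps\rhoe\varphi_\e$'' in the specific duality $\ztpair{T}$ of Propositions \ref{prop.cvg.vague}--\ref{prop.cvg.etroite}. Those brackets are limits of one-sided difference quotients over $\cqt$, so they implicitly contain the traces of $\rhoe$ at $a=0$, $t=0$ and $t=T$; checking that the PDE identity holds in exactly this pairing (equivalently, that the mild solution satisfies the boundary-aware weak formulation, with the $a=0$ trace equal to $\beps(1-\muze)$) is precisely what the paper's regularization of the data accomplishes, and mollifying only the test function $\varphi_\e$ does not produce it by itself — you also need the Duhamel/mild characterization of $\rhoe$, which you do invoke in your last step anyway. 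With that point made explicit (or by borrowing the paper's $\delta$-regularization for just this identity), your proof is sound and yields \eqref{eq.cke} with the same bracket as in the statement.
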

 
\begin{proof}
 In order to express the problem solved by $\cK_\e$, we 
 regularize the data. It  gives a pointwise meaning to an approximation of $\dt \rhoe$. 
For this sake, we regularize the
boundary datum and the off-rate setting :
$$
\beps^\delta (\bfx,t) :=  (1-  \chi_\delta(t))  \beps * \omega_{\delta,t} ,\quad 
\zteps^\delta (\xat) := \zteps * (\omega_{\delta,t} \omega_{\delta,a}),
$$
where the cut-off function $\chi_\delta$ is  monotone and $C^\infty(\rr)$  s.t.
$$
\chi_\delta(a) := 
\begin{cases}
 1 & \text{ if } a<\delta, \\
 0 & \text{ if } a>2 \delta,
\end{cases}
$$
and $\omega_{\delta,a}$ and $\omega_{\delta,t}$ are  the standard mollifiers in the $a$ and $t$ variable.
For the initial condition we use as in the Appendix, the specific regularisation
of $\BV$ functions originally presented in \cite{Zie.Book,Giu.Book} for the real-valued case,
and more recently adapted  to the vector valued case in Theorem 2.21 \cite{HeiPatRen.19}.
This regularisation provides
$$
\nrm{\rhoi^\delta-\rhoi}{L^1(\rr;L^\infty(\om))} \leq \delta ,\quad \nrm{\da \rhoi^\delta}{L^1(\rr;L^\infty(\om))} \leq
\limsup_{\alpha \to 0} \nrm{ D^\alpha_a \rhoi}{L^1(\rr;L^\infty(\om))} + \delta
$$
%We tune $\alpha$ so that $\rhoi^\delta$ is continuous with the boundary data in \eqref{eq.rho.eps} :
%$$
%\beps^\delta(\bfx,0)\left(1-\int_{\rr} \rhoi^\delta(\bfx,a) da\right)= \rhoi^\delta(\bfx,0)
%$$
%which defines $\alpha$ uniquely as
%$$
%\alpha(\bfx) := \frac{ \beps^\delta (\bfx,0) + \int_\rr (1-\chi_\delta) \rhoi(\bfx,a) da }{1+ \beps^\delta (\bfx,0) \int_{\rr} \chi_\delta da }.
%$$ 
One solves \eqref{eq.rho.eps} with initial, boundary and off-rate datum $(\rhoi^\delta, \beps^\delta,\zteps^\delta)$, 
the solution is denoted $\rhoe^\delta$.
Together with assumptions \ref{hypo.data} and \ref{hypo.data.deux}, the time derivative $\dt \rhoe^\delta$ 
solves~:
\newcommand{\rhoedt}{\dt \rhoe^\delta}
\begin{equation}\label{eq.rho.delta.eps}
\left\{
 \begin{aligned}
 & (\e \dt + \da + \zteps^\delta ) \rhoedt = \dt \zteps^\delta \rhoe^\delta, & \; a>0,\; t>0,\\
 & \rhoedt(\bfx,0,t) = \dt \beps^\delta (1- \muze^\delta ) - \beps^\delta \dt \muze^\delta & \;a=0, \; t>0, \\
 & \e \rhoedt(\bfx,a,0) =  - (\da + \zteps^\delta(\bfx,a,0 )) \rhoi^\delta & \; a>,\; t= 0, 
\end{aligned}
\right.
\end{equation}
where $\muze^\delta(\bfx,t) = \int_\rr \rhoe^\delta(\bfx,a,t)  da$.
Since the data of \eqref{eq.rho.delta.eps} is regular, 
for a fixed $\bfx \in \om$, existence results follow from Theorem 2.1 p. 488 \cite{MiOel.1}, 
and thanks to similar arguments as in Proposition \ref{prop.exist.cont.rho.eps}, one proves as well that $\rhoedt$ is a 
$C^0((0,T);L^1(\rr;L^\infty(\om)))$ function. 
One obtains {\em a priori} estimates, uniform in $\e$,  leading to $\dt \rhoe^\delta \in Y_T$.
In the same way as in Propositions \ref{prop.cvg.vague} and \ref{prop.cvg.etroite}, there is a limit $\dt \rhoe$ in 
the weak-$*$ topology $\sigma(\ztp,\zt)$,  up to a subsequence. 
Moreover, using the Lyapunov
functional $\cH[\cdot]$,  one has also that $\rhoe^\delta-\rhoe  \sim o_\delta(1)$ in $Y_T$.
%Similar regularity arguments can 
%be used in order to define $\dt \rhoe^\delta$ to be even a $C^1(\qt)$ function.
Now as $\dt \rhoe^\delta$ is regular enough, one derives the ODE solved by $\cke$,
\begin{equation}\label{eq.cked}
\left\{
 \begin{aligned}
 & \da \cked + \zteps^\delta(\bfx,a,t+\e a) \cked = \dt \rhoe^\delta - (a D^{\e a}_t \zteps^\delta) \rhoe^\delta,& \text{ a.e. } (\bfx,a,t) \in P_\e,\\
 & \cked(\bfx,0,t) = 0,& \text{ a.e. }\bfx \in \Omega,\; a=0, \; \text{ a.e. }t>0.
\end{aligned}
\right.
\end{equation}
This can be integrated and gives :
$$
\begin{aligned}
&  \cked(\bfx, a,t ) :=\chiu{P_\e} (\bfx,a,t) 
 \times  \int_0^a \exp\left( - \int_\tia^a \zteps^\delta(\bfx,s,t+\e s) ds \right) \left\{ \dt \rhoe^\delta(\bfx,\tia,t) - (\tia D^{\e \tia}_t \zteps^\delta) \rhoe^\delta(\bfx,\tia,t) \right\} d\tia.
\end{aligned} 
$$
Tested against $\psi \in \zt$ and integrated on $\cqt$, this becomes :
$$
\begin{aligned}
&  \int_{\cqt} \psi(\bfx,a,t) \cked (\bfx,a,t) d \bfx da dt =  -  \int_\cqt \int_0^a \exp\left( - \int_\tia^a \zteps^\delta(s,t+\e s) ds \right) (\tia D^{\e \tia}_t \zteps^\delta) \rhoe^\delta(\tia,t)  d\tia da dt d \bfx  + \\
&+  \int_\om \int_0^T \int_0^{\frac{T-t}{\e}} \int_{\tia}^{\frac{T-t}{\e}}  \exp\left( - \int_\tia^a \zteps^\delta(\bfx,s,t+\e s) ds \right) \psi(\bfx,a,t) da \dt \rhoe^\delta(\bfx, \tia ,t) d\tia dt d \bfx.
\end{aligned}
$$
Setting 
$$
\varphi_\e^\delta(\bfx,\tia,t):=\chiu{P_\e} (\bfx,\tia,t)  \int_{\tia}^{\frac{T-t}{\e}}  \exp\left( - \int_\tia^a \zteps^\delta(\bfx,s,t+\e s) ds \right) \psi(\bfx,a,t) da, $$
one recovers the regularized version of \eqref{eq.cke}.
Since $\zteps^\delta \in W^{1,\infty}(\cqt)$, one has $\zteps^\delta \to \zteps$ strongly in $C(K)$ for any compact $K \subset \cqt$.
Thus one has : $\varphi_\e^\delta \to \varphi_\e$ strongly  in $L^\infty_{a,t}L^1_\bfx$ 
when $\delta \to 0$. Since $\varphi_\e^\delta$ and $\varphi_\e$ are continuous and compactly supported, the strong
convergence occurs as well in $Z_T$.
%This can be compared with the formal limit $a \dt \rhoz$ and o
%For any fixed $\bfx$, it is easy to show that $\varphi_\e$ is a compactly supported continuous function in $\oqt$.
%Moreover, it is bounded uniformly with respect to  $\e$, Indeed :
%$$
%\nrm{\varphi_\e(\cdot,\tia,t)}{L^1(\om)} \leq \chiu{P_\e} \int_{\tia}^{\frac{T-t}{\e}} \expm{(a-\tia)} \nrm{\psi(\cdot,a,t)}{L^1(\om)} da
%$$
%which gives after taking the sup over $\oqt$ that 
%$$
%\nrm{\varphi_\e}{Z_T}\leq \sup_{(\tia,t)\in P_\e}\int_0^{\frac{T-t}{\e} -\tia} \expm{a} da \nrm{\psi}{Z_T} \lesssim \nrm{\psi}{Z_T}.
%$$
There exists a subsequence ${\dt \rhoe^\delta}$
converging in the $\sigma(\ztp,\zt)$ topology to ${\dt \rhoe}$ thus
$$
\begin{aligned}
 \int_{\cqt} &\varphi_\e^\delta \dt \rhoe^\delta d\tia dt d\bfx 
  - \left\langle \dt \rhoe , \varphi_\e \right\rangle_\ztpair{T}  \\
& = \int_{\cqt} (\varphi_\e^\delta -\varphi_\e) \dt \rhoe^\delta d\tia dt d\bfx  
+ \int_{\cqt} \varphi_\e  \dt \rhoe^\delta  d\tia dt d\bfx - \langle  \dt \rhoe , \varphi_\e \rangle _\ztpair{T} \to 0,
 \end{aligned}
% \int_{\cqt} \varphi_\e \lambda_{\dt \rhoe} (\bfx,\tia,t)
$$
when $\delta$ goes to zero. 
Now other arguments using the strong convergence of $\rhoe^\delta$ justify the claim. %that finally 
Moreover, $\varphi_\e$ is bounded uniformly with respect to  $\e$. Indeed :
$$
\nrm{\varphi_\e(\cdot,\tia,t)}{L^1(\om)} \leq \chiu{P_\e} \int_{\tia}^{\frac{T-t}{\e}} \expm{(a-\tia)} \nrm{\psi(\cdot,a,t)}{L^1(\om)} da,
$$
which gives after taking the sup over $\oqt$ that 
\begin{equation}\label{eq.unif.bound.phi.eps}
\nrm{\varphi_\e}{Z_T}\leq \sup_{(\tia,t)\in P_\e}\int_0^{\frac{T-t}{\e} -\tia} \expm{a} da \nrm{\psi}{Z_T} \lesssim \nrm{\psi}{Z_T}. 
\end{equation}
\end{proof}

\begin{corollary}\label{coro.cvg.phi}
 Under the previous hypotheses, one has  that $\nrm{\varphi_\e(\cdot,a,t) - \varphi_0(\cdot,a,t)}{L^1_\bfx}$ tends to zero when $\e$ goes to zero, for every 
 fixed $(a,t) \in \qt$, where 
 \begin{equation}\label{eq.def.varphi.zero}
 	\varphi_0 (\bfx,\tia,t) :=  \int_\tia^\infty \psi(\bfx,a,t) \exp\left( - \int_\tia^a \ztz (\bfx,s,t) ds \right) da.
 \end{equation}
\end{corollary}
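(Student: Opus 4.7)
The plan is to fix $(\tia,t) \in \qt$ and prove that $\varphi_\e(\cdot,\tia,t) \to \varphi_0(\cdot,\tia,t)$ in $L^1(\om)$ by a three-part splitting. For $\e$ small enough so that $(T-t)/\e > \tia$, one has $\chiu{P_\e}(\bfx,\tia,t) = 1$ for every $\bfx\in\om$, so that $\varphi_\e - \varphi_0 = I_1(\bfx) - I_2(\bfx)$ with
\begin{equation*}
I_1(\bfx) := \int_\tia^{(T-t)/\e} \psi(\bfx,a,t) \bigl[ e^{-\int_\tia^a \zteps(\bfx,s,t+\e s)\,ds} - e^{-\int_\tia^a \ztz(\bfx,s,t)\,ds} \bigr] da,
\end{equation*}
\begin{equation*}
I_2(\bfx) := \int_{(T-t)/\e}^\infty \psi(\bfx,a,t)\, e^{-\int_\tia^a \ztz(\bfx,s,t)\,ds} da.
\end{equation*}

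The tail $I_2$ I would control directly from the lower bound $\ztz \geq \ztmin$ in hypothesis \ref{hypo.data}(ii): swapping the order of integration yields $\nrm{I_2}{L^1(\om)} \leq \nrm{\psi}{Z_T} \int_{(T-t)/\e}^\infty e^{-\ztmin(a-\tia)}\,da$, which vanishes as $\e \to 0$ since the lower limit diverges.

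For $I_1$, I would introduce a cutoff $A > \tia$ and split the $a$-integration at $A$. On the tail $[A,(T-t)/\e]$, each exponential is bounded by $e^{-\ztmin(a-\tia)}$, so this piece contributes at most $2\nrm{\psi}{Z_T}\int_A^\infty e^{-\ztmin(a-\tia)}\,da$ to $\nrm{I_1}{L^1(\om)}$, independently of $\e$. On the bounded slice $[\tia,A]$, the elementary inequality $|e^{-u}-e^{-v}| \leq |u-v|$ combined with
\begin{equation*}
\sup_{\bfx\in\om,\, s\in[\tia,A]} |\zteps(\bfx,s,t+\e s) - \ztz(\bfx,s,t)| \leq \e A\, \nrm{\dt\zteps}{L^\infty} + \nrm{\zteps-\ztz}{L^\infty_{\bfx,a,t}}
\end{equation*}
provides a uniform-in-$\bfx$ bound on the integrand vanishing with $\e$, thanks to hypothesis \ref{hypo.data}(i). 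Dominated convergence then gives $L^1(\om)$-convergence of this near part to zero.

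A standard $\delta/3$ argument finishes the proof: given $\delta > 0$, I would first pick $A$ large enough so that the tails of $I_1$ and $I_2$ are each below $\delta/3$, and then choose $\e$ small enough to absorb the near part of $I_1$. I do not anticipate any real obstacle beyond this bookkeeping; the key structural point is that the divergence of the upper integration limit $(T-t)/\e$ is exactly what the exponential decay from $\ztmin > 0$ neutralises, and the joint passage of $\zteps(\bfx,s,t+\e s)$ to $\ztz(\bfx,s,t)$ is uniform in $\bfx$ on bounded $s$-intervals.
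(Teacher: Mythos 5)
Your proof is correct and supplies exactly the natural argument that the paper leaves implicit (the corollary is stated there without proof): the tail beyond $(T-t)/\e$ is killed by the uniform lower bound $\ztmin>0$, and on bounded age intervals the exponents converge uniformly in $\bfx$ thanks to $\nrm{\zteps-\ztz}{L^\infty_{\bfx,a,t}}\to 0$ and the time shift of order $\e a$. Two cosmetic remarks: $I_2$ does not depend on the cutoff $A$ and vanishes solely because $(T-t)/\e\to\infty$, so it belongs in the ``choose $\e$ small'' step rather than the choice of $A$; and if you prefer not to invoke a uniform-in-$\e$ bound on $\nrm{\dt\zteps}{L^\infty}$ (which the paper uses implicitly elsewhere but does not state), compare $\zteps(\bfx,s,t+\e s)$ first to $\ztz(\bfx,s,t+\e s)$ and then use the fixed time-Lipschitz constant of $\ztz$.
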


\begin{proposition}\label{prop.first.term.right hand side.cke}
 Under hypotheses \ref{hypo.data} and \ref{hypo.data.deux}, one has
 $$
 \langle  \dt \rhoe ,\varphi_\e \rangle  \to \int_{\cqt} \dt \rhoz \varphi_0 d\bfx da dt - \int_{\om\times\rr} \varphi_0(\bfx,a,0) (\rhoi(\bfx,a)-\rhoz(\bfx,a,0)) d\bfx da,
 $$
 where $\varphi_\e$ is defined in \eqref{eq.phi.eps}.% and 
\end{proposition}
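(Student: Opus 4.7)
The plan is to reduce the statement to Theorem~\ref{thm.cvg.il} applied to the fixed test function $\varphi_0$, and then show that the error from replacing $\varphi_\e$ by $\varphi_0$ vanishes in the limit.

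\textbf{Step 1.} Verify that $\varphi_0 \in Z_T$. From \eqref{eq.def.varphi.zero} and hypotheses~\ref{hypo.data}, the lower bound $\ztz \geq \ztmin$ yields the pointwise estimate $|\varphi_0(\bfx,\tia,t)| \leq \int_\tia^\infty e^{-\ztmin(a-\tia)} |\psi(\bfx,a,t)| da$. Integration in $\bfx$ together with the continuity in $(\tia,t)$ of the integrand (via the smoothness of $\ztz$ and the membership $\psi \in Z_T$) and dominated convergence give $\varphi_0 \in C_b(\oqt; L^1(\om)) = Z_T$.

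\textbf{Step 2.} Split the bracket as
$$\langle \dt \rhoe, \varphi_\e \rangle_\ztpair{T} = \langle \dt \rhoe, \varphi_0 \rangle_\ztpair{T} + \langle \dt \rhoe, \varphi_\e - \varphi_0 \rangle_\ztpair{T}.$$
Since $\varphi_0 \in Z_T$, Theorem~\ref{thm.cvg.il} gives directly the convergence of the first term to the stated limit. It thus remains to show $\langle \dt \rhoe, \varphi_\e - \varphi_0 \rangle_\ztpair{T} \to 0$.

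\textbf{Step 3.} Following the three-piece decomposition used in the proof of Theorem~\ref{thm.cvg.il}, one writes
\begin{equation*}
\langle \dt \rhoe, \varphi_\e - \varphi_0 \rangle = \langle \dt \rhoe - \dt \rhoz - \dt \trhoze, \varphi_\e - \varphi_0 \rangle + \langle \dt \rhoz, \varphi_\e - \varphi_0 \rangle + \langle \dt \trhoze, \varphi_\e - \varphi_0 \rangle.
\end{equation*}
The first term is controlled by Proposition~\ref{prop.err.est} combined with the uniform bound $\nrm{\varphi_\e}{Z_T} \leq C \nrm{\psi}{Z_T}$ established in \eqref{eq.unif.bound.phi.eps}. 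The second term is handled by dominated convergence: $\dt \rhoz$ is a genuine $L^1$-function on $\cqt$ obtained by differentiating \eqref{eq.rho.zero}, while Corollary~\ref{coro.cvg.phi} provides the pointwise convergence of $\varphi_\e - \varphi_0$ to zero in $L^1_\bfx$ against the uniform bound $\nrm{\varphi_\e - \varphi_0}{Z_T} \leq 2C \nrm{\psi}{Z_T}$.

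\textbf{Step 4.} The main obstacle is $\langle \dt \trhoze, \varphi_\e - \varphi_0 \rangle$ because $\dt \trhoze$ concentrates as a Radon measure in the initial layer $t = O(\e)$. Following the strategy of Proposition~\ref{prop.cvg.diff} and Theorem~\ref{thm.cvg.il.micro}, one further splits
$$\langle \dt \trhoze, \varphi_\e - \varphi_0 \rangle = \langle \dt \trhoze, (\varphi_\e - \varphi_0) - (\varphi_\e - \varphi_0)(\cdot,\cdot,0) \rangle + \langle \dt \trhoze, (\varphi_\e - \varphi_0)(\cdot,\cdot,0) \rangle.$$
For the first sub-term the scaling $\tit = t/\e$ (Corollary~\ref{coro.scale}) reduces matters to a tight-convergence argument analogous to Proposition~\ref{prop.cvg.diff}: the integrand $(\varphi_\e - \varphi_0)(\bfx,a,\e\tit) - (\varphi_\e - \varphi_0)(\bfx,a,0)$ converges pointwise in $L^1_\bfx$ to zero for almost every $(a,\tit)$ while remaining uniformly bounded, and the tightness of the measures associated to $\esupx |D^\tau_{\tit} \trhoz|$ established in Theorem~\ref{thm.micro} via \eqref{eq.est.decrease.micro} and \eqref{eq.est.apriori.micro} closes the argument. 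For the second sub-term, the telescoping used in Theorem~\ref{thm.cvg.il.micro} yields the boundary contribution
$$-\int_{\om\times\rr} (\varphi_\e - \varphi_0)(\bfx,a,0)(\rhoi(\bfx,a) - \rhoz(\bfx,a,0)) d\bfx da,$$
modulo an exponentially small remainder coming from $q(T/\e)$. This boundary integral vanishes as $\e \to 0$ by Corollary~\ref{coro.cvg.phi} together with a final dominated convergence using the $L^1(\rr;L^\infty(\om))$-integrability of $\rhoi$ and $\rhoz(\cdot,\cdot,0)$ guaranteed by hypotheses~\ref{hypo.data.deux}.
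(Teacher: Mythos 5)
Your proposal is correct and follows essentially the same route as the paper: the paper bounds $\left| \langle \dt \rhoe,\varphi_\e\rangle - \langle \dt \rhoz,\varphi_0\rangle + \ell \right|$ directly by three terms controlled by Proposition \ref{prop.err.est}, Corollary \ref{coro.cvg.phi} with dominated convergence, and the scaled initial-layer analysis (Corollary \ref{coro.scale}, Proposition \ref{prop.cvg.diff}, Theorem \ref{thm.cvg.il.micro}), while you first invoke Theorem \ref{thm.cvg.il} for the fixed test function $\varphi_0$ and then annihilate $\langle \dt \rhoe, \varphi_\e-\varphi_0\rangle$ with exactly the same ingredients. This is only a reorganization: the essential technical content — the uniform bound \eqref{eq.unif.bound.phi.eps} and the treatment of the $\e$-dependent test function in the initial layer — coincides with the paper's argument.
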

\begin{proof}
We set $\ell :=  \int_{\om\times\rr} \varphi_0(\bfx,a,0) (\rhoi(\bfx,a)-\rhoz(\bfx,a,0)) d\bfx da$.
 As above one has
 $$
\begin{aligned}
 & \left|  \langle  \dt \rhoe ,\varphi_\e \rangle  - \langle \dt \rhoz ,\varphi_0 \rangle  +\ell \right|
 \leq \left|  \langle  \dt \rhoe -\dt \rhoz - \dt \trhoze,\varphi_\e \rangle  \right|
%+ \left|  \langle  \dt \rhoz ,\varphi_\e \rangle  \right| \\
+\left|  \langle  \dt \rhoz ,\varphi_\e -\varphi_0 \rangle  \right| + \\
&+ \left|  \langle  \dt \trhoz ,\varphi_\e  \rangle  +\ell \right| \leq o_\e(1) + \left|\langle  \dt \trhoze,\varphi_\e\rangle + \ell\right| 
=: o_\e(1) + J_\e,
\end{aligned}
$$
%the latter estimate is obtained thanks to  Proposition \ref{prop.err.est} that gives the control of the first term,
the first term in the right hand side is $o_\e(1)$ thanks to Proposition \ref{prop.err.est}. We focus on the second one~: 
thanks to Corollary \ref{coro.cvg.phi} and  as $\esupxt | \dt \rhoz (\bfx,a,t)|$ is an integrable function on $\qt$,
$$
\left| \int_\cqt (\varphi_\e-\varphi_0) \dt \rhoz d\bfx da dt \right| \leq 
\int_{\qt} \nrm{\left(\varphi_\e-\varphi_0\right)(\cdot,a,t)}{L^1(\om)} \esupx \left| \dt \rhoz (\bfx,a,t) \right| da dt.
$$
By Lebesgue's Theorem, the right hand side tends to zero.
%%\end{itemize}
Using Corollary \ref{coro.scale}, one writes then
$$
%\left\{
\begin{aligned}
J_\e  = &\left| \langle  \dt \trhoz,\varphi_\e(\cdot,\cdot,\e\cdot)\rangle _\ztpair{\Tse}  \right| 
\leq  \left| \langle  \dt \trhoz,(\varphi_\e-\varphi_0) (\cdot,\cdot,\e\cdot)\rangle _\ztpair{\Tse}  \right|  \\
& + \left| \langle  \dt \trhoz,\varphi_0(\cdot,\cdot,\e\cdot)-\varphi_0 (\cdot,\cdot,0)\rangle _\ztpair{\Tse}  \right| 
 + \left| \langle  \dt \trhoz,\varphi_0 (\cdot,\cdot,0)\rangle _\ztpair{\Tse}   +\ell\right| = \sum_{i=1}^3 J_{\e,i}.
\end{aligned}
%\right.
$$
 Since $\chiu{(0,\Tse)}(t)\nrm{(\varphi_\e-\varphi_0)(\cdot,a,\e t)}{L^1(\om)}$ and
 $\chiu{(0,\Tse)}(t)\nrm{\varphi_0(\cdot,a,\e t)-\varphi_0 (\cdot,a,0)}{L^1(\om)}$ tend to zero 
 for a.e. $(a,t)\in(\rr)^2$, by the same arguments as in the proof of Proposition \ref{prop.cvg.diff},
 one concludes that $J_{\e,1}$ and $J_{\e,2}$ vanish when $\e$ goes to 0.
For the last term we use 
 Theorem \ref{thm.cvg.il.micro}, and
 one concludes.
 \end{proof}

Now we are in the position to prove
\begin{proposition}\label{prop.lim.cke}
 Under hypotheses \ref{hypo.data} and \ref{hypo.data.deux}, when $\e$ goes to zero,
 $$
\begin{aligned}
 \int_\cqt \cke(\bfx, a,t) & \psi (\bfx, a,t)da dt d\bfx  \to \int_\cqt  a \dt \rhoz(\bfx,a,t)\psi(\bfx,a,t) da dt d\bfx  - \\
 & - \int_{\om\times\rr} \varphi_0(\bfx,a,0) (\rhoi(\bfx,a)-\rhoz(\bfx,a,0)) da d\bfx
\end{aligned}
$$
for all $\psi \in C_b(\ocyt)$ and $\varphi_0$ is defined in \eqref{eq.def.varphi.zero} and depends on $\psi$.
\end{proposition}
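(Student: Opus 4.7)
The plan is to start from the identity proved in Theorem \ref{thm.discrete.time.derivative.rho}, combine it with the limit of its first right-hand-side term obtained in Proposition \ref{prop.first.term.right hand side.cke}, pass to the limit in the remaining term, and finally rewrite the combined limit in the claimed form via an integration by parts in age.

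\textbf{Step 1: Identity and first term.} By Theorem \ref{thm.discrete.time.derivative.rho},
\begin{equation*}
\int_\cqt \cke\,\psi\,d\bfx\,da\,dt
= \langle \dt\rhoe,\varphi_\e\rangle_\ztpair{T}
- \int_\cqt \varphi_\e\,(\tia D^{\e\tia}_t\zteps)\,\rhoe\,d\bfx\,da\,dt,
\end{equation*}
with $\varphi_\e$ given by \eqref{eq.phi.eps}. Proposition \ref{prop.first.term.right hand side.cke} then identifies the limit of the first term as
$$
\int_\cqt \dt\rhoz\,\varphi_0\,d\bfx\,da\,dt
-\int_{\om\times\rr}\varphi_0(\bfx,a,0)\bigl(\rhoi(\bfx,a)-\rhoz(\bfx,a,0)\bigr)d\bfx\,da,
$$
which already accounts for the initial-layer contribution in the claim.

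\textbf{Step 2: Second term.} I plan to show
$$
\int_\cqt \varphi_\e\,(a D^{\e a}_t\zteps)\,\rhoe\,d\bfx\,da\,dt
\longrightarrow
\int_\cqt a\,\varphi_0\,\dt\ztz\,\rhoz\,d\bfx\,da\,dt.
$$
Writing $a D^{\e a}_t\zteps(\xat)=\int_0^a\dt\zteps(\bfx,a,t+\e s)\,ds$, this quantity is dominated by $a\,\|\dt\zteps\|_{L^\infty}$ uniformly in $\e$ thanks to hypothesis \ref{hypo.data}. Since $\zteps\to\ztz$ in $L^\infty$ with uniform $W^{1,\infty}$ bounds, up to extraction $\dt\zteps\wscvg\dt\ztz$ weakly-$*$ in $L^\infty$. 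Combined with the strong convergences $\varphi_\e\to\varphi_0$ from Corollary \ref{coro.cvg.phi} together with the uniform bound \eqref{eq.unif.bound.phi.eps}, and $\rhoe\to\rhoz$ strong in $Y_T$ recalled at the start of Section \ref{sec.conv.e}, a dominated-convergence argument against the strong limit $a\,\varphi_0\,\rhoz$ gives the claim.

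\textbf{Step 3: Integration by parts in age.} The definition \eqref{eq.def.varphi.zero} of $\varphi_0$ yields the identity
$$
\da\varphi_0=\ztz\,\varphi_0-\psi,
$$
while differentiating \eqref{eq.rho.zero} in time gives
$$
\da(\dt\rhoz)+\ztz\,\dt\rhoz+\dt\ztz\,\rhoz=0.
$$
Substituting $\psi=\ztz\varphi_0-\da\varphi_0$ in $\int_\cqt a\,\psi\,\dt\rhoz\,d\bfx\,da\,dt$ and integrating by parts in $a$ on $\rr$, the boundary term at $a=0$ vanishes because of the prefactor $a$, and the boundary term at $a=\infty$ vanishes thanks to the exponential decay of $\rhoz$ and $\dt\rhoz$ inherited from $\ztmin>0$, so that
$$
\int_\cqt a\,\psi\,\dt\rhoz\,d\bfx\,da\,dt
=\int_\cqt \varphi_0\,\dt\rhoz\,d\bfx\,da\,dt
-\int_\cqt a\,\varphi_0\,\dt\ztz\,\rhoz\,d\bfx\,da\,dt.
$$
Combining this identity with the limits of Steps 1 and 2 yields exactly the claimed formula.

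The main obstacle is Step 2: hypothesis \ref{hypo.data} provides only $L^\infty$ convergence $\zteps\to\ztz$, so no strong convergence of the time difference quotient is directly available and weak-$*$ compactness must be invoked carefully, ensuring that the factors against which it is tested ($\varphi_\e\,\rhoe$) converge strongly enough to close the duality.
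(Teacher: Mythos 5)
Your proposal is correct and follows essentially the same route as the paper: the same decomposition via Theorem \ref{thm.discrete.time.derivative.rho}, the same use of Proposition \ref{prop.first.term.right hand side.cke} for the $\langle \dt\rhoe,\varphi_\e\rangle$ term, and the same passage to the limit in the $a D^{\e a}_t \zteps$ term. The only cosmetic difference is the final bookkeeping: the paper verifies the identity $\int_\cqt a\,\dt\rhoz\,\psi\,d\bfx\,da\,dt=\int_\cqt \varphi_0\,(\dt\rhoz-a\,\dt\ztz\,\rhoz)\,d\bfx\,da\,dt$ by writing $\cK_0=a\,\dt\rhoz$ through its explicit Duhamel formula and applying Fubini, whereas you obtain the same identity by an integration by parts in age based on $\da\varphi_0=\ztz\varphi_0-\psi$ and the time-differentiated equation for $\rhoz$ — the two computations are equivalent.
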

 
\begin{proof}
 Considering the first term in \eqref{eq.cke}, Proposition \ref{prop.first.term.right hand side.cke}, shows that~:
  $$
 \langle  \dt \rhoe ,\varphi_\e \rangle  \to \int_{\cqt} \dt \rhoz \varphi_0 d\bfx da dt - \int_{\om\times\rr} \varphi_0(\bfx,a,0) (\rhoi(\bfx,a)-\rhoz(\bfx,a,0)) d\bfx da.
 $$
On the other hand, hypotheses \ref{hypo.data}, standard arguments and the strong convergence of $\rhoe$ imply that
$$
\int_\cqt \varphi_\e   a ( D^{\e a}_t \zteps) \rhoe(\bfx,a,t) da dt d\bfx \to 
\int_\cqt \varphi_0 (\bfx,a,t)   a \dt \ztz (\bfx,a,t)  \rhoz(\bfx,a,t) da dt d\bfx.
$$
So that finally, one has 
$$
\begin{aligned}
\lim_{\e \to 0}  & \int_\cqt \cke(\bfx, a,t) \psi(\bfx,a,t) d\bfx da dt = \int_\cqt (\dt \rhoz - a \dt \ztz \rhoz ) \varphi_0 d\bfx da dt - \\
& -  \int_{\om\times\rr} \varphi_0(\bfx,a,0) (\rhoi(\bfx,a)-\rhoz(\bfx,a,0)) d\bfx da.
\end{aligned}
$$
As $\cK_0(\bfx,a,t) :=a \dt \rhoz(\bfx,a,t)$ is solving 
$$
(\da +\ztz(\bfx,a,t))\cK_0 = \dt \rhoz - a \dt \ztz \rhoz, \quad \cK_0(\bfx,0,t)=0,
$$
it is explicit and reads :
$$
\cK_0(\bfx,a,t) = \int_0^a \exp\left( -\int_\tia^a \ztz(\bfx,s,t)ds \right) \left( \dt \rhoz - \tia \dt \ztz \rhoz \right) d\tia.
$$
It is then a matter of check to write  
$$
\int_\cqt \cK_0 \psi da dt \bfx = \int_\cqt \varphi_0 \left( \dt \rhoz - a \dt \ztz \rhoz \right) da dt d\bfx,
$$
which ends the proof.
\end{proof}
\newcommand{\bpsi}{{\boldsymbol \psi}}

\subsection{Convergence of $\zeps$ } 
In \cite{MiOel.4}, we derived, uniformly with respect to $\e$,  $L^\infty_tL^1_\bfx$ estimates for $\dt \zeps$.
Here we were not able to obtain this uniformity with respect to $\e$,  and numerical
simulations showed  that these estimate do not hold true here. 
Thus the rest of the paper deals with the asymptotic when $\e$
goes to zero when only $\bL^2_{\bfx,t}$  compactness for $\zeps$  is available.

%\subsection{The weak formulation}
%
%In the previous paragraph, the time derivative is considered in the strong sense and requiers initial conditions to
%be more regular than in the standard parabolic framework. Here we a weaker formulation that should put us in a more
%standard framework.

We consider 
$$
\caIe (\rhoe,\zeps,\bpsi) := \ue \int_{\Omega} \int_0^T \int_\rr \rhoe(\bfx,a,t) \left( \zeps(\bfx,t) - \zeps(\bfx,t-\e a) \right) da \bpsi(\bfx,t) dt d \bfx
$$
and we want to express the limit of this operator when $\e$ goes to 0. 
\begin{theorem}\label{thm.cvg.weak}
 Under hypotheses \ref{hypo.data}, \ref{hypo.data.deux} and \ref{hypo.data.trois}, when $\e$  goes to zero, one has that
 $$
\begin{aligned}
 \caIe (\rhoe,\zeps,\bpsi) \to &  \left[ \int_{\Omega} \bpsi(\bfx,t) \cdot \zz(\bfx,t) \muoz(\bfx,t) d\bfx \right]_{t=0}^{t=T}  - \int_{\om\times(0,T)} \zz(\bfx,t) \cdot \dt \left( \muoz \bpsi \right) dt d\bfx,
\end{aligned}
 $$
 where $\muoz(\bfx,t) := \int_{\rr} a \rhoz(\bfx,a,t) da$ and for any test function $\bpsi \in W^{1,\infty}(\qt)$.
\end{theorem}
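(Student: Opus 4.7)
The plan is to transpose the delay operator from $\zeps$ onto $\rhoe\bpsi$, so as to avoid strong compactness of $\dt\zeps$ (which is not at hand here). For each $(\bfx,a)$ with $\e a<T$, the substitution $s=t-\e a$ in the advanced term of $\caIe$ gives
\begin{equation*}
\ue\int_0^T\rhoe(a,t)[\zeps(t)-\zeps(t-\e a)]\bpsi(t)\,dt
=
-\int_0^{T-\e a} a\did{\e a}{t}(\rhoe\bpsi)(t)\cdot\zeps(t)\,dt
+\mathcal{B}_\e^T(a,\bfx)-\mathcal{B}_\e^0(a,\bfx),
\end{equation*}
with endpoint remainders $\mathcal{B}_\e^T:=\ue\int_{T-\e a}^T\rhoe\bpsi\zeps\,dt$ and $\mathcal{B}_\e^0:=\ue\int_0^{\e a}\rhoe(a,u)\bpsi(u)\zp(u-\e a)\,du$. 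The discrete Leibniz rule $a\did{\e a}{t}(\rhoe\bpsi)=\cke\cdot\bpsi(t+\e a)+a\rhoe\,\did{\e a}{t}\bpsi$ then splits the bulk into a piece driven by $\cke$ (the discrete time derivative studied in Proposition~\ref{prop.lim.cke}) and a piece in which only $\bpsi$ is differentiated.

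For $-\int_\cqt a\rhoe\,\did{\e a}{t}\bpsi\cdot\zeps$, the strong convergences $(1+a)\rhoe\to(1+a)\rhoz$ in $Y_T$ recalled in Section~5.1, $\zeps\to\zz$ in $C^0(\ov\om\times[0,T])$ from Corollary~\ref{coro.conv.zeps}, and the Lipschitz regularity of $\bpsi$ yield the limit $-\int_{\om\times(0,T)}\muoz\dt\bpsi\cdot\zz\,dt\,d\bfx$. For $-\int_\cqt\cke\bpsi(t+\e a)\zeps(t)$, I apply Proposition~\ref{prop.lim.cke} with the fixed test function $\psi(\bfx,a,t):=\bpsi(\bfx,t)\zz(\bfx,t)\in C_b(\ocyt)$; the residual $\int\cke\bigl[\bpsi(t+\e a)\zeps(t)-\bpsi(t)\zz(t)\bigr]$ is split as $[\bpsi(t+\e a)-\bpsi(t)]\zeps+\bpsi[\zeps-\zz]$ and sent to $0$ via a truncation in $a$, using the unweighted bound $\nrm{\did{\tau}{t}\rhoe}{Y_T}\le C$ on the bounded range $a<A$ and the exponential $a$-decay of $\rhoe$ (coming from $\ztz\ge\ztmin>0$) on the tail.

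For the boundary terms, $\mathcal{B}_\e^T$ converges pointwise in $(\bfx,a)$ to $a\rhoz(a,T)\zz(T)\bpsi(T)$ by continuity of $\rhoe\bpsi\zeps$ in time, so $\int_\om\int_\rr\mathcal{B}_\e^T\,da\,d\bfx\to\int_\om\muoz(T)\zz(T)\bpsi(T)\,d\bfx$. The initial-time remainder $\mathcal{B}_\e^0$ is more delicate: after the inner rescaling $u=\e\sigma$, Proposition~\ref{prop.err.est} lets me identify $\rhoe(a,\e\sigma)\to\rhoz(a,0)+\trhoz(a,\sigma)$, so $\int_\om\int_\rr\mathcal{B}_\e^0\,da\,d\bfx\to\int_\om\bpsi(0)\zz(0)\bigl[\muoz(0)+\mathcal{I}(\bfx)\bigr]\,d\bfx$ with $\mathcal{I}(\bfx):=\int_\rr\int_0^a\trhoz(\bfx,a,\sigma)\,d\sigma\,da$. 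Using the characteristics formula $\trhoz(\bfx,a,\sigma)=(\rhoi-\rhoz(\cdot,0))(\bfx,a-\sigma)\exp\bigl(-\int_{a-\sigma}^a\ztz(\bfx,s,0)\,ds\bigr)$ for $\sigma<a$ coming from~\eqref{eq.micro}, a Fubini swap gives $\bpsi(0)\zz(0)\mathcal{I}(\bfx)=\int_\rr\varphi_0(\bfx,a,0)(\rhoi-\rhoz(0))\,da$ with $\psi=\bpsi\zz$ in~\eqref{eq.def.varphi.zero}, which is exactly the initial-layer contribution produced by Proposition~\ref{prop.lim.cke}; the two pieces therefore cancel.

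Assembling all limits and using Fubini to write $\int a\dt\rhoz\,da=\dt\muoz$, the two bulk limits combine into $-\int_{\om\times(0,T)}\dt(\muoz\bpsi)\cdot\zz$, while the boundary limits give $\int_\om[\muoz\bpsi\zz]_{t=0}^{t=T}\,d\bfx$, yielding the claim. The main obstacle is the residual control in the second paragraph: the lack of an $a$-weighted $L^1$ bound on $\cke$ forces a truncation argument exploiting the exponential decay of $\rhoe$ in $a$, and the identification of the inner-layer contribution in $\mathcal{B}_\e^0$ with the $\varphi_0$-correction of Proposition~\ref{prop.lim.cke} is the second delicate point, which is where the careful initial-layer analysis developed in Theorem~\ref{thm.cvg.il} and Proposition~\ref{prop.err.est} pays off.
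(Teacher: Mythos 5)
Your architecture is the same as the paper's: the substitution $s=t-\e a$ that transposes the delay onto $\rhoe\bpsi$ is exactly the paper's splitting of $\caIe$ into $\ell_1-\ell_2-\ell_3-\ell_4$, the bulk is treated with Proposition \ref{prop.lim.cke} applied to $\psi=\bpsi\cdot\zz$ and with the strong convergences of $(1+a)\rhoe$ and $\zeps$ for the term where only $\bpsi$ is differenced, and your cancellation of $\mathcal{I}$ against the $\varphi_0$-correction is the same cancellation the paper performs (your direct evaluation of $\mathcal{B}^0_\e$ through the layer $\trhoz$ is equivalent to the paper's limit $\ell_4\to\int_\om\zz(\bfx,0)\cdot\int_\rr\bvarphi_0(\bfx,a,0)\rhoi(\bfx,a)\,da\,d\bfx$ combined with the identity $\int_\rr\rhoz(\bfx,a,0)\varphi_0(\bfx,a,0)\,da=\muoz(\bfx,0)\,\bpsi(\bfx,0)\cdot\zz(\bfx,0)$ coming from $\da(a\rhoz)+\ztz a\rhoz=\rhoz$). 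However, one step is justified by an argument that does not work as stated: the residual $\int_\cqt\cke\,\bpsi\cdot(\zeps-\zz)$. Your truncation handles $a<A$ (there the shift is $\e a$, so the unweighted bound on $\did{\sigma}{t}\rhoe$ in $Y_T$ gives $\int_{a<A}\int_t\esupx|\cke|\lesssim A$), but on the tail the pointwise decay of $\rhoe$ only yields $\esupx|\cke|\lesssim \bmax e^{-\ztmin a}/\e$ for the boundary-launched characteristics, hence a tail of size $O(e^{-\ztmin A}/\e)$: the $\e^{-1}$ inside $\cke$ is not absorbed by the decay of $\rhoe$, and since no rate is available for $\nrm{\zeps-\zz}{C^0(\ov\om\times[0,T])}$ you can neither keep $A$ fixed nor let it depend on $\e$. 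The clean fix — and what the paper itself needs for its analogous term $\int\cke\,\hz\cdot\bpsi$ — is to estimate this residual through the transposed formulation \eqref{eq.cke}: by \eqref{eq.unif.bound.phi.eps}, the uniform bound on $\dt\rhoe$ in $\ztp$ and the uniform bound on the first moment of $\rhoe$, one has $|\int_\cqt\cke\,\chi|\lesssim\nrm{\chi}{Z_T}$ uniformly in $\e$, and $\nrm{\bpsi\cdot(\zeps-\zz)}{Z_T}\to0$ by Corollary \ref{coro.conv.zeps}. (Alternatively your truncation can be saved by exploiting the cancellation in $\rhoe(\cdot,a,t+\e a)-\rhoe(\cdot,a,t)$ along characteristics, both values being launched from the boundary datum $\beps(1-\muze)$ at times $\e a$ apart, whose time variation is uniformly bounded; the decay of $\rhoe$ alone is not enough.)

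Two further points are under-justified rather than wrong. The convergence of $\mathcal{B}^T_\e$ cannot be deduced from ``continuity of $\rhoe\bpsi\zeps$ in time'': the equation gives $\dt\rhoe=O(\e^{-1})$ pointwise, so the oscillation of $\rhoe$ over the window $(T-\e a,T)$ is not small for free; one must pass through $\rhoe\to\rhoz$ (with quantitative bounds of the type $|\rhoe-\rhoz|\lesssim e^{-\ztmin t/\e}+(1+a)^2e^{-\ztmin a}o_\e(1)$), the continuity of $\rhoz$, and an $a$-dominating function to integrate the pointwise limits in $a$ — this is precisely the content of the paper's appendix for $\ell_1$, and it is also where the region $\e a\ge T$, which your identity excludes and which you never return to, is shown to contribute nothing (the pieces $\ell_{1,1}$, $\ell_{1,2,2}$ and the large-$a$ part of $\ell_4$). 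With the residual estimate replaced as above and these boundary/large-age arguments supplied, your proof coincides with the paper's.
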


\begin{proof}
 Splitting the domain of integration, %one transforms $\cale$ into :
$$
\begin{aligned}
&\caIe (\rhoe,\zeps,\bpsi)  =  \ue \int_{\Omega} \int_0^T \int_{\frac{T-t}{\e}}^{\infty} \rhoe(\bfx,a,t) \zeps(\bfx,t) \cdot \bpsi(\bfx,t) da dt d\bfx \\
&- \ue \int_{\Omega} \int_0^T \int_{0}^{\frac{T-t}{\e}} \zeps(\bfx,t)\cdot \bpsi(\bfx,t)  \left( \rhoe(\bfx,a,t+\e a) - \rhoe(\bfx,a,t) \right) da dt d\bfx
\\
& - \ue \int_{\Omega} \int_0^T \int_{0}^{\frac{T-t}{\e}} \zeps(\bfx,t)\cdot \left( \bpsi(\bfx,t+\e a) - \bpsi(\bfx,t) \right) \rhoe(\bfx,a,t+\e a) da dtd\bfx
\\& - \ue \int_{\Omega} \int_0^T \int_{\frac{t}{\e}}^{\infty} \rhoe(\bfx,a,t) \zeps(\bfx,t-\e a) \cdot \bpsi(\bfx,t) da dt d\bfx =: \ell_1 -\sum_{i \in \{2,3,4\}} \ell_i\,.
%& = - \int_{\qt} \int_\rr \zeps(t) \bpsi (t-\e a) \cke(a,t) da dt - \int_\qt \zeps(t)\rhoe(a,t) a (D^{\e a}_t \bpsi )\; \chiu{\{ t+\e a < T\}}(a,t) da dt\\
%& + \ue \int_0^T \int_{\frac{T-t}{\e}}^{\infty} \rhoe(a,t) \zeps(t) \bpsi(t) da dt - \ue \int_0^T \int_{\frac{t}{\e}}^{\infty} \rhoe(a,t) \zeps(t-\e a) \bpsi(t) da dt =: -\ell_1 +\ell_2+\ell_3-\ell_4
\end{aligned}
$$
Thanks to \ref{sec.weak.form.initial}, %and \ref{}, 
$\ell_1$ and $\ell_4$ tend respectively to 
$$
\ell_1 \to  \int_{\Omega} \zz(\bfx,T) \cdot \bpsi(\bfx,T) \muoz(\bfx,T) d\bfx, \quad \ell_4 \to  \int_{\Omega} \zz(\bfx,0) \cdot%\bpsi(\bfx,0)
 \int_{\rr} \bvarphi_0(\bfx,a,0)  \rhoi(\bfx,a) da d\bfx,
$$
where $\bvarphi_0(\bfx,a,t) := \bpsi(\bfx,t)\int_a^\infty \exp\left( - \int_a^\tia \ztz(\bfx,s,t) ds \right)  d\tia$.
By strong convergence established for $\rhoe$ and $\zeps$, and the Lebesgue's Theorem, one  shows that
$$
\ell_3 \to \int_{\Omega \times \rr \times (0,T)} \zz(\bfx,t) \cdot \dt \bpsi(\bfx,t)  a  \rhoz(\bfx,a,t)dt da d \bfx.
$$
Setting $\hz = \zeps - \zz$, and rewriting $\ell_2$ gives :
$$
\begin{aligned}
\ell_2 = & \int_\cyt  \cke(\bfx,a,t) \zeps(\bfx,t)\cdot \bpsi(\bfx,t) da dt d\bfx \\
&= \int_\cyt \hz(\bfx,t) \cdot\bpsi(\bfx,t) \cke(\bfx,a,t)  da dt d\bfx  +  \int_\cyt  \cke(\bfx,a,t) \zz (\bfx,t)\cdot\bpsi(\bfx,t) da dt d\bfx . 
\end{aligned}
 $$
Thanks to Corollary \ref{coro.conv.zeps} and Proposition \ref{prop.lim.cke}, one concludes that~:
$$
\begin{aligned}
 \ell_2   \to & \int_{\cqt} a \dt \rhoz(\bfx,a,t) \zz(\bfx,t)  \cdot \bpsi(\bfx,t) dt da d \bfx %\\
 %& 
 - \int_{\rr} \int_{\Omega}  (\rhoi(\bfx,a)- \rhoz(\bfx,a,0)) \zz(\bfx,0)\cdot \bvarphi_0(\bfx,a,0) da d\bfx 
\end{aligned}
$$
then gathering the terms provides that 
$$
\begin{aligned}
\cIe (\rhoe,\zeps,\bpsi)  & \to \int_{\Omega\times \rr} a \rhoz(\bfx,a,T) \zz(\bfx,T) \cdot \bpsi(\bfx,T) da d\bfx %\\
%&
-\int_{\cqt} a(  \rhoz \zz \cdot \dt \bpsi +  \dt\rhoz \zz \cdot \bpsi ) dt da d \bfx \\
&-  \int_{\Omega\times\rr} \rhoz(\bfx,a,0)\bvarphi_0(\bfx,a,0) da \cdot \zz(\bfx,0) d\bfx . 
\end{aligned}
$$
where $\bvarphi_0$ is defined in \eqref{eq.def.varphi.zero} as a function of $\bpsi$.
The last term of the previous right hand side can then be transformed into  :
$$
\begin{aligned}
 \int_{\Omega} \int_{\rr} \int_0^\tia & \rhoz(\bfx,a,0)\exp\left( - \int_a^\tia \ztz(\bfx,s,0)ds \right) da d \tia \bpsi(\bfx,0) \cdot \zz(\bfx,0) d\bfx \\
 & = \int_{\Omega} \left\{ \int_\rr a \rhoz(\bfx,a,0) da \right\} \bpsi(\bfx,0) \cdot \zz(\bfx,0) d\bfx = \int_{\Omega} \bpsi(\bfx,0)\cdot \zz(\bfx,0) \muoz(\bfx,0)   d\bfx,
\end{aligned}
$$
since $\da (a \rhoz) + \ztz (a \rhoz ) = \rhoz$, one sees easily that the latter inner integral corresponds exactly to the
integration of the latter ODE.
\end{proof}

%\begin{theorem}
%  Under hypotheses \ref{hypo.data} and \ref{hypo.data.deux}, when $\e$ goes to zero,
%  the term $\ell_4$ in the proof of Theorem \ref{thm.cvg.weak} admits a limit :
%$$
%\begin{aligned}
% \ell_2   \to & \int_{\Omega } \dt \muoz (\bfx,t) \zz(\bfx,t)   \bpsi(\bfx,t) dt  d \bfx \\
% & - \int_{\rr} \int_{\Omega}  (\rhoi(\bfx,a)- \rhoz(\bfx,a,0)) \zz(\bfx,0) \bvarphi_0(\bfx,a,0) da d\bfx 
%\end{aligned}
%$$
%\end{theorem}
%\begin{proof}
%Setting $\hz = \zeps - \zz$, and rewriting $\ell_2$ gives :
%$$
%\begin{aligned}
%\ell_2 = & \int_\cyt \zeps(\bfx,t) \cke(\bfx,a,t) \bpsi(\bfx,t) da dt d\bfx \\
%&= \int_\cyt \hz(\bfx,t) \cke(\bfx,a,t) \bpsi(\bfx,t) da dt d\bfx  +  \int_\cyt \zz (\bfx,t) \cke(\bfx,a,t) \bpsi(\bfx,t) da dt d\bfx  
%\end{aligned}
% $$
% %==
% and the result is then the straightforward consequence of Corollary \ref{coro.conv.zeps} and Proposition \ref{prop.lim.cke}.
%First we approximate $\zeps$ by a continuous function in $\ov{\qt}$, this is possible 
%using a standard  mollification procedure. We set $\zz^\delta:= \zz*\omega_\delta$
%where $\omega_\delta(\bfx,t)=\omega(\bfx/\delta)\omega(t/\delta)/\delta^2$ and $\omega$ is a positive compactly supported regular function. 
%One has $\nrm{\zz^\delta-\zz}{L^\infty_tL^2_\bfx} \to 0$
%when $\delta \to 0$.
%
% The proof is a simple consequence of the  Proposition \ref{prop.lim.cke} choosing $\bpsi(\bfx,a,t)$ in an appropriate way :
% $$
% \bpsi(\bfx,a,t) = \zz^\delta(x,t) \bpsi(\bfx,t)
% $$
%as a test function in the claim.
%\end{proof}

\begin{theorem}
 Under hypotheses \ref{hypo.data}, \ref{hypo.data.deux} and \ref{hypo.data.trois}, the unique solution $\zeps$ of the problem \ref{eq.eul.lag.cont}
 converges towards the unique solution pair $\zz \in H^1((0,T);\bL^2(\om))\cap L^\infty((0,T);\bho)$  satisfying~:
 $$
\begin{aligned}
&     \int_{\om\times(0,T)} \muoz \dt \zz \cdot \bvarphi+ \dx \zz \cdot \dx \bvarphi -   |\dx \zz|^2 \zz \cdot \bvarphi \; \,d\bfx\,dt   =0
\end{aligned}
 $$
 for every $\varphi \in  H^1((0,T);\bL^2(\om))\cap L^\infty((0,T);\bho)$.
\end{theorem}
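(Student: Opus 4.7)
The plan is to pass to the limit in the weak Euler-Lagrange equation \eqref{eq.eul.lag.cont} by choosing tangential fields along $\zeps$, thereby automatically cancelling the Lagrange multiplier term $\int_\om \lame \zeps\cdot\bfv\,d\bfx$ for which only an $L^\infty_t L^1_\bfx$ bound is available. For a smooth $\bfw \in \D(\om\times(0,T);\RR^d)$, I set $\bvarphi_\e := \bfw - (\bfw\cdot\zeps)\zeps$, which satisfies $\zeps\cdot\bvarphi_\e = 0$ a.e. Testing \eqref{eq.eul.lag.cont} against $\bvarphi_\e$ and using the orthogonality $\dx\zeps\cdot\zeps = 0$ (obtained by differentiating $|\zeps|^2 = 1$) yields after a direct computation
\begin{equation*}
(\cle,\bvarphi_\e) + (\dx\zeps,\dx\bfw) = \int_\om (\bfw\cdot\zeps)|\dx\zeps|^2\,d\bfx,
\end{equation*}
from which the multiplier has disappeared.

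Next, I pass to the limit term by term. Writing $(\cle,\bvarphi_\e) = (\cle,\bfw) - \int(\cle\cdot\zeps)(\bfw\cdot\zeps)$, the first piece (after integration in time) converges via Theorem \ref{thm.cvg.weak} to $\int_{\om\times(0,T)}\muoz\dt\zz\cdot\bfw\,d\bfx dt$---the boundary terms at $t = 0,T$ vanish since $\bfw$ has compact support in time, and the $\dt\muoz$ contribution is absorbed by a classical integration by parts---while the second piece vanishes thanks to Proposition \ref{prop.decay} ($\cle\cdot\zeps\to 0$ in $L^1_{\bfx,t}$) and the uniform bound on $\bfw\cdot\zeps$. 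The diffusion term $(\dx\zeps,\dx\bfw)$ tends to $(\dx\zz,\dx\bfw)$ by weak-$*$ convergence of $\dx\zeps$ in $L^\infty_t\bL^2_\bfx$. Finally, since $|\dx\zeps|^2$ is bounded in $L^\infty_t L^1_\bfx$, it converges (up to a subsequence) weakly-$*$ to a non-negative scalar Radon measure $\nu_0$ on $\om\times[0,T]$; combined with the strong $C^0$ convergence $\zeps\to\zz$ from Corollary \ref{coro.conv.zeps}, this gives $\int(\bfw\cdot\zeps)|\dx\zeps|^2 \to \int(\bfw\cdot\zz)\,d\nu_0$. Collecting the limits, the identity becomes
\begin{equation*}
\int_{\om\times(0,T)}\muoz\dt\zz\cdot\bfw\,d\bfx dt + \int\dx\zz\cdot\dx\bfw\,d\bfx dt = \int(\bfw\cdot\zz)\,d\nu_0.
\end{equation*}

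The crucial step is the identification $\nu_0 = |\dx\zz|^2\,d\bfx dt$. Both sides of the identity are continuous in $\bfw$ in topologies that, by density of smooth functions, admit the choice $\bfw = \phi\zz$ for $\phi \in \D(\om\times(0,T))$ (the product $\phi\zz$ belongs to $H^1_t\bL^2_\bfx\cap L^\infty_t\bho$ since $\zz$ does, and is continuous on $\ov\om\times[0,T]$). For this choice, $\dt\zz\cdot\zz = 0$ annihilates the first term of the left-hand side, while integration by parts combined with $\dx\zz\cdot\zz = 0$ reduces the second to $\int\phi|\dx\zz|^2\,d\bfx dt$; the right-hand side equals $\int\phi\,d\nu_0$ because $|\zz|^2=1$. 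Matching yields $\nu_0 = |\dx\zz|^2\,d\bfx dt$, and substitution back in the limit identity produces for every smooth $\bfw$ the weak form
\begin{equation*}
\int_{\om\times(0,T)}\muoz\dt\zz\cdot\bfw + \dx\zz\cdot\dx\bfw - |\dx\zz|^2\zz\cdot\bfw\,d\bfx dt = 0,
\end{equation*}
which extends to every $\bvarphi \in H^1((0,T);\bL^2(\om))\cap L^\infty((0,T);\bho)$ by density.

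The main obstacle throughout is precisely the passage to the limit in the nonlinearity $|\dx\zeps|^2\zeps$: without strong compactness of $\dx\zeps$, one cannot a priori rule out a defect measure in $\nu_0$, and the tangent-test-function trick together with the constraint $|\zz|=1$ is what closes the identification on the normal direction. Uniqueness of $\zz$ follows by testing the difference of two solutions against itself and invoking the one-dimensional Gagliardo-Nirenberg interpolation exactly as in the proof of Proposition \ref{prop.time.compactness}; the initial condition $\zz(\cdot,0) = \zp(\cdot,0)$ is inherited from the $C^0_t$ continuity established in Corollary \ref{coro.conv.zeps}.
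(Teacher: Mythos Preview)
Your argument is correct, but it proceeds by a genuinely different route from the paper. The paper keeps the Lagrange multiplier $\lame$ through the limit: it passes to the limit in the full weak formulation \eqref{eq.weak.z.eps}, using the uniform $L^\infty_t L^1_\bfx$ bound on $\lame$ (Proposition~\ref{proposition.lag.multi}) to extract a weak-$*$ limit $\lamz$ in $L^\infty_t\cM_\bfx$, and only afterwards identifies $\lamz=-|\dx\zz|^2$ by testing with $\bvarphi=\zz\theta$. You instead eliminate $\lame$ \emph{before} the limit via the tangential test field $\bvarphi_\e=\bfw-(\bfw\cdot\zeps)\zeps$, which shifts the difficulty to the quadratic term $|\dx\zeps|^2$; this only converges weak-$*$ to a defect measure $\nu_0$, and you then recover $\nu_0=|\dx\zz|^2$ by the same normal-direction trick $\bfw=\phi\zz$ applied to the limit identity. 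Structurally the two proofs are dual: both ultimately rely on testing with multiples of the solution to close the nonlinearity, but yours trades the measure-valued multiplier for a scalar defect measure and uses Proposition~\ref{prop.decay} (in its continuous form $\nrm{\cle\cdot\zeps}{L^1_{\bfx,t}}\lesssim\e$) to dispose of the extra piece $(\cle\cdot\zeps)(\bfw\cdot\zeps)$. Your density step (passing from $\bfw\in\D$ to $\bfw=\phi\zz$) is a bit terse---one needs an approximation of $\zz$ converging simultaneously in $L^2_tH^1_\bfx$ and in $C^0_{\bfx,t}$ so that both sides of the identity pass to the limit, which a standard mollification of $\zz\in C^0_{\bfx,t}\cap L^\infty_tH^1_\bfx$ provides---but the argument goes through.
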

\begin{proof} From Theorem \ref{thm.cvg.weak}, and stability results above one has that the solution $\zeps$ 
satisfying the weak formulation \eqref{eq.weak.z.eps} converges strongly in $C(\ov{\om}\times[0,T])$ to $\zz\in H^1((0,T);\bL^2(\om))\cap L^\infty((0,T);\bho)$ satisfying :
  $$
\begin{aligned}
&    \left[ \int_{\Omega} \psi(\bfx,t) \muoz(\bfx,t) \zz(\bfx,t) d\bfx \right]_{t=0}^{t=T}  - \int_{\qt} \zz(\bfx,t) \dt \left( \muoz \psi \right) dt d\bfx  \\
&+ \int_{\om\times(0,T)} \dx \zz \cdot \dx \varphi d\bfx\,dt  + \langle \zz \cdot \varphi , \lamz \rangle =0
\end{aligned}
 $$
 together with the constraint $| \zz |= 1$ everywhere in $\ov{\om}\times[0,T]$ obtained in the same way as in Theorem \ref{thm.lim.delta}.
Then since in the first line of the latter equation all terms are well defined, one can perform an integration 
by parts in time and obtain that $\zz$ solves~:
 $$
\begin{aligned}
&     \int_{\om\times(0,T)} \muoz\dt \zz \cdot \bvarphi+ \dx \zz \cdot \dx \bvarphi \; \,d\bfx\,dt +  \langle \zz \cdot \bvarphi ,\lamz \rangle   =0
\end{aligned}
 $$
 again as in Theorem \ref{thm.lim.delta}, choosing the test function to be $\bvarphi= \zz \theta$, with $\theta \in \D(\om\times(0,T))$,  one proves that
 for almost every $(\bfx,t)$, $\lamz = - | \dx \zz |^2$ which, because $\zz$ belongs to  $L^\infty_t \bH^1_\bfx$, is an $L^1_{\bfx,t}$ function.
 Uniqueness follows the same ideas as in the proof of Theorem \ref{thm.lim.delta}, it is simpler since the first term is a derivative instead 
 of being a delay term as in \eqref{eq.eul.lag.cont}. Indeed, %using Gagliardo-Nirenberg, 
 denoting $\hz:=\zz^2-\zz^1$, where $(\zz^i)_{i\in\{1,2\}}$ are two distinct solutions, it solves for every $\bfv \in \bho$ and for almost every $t\in(0,T)$, 
 $$
\begin{aligned}
&     \int_{\om} \muoz \dt \hz \cdot \bfv+ \dx \hz \cdot \dx \bfv  + \nud \left\{ (\lamz^2+\lamz^1) \hz + \hat{\lambda}_0 ( \zz^2+\zz^1 ) \right\} \cdot \bfv \; \,d\bfx\,dt   =0
\end{aligned}
 $$
choosing  $\bfv=\hz$ and using Gagliardo-Nirenberg as above, one recovers :
$$
\frac{\mumin}{2} \ddt{}  \nrm{\hz}{L^2(\om)}^2   + (1-\delta C ) \nrm{\dx \hz}{L^2(\om)}^2 \leq \frac{C}{\delta} \nrm{\hz}{L^2(\om)}^2.
$$
For $\delta$ small enough, one neglects the second term in the left hand side. Thanks to Gronwall's Lemma, one concludes, 
since $\hz(\bfx,0)=0$ for all $x\in\om$,
that $\nrm{\hz(\cdot,t)}{L^2(\om)}^2=0$ a.e. $t \in (0,T)$ which  shows the claim.
\end{proof}
%\begin{aligned}
%  \int_{\Omega}\int_0^T & \int_{\rr} \frac{(\zeps(\bfx,t)-\zeps(\bfx,t-\e a) )}{\e} \rhoe(\bfx,a,t) da \varphi(\bfx,t) d\bfx dt = \\
%  = \int_{\Omega}\int_0^T & \left\{ \int_0^{t/ \e} \frac{(\zeps(\bfx,t)-\zeps(\bfx,t-\e a) )}{\e} \rhoe(\bfx,a,t) da \right. \\
% &+ \left. \int_{t/\e}^\infty \frac{(\zeps(\bfx,t)-\zp(\bfx,t-\e a) )}{\e} \rhoe(\bfx,a,t) da \right\} \varphi(\bfx,t)  d\bfx dt = \ell_1 + \ell_2
%\end{aligned}
%$$
%transferring the finite differences on $\rhoe \varphi$ one gets~:
%\newcommand{\err}{\mathcal{r}}
%$$
%\ell_1 = \int_{Q_T} \int_{t/\e}^{\frac{T-t}{\e}} \frac{\varphi(\bfx,t) \rhoe(\bfx,a,t)-\varphi(\bfx,t+ \e a) \rhoe(\bfx,a,t + \e a)}{\e} \zeps(\bfx,t) dt + \err_{1,1}-\err_{1,2}
%$$
%where 
%$$
%\begin{aligned}
%\err_{1,1}&  := \ue \int_{\Omega}\int_{T/2}^T \int_{\frac{T-t}{\e}}^{t/\e} \rhoe(\bfx,a,t) \varphi(\bfx,t) \zeps(\bfx,t) da \; dt \;d \bfx\\
%\err_{1,2} &  := \ue \int_{\Omega}\int_0^{T/2} \int^{\frac{T-t}{\e}}_{t/\e} \rhoe(\bfx,a,t+ \e a) \varphi(\bfx,t+\e a) \zeps(\bfx,t) da  \;dt \;d \bfx
%\end{aligned}
%$$
%
%We write :

%\appendix

\appendix

\section{Initial and final  terms in the weak formulation}\label{sec.weak.form.initial}
\begin{proposition}
 Under hypotheses \ref{hypo.data} and \ref{hypo.data.deux}, one has 
$$  \ell_4 = \ue \int_{\Omega} \int_0^T \int_{\frac{t}{\e}}^{\infty} \rhoe(\bfx,a,t) \zeps(\bfx,t-\e a) \cdot \bpsi(\bfx,t) da dt d\bfx 
\to  \int_{\Omega} \zz(\bfx,0) %\bpsi(\bfx,0) 
\cdot
 \int_{\rr} \bvarphi_0(\bfx,a,0)  \rhoi(\bfx,a) da d\bfx,
$$
where $\ell_4$ and $\bvarphi_0$ are defined in the proof of Theorem \ref{thm.cvg.weak}
\end{proposition}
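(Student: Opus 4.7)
The plan is to reduce $\ell_4$ to an integral over $(0,\infty)^2$ via the method of characteristics and two changes of variables, and then to pass to the limit by dominated convergence. The main obstacle is justifying the passage to the limit on a domain whose measure blows up as $\varepsilon \to 0$; this will be overcome by the uniform lower bound $\zeta_{\min}$ on the off-rates, which provides an integrable exponential majorant.

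First, since $a > t/\varepsilon$ in the region of integration, the characteristics of \eqref{eq.rho.eps} traced back to $t=0$ remain inside the quadrant, so
$$
\rho_\e(\bfx,a,t) = \rho_I(\bfx,a-t/\e)\, \exp\!\left(-\int_0^{t/\e} \zeta_\e(\bfx,a-t/\e+\sigma, \e\sigma)\,d\sigma\right),
$$
and $\zeps(\bfx,t-\e a) = \bfz_p(\bfx,t-\e a)$ because $t-\e a < 0$. Perform the two changes of variables $\tia = a - t/\e$ (so that $t - \e a = -\e\tia$) and $\hat t = t/\e$ (so that $dt = \e\, d\hat t$). The Jacobian exactly cancels the prefactor $1/\e$ and yields
$$
\ell_4 = \int_\Omega \int_0^{T/\e}\!\!\int_0^\infty \rho_I(\bfx,\tia)\, E_\e(\bfx,\tia,\hat t)\, \bfz_p(\bfx,-\e\tia)\cdot \bpsi(\bfx,\e\hat t)\, d\tia\, d\hat t\, d\bfx,
$$
where $E_\e(\bfx,\tia,\hat t) := \exp\!\left(-\int_0^{\hat t} \zeta_\e(\bfx,\tia+\sigma,\e\sigma)\,d\sigma\right)$.

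Next I pass to the limit $\e \to 0$ under the integral sign. For fixed $(\bfx,\tia,\hat t)$: hypothesis \ref{hypo.data.trois} gives $|\bfz_p(\bfx,-\e\tia)-\bfz_p(\bfx,0)|\leq C_{\zp}(\bfx)\,\e\tia \to 0$; the continuity of $\bpsi$ yields $\bpsi(\bfx,\e\hat t)\to \bpsi(\bfx,0)$; and hypothesis \ref{hypo.data} combined with the uniform Lipschitz regularity of $\zeta_0$ gives $E_\e(\bfx,\tia,\hat t) \to \exp\!\left(-\int_0^{\hat t} \zeta_0(\bfx,\tia+\sigma,0)\,d\sigma\right)$ after adding and subtracting $\zeta_0(\bfx,\tia+\sigma,\e\sigma)$ and using $\|\zeta_\e-\zeta_0\|_{L^\infty}\to0$. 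For the majorant, $|\bfz_p|\equiv 1$, $\|\bpsi\|_{L^\infty}<\infty$, and crucially $E_\e \leq e^{-\zeta_{\min}\hat t}$ uniformly in $\e$; extending the $\hat t$-integral to $(0,\infty)$ via $\mathbf 1_{\hat t<T/\e}\leq 1$, the integrand is dominated by
$$
\|\bpsi\|_{L^\infty}\, e^{-\zeta_{\min}\hat t}\, \rho_I(\bfx,\tia),
$$
which is integrable on $\Omega\times(0,\infty)^2$ by hypothesis \ref{hypo.data.deux} (since $\rho_I\in L^1_a L^\infty_\bfx$) and the strictly positive exponential decay. Lebesgue's dominated convergence theorem then gives
$$
\ell_4 \longrightarrow \int_\Omega \bfz_0(\bfx,0)\cdot\bpsi(\bfx,0)\int_0^\infty \rho_I(\bfx,\tia) \int_0^\infty \exp\!\left(-\int_0^{\hat t}\zeta_0(\bfx,\tia+\sigma,0)\,d\sigma\right) d\hat t\, d\tia\, d\bfx.
$$

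Finally, perform the substitution $s = \tia + \hat t$ in the innermost integral and the associated substitution $\tau = \tia + \sigma$ in the exponent; this transforms
$$
\int_0^\infty \exp\!\left(-\int_0^{\hat t}\zeta_0(\bfx,\tia+\sigma,0)\,d\sigma\right) d\hat t = \int_{\tia}^\infty \exp\!\left(-\int_{\tia}^{s}\zeta_0(\bfx,\tau,0)\,d\tau\right) ds,
$$
which, multiplied by $\bpsi(\bfx,0)$, is exactly $\bvarphi_0(\bfx,\tia,0)$. Recombining, one obtains the claimed identity. The only delicate point in the entire argument is guaranteeing the uniform domination on the unbounded $\hat t$-range; the strictly positive bound $\zeta_{\min}$ handles this cleanly.
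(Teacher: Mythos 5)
Your argument is correct and follows essentially the same route as the paper: Duhamel's formula along characteristics, the changes of variables $\tia=a-t/\e$ and $\tit=t/\e$, and then Lebesgue's dominated convergence, with the final substitution identifying $\bvarphi_0$. You merely make explicit the details the paper leaves implicit (the uniform majorant $e^{-\ztmin \tit}\rhoi(\bfx,\tia)$, the pointwise limits via the Lipschitz bound on $\zp$ and $\nrm{\zteps-\ztz}{L^\infty}\to0$), so nothing further is needed.
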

\begin{proof}
 Using the characteristics, one writes :
$$
\begin{aligned}
\ell_4 = &\ue \int_{\Omega} \int_0^T \int_{\frac{t}{\e}}^{\infty} 
\rhoi(\bfx,a-t/\e) \exp\left( - \int_0^{\frac{t}{\e}} \zteps\left(\bfx,a+s -\tse,\e s\right)ds \right) \times \\
& \times \zp(\bfx,t-\e a) \bpsi(\bfx,t) da dt d\bfx \\
& = \ue \int_{\Omega} \int_0^T \int_{0}^{\infty} 
\rhoi(\bfx,\tia) \exp\left( - \int_0^{\frac{t}{\e}} \zteps\left(\bfx,\tia+s,\e s\right)ds \right) \zp(\bfx,-\e \tia ) \bpsi(\bfx,t) d\tia dt d\bfx \\
& =  \int_{\Omega}  \int_0^{T/\e} \int_{0}^{\infty} 
\rhoi(\bfx,\tia) \exp\left( - \int_0^{\tit } \zteps\left(\bfx,\tia+s,\e s\right)ds \right) \zp(\bfx,-\e \tia ) \bpsi(\bfx,\e \tit) d\tia d\tit d\bfx.
\end{aligned}
$$
By the Lebesgue's Theorem the latter term tends to 
$$
\begin{aligned}\lim_{\e \to 0} \ell_4 & = \int_{\Omega} \zp(\bfx, 0) \bpsi(\bfx,0) \int_\rr \int_\rr \rhoi(\bfx, a) \left( - \int_0^{t } \ztz\left(\bfx, a+s,0 \right)ds \right)  d a dt d\bfx\\
& =   \int_{\Omega} \zp(\bfx, 0) \int_{\rr} \rhoi(\bfx,a) \bvarphi_0(\bfx,a,0) da d \bfx,
\end{aligned}
$$
where $\bvarphi_0 (a,t)$ is defined as
\end{proof}

\begin{proposition}
 Under hypotheses \ref{hypo.data} and \ref{hypo.data.deux}, one has 
$$  \ell_1  \to  \int_{\Omega} \zz(\bfx,T) \bpsi(\bfx,T) \muoz(\bfx,T) d\bfx, 
$$
where $\ell_4$ and $\bvarphi_0$ are defined in the proof of Theorem \ref{thm.cvg.weak}.
\end{proposition}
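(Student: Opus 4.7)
The plan is to mimic the strategy used for $\ell_4$ in the preceding proposition, but now exploiting backward characteristics of $\rhoe$ that hit the boundary $a=0$ at times close to $T$ rather than tracing back to the initial datum. First I would apply the change of variables $\tau=(T-t)/\varepsilon$, so that
$$
\ell_1=\int_\Omega\int_0^{T/\varepsilon}G_\varepsilon(\bfx,\tau)\,\zeps(\bfx,T-\varepsilon\tau)\cdot\bpsi(\bfx,T-\varepsilon\tau)\,d\tau\,d\bfx,
$$
with $G_\varepsilon(\bfx,\tau):=\int_\tau^\infty\rhoe(\bfx,a,T-\varepsilon\tau)\,da$. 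The proof then reduces to identifying the limit of this $\tau$-integral, for which I would split at $\tau=T/(2\varepsilon)$.

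On the range $\tau\in(0,T/(2\varepsilon))$ (Case A), the method of characteristics further splits $G_\varepsilon$ into a boundary portion $a\in(\tau,T/\varepsilon-\tau)$, where $\rhoe(\bfx,a,T-\varepsilon\tau)=\rhoe(\bfx,0,T-\varepsilon(\tau+a))\exp(-\int_0^a\zteps(\bfx,\sigma,T-\varepsilon(\tau+a-\sigma))d\sigma)$ is uniformly dominated by $\bmax e^{-\ztmin a}$, and an initial portion $a>T/\varepsilon-\tau$, bounded by $e^{-\ztmin(T/\varepsilon-\tau)}\esupx\rhoi(\bfx,a-T/\varepsilon+\tau)$. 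Hypotheses~\ref{hypo.data} together with the strong convergences $\zteps\to\ztz$, $\beps\to\bz$ and $\rhoe\to\rhoz$ provide pointwise convergence of the boundary portion to $\rhoz(\bfx,a,T)$ for each fixed $\tau$, while the initial portion is of order $e^{-\ztmin T/(2\varepsilon)}$ uniformly in $\tau$ in this range and thus vanishes. A uniform bound $G_\varepsilon(\bfx,\tau)\leq Ce^{-\ztmin\tau}$ (using $T/\varepsilon-\tau>\tau$) combined with the uniform strong convergences $\zeps(\bfx,T-\varepsilon\tau)\to\zz(\bfx,T)$ from Corollary~\ref{coro.conv.zeps} and $\bpsi(\bfx,T-\varepsilon\tau)\to\bpsi(\bfx,T)$ then allows a global application of Lebesgue's theorem, giving the Case A contribution as $\int_\Omega\int_0^\infty G_0(\bfx,\tau)\,\zz(\bfx,T)\cdot\bpsi(\bfx,T)\,d\tau\,d\bfx$, where $G_0(\bfx,\tau):=\int_\tau^\infty\rhoz(\bfx,a,T)\,da$.

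The main obstacle will be to show that the complementary range $\tau\in(T/(2\varepsilon),T/\varepsilon)$ (Case B), on which $\rhoe$ is governed entirely by the initial data and $G_\varepsilon$ does not vanish pointwise, contributes nothing in the limit. I would bound this piece by $\|\zeps\|_{L^\infty}\|\bpsi\|_{L^\infty}\int_{T/(2\varepsilon)}^{T/\varepsilon}G_\varepsilon(\bfx,\tau)\,d\tau$; the change of variable $v=T/\varepsilon-\tau$ then reveals that the decay factor $e^{-\ztmin v}$ is concentrated near $v=T/(2\varepsilon)$, exactly where the remaining $\rhoi$-tail $\int_{T/\varepsilon-2v}^\infty\esupx\rhoi\,d\tilde a$ tends to zero. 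A two-step argument (first fix $R$ large so that $\int_R^\infty\esupx\rhoi\,d\tilde a<\delta/2$, then take $\varepsilon$ small enough so that the contribution from $v<R$ is at most $\delta/2$) using the integrability granted by hypotheses~\ref{hypo.data.deux} closes the estimate.

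Once both cases are handled, a Fubini computation finishes the argument:
$$
\int_0^\infty G_0(\bfx,\tau)\,d\tau=\int_0^\infty\int_\tau^\infty\rhoz(\bfx,a,T)\,da\,d\tau=\int_0^\infty a\,\rhoz(\bfx,a,T)\,da=\muoz(\bfx,T),
$$
producing the announced limit $\int_\Omega\zz(\bfx,T)\cdot\bpsi(\bfx,T)\,\muoz(\bfx,T)\,d\bfx$.
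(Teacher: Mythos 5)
Your region decomposition is in substance the same as the paper's: both proofs isolate the part of the domain where the backward characteristics of $\rhoe$ reach the boundary $a=0$ at times close to $T$ (your Case A boundary portion, the paper's $\ell_{1,2,1,1}$ over $\omega_\e$), and both kill everything traced back to the initial datum by combining the semigroup decay $e^{-\ztmin t/\e}$ with the integrability of $\rhoi$; your Case B is the paper's $\ell_{1,1}+\ell_{1,2,2}$ treated in one stroke, and your bookkeeping there (which of the two factors, the exponential or the $\rhoi$-tail, is small on which piece of the $v$-integral) is stated backwards but the two-parameter $R$-then-$\e$ argument closes regardless. Where you genuinely diverge is the limit passage on the surviving piece: you change variables to $\tau=(T-t)/\e$, prove pointwise convergence of $G_\e(\bfx,\tau)\to\int_\tau^\infty\rhoz(\bfx,a,T)\,da$ and conclude by dominated convergence plus a Fubini identity for $\muoz$, whereas the paper decomposes $\rhoe\,\zeps=\hrhoe\,\zeps+\rhoz\,\hz+\rhoz\,\zz$, disposes of the first two terms with the quantitative bounds $|\hrhoe|\lesssim e^{-\ztmin t/\e}+(1+a)^2e^{-\ztmin a}o_\e(1)$ and $\nrm{\hz}{L^\infty_t\bL^2_\bfx}=o_\e(1)$, and handles $\rhoz\,\zz\cdot\bpsi$ by a continuity-at-$t=T$ argument for $f(a,t)=a\int_\om\rhoz\,\bpsi\cdot\zz\,d\bfx$. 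Your route is arguably cleaner to state, and the final Fubini computation $\int_0^\infty G_0\,d\tau=\muoz(\bfx,T)$ replaces the paper's ODE integration identity.

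The one step you assert too casually is the ``pointwise convergence of the boundary portion to $\rhoz(\bfx,a,T)$''. Through the characteristics formula this requires $\muze(\bfx,s)\to\muzz(\bfx,T)$ as $\e\to0$ with $s=T-\e(\tau+a)\to T$, and this does \emph{not} follow from the strong convergence $\rhoe\to\rhoz$ in $L^1(\rr\times(0,T);L^\infty_\bfx)$ that you invoke, which only controls $\muze-\muzz$ in $L^1$ of the time variable. You need the uniform-in-time estimate coming from the Lyapunov functional recalled in Section 5.1, namely $\mathcal H[\hrhoe(\bfx,\cdot,t)]\lesssim e^{-\ztmin t/\e}+o_\e(1)$, which gives $|\muze(\bfx,t)-\muzz(\bfx,t)|\lesssim e^{-\ztmin t/\e}+o_\e(1)$, together with continuity of $\muzz$ in $t$; the residual non-convergence near the corner $a\approx T/\e-\tau$ (where $t-\e a\approx 0$ and the initial layer of $\muze$ is active) is then absorbed by the $e^{-\ztmin a}$ damping. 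This is exactly the quantitative input the paper channels through its $\hrhoe$ term, so your argument is repairable, but as written the convergence of $G_\e$ is not justified.
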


\begin{proof}
\begin{figure}[ht!]
\begin{center}
 \begin{tikzpicture}[scale=1.5]
\draw [->] (0,0) -- (2.9,0);
\draw (2.9,0) node[right] {$a$};
\draw [->] (0,0) -- (0,2.25);
\draw (0,2.25) node[above] {$t$};
\draw (0,2) node[left] {$T$};
\draw (0,1) node[left] {$\frac{T}{2}$};
\draw (0,1) node  {--};
\draw [-] (0,2) -- (2.9,2);
\draw (2.5,0) node[below] {$\frac{T}{\varepsilon}$};
%\draw (5,0) node[below] {$\frac{2 T}{\varepsilon}$};
\draw [-] (0,2) -- (2.5,0);
%\draw [-] (2.5,0) -- (5,2); 
\draw [-] (2.5,0) -- (2.5,2);
\draw [dashed] (0,0) -- (2.5,2);
\draw [-] (2,0.4) -- (2,2);
\draw (2,0.4) node[below] {\tiny $(a,T-\varepsilon a)$};
\draw (2,8/5) node[below] {\tiny $(a,\varepsilon a)$};
\draw (2,2) node[above] {\tiny $(a,T)$};
\draw (1.25,0) node [below] {$\frac{T}{2 \varepsilon}$};
\draw (1.25,0) node {$|$};
%\draw [dotted] (1.25,0) -- (1.25,1.25*4/5);
\draw [dotted] (1.25,0) -- (1.25,2);
%\draw [dotted] (5,0) -- (5,2); 
\end{tikzpicture} 
\end{center}
\caption{Partition of the age-time domain for the splitting of the $\ell_{1}$ terms}\label{fig.age.time}
\end{figure}
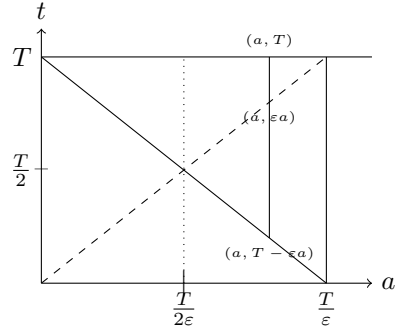
Dividing $(0,T)$ in two equal parts $(0,T/2)$ and $(T/2,T)$,  gives two terms
$\ell_{1,1}$ and $\ell_{1,2}$.
For the first one, we have
$$
%\left\{
\begin{aligned}
\ell_{1,1}= & \ue \int_{\om\times\left(0,\frac{T}{2}\right)} \int_{\frac{T-t}{\e}}^\infty \rhoi(\bfx,a-t/\e) \exp\left( -\int_0^\tse \zteps(\bfx,a-t/\e+s,\e s)ds \right) \times \\
&\times \zeps(\bfx,t)\bpsi(\bfx,t) da dt d\bfx ,\\
& = \frac{1}{\e} \int_{\om\times\left(0,\frac{T}{2}\right)}   \int_{\frac{T-2 t}{\e}}^\infty \rhoi(\bfx,a) \exp\left( -\int_0^\tse \zteps(\bfx,a+s,\e s)ds \right) \zeps(\bfx,t)\bpsi(\bfx,t) da dt\, d \bfx,\\
&= \int_{\om\times\left(0,\frac{T}{2\e}\right)}  \int_{\frac{T}{\e}-2 t}^\infty \rhoi(\bfx,a) \exp\left( -\int_0^t \zteps(\bfx,a+s,\e s)ds \right) \zeps(\bfx,\e t)\bpsi(\bfx,\e t) da dt d\bfx.
\end{aligned}
%\right.
$$
Now, we estimate the latter term as
$$
\begin{aligned}
 \left| \ell_{1,1}  \right| & \leq C \int_\om  \int_0^{\frac{T}{2\e}} \int_{\frac{T}{\e}-2 t}^\infty \exp(-\ztmin t) \rhoi(\bfx,a) da dt d\bfx \\
 & =  \int_\om \int_0^{\frac{T}{2\e}} \int_{\frac{T}{\e}-2 t}^{\frac{T}{\e}}  \exp(-\ztmin t) \rhoi(\bfx,a) da dt d\bfx \\
 & + \int_\om \int_0^{\frac{T}{2\e}}  \int_{\frac{T}{\e}}^\infty \exp(-\ztmin t) \rhoi(\bfx, a) da dt d \bfx = \ell_{1,1,1}+\ell_{1,1,2},
\end{aligned}
$$ 
applying Lebesgue's Theorem shows that $\ell_{1,1,2} \to 0$  when $\e$ goes to zero.
$$
\begin{aligned}
\ell_{1,1,1} & =  \int_\om \int_0^{\frac{T}{\e}} \rhoi(\bfx,a) \int_{\frac{T}{2\e}-\frac{a}{2}}^{\frac{T}{2\e}} \exp(-\ztmin t)dt da d\bfx \\
& \lesssim  \int_\om \int_0^{\frac{T}{\e}} \rhoi(\bfx, a) 
\left( 
\exp\left(-\ztmin \left(\frac{T}{2 \e}-\frac{a}{2}\right)\right) 
- 
\exp\left(-\frac{\ztmin T}{2 \e}\right)
\right)
 da d\bfx, \\
& \lesssim  
\exp\left(-\frac{\ztmin T}{2 \e}\right) 
\int_0^{\Tse} 
\nrm{\rhoi(\cdot,a)}{L^\infty_\bfx} (1+a) da 
\sup_{a \in \left( 0,\Tse\right) } 
\frac{\left(\exp\left(\frac{\ztmin a}{2}\right)-1\right)}{1+a} \\
& \lesssim \frac{\e}{T} \nrm{(1+a)\rhoi}{L^1_a L^\infty_\bfx}.
\end{aligned}
$$
where in the third line, the function $(\exp(\ztmin a/2)-1)/(1+a)$ is monotone increasing provided that $\ztmin>0$,
which then gives the result.
%which again by Lebesgue's Theorem vanishes.
The term $\ell_{1,2}$ can be split in two parts~:
$$
\begin{aligned}
\ell_{1,2} := & \ue \int_\om \int_{\frac{T}{2}}^T \int_{\frac{T-t}{\e}}^\tse \rhoe(\bfx,a,t) \zeps(\bfx,t) \bpsi(\bfx,t) da dt d \bfx \\
 &+ \ue \int_\om \int_{\frac{T}{2}}^T \int_\tse^\infty \rhoe(\bfx,a,t) \zeps(\bfx,t) \bpsi(\bfx,t) da dt d \bfx=\ell_{1,2,1}+ \ell_{1,2,2},
\end{aligned}
$$
Using Duhamel's principle again for $t<\e a$ provides :
$$
\begin{aligned}
\left| \ell_{1,2,2} \right| = & \left|  \int_\om \int_{\frac{T}{2 \e}}^{\frac{T}{\e}} \int_\rr \rhoi(\bfx,a) \exp\left( - \int_0^t \zteps(\bfx,a+s,\e s) ds \right) \,  da\, \zeps(\bfx,\e t)\bpsi(\bfx,\e t) dt d \bfx \right| \\
&  \lesssim \exp\left(-{\frac{T}{2\e}}\right) \nrm{\rhoi}{L^1(\rr;L^\infty(\Omega))} \to 0.
\end{aligned}
$$ 
Finally,   we split $\ell_{1,2,1}$ in two terms
$$
\ell_{1,2,1} = \ue \int_\om \left\{ \left( \int_{\frac{T}{2}}^T \int_{\frac{T-t}{\e}}^{\frac{T}{2\e}} % \rhoe(\bfx,a,t) \zeps(\bfx,t) \bpsi(\bfx,t) da dt  
+ \int_{\frac{T}{2}}^T \int_{\frac{T}{2\e}}^\tse \right) \rhoe(\bfx,a,t) \zeps(\bfx,t) \bpsi(\bfx,t) da dt \right\} d\bfx = \ell_{1,2,1,1} + \ell_{1,2,1,2}. 
$$
In a straightforward manner, the latter term goes to zero since 
$$
\begin{aligned}
\ell_{1,2,1,2}:= & \ue \int_\om \int_{\frac{T}{2}}^T \int_{\frac{T}{2\e}}^\tse  \rhoe(\bfx,a,t) \zeps(\bfx,t) \bpsi(\bfx,t) da dt d\bfx \\
& =  \ue \int_\om \int_{\frac{T}{2}}^T \int_{\frac{T}{2\e}}^\tse  \rhoe(\bfx,0,t-\e a) \exp\left(-\int_{-a}^0 \zteps(\bfx,a+s,t+\e s) ds\right) \zeps(\bfx,t) \bpsi(\bfx,t) da dt d\bfx \\
&  \leq \frac{\bmax}{\e} |\om| \nrm{ \zeps \bpsi}{L^\infty_{\bfx,t}}\int_{\frac{T}{2}}^T \int_{\frac{T}{2\e}}^\tse    \exp(-\ztmin a) da dt  \lesssim \exp(-\ztmin T/(2 \e))/\e \sim o_\e(1)
%& = \ue \int_\om \int_{\frac{T}{2\e}}^{\frac{T}{\e}} \int_{\e a}^T \rhoe(\bfx,0,t-\e a)  \exp\left( -\int_0^a \zteps(\bfx,s,t+\e (s-a) )ds \right)  \zeps(\bfx,t) \bpsi(\bfx,t) dt da d\bfx \\
%& \lesssim \ue  \int_{\frac{T}{2\e}}^{\frac{T}{\e}} \exp(-\ztmin a)(T-\e a) da \lesssim \frac{\exp(-\ztmin T/ (2 \e))}{\e} \sim o_\e(1),
\end{aligned}
$$
where we used that $\rhoe(\bfx,0,t-\e a)$ is bounded uniformly with respect to  $\e$.
The only term remaining and which should converge to a non-zero limit is 
$$
\begin{aligned}
&J:=\ell_{1,2,1,1}:=  \ue  \int_\om \int_{\frac{T}{2}}^T \int_{\frac{T-t}{\e}}^{\frac{T}{2\e}}  \rhoe(\bfx,a,t) \zeps(\bfx,t) \bpsi(\bfx,t) da dt d \bfx\\
& = \ue \int_\om \int_0^{\frac{T}{2\e}} \int_{T-\e a}^T \rhoe(\bfx,0,t-\e a) \exp\left( -\int_0^a \zteps(\bfx,s,t+\e (s-a) )ds \right) \zeps(\bfx,t) \bpsi(\bfx,t) dt da d\bfx.
\end{aligned}
$$
We denote by $\omega_\e := \Omega\times\{ (a,t) \in \rr\times(0,T) \st t \in (T/2,T) \text{ and } a \in ((T-t)/\e,T/\e)\}$.
$$
J = \ue \left\{ \int_{\omega_\e} \hrhoe \zeps \bpsi dq +  \int_{\omega_\e} \rhoz \hz \bpsi dq +  \int_{\omega_\e} \rhoz \zz \bpsi dq \right\} = \sum_{i=1}^3 J_i,
$$
where $\hrhoe=\rhoe-\rhoz$.
Thanks to the Duhamel's principle applied to $\hrhoe$, one has the estimate 
%{lem.duh.hrho}, one has the estimate 
$
|J_1| \sim o_\e(1)
$.
Indeed as in Lemma 5.2 \cite{MiOel.4}, when $\e a <t $ one can show that $|\hrhoe(a,t)|\lesssim \exp( -\ztmin t / \e) + (1+a)^2 \exp(-\ztmin a)o_\e(1)$, which gives the result.
By Lemma 3.4 \cite{MiOel.4}, 
$| \rhoz(\bfx,a,t) | \lesssim \exp(-\ztmin a)$ and $\nrm{\hz}{L^\infty((0,T);L^2(\Omega))}\sim o_\e(1)$, one has
$$
|J_2| \lesssim\frac{ \nrm{\hz}{ L^\infty((0,T);L^2(\Omega))}}{\e} \int_0^{\frac{T}{2\e}} \int_{T-\e a}^T dt \exp( - \ztmin a) da %\lesssim \nrm{\hz}{ L^\infty(0,T)} 
 \sim o_\e (1).
$$
In order to prove the limit of $J_3$ we define $f(a,t) = a \int_\om \rhoz(\bfx,a,t) \bpsi(\bfx,t) \zz(\bfx,t)d\bfx $, and write :
$$
\begin{aligned}
 \left|f(a,t)-f(a,T)\right| \leq  & a \left\{ \exp(-\ztmin a) \left( \nrm{\zz}{L^\infty((0,T);L^2(\Omega))} 
\sup_{\bfx \in \Omega} |\bpsi(\bfx,t)-\bpsi(\bfx,T)| \right. \right. \\
 & \left. \hspace{2cm}+ \nrm{\bpsi}{\bL^\infty(\Omega\times(0,T))} \nrm{\zz(\cdot,t)-\zz(\cdot,T)}{\bL^2(\Omega)} \right)\\
 &\left. +  \nrm{\bpsi}{\bL^\infty(\Omega\times(0,T))} \nrm{\zz}{L^\infty((0,T);\bL^2(\Omega))}\sup_{\bfx \in \Omega} |\rhoz(\bfx,a,t)-\rhoz(\bfx,a,T)| \right\}
\end{aligned}
$$
for all $a \in \rr$ and all $\delta > 0$ there exists $\eta_i$, $i\in\{ 1,2,3\}$ s.t.
$$
\begin{aligned}
\forall t\in (0,T) \st |t-T| < \eta_1 & \implies \sup_{\bfx \in \Omega} | \bpsi(\bfx,t) -\bpsi(\bfx,T) | < \frac{\delta}{3 (1+a) C \exp(-\ztmin a) \nrm{\zz}{L^\infty_t \bL^2_\bfx} }, \\
\forall t \in (0,T) \st   |t-T| < \eta_2 & \implies \nrm{ \zz(\cdot,t) -\zz(\cdot,T)}{L^2(\Omega)} < \frac{\delta}{3 (1+a) C \exp(-\ztmin a) \nrm{\bpsi}{\bL^\infty_{\bfx,t}} }, \\
 \forall t \in (0,T) \st  |t-T| < \eta_3 & \implies  \sup_{\bfx \in \Omega} | \rhoz(\bfx,a,t) -\rhoz(\bfx,a,T) | < 
   \frac{\delta}{3 (1+a) \nrm{\zz}{L^\infty_t\bL^2_\bfx} \nrm{\bpsi}{\bL^\infty_{\bfx,t}} } ,
\end{aligned}
$$
which means that $\forall a\in \rr$, $\forall \delta>0$ there exists $\eta(a,\delta)=\min_{i\in\{1,2,3\}} \eta_i>0$ s.t.
$$
\forall t \in (0,T) \st | t- T | < \eta \implies  \left|f(a,t)-f(a,T)\right| < \delta.
$$
Now there exists $\e(a,\delta)$ small enough $\e < \eta(a,\delta)/(1+a)$ s.t.
for all $t \in(T-\e a,T)$,  $|t-T|< \eta$, thus 
$$
\left| \frac{1}{\e a} \int_{T-\e a}^T  f(a,t)-f(a,T) dt \right| \leq  \frac{1}{\e a} \int_{T-\e a}^T \left| f(a,t)-f(a,T) \right| dt \leq \delta.
$$
Setting $g_\e (a) = \frac{1}{\e a} \int_{T-\e a}^T  f(a,t) dt$ and $g_0(a) = f(a,T)$, this shows that 
there is pointwise convergence for every fixed $a$, $g_\e(a) \to g_0(a)$. Moreover, 
$$
g_\e(a) \lesssim (1+a) \exp(-\ztmin a) \in L^1(\rr)
$$
and by the Lebesgue's Theorem the convergence holds :
$$
\int_\rr \geps(a) da \to \int_\rr a \int_\om \rhoz(\bfx,a,T) \zz(\bfx,T) \cdot \bpsi(\bfx,T) d\bfx da = \int_\om \mu_{1,0}(\bfx,T) \zz(\bfx,T) \cdot \bpsi(\bfx,T) d \bfx 
$$
when $\e$ goes to zero. 
\end{proof}

\section{Functionnal analysis in Banach valued spaces}\label{sec.app.un}

%\begin{proposition}\label{prop.cvg.vague}
%If $f$ is in $U_T$ then its weak derivatives $\da f$ and $\dt f$ are 
%in $X_T'$. One defines the corresponding duality brackets as
%$$
%\langle \da f, \varphi \rangle_\xtpair{T}
%:= \lim_{\sigma \to 0} \int_{\cqt} \did{\sigma}{a} f  \varphi d\bfx \;da\; dt
%$$
%\end{proposition}

\begin{proof}[of Proposition \ref{prop.cvg.vague}]
 One has :
 $$
 \left|   \int_{\cqt} \did{\sigma}{a} f \varphi(\bfx,a,t) d\bfx da dt  \right|  \leq \nrm{\did{\sigma}{a} f}{Y_T} \nrm{\varphi}{X_T} 
 $$
for any $\varphi \in X_T$. This proves that $\did{\sigma}{a} f$ 
belongs to $X_T'$.  $X_T$ is a separable Banach space. Thus according to Corollary 3.30 p. 76 \cite{Bre.Book}, 
 for any bounded sequence in $X_T'$ there exists a subsequence converging in the weak-$\star$ topology $\sigma(X_T',X_T)$. As the bound $\nrm{\did{\sigma}{a}f}{Y_T}$ is uniform with respect to  $\sigma$, 
there exists a weak limit in $X_T'$ denoted $g$ and we define the $X_T',X_T$ duality pairing as
$$
\lim_{\sigma_k \to 0}\int_{\cqt} \did{\sigma}{a} f \varphi \dxat = \langle g,\varphi\rangle_\xtpair{T},\quad \text{ as } \tau \to 0.
$$
Moreover, $g = \da f$
in the weak sense {\em i.e.} for any $\varphi \in \D(\cqt)$ one has
$$
\int_{\cqt} f (\bfx,a,t) \dt \varphi (\bfx,a,t) d\bfx da dt = - \langle g , \varphi \rangle_\xtpair{T}.
$$
The same results hold obviously for $\dt f$.
\end{proof}

%\begin{proposition}\label{prop.cvg.etroite}
%If $f$ is in $U_T$ then its weak derivatives $\da f$ and $\dt f$ are 
%in $Z_T'$. One defines the corresponding duality brackets as
%$$
%\langle \da f, \varphi \rangle_{Z_T',Z_T}
%:= \lim_{\sigma \to 0} \int_{\cqt} \did{\sigma}{a} f  \varphi d\bfx \;da\; dt
%$$
%for any $\varphi \in Z_T$.
%\end{proposition}

\begin{proof}[of Proposition \ref{prop.cvg.etroite}]
By  Proposition \ref{prop.cvg.vague}, there exists a limit $\da f \in X_T'$ and a subsequence s.t.
$$
\did{\sigma_k}{a} f \wscvg \da f \text{ weak-}\star\text{ in } X_T'.
$$
%For any continuous positive $\chim(a)$ s.t. $\chim \equiv1$ on $[0, m]$ and $\supp \, \chim \in [0,m+1]$
%$$
%|<\dt \rhoe , \chim >|=\lim | <D^{\tau_k}_t \rhoe,\chim> | \leq \sup_{\tau} \nrm{\did{\sigma}{a} f}{Y} < C
%$$
%which proves that $\dt \rhoe \in X_T'$.
First, $\did{\sigma}{a}f$ defines a linear continuous functional on $Z_T$ :
\begin{equation}\label{eq.lin.form}
 \left|\int_\cqt \did{\sigma}{a} f \varphi \right| \leq \nrm{\did{\sigma}{a} f}{Y_T} \nrm{\varphi}{Z_T}. 
\end{equation}
Moreover, one has tightness, for any continuous positive $\chim(a)$ s.t. $\chim \equiv1$ on $[0, m]$ and $\supp \, \chim \in [0,m+1]$
 $$
\begin{aligned}
  & \left| \langle  \did{\sigma}{a} f , \varphi (1-\chim) \rangle  \right| \leq \int_{\om \times\rr \times (0,T)} | \did{\sigma}{a} f (\bfx,a,t) \varphi(1-\chim) (\bfx,a,t) | d\bfx d a dt \\
  & \leq \int_0^T \int_\rr \chiu{\supp \varphi(1-\chim)} \esupx | \did{\sigma}{a} f | da dt \nrm{\varphi(1-\chim)}{L^\infty(\qt;L^1(\om))} \\
  & \leq \int_0^T \int_m^\infty \esupx | \did{\sigma}{a} f | da dt \nrm{\varphi(1-\chim)}{X_T} ,
\end{aligned}
 $$
 the latter term goes to zero as $m$ tends to infinity by the Lebesgue's Theorem.
 Now %as in the proof of Lemma 3.4.5 p.14 \cite{Mischler.Courses} 
 one writes for any $\varphi$ in $Z_T$
 $$ 
\begin{aligned}
&  \left| < \did{\sigma_j}{a}f ,\varphi>-<\did{\sigma_k}{a}f,\varphi> \right| \leq 
  \left| < \did{\sigma_j}{a}f ,\varphi \chim >-<\did{\sigma_k}{a}f,\varphi \chim> \right| \\
  &\quad \quad +   \left| < \did{\sigma_j}{a}f,\varphi (1-\chim) >-<\did{\sigma_k}{a}f,\varphi (1-\chim)> \right|. %\to 0
\end{aligned}
$$
For every $\delta>0$ there exists $k_0$ s.t. for $k$ and $j$ greater than $k_0$
the first term on the right hand side is smaller than $\delta/3$ by weak-$\star$ convergence in $\sigma(X_T',X_T)*$,
while  the two latter terms can be made  smaller than $2\delta/3$ due to the tightness proved above.
This implies, because $\RR$ is complete, that there exists a limit $L$ s.t.
$$
L_j:=<\did{\sigma_j}{a}f,\varphi>_\ztpair{T} \to L,\quad \text{when } j \to \infty.
$$
Since for every arbitrary fixed $\delta$ there exists $j_0$ s.t. $j>j_0$ implies 
$$
\left| L \right| \leq \left| L -L_j\right| + |L_j| \leq \delta + C \nrm{\varphi}{Z_T},
$$
$L$ is also a linear continuous form on $\zt$ thanks to \eqref{eq.lin.form}. 
By similar arguments as above we identify this limit with the weak derivative $\da f$ and 
we denote $<\da f,\varphi>_\ztpair{T} := L(\varphi)$ for every $\varphi \in Z_T$. The same proof holds for the time derivative as well.
\end{proof}

\begin{lemma}
  If $f \in U_T$ then
$$
\sup_{
\substack{(\varphi_1,\varphi_2)\in \D(\qt;L^1(\om))\\ |\varphi_i(\xat)|\leq 1,\, i=1,2}} \left| \int_\cqt f (\da \varphi_1 + \dt \varphi_2) d\bfx\; da \; dt  \right|\leq \nrm{f}{Y_T}.
$$
\end{lemma}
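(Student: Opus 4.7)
The plan is to establish the bound by a duality argument built on discrete difference quotients, exactly in the spirit of the characterisation of $L^\infty(\om)$-valued Radon measures quoted in the discussion at the start of Section~\ref{sec.hypo}.

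First, for $\varphi_1 \in \D(\qt; L^1(\om))$ with $|\varphi_1| \leq 1$ pointwise, I would approximate $\partial_a \varphi_1$ by forward difference quotients $\did{\sigma}{a} \varphi_1$, which converge uniformly on $\cqt$ as $\sigma \to 0^+$ thanks to the smoothness and compact support of $\varphi_1$. A change of variable $a \mapsto a+\sigma$, legitimate because $\varphi_1$ vanishes near $a=0$ and for $a$ large, yields the discrete integration-by-parts identity
\begin{equation*}
\int_\cqt f \, \did{\sigma}{a} \varphi_1 \, \dxat \;=\; -\int_\cqt \did{-\sigma}{a} f \, \varphi_1 \, \dxat.
\end{equation*}

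Second, I would invoke H\"older's inequality in the $L^1_{a,t}L^\infty_\bfx$--$L^\infty_{a,t}L^1_\bfx$ duality,
\begin{equation*}
\left|\int_\cqt \did{-\sigma}{a} f \, \varphi_1 \, \dxat\right| \;\leq\; \nrm{\did{-\sigma}{a} f}{Y_{(T-\sigma)}} \cdot \nrm{\varphi_1}{L^\infty_{a,t}L^1_\bfx}.
\end{equation*}
The pointwise bound $|\varphi_1| \leq 1$ together with $|\om|=1$ give $\nrm{\varphi_1}{L^\infty_{a,t}L^1_\bfx} \leq 1$. The same argument (after a time-shift $t \mapsto t+\sigma$) applies verbatim to the term containing $\partial_t \varphi_2$. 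Summing the two contributions and passing to $\limsup_{\sigma \to 0^+}$ bounds the left-hand side by
\begin{equation*}
\limsup_{\sigma \to 0^+} \left( \nrm{\did{\sigma}{a} f}{Y_{(T-\sigma)}} + \nrm{\did{\sigma}{t} f}{Y_{(T-\sigma)}} \right),
\end{equation*}
which is finite by the very definition of $U_T$. This establishes the essential content: $\partial_a f$ and $\partial_t f$ act as bounded linear functionals on the stated test function class, and the sup in the lemma is finite.

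The main obstacle is reconciling this with the stated right-hand side $\nrm{f}{Y_T}$: the scheme above naturally produces the BV-type seminorm contribution to $\nrm{f}{U_T}$, and not $\nrm{f}{Y_T}$ itself (the latter, being a purely integral norm without derivative information, cannot control the LHS in general, as a simple rescaling of $\varphi_i$ would show). The duality characterisation of total variation for $L^\infty(\om)$-valued Radon measures, quoted from \cite{HeiPatRen.19,Dafermos.Book.2000}, confirms that the BV-seminorm is the sharp constant. Reading the right-hand side as $\nrm{f}{U_T}$ (which is compatible with the way the lemma is used thereafter, namely to give a distributional meaning to $\partial_a f$ and $\partial_t f$), the three-step argument above closes the proof.
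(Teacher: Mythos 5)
Your proof is correct and follows essentially the same route as the paper: approximate $\partial_t\varphi$ (resp.\ $\partial_a\varphi$) by difference quotients, shift the quotient onto $f$ by a change of variables, and apply the $Y_T$--$X_T$ duality to land on $\limsup_{\sigma\to0}\nrm{\did{\sigma}{t}f}{Y_T}\nrm{\varphi}{X_T}$. Your observation that the stated right-hand side $\nrm{f}{Y_T}$ must be read as $\nrm{f}{U_T}$ is also vindicated: the paper's own proof concludes with exactly the bound $\leq \nrm{f}{U_T}$, so the displayed statement contains a typo that you correctly diagnosed.
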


\begin{proof}
  Taking the test function $\varphi \in  \D(\qt;L^1(\om))$, one has
$$
\begin{aligned}
\left| \int_\cqt f \dt \varphi \dxat \right| & = 
  \left| \lim_{\sigma \to 0} \int_\cqt f(\xat) \frac{\varphi(\bfx,a,t+\sigma)-\varphi(\xat)}{\sigma} \dxat \right| \\
&  = \left| \lim_{\sigma \to 0} \int_{\supp \varphi+\sigma e_t}  \varphi(\bfx, a, t+\sigma ) \frac{f(\bfx,a,t+\sigma)-f(\xat)}{\sigma} \dxat \right| \\
& \leq \limsup_{\sigma\to 0} \nrm{\did{\sigma}{t}f}{Y_T} \nrm{\varphi}{X_T} \leq \nrm{f}{U_T}.
\end{aligned}
$$
\end{proof}

\begin{lemma}\label{lem.err.bv}
  If $f\in U_T$ then
$$
\begin{aligned}
  \int_0^{T-\Delta t} & \int_\rr \esupx \left|f (\bfx,a+ \Delta a,t+ \Delta t) - \int_0^1 f (\bfx,a+\tau\Delta a,t+\tau \Delta t) d\tau \right| ds \dxat \\
& \leq (\Delta a + \Delta t)\nrm{f}{U_T}.
\end{aligned}
$$
\end{lemma}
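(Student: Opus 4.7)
The plan is to reduce the integrand to finite-difference quotients of $f$ in age and time, and then to invoke the uniform BV-type control on such quotients built into the $U_T$ norm. First I would bring the $\tau$-integral inside the absolute value by Minkowski's inequality, and apply the telescoping splitting
\begin{align*}
& f(\bfx,a+\Delta a,t+\Delta t) - f(\bfx,a+\tau\Delta a,t+\tau\Delta t) \\
&\quad = (1-\tau)\Delta a\, \did{(1-\tau)\Delta a}{a} f(\bfx,a+\tau\Delta a,t+\Delta t) + (1-\tau)\Delta t\, \did{(1-\tau)\Delta t}{t} f(\bfx,a+\tau\Delta a,t+\tau\Delta t),
\end{align*}
which decomposes the joint increment as a pure age step followed by a pure time step.

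Next, I would take $\esupx$ of the resulting pointwise inequality, integrate over $(a,t)\in\rr\times(0,T-\Delta t)$, use Fubini between the $\tau$- and $(a,t)$-integrals, and perform the measure-preserving translations $a\mapsto a+\tau\Delta a$ together with $t\mapsto t+\Delta t$ (respectively $t\mapsto t+\tau\Delta t$) in each of the two contributions. The resulting domains of integration are contained in $\rr\times(0,T)$ and $\rr\times(0,T-(1-\tau)\Delta t)$ respectively, so the two integrals are dominated by $\nrm{\did{(1-\tau)\Delta a}{a} f}{Y_T}$ and $\nrm{\did{(1-\tau)\Delta t}{t} f}{Y_{T-(1-\tau)\Delta t}}$, leading to the estimate
$$
\int_0^{T-\Delta t}\!\!\int_\rr\esupx|\cdots|\,da\,dt \leq \Delta a\!\int_0^1 (1-\tau)\nrm{\did{(1-\tau)\Delta a}{a} f}{Y_T}d\tau + \Delta t\!\int_0^1(1-\tau)\nrm{\did{(1-\tau)\Delta t}{t} f}{Y_{T-(1-\tau)\Delta t}}d\tau.
$$

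The final step is the uniform BV bound $\nrm{\did{\sigma}{a} f}{Y_T}\leq \nrm{f}{U_T}$ and $\nrm{\did{\sigma}{t} f}{Y_{T-\sigma}}\leq \nrm{f}{U_T}$, valid for every $\sigma>0$; substituting it yields $\tfrac{1}{2}(\Delta a+\Delta t)\nrm{f}{U_T}$, which is strictly stronger than claimed. The main obstacle here is precisely this uniform bound: since $\nrm{\cdot}{U_T}$ is defined as a $\limsup_{\sigma\to 0}$ rather than a $\sup_\sigma$, one has to invoke the classical fact that for $f\in \BV(\qt;L^\infty(\om))$ the two quantities coincide, a consequence of the identity $g(\cdot+\sigma)-g(\cdot)=\int_0^\sigma g'(\cdot+s)\,ds$ applied to a mollification, combined with Fubini. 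This argument carries over verbatim from the scalar setting treated in the proof of Theorem~1.7.1 of \cite{Dafermos.Book.2000} to the $L^\infty(\om)$-valued setting used throughout the paper.
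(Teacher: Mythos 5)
Your argument is correct, but it follows a genuinely different route from the paper. You telescope the increment $f(\bfx,a+\Delta a,t+\Delta t)-f(\bfx,a+\tau\Delta a,t+\tau\Delta t)$ into a pure age step and a pure time step at the finite scales $(1-\tau)\Delta a$ and $(1-\tau)\Delta t$, then use translation invariance and Fubini, so the whole estimate reduces to the bound $\nrm{\did{\sigma}{a}f}{Y_T}+\nrm{\did{\sigma}{t}f}{Y_{T-\sigma}}\lesssim\nrm{f}{U_T}$ at \emph{every} fixed scale $\sigma$, not just in the $\limsup_{\sigma\to0}$ entering the definition of $\nrm{\cdot}{U_T}$; as you note, this is the crux, and it holds because the fixed-scale difference-quotient seminorms of a Banach-valued $\BV$ function are dominated by the total variation, which coincides with the $\limsup$ — exactly the equivalence the paper records in Section~\ref{sec.hypo} via Theorem~1.7.1 of \cite{Dafermos.Book.2000} and \cite{HeiPatRen.19} (one should also check the harmless $Y_T$ versus $Y_{T-\sigma}$ domain discrepancy by exhausting $(0,T)$). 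The paper instead invokes the smooth approximation theorem for $L^\infty(\om)$-valued $\BV$ functions (Theorem~2.21 in \cite{HeiPatRen.19}) to produce $f^\delta$ with $\nrm{f-f^\delta}{Y_T}\leq\delta$ and $\nrm{\nabla_{a,t}f^\delta}{Y_T}\leq\nrm{f}{U_T}+\delta$, writes exact fundamental-theorem-of-calculus expansions for $f^\delta(\bfx,a+\Delta a,t+\Delta t)$ and for the $\tau$-average, and concludes by a triangle inequality as $\delta\to0$. Your version is more elementary at the level of the main estimate (only translations and Fubini, no mollification of $f$ itself) and even gives the sharper constant $\tfrac12(\Delta a+\Delta t)$ — whereas the paper's computation actually yields $2(\Delta a+\Delta t)(\nrm{f}{U_T}+\delta)$, slightly weaker than the stated inequality, which is harmless since the lemma is only used with $\lesssim$ in Lemma~\ref{lem.cvg.rhoeD}; the price is that the mollification/duality work is hidden inside the sup-equals-limsup fact, which the paper's route makes explicit by calling the approximation theorem once and keeping the rest a pointwise calculus identity.
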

\begin{proof}
Thanks to Theorem 2.2.1 in \cite{HeiPatRen.19}, since $f \in U_T = \BV(\rr\times(0,T);L^\infty(\om))$, there exists a smooth 
function $f_\delta \in C^\infty(\rr\times(0,T);L^\infty(\om))$, s.t. for every $\delta$ :
$$
\nrm{f-f^\delta}{L^1(\rr\times(0,T);L^\infty(\om))} \leq \delta ,\quad 
\nrm{\dt f^\delta}{L^1(\rr\times(0,T);L^\infty(\om))} + \nrm{\da f_\delta}{L^1(\rr\times(0,T);L^\infty(\om))} \leq 
\nrm{f}{U_T} + \delta
$$
%  One regularises $f$ by convolution with a standard mollifier in age and time setting $f^\delta :=\omega_\delta * f$ where $\omega_\delta(a,t):=\omega(t/\delta)\omega(a/\delta)/\delta^2$ and $\omega$ is a standard approximation kernel. 
%By density of regular $L^\infty(\om)$-valued functions in $Y_T$ (cf. p.12 Corollary 4.11 \cite{simon}), one has
The first estimates  imply directly that 
$$
 \int_0^{T-\Delta t}  \int_\rr \nrm{f (\cdot,a+ \Delta a,t+ \Delta t) -f^\delta (\cdot,a+ \Delta a,t+ \Delta t)}{L^\infty(\om)} da dt \to 0
$$
as well as
$$
  \int_0^{T-\Delta t}\int_\rr \nrm{\int_0^1 f (\cdot,a+s\Delta a,t+s \Delta t) - f_\delta (\cdot,a+s\Delta a,t+s \Delta t) ds}{L^\infty(\om)}  da dt \to 0
$$
as $\delta \to 0$.
Using that 
$$
f^\delta (\bfx,a+\Delta a,t+\Delta t) = 
f^\delta (\bfx,a,t) + \int_0^1 \nabla_{a,t} f^\delta (\bfx,a+s \Da,t+s \Dt) ds \cdot \begin{pmatrix}
  \Delta a\\ \Delta t
\end{pmatrix}
$$
and
$$
\int_0^1 f^\delta (\bfx,a+s \Delta a,t+s \Delta t) ds = 
f^\delta (\bfx,a,t) + \int_0^1 s \int_0^1 \nabla_{a,t} f^\delta (\bfx,a+s \tau \Da,t+s \tau \Dt) d\tau ds \cdot \begin{pmatrix}
  \Delta a\\ \Delta t
\end{pmatrix}
$$

one then writes that
$$
\begin{aligned}
&f^\delta (\bfx,a+\Delta a,t+\Delta t) -\int_0^1 f^\delta(\bfx,a+s \Delta a,t+s \Delta t) ds \\
& = \left( \int_0^1 \left\{ \nabla_{a,t} f^\delta (\bfx,a+s \Delta a,t+s \Delta t)- s \int_0^1 \nabla_{a,t} f^\delta (\bfx,a+s \tis \Delta a,t+s \tis \Delta t) d\tis \right\}ds \right)\cdot
\begin{pmatrix}
  \Delta a\\ \Delta t
\end{pmatrix}. 
\end{aligned}
$$
Setting $J^\delta(\xat)  := 
f^\delta (\bfx,a+\Delta a,t+\delta t) -\int_0^1 f^\delta(\bfx,a+s \Delta a,t+s \Delta t) ds$ and integrating in age and time gives that
$$
\nrm{ J^\delta }{ Y_T } \leq 2  (\Delta a + \Delta t) \nrm{|\nabla_{a,t} f^\delta|}{Y_T}  \leq  2 (\Delta a + \Delta t) \left( \nrm{f^\delta}{U_T} + \delta\right)  ,
$$
One concludes thanks to a triangular inequality.
\end{proof}

\section*{Acknowledgements}

I would like to thank warmly Ioan Ionescu for his encouragements and comments that helped me in order to finish and submit this work. 
I thank also the referee for the
very attentive reading of this article and for her/his wise comments.

\def\cprime{$'$} \def\cprime{$'$}

%

%\bibliographystyle{abbrv}
%\bibliography{biblio}

\end{document}